\newcommand{\plotptradius}{4pt}
\newcommand{\setplotptradius}[1]{\renewcommand{\plotptradius}{#1}}
\tikzset{permpt/.style={circle, draw, fill=black, inner sep=0pt, minimum width=\plotptradius}}
\tikzset{empty/.style={draw=none, fill=none}}
\newcommand\absdot[2]{
	\node[permpt] at #1 {};
}
\newcommand{\plotperm}[2][black]{ 
	\foreach \j [count=\i] in {#2} {
        \ifnum0=\j {} \else {
 		\node[permpt,fill=#1,draw=#1] (\j) at (\i,\j) {};
	} \fi
	};
}
\newcommand{\plotpermbox}[4]{
	\draw [darkgray, very thick, line cap=round, fill=white]
		({#1-0.5}, {#2-0.5}) rectangle ({#3+0.5}, {#4+0.5});
}
\newcommand{\plotpermborder}[1]{
	\foreach \i [count=\nn] in {#1} {\global\let\n\nn};    
	\plotpermbox{1}{1}{\n}{\n};
	\plotperm{#1};
}
\newcommand{\plotpermbordergrid}[1]{
	\foreach \i [count=\nn] in {#1} {\global\let\n\nn};    
	\plotpermbox{1}{1}{\n}{\n};
	\draw[step=1cm,gray!50,very thin] (1,1) grid (\n,\n);
	\plotperm{#1};
}
\newcommand{\plotgrid}[1]{
	\draw[step=1cm,gray!50,very thin] (0.5,0.5) grid (#1+0.5,#1+0.5);
}
\newcommand{\plotpermgrid}[1]{
	\foreach \i [count=\nn] in {#1} {\global\let\n\nn};    
	\plotgrid{\n};
	\plotperm{#1};
}
\newcommand{\plotpinsequence}[1]{
	\absdot{(0,0)}{};
	\edef\n{0}
	\edef\s{0}
	\edef\e{0}
	\edef\w{0}
	\edef\x{0}
	\edef\y{0}
	\foreach \pin [remember=\pin as \oldpin (initially 1), count=\i] in {#1} {
		\ifthenelse{\pin=1 \OR \pin=2}{
			\ifthenelse{\oldpin=3}{
				\xdef\x{\number\numexpr\e-1}
			}{
				\xdef\x{\number\numexpr\w+1}
			}
			\ifnum\i=1 
				\pgfmathparse{\e+1}
 				\xdef\e{\pgfmathresult}
			\fi	
		}{ 
			\ifthenelse{\oldpin=1}{
				\xdef\y{\number\numexpr\n-1}
			}{
				\xdef\y{\number\numexpr\s+1}
			}
			\ifnum\i=1 
				\pgfmathparse{\s-1}
 				\xdef\s{\pgfmathresult}
			\fi	
		}
		\ifnum\pin=1 
			\pgfmathparse{\n+2}
 			\xdef\n{\pgfmathresult}		
			\absdot{(\x,\n)}{};
			\ifnum\i>1
				\draw (\x,\n) -- (\x,\y-0.5);
			\else
				\draw[gray,very thick] (-0.5,-0.5) rectangle (\x+0.5,\n+0.5);
			\fi
		\fi
		\ifnum\pin=2 
			\pgfmathparse{\s-2}
 			\xdef\s{\pgfmathresult}
			\absdot{(\x,\s)}{};
			\ifnum\i>1
				\draw (\x,\s) -- (\x,\y+0.5);
			\else
				\draw[gray,very thick] (-0.5,0.5) rectangle (\x+0.5,\s-0.5);
			\fi
		\fi
		\ifnum\pin=3 
			\pgfmathparse{\e+2}
 			\xdef\e{\pgfmathresult}
			\absdot{(\e,\y)}{};
			\ifnum\i>1
				\draw (\e,\y) -- (\x-0.5,\y);
			\else
				\draw[gray,very thick] (-0.5,+0.5) rectangle (\e+0.5,\y-0.5);
			\fi
		\fi
		\ifnum\pin=4 
			\pgfmathparse{\w-2}
 			\xdef\w{\pgfmathresult}
			\absdot{(\w,\y)}{};
			\ifnum\i>1
				\draw (\w,\y) -- (\x+0.5,\y);
			\else
				\draw[gray,very thick] (0.5,0.5) rectangle (\w-0.5,\y-0.5);

			\fi
		\fi		
	};
}
\tikzset{
  on each segment/.style={
    decorate,
    decoration={
      show path construction,
      moveto code={},
      lineto code={
        \path [#1]
        (\tikzinputsegmentfirst) -- (\tikzinputsegmentlast);
      },
      curveto code={
        \path [#1] (\tikzinputsegmentfirst)
        .. controls
        (\tikzinputsegmentsupporta) and (\tikzinputsegmentsupportb)
        ..
        (\tikzinputsegmentlast);
      },
      closepath code={
        \path [#1]
        (\tikzinputsegmentfirst) -- (\tikzinputsegmentlast);
      },
    },
  },
  mid arrow/.style={postaction={decorate,decoration={
        markings,
        mark=at position .5 with {\arrow[#1]{stealth}}
      }}},
}
\newtheorem{thm}{Theorem}[section]
\newtheorem*{thm*}{Theorem}
\newtheorem{prop}[thm]{Proposition}
\newtheorem{cor}[thm]{Corollary}
\newtheorem{lemma}[thm]{Lemma}
\newtheorem{obs}[thm]{Observation}
\newtheorem*{conj*}{Conjecture}
\newtheorem{procedure}[thm]{Procedure}
\theoremstyle{definition}
\newtheorem{defn}[thm]{Definition}
\newtheorem{defn*}{Definition}
\newtheorem{example}[thm]{Example}
\newtheorem*{example*}{Example}
\newtheorem{comment}[thm]{Comment}
\newtheorem*{comment*}{Comment}
\let\C\CCC
\newcommand{\gr}{\mathrm{gr}}
\newcommand{\tikzcircle}[2][black,fill=black]{\tikz[baseline=-0.5ex]\draw[#1,radius=#2] (0,0) circle ;}
\newcommand{\fitellipsis}[2] 
{\draw let \p1=(#1), \p2=(#2), \n1={atan2(\y2-\y1,\x2-\x1)}, \n2={veclen(\y2-\y1,\x2-\x1)}
    in ($ (\p1)!0.5!(\p2) $) ellipse [x radius=\n2/2+0.5cm, y radius=0.5cm, rotate=\n1];
}
\tikzstyle{vertex}=[circle, draw, fill=black,
\tikzstyle{vertex}=[circle, draw, fill=black,
\newcommand{\smallgrid}{\setplotptradius{2pt}\draw (-.5,0)--++(1,0) (0,-.5)--++(0,1);}
\newcommand{\sept}{\tikz[scale=.5,baseline=-3pt]{\smallgrid\node[permpt] at (.25,-.25) {};}}
\newcommand{\swpt}{\tikz[scale=.5,baseline=-3pt]{\smallgrid\node[permpt] at (-.25,-.25) {};}}
\newcommand{\nept}{\tikz[scale=.5,baseline=-3pt]{\smallgrid\node[permpt] at (.25,.25) {};}}
\newcommand{\nwpt}{\tikz[scale=.5,baseline=-3pt]{\smallgrid\node[permpt] at (-.25,.25) {};}}
\newcommand{\netwo}{\tikz[scale=.5,baseline=-3pt]{\smallgrid\node[permpt] at (.15,.35) {};\node[permpt] at (.35,.15) {};}}
\newcommand{\nwtwo}{\tikz[scale=.5,baseline=-3pt]{\smallgrid\node[permpt] at (-.15,.35) {};\node[permpt] at (-.35,.15) {};}}
\newcommand{\swtwo}{\tikz[scale=.5,baseline=-3pt]{\smallgrid\node[permpt] at (-.15,-.35) {};\node[permpt] at (-.35,-.15) {};}}
\newcommand{\plottwocellperm}[2][black]{ 
    \foreach \i [count=\nn] in {#2} {\global\let\n\nn};
	\foreach \j [count=\i] in {#2} { 
        \ifnum0=\j {%
          \draw (\i,.5) -- ++(0,\n-1);   
        } \else {%
 		  \node[permpt,fill=#1,draw=#1] (\j) at (\i,\j) {};
	    } \fi
	};
}
\newcommand{\twocell}[1]{\tikz[scale=.4,baseline=-3pt]{\setplotptradius{2pt}
\foreach \i [count=\nn] in {#1} {\global\let\n\nn};
\begin{scope}[yscale=1/(\n-1),xscale=1/3]
	\foreach \j [count=\i] in {#1} { 
        \ifnum0=\j {%
          \draw (\i,.5-\n/2) -- ++(0,\n-1);   
        } \else {%
 		  \node[permpt] (\j) at (\i,\j-\n/2) {};
	    } \fi
	};
\end{scope}
}}
\title{Pin Classes I: Growth Rates}
\author{Ben Jarvis\footnote{bjarvis93@outlook.com}\\
\small School of Mathematics and Statistics\\
\small The Open University, UK
}
\begin{document}
\maketitle

\begin{abstract}
Pin sequences play an important role in the structural study of permutation classes. In this paper, we study the permutation classes that comprise all the finite subpermutations contained in an infinite pin sequence. We prove that these permutation classes have proper growth rates and establish a procedure for calculating these growth rates.
\end{abstract}

Since their introduction by Brignall, Huczynska and Vatter~\cite{brignall:decomposing-sim:} in relation to the simple permutations, the structures known as pin sequences have appeared in many varied and unexpected places throughout the field of permutation patterns, notably in connection with substitution-closed classes and infinite antichains. Additionally, pin sequences, along with the \emph{pin permutation classes} they naturally define, have proven to be a very efficient means of producing counterexamples, notably being used recently by Brignall and Vatter~\cite{bv:wqo-uncountable:} to demonstrate the existence of well-quasi-ordered permutation classes with non-algebraic generating functions. The study of pin classes as objects of interest in their own right, on the other hand, has been somewhat limited, though precedent certainly exists -- most notably, Bassino, Bouvel and Rossin~\cite{bassino:enumeration-of-:} showed that the class of all pin permutations has a rational generating function and established its growth rate $\omega_{\infty} \approx 5.24112$. In this paper we take up the study of pin classes in a more systematic manner than has previously been attempted. Our main result is that pin classes have \emph{proper} growth rates, not merely upper growth rates as guaranteed by the Marcus-Tardos Theorem~\cite{marcus:excluded-permut:}. We also describe a procedure for finding the growth rate of a given pin class in terms of the recurrent subfactors of the defining infinite pin word. 

We begin in Section \ref{sec:1} by collecting the basic notions and results from the field of permutation patterns which we shall require going forward, before concluding with a brief and informal introduction to pin sequences in connection with the simple permutations. In Section \ref{sec:2} we introduce the notion of a \emph{centred permutation}: a permutation along with a defined origin point. This will transpire to be the most natural context in which to situate the study of pin classes, chiefly because it will allow us to utilise the $\boxplus$-sum, an operation on the centred permutations. We begin to study pin permutation classes in earnest in Section \ref{sec:3}, formally defining pin words and the (centred and uncentred) permutation classes they generate. We demonstrate that pin classes generated by recurrent infinite pin words possess a particularly nice structure which enables us to enumerate these classes using the $\boxplus$-sum. This gives us an extraordinarily large family of centred permutation classes whose generating functions we can find by solving a simple combinatorial problem concerning subfactors of the defining infinite pin word. In Section \ref{sec:4} we drop the recurrency condition and deduce the major result of this paper: that every pin permutation class has a proper growth rate. We also describe an effective procedure for determining the growth rate of any given pin class by reducing to a simpler combinatorial problem on words.

\tableofcontents

\section{Basic Notions} \label{sec:1}

In this section we collect some of the basic notions and key results from the field of permutation patterns, which we use freely throughout, along with an informal introduction to pin sequences and pin permutations. We refer the reader to Bevan~\cite{bevan2015defs} for background information concerning permutations and permutation classes.

\subsection{Permutations and Permutation Classes} \label{sec:1.1}

A permutation is a bijection from a finite set to itself. As we are generally not interested in the nature of the objects we are permuting, we may take our formal definition to be the following:

\begin{defn}[Permutations]
A \emph{permutation} $\pi$ is a bijection from $\left\{1,2,3,\dots,n\right\}$ to itself, where $n \in \mathbb{N}$. We say that $n$ is the \emph{length} of the permutation $\pi$.
\end{defn}
As a short-hand way of writing down specific permutations we adopt \emph{one-line notation}, in which the images of each number are written out consecutively:
\[
\pi(1)\pi(2)\dots\pi(n).
\]
For example, $\pi = 13524$ refers to the permutation
\[
\pi : \{1,2,3,4,5\} \rightarrow \{1,2,3,4,5\}
\]
defined by $\pi(1)=1,\pi(2)=3,\pi(3)=5,\pi(4)=2,\pi(5)=4$.

One-line notation suggests a natural visual representation of a permutation of length $n$ on an $n$-by-$n$ grid -- see Fig. \ref{fig:visrep}.

\begin{figure}[h!]
\begin{center}
\begin{tikzpicture}[scale=0.35]
\draw[step=1cm,gray,very thin] (1,1) grid (5,5);
\foreach \y [count=\x] in {1,3,5,2,4}
{
	\node[permpt] (\y) at (\x,\y) {};
	\node[fill=none,draw=none] at (\x,0) {\y};
}

\end{tikzpicture}
\end{center}
\caption{The standard visual representation of the permutation $\pi = 13524$ on a $5$-by-$5$ grid. $\pi$ maps $1$ to $1$, so there is a point in the grid with coordinates $(1,1)$; $\pi$ maps $2$ to $3$, so there is a point with coordinates $(2,3)$, etc.}
\label{fig:visrep}
\end{figure}

Conversely, we can interpret any finite collection of points in the Cartesian plane as a permutation, as long as no two points share the same $x$- or $y$- coordinate. Fig. \ref{fig:visrep2} illustrates an example.

\begin{figure}[h]
\begin{center}
\begin{tikzpicture}[scale=0.35]

\node[permpt] (6) at (1,4) {};
         \node[permpt] (7) at (2,3) {};
         \node[permpt] (8) at (4,6) {}; 
         \node[permpt] (9) at (7,7) {};
         \node[permpt] (10) at (8,1) {};

\node[fill=none,draw=none] at (1,0) {3};
        \node[fill=none,draw=none] at (2,0) {2};
        \node[fill=none,draw=none] at (4,0) {4};
        \node[fill=none,draw=none] at (7,0) {5};
        \node[fill=none,draw=none] at (8,0) {1};

\end{tikzpicture}
\end{center}
\caption{The above collection of points (no pair of which shares either an x- or y-coordinate) can be interpreted as representing the permutation $32451$. We don't care about the distance between points, only their relative orderings horizontally and vertically.}
\label{fig:visrep2}
\end{figure}

There is a particularly natural containment order defined on the set of all permutations:
\begin{defn}[Permutation Containment]
Suppose that $\pi$ is a permutation of length $n$, so that, in one-line notation,
\[
\pi = \pi(1) \pi(2) \cdots \pi(n).
\]
Suppose further that $\sigma$ is a permutation of length $k \leq n$. We say that $\sigma$ is \emph{contained} in $\pi$, denoted $\sigma \leq \pi$, if there exists a sequence of indices $1 \leq i_{1} \leq i_{2} \leq \dots \leq i_{k} \leq n$ such that the sequence $\pi(i_{1})\pi(i_{2}) \dots \pi(i_{k})$ has the same relative ordering as $\sigma(1)\sigma(2) \dots \sigma(k)$. Similarly, $\sigma$ is \emph{strictly contained} in $\pi$, denoted $\sigma < \pi$, if $\sigma \leq \pi$ and $\sigma \neq \pi$.

Conversely, $\pi$ \emph{avoids} $\sigma$ if $\sigma\nleq \pi$.
\end{defn}
Permutation containment is much easier to understand using the visual representation introduced earlier: $\sigma \leq \pi$ if and only if a representation of $\pi$ as a collection of points in the plane contains a representation of $\sigma$ as a subset -- see Fig. \ref{fig:permcont}.

\begin{figure}[h]
\begin{center}
\begin{tikzpicture}[scale=0.35]
\foreach \y [count=\x] in {2,1,3}
{
	\node[permpt] (\y) at (\x,\y) {};
	\node[fill=none,draw=none] at (\x,-1) {\y};
}

\foreach \y [count=\x] in {2,1,3}
{
	
	\node[fill=none,draw=none] at (\x,-1) {\y};
}

\node at (5,-1) {$\leq$};
\begin{scope}[shift={(6,0)}]
\foreach \y [count=\x] in {1,3,5,2,4}
{
	\node[permpt] (\y) at (\x,\y) {};
	\node[fill=none,draw=none] at (\x,-1) {\y};
}

\foreach \y [count=\x] in {1,3,5,2,4}
{
	
	\node[fill=none,draw=none] at (\x,-1) {\y};
}

\node[fill=none,draw=none,blue] at (2,-1) {3};
\node[fill=none,draw=none,blue] at (4,-1) {2};
\node[fill=none,draw=none,blue] at (5,-1) {4};

\node at (7,-1) {$\leq$};

\foreach \y in {3,2,4}
	\node[permpt,blue] at (\y) {};
\end{scope}
\begin{scope}[shift={(14,0)}]
\foreach \y [count=\x] in {4,1,2,6,3,8,5,7}
{
	\node[permpt] (\y) at (\x,\y) {};
	\node[fill=none,draw=none] at (\x,-1) {\y};
}

\foreach \y [count=\x] in {4,1,2,6,3,8,5,7}
{
	
	\node[fill=none,draw=none] at (\x,-1) {\y};
}

\node[fill=none,draw=none,orange] at (2,-1) {1};
\node[fill=none,draw=none,orange] at (4,-1) {6};
\node[fill=none,draw=none,orange] at (6,-1) {8};
\node[fill=none,draw=none,orange] at (7,-1) {5};
\node[fill=none,draw=none,orange] at (8,-1) {7};

\foreach \y in {1,6,8,5,7}
	\node[permpt,orange] at (\y) {};
\end{scope}
\end{tikzpicture}
\end{center}
\caption{The permutation $213$ is contained in $13524$ (as the blue points form another copy of $213$); in turn, $13524$ is contained in $41263857$ (as the orange points form another copy of $13524$).}
\label{fig:permcont}
\end{figure}

Permutation containment allows us to define the primary objects of study in the field of permutation patterns.

\begin{defn}[Permutation Classes]
A \emph{permutation class} $\mathcal{C}$ is a set of permutations which is closed under permutation containment. That is:
\[
\left(\pi \in \C\text{ and }\sigma\leq \pi\right)\text{ implies }\sigma\in \C \ .
\]
A permutation class is \emph{proper} if it does not contain all permutations.
\end{defn}

Given a permutation class we are primarily concerned with what we can say about its enumeration, meaning that we wish to study its enumeration sequence, defined as follows.

\begin{defn} \label{enumseqforpermclass}
Suppose $\mathcal{C}$ is a permutation class, and let $\mathcal{C}_{n}$ denote the set of permutations of length $n$ in $\mathcal{C}$. We denote
\[
C_{n} = |\mathcal{C}_{n}| \ \ \ \text{for all $n \in \mathbb{N}$,}
\]
and refer to this sequence as the \emph{enumeration sequence} of $\mathcal{C}$.

The \emph{generating function} of $\mathcal{C}$ is then defined to be the formal power series
\[
f(z) = 1 + C_{1}z + C_{2}z^2 + C_{3}z^3 + C_{4}z^4 + \dots
\]
\end{defn}

In practice, when we say we have enumerated a permutation class we mean that we have obtained a closed form for its generating function, not necessarily that we have an $n$th term formula for its enumeration sequence. This focus on generating functions is partly justified by the fact that we can read off asymptotic information about the enumeration sequence from its generating function. Specifically, studying analytic properties of the generating function of a permutation class will enable us to determine its \emph{(upper and lower) growth rates}.

\begin{defn}[Growth Rates] \label{def:growthrates}
Let $\mathcal{C}$ be a permutation class with enumeration sequence $(C_{n})$. The \emph{upper} and \emph{lower growth rates} of $\mathcal{C}$ are defined, respectively, to be 
\[
\overline{gr}(\mathcal{C}) = \limsup_{n\to\infty}\sqrt[n]{C_n}, \quad\text{ and }\quad \underline{gr}(\mathcal{C})=\liminf_{n\to\infty}\sqrt[n]{C_n}.
\]

If both the upper and lower growth rates of $\mathcal{C}$ are finite and
\[
\overline{gr}(\mathcal{C})=\underline{gr}(\mathcal{C}),
\]
then we say that $\mathcal{C}$ has a \emph{(proper) growth rate} $\gr(\mathcal{C})$, denoting this common value. Equivalently,
\[
gr(\mathcal{C}) = \lim_{n \rightarrow \infty}\sqrt[n]{C_{n}}
\]
if such a quantity exists.
\end{defn}

\begin{comment}[Growth rate respects subclass containment]
We note that this formulation of the growth rate immediately implies that growth rates respect subclass containment, a fact to which we refer freely throughout. To be precise, if $\mathcal{C}$ is a permutation class and $\mathcal{D}$ is a subclass (meaning that $\mathcal{D}$ is a subset of $\mathcal{C}$ which is itself a permutation class) then the (upper, lower, proper) growth rate of $\mathcal{D}$ is less than or equal to the (upper, lower, proper) growth rate of $\mathcal{C}$. This is of course immediate from the fact that the enumeration sequence of $\mathcal{D}$ is dominated by the enumeration sequence of $\mathcal{C}$.
\end{comment}

Suppose $\mathcal{C}$ is a permutation class. By basic real analysis the quantities $\overline{gr}(\mathcal{C})$ and $\underline{gr}(\mathcal{C})$ both exist in $[0,\infty]$. It is certainly possible for both to be infinite: the class of all permutations grows factorially, and therefore has $\overline{gr}(\mathcal{C}) = \underline{gr}(\mathcal{C}) = \infty$. Relatively early in the study of permutation classes it was noted, independently by Stanley and Wilf amongst others, that this appears to be the only instance of a permutation class with infinite upper or lower growth rate. This observation motivated the famous \emph{Stanley-Wilf conjecture}, first stated in the early $1990$s, that every \emph{proper} permutation class has finite upper (and hence also lower) growth rate. This conjecture remained open until $2004$ when Marcus and Tardos settled it in the affirmative:

\begin{thm}[The Marcus-Tardos Theorem~\cite{marcus:excluded-permut:}]
Suppose that $\mathcal{C}$ is a proper permutation class. Then there is some constant $\mu > 0$ for which
\[
C_{n} < \mu^{n},
\]
for all $n \in \mathbb{N}$.

Hence $\overline{gr}(\mathcal{C})$ and $\underline{gr}(\mathcal{C})$ are both finite.
\end{thm}

The Marcus-Tardos Theorem is a truly remarkable result: essentially it tells us that if we throw \emph{any} element out of the class of all permutations then the containment condition forces us to throw \emph{almost everything} out, as exponential growth to any base is negligible compared to factorial growth. On the other hand, the Marcus-Tardos Theorem only classifies the asymptotics of permutation classes in the broadest possible sense, and leaves open many questions we could ask about the finer asymptotics. Most notably, Marcus-Tardos does not guarantee that the upper and lower growth rates of any given proper permutation class are equal, though this is true in all cases which have been checked. The question of whether every proper permutation class has a proper growth rate remains one of the major open problems in the field of permutation patterns. One of the major results of this paper, presented in Theorem \ref{greqbox}, is that every member of a large family of permutation classes called the \emph{pin permutation classes} does indeed have a proper growth rate. It is our hope that this result may shed some light on the more general problem.

\subsection{The Exponential Growth Theorem} \label{sec:1.2}

As mentioned in the previous section, one of the primary reasons we are interested in calculating generating functions of permutation classes is that we can easily read asymptotic information about the enumeration sequence $(C_{n})$ of a permutation class $\mathcal{C}$ from its generating function $f(z)$. We state a result derived by Flajolet \& Sedgewick in consequence of Pringsheim's Theorem, perhaps the most important analytic tool used in this paper:

\begin{thm}[Exponential Growth Theorem, {Flajolet and Sedgewick~\cite{flajolet:analytic-combin:}}]\label{EGT0}
Suppose that $f(z)$ is the generating function of a combinatorial class\footnote{A \emph{combinatorial class} is a set $\mathcal{C}$ along with a length function $\varphi: \mathcal{C} \rightarrow \mathbb{N}_{0}$ for which the preimage of any element of $\mathbb{N}_{0}$ is finite. A permutation class is of course a type of combinatorial class, as is any set of permutations. The enumeration sequence and generating function of a general combinatorial class is defined analogously to Definition \ref{enumseqforpermclass}.} $\mathcal{C}$, and that $f(z)$ is analytic at $0$. Then the upper growth rate of $\mathcal{C}$ is equal to the reciprocal of the radius of convergence of $f(z)$. 
\end{thm}

We refer to Flajolet \& Sedgewick~\cite[Theorem IV.7]{flajolet:analytic-combin:} for a formal proof of this fact, but the intuition here is fairly basic: the faster the coefficients of a power series grow the smaller we expect the radius of convergence to be, and we can make this relationship explicit by comparison with the geometric series
\[
1 + \lambda z + \lambda^{2}z^{2} + \lambda^{3}z^{3} + \lambda^{4}z^{4} + \dots = \frac{1}{1 - \lambda z}
\]
whose radius of convergence is the smallest root of its denominator, namely $\lambda^{-1}$. Thus if a power series $f(z)$ can be shown to have radius of convergence $\lambda^{-1}$ we expect its coefficients to, in some limited sense, grow `like powers of $\lambda$'.

We now specialise Theorem \ref{EGT0} to the combinatorial classes we are most interested in:

\begin{thm}[Exponential Growth Theorem for Permutation Classes]\label{EGT1}
Suppose that $f(z)$ is the generating function of a proper permutation class $\mathcal{C}$. Then the upper growth rate of $\mathcal{C}$ is equal to the reciprocal of 
\[
R = \sup\left\{r \geq 0 \mid \ \text{$f(z)$ converges on $[0,r)$}\right\} \ .
\]

Furthermore, if
\[
f(z) = \frac{a(z)}{b(z)}
\]
is a rational function, where the polynomials $a(z)$ and $b(z)$ share no polynomial factor, then the upper growth rate of $\mathcal{C}$ is equal to the reciprocal of the smallest positive real root of $b(z)$.
\end{thm}

\begin{proof}
We can apply Theorem \ref{EGT0} in this instance as Marcus-Tardos tells us that the coefficients of $f(z)$ are bounded by powers of some constant $\lambda > 0$. Hence, by the Ratio Test, $f(z)$ converges to an analytic function on the unit disc of radius $\lambda^{-1}$. As the coefficients of $f(z)$ are non-negative, the singularity of $f(z)$ with smallest absolute value must occur on the positive real axis, so nothing changes by defining $R$ as in the statement of the theorem. If we further assume that $f(z)$ is a rational function then the only possible singularities are at the zeroes of the denominator, as required. \qedhere
\end{proof}

Theorem \ref{EGT1} (as well as its analogue for centred permutation classes stated as Theorem \ref{EGT}) will be the primary means by which we find (upper) growth rates of the classes we are interested in throughout this paper.

\subsection{Intervals} \label{sec:1.3}

\begin{defn}[Intervals and Simple Permutations] \label{def:uncentredinvtervals}
An \emph{interval} of a permutation
\[
\pi \ = \pi(1) \ \pi(2) \ \dots \ \pi(n) \ ,
\]
is a set of consecutive indices $k, k+1, \dots , \ell$ such that the set of values $\{\pi(i) : k \leq i \leq \ell\}$ is also contiguous. Visually, an interval in a
permutation is a bounding rectangle that is not sliced horizontally or vertically by any point not in it -- see Fig. \ref{fig:interval}.

In any permutation the set of all points is an interval, as are the singleton sets consisting of only one point each. A permutation which contains no intervals other than these is called \emph{simple}.
\end{defn}

\begin{figure}[h]
\begin{center}
\begin{tikzpicture}[scale=0.35]

\node[permpt] (1) at (8,9) {};
\node[permpt] (2) at (5,8) {};
\node[permpt] (3) at (6,5) {};
\node[permpt] (4) at (4,6) {};
\node[permpt] (1) at (3,3) {};
\node[permpt] (2) at (1,4) {};
\node[permpt] (3) at (2,1) {};
\node[permpt] (4) at (9,11) {};
\node[permpt] (4) at (10,10) {};


\draw[dotted] (8.5,4.5) -- ++ (0,5);
\draw[dotted] (3.5,4.5) -- ++ (0,5);
\draw[dotted] (3.5,9.5) -- ++ (5,0);
\draw[dotted] (3.5,4.5) -- ++ (5,0);


\end{tikzpicture}
\end{center}
\caption{In the permutation $312564798$ the contiguous block $5647$ consists of four contiguous integers, so this forms an interval - as can be seen by the fact that the bounding rectangle is not sliced by any outside point in either the horizontal or vertical direction. This permutation is therefore not simple.}
\label{fig:interval}
\end{figure}

The existence and number of simple permutations in a permutation class (or in its \emph{basis}, the set of permutations it avoids which are minimal with respect to this property) has important structural consequences for the class as a whole. It may easily be demonstrated, for example, that a permutation class is substitution-closed if and only if its basis consists only of simple permutations. Moreover, Albert \& Atkinson~\cite{albert:simple-permutat:} proved that a permutation class containing only finitely-many simple permutations must have a finite basis as well as an algebraic generating function which may be computed via a recursive procedure. 

\subsection{Pin Sequences} \label{sec:1.4}

When presented with a new permutation class $\mathcal{C}$, then, one of the most important structural properties of interest is whether $\mathcal{C}$ contains only finitely-many simple permutations -- if it does, and if we can list these simples, then Albert \& Atkinson provide a scheme to enumerate the class. The search for a decision procedure for determining whether a given permutation class $\mathcal{C}$ contains finitely- or infinitely-many simples therefore became an object of research, and it was this problem that first motivated the construction that will be the main focus of this paper: \emph{pin permutations}.

\begin{defn}[Pin Sequences and Permutations]
A \emph{pin word} $w$ is a finite word over the alphabet $\mathbb{A}_{P} = \left\{1,2,3,4,u,d,l,r\right\}$ which obeys the following construction rules:
\begin{enumerate}
\item The first letter is one of the numerals $1$, $2$, $3$ or $4$.
\item The second letter is one of the letters $u$, $d$, $l$, $r$.
\item If the $n$th letter of $w$ is in $\left\{u,d\right\}$ then the $(n+1)$-th letter (if it exists) is in $\left\{l,r\right\}$.
\item If the $n$th letter of $w$ is in $\left\{l,r\right\}$ then the $(n+1)$-th letter (if it exists) is in $\left\{u,d\right\}$.
\end{enumerate}

A pin word can be converted into a set of points in the plane, known as a \emph{proper pin sequence}, by the following procedure (see Fig. \ref{fig:example} for an illustration of this process):

\begin{enumerate}
\item Choose a point in the plane to be the origin. By drawing horizontal and vertical axes through this point, split the plane into four quadrants, numbered $1$ to $4$ anti-clockwise from the top-right.
\item Place an initial point in the quadrant specified by the initial numeral of $w$.
\item At each subsequent step, place a point either up, down, left or right (depending on the letter u, d, l, or r in the relevant position of $w$) of the bounding rectangle of all previous points and the origin at the end of a 'pin' which separates the previous point from all points prior.
\end{enumerate}

We may then ignore the grid-lines, the origin and the pins to read off the permutation given by the points. This is the \emph{pin permutation generated by $w$}. More generally, a \emph{pin permutation} is any permutation which is contained in a permutation generated by some pin word $w$ through this process.

\begin{figure}[h]
\begin{center}
\begin{tikzpicture}[scale=0.5]
\node[circle, draw, fill=none, inner sep=0pt, minimum width=\plotptradius] (0) at (6,4) {}; 
\node[permpt,label={\small 1}] (1) at (5,6) {};
\node[permpt,label={\small 2}] (2) at (3,5) {}; \draw[thin] (2) -- ++ (2.5,0); 
\node[permpt,label={\small 3}] (3) at (4,8) {}; \draw[thin] (3) -- ++ (0,-3.5);
\node[permpt,label={\small 4}] (4) at (8,7) {}; \draw[thin] (4) -- ++ (-4.5,0);
\node[permpt,label={[yshift=-18pt]{\small 5}}] (5) at (7,2) {}; \draw[thin] (5) -- ++ (0,5.5);
\node[permpt,label={\small 6}] (6) at (1,3) {}; \draw[thin] (6) -- ++ (6.5,0);
\node[permpt,label={[yshift=-18pt]{\small 7}}] (7) at (2,1) {}; \draw[thin] (7) -- ++ (0,2.5);

\draw[thick] (0,4) -- ++ (9,0);
\draw[thick] (6,0) -- ++ (0,9);
\end{tikzpicture}
\end{center}
\caption{The permutation $3147526$, constructed from the pin word $2lurdld$. The numbers refer to the order in which the points are placed: the first point is placed in quadrant $2$ (due to the $2$ at the start of the pin word); then, the second point is placed to the left (due to the $l$) of the bounding rectangle of the first point and the origin, at the end of a pin separating point $1$ from the origin; next, point $3$ is placed above (or 'up', due ot the u) the bounding rectangle of the first two points and the origin, at the end of a pin separating point $2$ from point $1$ and the origin; and so on...}
\label{fig:example}
\end{figure}
\end{defn}

Pin sequences were first introduced by Brignall, Huczynska \& Vatter~\cite{brignall:decomposing-sim:}. The connection with simple permutations may be appreciated immediately: the definition of a proper pin sequence seems almost designed to produce a simple permutation, with each point placed in such a way as to ensure that the previously placed points do not form an interval. This is not in fact quite true -- see Fig. \ref{fig:nonsimplepinperm} for a pin word which generates a non-simple permutation -- but Brignall \emph{et al.} did show that every permutation generated by a sufficiently long pin word is either simple \emph{or} can be made simple by removing a single point. More importantly, they demonstrated a partial converse: every sufficiently long simple permutation contains either a long pin sequence or a long permutation in one of two families called parallel alternations and wedge simples. This result provides a procedure for determining whether a given permutation class contains only finitely-many simples, and thus establishes pin sequences as a crucial tool in the structural study of permutation classes.

\begin{figure}[h!]
\begin{center}
\begin{tikzpicture}[scale=0.35]

\node[circle, draw, fill=none, inner sep=0pt, minimum width=\plotptradius] (0) at (7,7) {}; 
\node[permpt] (1) at (8,9) {};
\node[permpt] (2) at (5,8) {}; \draw[thin] (2) -- ++ (3.5,0); 
\node[permpt] (3) at (6,5) {}; \draw[thin] (3) -- ++ (0,3.5);
\node[permpt] (4) at (3,6) {}; \draw[thin] (4) -- ++ (3.5,0);
\node[permpt] (5) at (4,3) {}; \draw[thin] (5) -- ++ (0,3.5);
\node[permpt] (6) at (1,4) {}; \draw[thin] (6) -- ++ (3.5,0);
\node[permpt] (7) at (2,1) {}; \draw[thin] (7) -- ++ (0,3.5);
\node[permpt] (8) at (10,2) {}; \draw[thin] (8) -- ++ (-8.5,0);
\node[permpt] (9) at (9,10) {}; \draw[thin] (9) -- ++ (0,-8.5);

\draw[thick] (0,7) -- ++ (11,0);
\draw[thick] (7,0) -- ++ (0,11);

\node at (16,7) {$\longrightarrow$};

\begin{scope}[shift={(21,0)}]

\node[permpt] (1) at (8,9) {};
\node[permpt] (2) at (5,8) {}; 
\node[permpt] (3) at (6,5) {}; 
\node[permpt] (4) at (3,6) {}; 
\node[permpt] (5) at (4,3) {}; 
\node[permpt] (6) at (1,4) {}; 
\node[permpt] (7) at (2,1) {}; 
\node[permpt] (8) at (10,2) {}; 
\node[permpt] (9) at (9,10) {}; 

\draw[dashed] (7.5,8.5) rectangle (9.5,10.5);

\end{scope}

\end{tikzpicture}
\end{center}
\caption{The pin word $1ldldldru$ generates the permutation $416375892$. This is not a simple permutation due to the proper interval contained in the marked rectangle, but note that removing either one of the two points in this rectangle results in a simple permutation.}
\label{fig:nonsimplepinperm}
\end{figure}

Though pin sequences were initially introduced in connection with the simple permutations, they have since cropped up in various places, suggesting a deeper importance. Shortly after their introduction, Brignall~\cite{brignall:wreath-products:} used pin sequences to investigate preservation of the finite basis property under the \emph{wreath product}\footnote{Usually referred to as the \emph{inflation} of one permutation class by another in more recent publications.}. Given two permutation classes $\mathcal{C}$ and $\mathcal{D}$, the wreath product $\mathcal{C}[\mathcal{D}]$ is the permutation class consisting of permutations in $\mathcal{C}$ with any individual point replaced with a copy of some permutation in $\mathcal{D}$. Since the introduction of the wreath product by Atkinson \& Stitt~\cite{atkinson:restricted-perm:a} it has been known that this operation does not in general preserve finite bases: Atkinson \& Stitt give an example of two finitely-based permutation classes $\mathcal{C}$ and $\mathcal{D}$ such that $\mathcal{C}[\mathcal{D}]$ has an infinite basis. Brignall was able to demonstrate, however, that the wreath product $\mathcal{C}[\mathcal{D}]$ is finitely-based if both $\mathcal{C}$ and $\mathcal{D}$ are finitely-based \emph{and} $\mathcal{D}$ does not contain arbitrarily long pin sequences.

Another point of contact between pin sequences and the structural study of permutation classes concerns \emph{infinite antichains}. An infinite antichain is an infinite set of permutations which are pairwise-incomparable under the $\leq$-order. The presence of an infinite antichain in a permutation class has significant structural consequences, so much so that a name has been given to those classes which do \emph{not} contain such an antichain: we call these \emph{well-quasi-ordered} permutation classes. The existence of an infinite antichain known as the \emph{antichain of oscillations} in a permutation class with growth rate $\kappa \approx 2.20557$ was used by Vatter~\cite{vatter:small-permutati:} to construct uncountably-many distinct permutation classes at this growth rate, and later work by Bevan~\cite{bevan:intervals} (extending Vatter~\cite{vatter:every-growth-rate:}) used this same antichain to construct permutation classes at \emph{every} growth rate greater than $\lambda_{B} \approx 2.35698$. The connection between infinite antichains and pin sequences is that the \emph{oscillations} (shown in Fig. \ref{fig:osc0}), a particular family of permutations used in the construction of the antichain of oscillations, can be thought of as the simplest possible pin permutations, those for which only up- and right-steps are allowed. Seen in this light, pin permutations are generalisations of the oscillations and can also be used to generate infinite antichains by an analogous construction.

With these points in mind we may therefore confidently state that the presence or otherwise of long pin sequences in a given permutation class is a property of considerable interest. Pin sequences have therefore become an object of study in their own right, beginning with a paper of Bassino, Bouvel \& Rossin~\cite{bassino:enumeration-of-:} in which the authors showed that the class $\mathcal{P}$ consisting of \emph{all} pin permutations has a rational generating function, namely
\begin{equation} \label{eq:completepinclass}
f(z) = 1 + \frac{z - 6z^2 + 9z^3 - 12z^4 + 4z^5 + 20z^6 - 8z^7}{1 - 8z + 19z^2 - 26z^3 + 14z^4 - 12z^5 - 8z^6 + 20z^7 - 8z^8} \ ,
\end{equation}
thus confirming a conjecture of Brignall, Ru\v{s}kuc \& Vatter~\cite{brignall:simple-permutat:b}, as well as demonstrating that this class has growth rate approximately equal to $5.24112$. More recently, Brignall and Vatter~\cite{bv:wqo-uncountable:} used pin permutations to construct uncountably many well-quasi-ordered permutation classes with distinct enumeration sequences.

In this paper we take up the study of permutation classes generated by infinite pin words -- so-called \emph{pin classes} -- in a more systematic manner than has previously been attempted. Our main result is that these classes have \emph{proper} growth rates, not merely upper growth rates as guaranteed by Marcus-Tardos. We also describe a procedure for finding the growth rate of such a permutation class in terms of the recurrent subfactors of its defining infinite pin word. It is hoped that, in addition to potential applications to simple permutations and infinite antichains, pin classes prove to be objects of interest in their own right.

\begin{figure}[h]
\begin{center}
\begin{tikzpicture}[scale=0.23]

\node[permpt] (1) at (1,2) {};
\node[permpt] (2) at (3,1) {}; \draw[thin] (2) -- ++ (-2.5,0);
\node[permpt] (3) at (2,4) {}; \draw[thin] (3) -- ++ (0,-3.5);
\node[permpt] (4) at (5,3) {}; \draw[thin] (4) -- ++ (-3.5,0);
\node[permpt] (5) at (4,6) {}; \draw[thin] (5) -- ++ (0,-3.5);
\node[permpt] (6) at (7,5) {}; \draw[thin] (6) -- ++ (-3.5,0);




\node at (10.5,3) {$\leq$};

\begin{scope}[shift={(14,0)}]

\node[permpt] (1) at (1,2) {};
\node[permpt] (2) at (3,1) {}; \draw[thin] (2) -- ++ (-2.5,0);
\node[permpt] (3) at (2,4) {}; \draw[thin] (3) -- ++ (0,-3.5);
\node[permpt] (4) at (5,3) {}; \draw[thin] (4) -- ++ (-3.5,0);
\node[permpt] (5) at (4,6) {}; \draw[thin] (5) -- ++ (0,-3.5);
\node[permpt] (6) at (7,5) {}; \draw[thin] (6) -- ++ (-3.5,0);
\node[permpt] (7) at (6,8) {}; \draw[thin] (7) -- ++ (0,-3.5);
\node[permpt] (8) at (9,7) {}; \draw[thin] (8) -- ++ (-3.5,0);



\node at (12.5,3) {$\leq$};

\end{scope}

\begin{scope}[shift={(30,0)}]

\node[permpt] (1) at (1,2) {};
\node[permpt] (2) at (3,1) {}; \draw[thin] (2) -- ++ (-2.5,0);
\node[permpt] (3) at (2,4) {}; \draw[thin] (3) -- ++ (0,-3.5);
\node[permpt] (4) at (5,3) {}; \draw[thin] (4) -- ++ (-3.5,0);
\node[permpt] (5) at (4,6) {}; \draw[thin] (5) -- ++ (0,-3.5);
\node[permpt] (6) at (7,5) {}; \draw[thin] (6) -- ++ (-3.5,0);
\node[permpt] (7) at (6,8) {}; \draw[thin] (7) -- ++ (0,-3.5);
\node[permpt] (8) at (9,7) {}; \draw[thin] (8) -- ++ (-3.5,0);
\node[permpt] (9) at (8,10) {}; \draw[thin] (9) -- ++ (0,-3.5);
\node[permpt] (10) at (11,9) {}; \draw[thin] (10) -- ++ (-3.5,0);



\node at (14.5,3) {$\leq$};

\node[circle,fill,inner sep=0.5pt] () at (16.5,3) {};
\node[circle,fill,inner sep=0.5pt] () at (17.5,3) {};
\node[circle,fill,inner sep=0.5pt] () at (18.5,3) {};
\node[circle,fill,inner sep=0.5pt] () at (19.5,3) {};

\end{scope}

\begin{scope}[shift={(0,-15)}]

\node[permpt] (1a) at (0.6,1.6) {};
\node[permpt] (1b) at (1.2,2.2) {};
\node[permpt] (2) at (3,1) {}; \draw[thin] (2) -- ++ (-2.5,0);
\node[permpt] (3) at (2,4) {}; \draw[thin] (3) -- ++ (0,-3.5);
\node[permpt] (4) at (5,3) {}; \draw[thin] (4) -- ++ (-3.5,0);
\node[permpt] (5) at (4,6) {}; \draw[thin] (5) -- ++ (0,-3.5);
\node[permpt] (6a) at (6.6,4.6) {}; 
\node[permpt] (6b) at (7.2,5.2) {}; \draw[thin] (6.5,5.2) -- ++ (-3,0);

\fitellipsis {1a} {1b};

\fitellipsis {6a} {6b} ;


\node at (10.5,3) {$\nleq$};

\begin{scope}[shift={(14,0)}]

\node[permpt] (1a) at (0.6,1.6) {};
\node[permpt] (1b) at (1.2,2.2) {};
\node[permpt] (2) at (3,1) {}; \draw[thin] (2) -- ++ (-2.5,0);
\node[permpt] (3) at (2,4) {}; \draw[thin] (3) -- ++ (0,-3.5);
\node[permpt] (4) at (5,3) {}; \draw[thin] (4) -- ++ (-3.5,0);
\node[permpt] (5) at (4,6) {}; \draw[thin] (5) -- ++ (0,-3.5);
\node[permpt] (6) at (7,5) {}; \draw[thin] (6) -- ++ (-3.5,0);
\node[permpt] (7) at (6,8) {}; \draw[thin] (7) -- ++ (0,-3.5);
\node[permpt] (8a) at (8.6,6.6) {};
\node[permpt] (8b) at (9.2,7.2) {}; \draw[thin] (8.5,7.2) -- ++ (-3,0);

\fitellipsis {1a} {1b};

\fitellipsis {8a} {8b} ;


\node at (12.5,3) {$\nleq$};

\end{scope}

\begin{scope}[shift={(30,0)}]

\node[permpt] (1a) at (0.6,1.6) {};
\node[permpt] (1b) at (1.2,2.2) {};
\node[permpt] (2) at (3,1) {}; \draw[thin] (2) -- ++ (-2.5,0);
\node[permpt] (3) at (2,4) {}; \draw[thin] (3) -- ++ (0,-3.5);
\node[permpt] (4) at (5,3) {}; \draw[thin] (4) -- ++ (-3.5,0);
\node[permpt] (5) at (4,6) {}; \draw[thin] (5) -- ++ (0,-3.5);
\node[permpt] (6) at (7,5) {}; \draw[thin] (6) -- ++ (-3.5,0);
\node[permpt] (7) at (6,8) {}; \draw[thin] (7) -- ++ (0,-3.5);
\node[permpt] (8) at (9,7) {}; \draw[thin] (8) -- ++ (-3.5,0);
\node[permpt] (9) at (8,10) {}; \draw[thin] (9) -- ++ (0,-3.5);
\node[permpt] (10a) at (10.6,8.6) {};
\node[permpt] (10b) at (11.2,9.2) {}; \draw[thin] (10.5,9.2) -- ++ (-3,0);

\fitellipsis {1a} {1b};

\fitellipsis {10a} {10b} ;


\node at (14.5,3) {$\nleq$};

\node[circle,fill,inner sep=0.5pt] () at (16.5,3) {};
\node[circle,fill,inner sep=0.5pt] () at (17.5,3) {};
\node[circle,fill,inner sep=0.5pt] () at (18.5,3) {};
\node[circle,fill,inner sep=0.5pt] () at (19.5,3) {};

\end{scope}

\end{scope}

\end{tikzpicture}
\end{center}
\caption{The chain of increasing oscillations (above) along with the antichain of increasing oscillations (below). The increasing oscillations are the permutations that can be drawn on 'staircase' diagrams like these. Note that these are actually pin permutations defined by pin words with only up- and right-steps. The permutation class of increasing oscillations, denoted $\mathcal{O}$, is the set of all permutations that can be found somewhere in a diagram in the chain.}
\label{fig:osc0}
\end{figure}

\section{Centred Permutation Classes} \label{sec:2}

We introduce the concept of a  \emph{centred permutation class}, one of the primary combinatorial objects we will deal with in this paper, due to these being in a sense the most natural context in which to study pin permutation classes. In Section \ref{sec:2.1} we define centred permutations: permutations with an identified centre point, known as the origin. We show that there exists a natural containment order of the set of centred permutations, which will allow us to define a centred analogy to permutation classes. We shall see that there is a close relationship between the asymptotics of a centred permutation class and its \emph{underlying} class of regular permutations. In Section \ref{sec:2.2} we introduce an operation on centred permutations, the $\boxplus$-sum, somewhat analogous to the $\oplus$-sum for regular permutations. This operation will go some way to explaining our interest in centred permutation classes, as many permutation classes of interest can be conceptualised as the underlying classes of $\boxplus$-closed centred permutation classes. In Section \ref{sec:2.3} we therefore investigate $\boxplus$-closed centred permutation classes, and demonstrate that they can be enumerated with relative ease using an amended version of the sequent operator. In Section \ref{sec:2.4} we briefly specialise the Exponential Growth Theorem to centred permutation classes, as this will be the main means by which we find the growth rates of classes throughout this paper. We conclude in Section \ref{sec:2.5} with a small number of examples, demonstrating the means by which we may determine the generating functions and growth rates of $\boxplus$-closed centred permutation classes from scratch.

\subsection{Centred Permutations} \label{sec:2.1}

Our main focus in this paper will be a particular method for converting a binary sequence into a permutation class; the resulting permutation classes will be known as \emph{pin classes}. It shall transpire that pin classes are in fact most naturally understood not as classes of permutations in the ordinary sense, but as \emph{centred permutations}. A centred permutation is essentially a permutation with an extra point, designated as the \emph{origin}. A formal definition follows:

\begin{defn}[Centred Permutations]
A centred permutation of length $n$ is a permutation of length $n+1$ in which one point is designated as the origin, which does not contribute to the length of the centred permutation.
\end{defn}

\begin{figure}[h]
\begin{center}
\begin{tikzpicture}[scale=0.35]

\node[circle, draw, fill=none, inner sep=0pt, minimum width=\plotptradius] (0) at (0,0) {};
\node[permpt] (1) at (-2,-1) {}; 
\node[permpt] (2) at (-1,2) {};
\node[permpt] (3) at (1,1) {};

\begin{scope}[shift={(12,0)}]

\node[circle, draw, fill=none, inner sep=0pt, minimum width=\plotptradius] (0) at (0,0) {};
\node[permpt] (1) at (-1,-1) {}; 
\node[permpt] (2) at (1,2) {};
\node[permpt] (3) at (2,1) {};

\end{scope}

\end{tikzpicture}
\end{center}
\caption{Two centred permutations of length $3$. Note that a centred permutation of length $3$ consists of $4$ points (no two of which share either an $x$- or $y$-coordinate) in the plane, of which one is designated as the origin, which does not contribute to the length. Here we denote the origin with an empty circle, whereas the `true' points are represented by solid circles.}
\label{fig:centred0}
\end{figure}

We usually distinguish between centred and uncentred permutations by placing a circle in the superscript: so $\pi$ is a regular permutation and $\pi^{\circ}$ is a centred permutation\footnote{Similarly, when these concepts have been defined, we shall write $\mathcal{C}^{\circ}$ for a class of centred permutations, and  $\mathcal{C}$ for a class of uncentred permutations}.
\begin{comment}
By drawing horizontal and vertical axes through the origin, we can identify a centred permutation with a $2$-by-$2$-gridded permutation\footnote{Informally, this is a permutation along with one horizontal and one vertical line in positions specified relative to the points of the permutation. See ~{Vatter~\cite[Proposition 2.1]{vatter:small-permutati:}} for formal introduction to gridded permutations.}; we shall usually draw these axes on in addition to the origin for clarity. This splits the plane into four \emph{quadrants}, numbered $1$ to $4$ anticlockwise - see Fig. \ref{fig:quadnumbering}. Centred permutations containing points in only one of these four quadrants will play a special role in the theory - we call these \emph{one-quadrant permutations}.
\end{comment}
\begin{figure}[h]
\begin{center}
\begin{tikzpicture}[scale=0.35]

\node[circle, draw, fill=none, inner sep=0pt, minimum width=\plotptradius] (0) at (0,0) {};
\node at (1,1) {1};
\node at (-1,1) {2};
\node at (-1,-1) {3};
\node at (1,-1) {4};

\draw[thick] (0,-2) -- ++ (0,4);
\draw[thick] (-2,0) -- ++ (4,0);

\end{tikzpicture}
\end{center}
\caption{The standard quadrant numbering: the origin point of a centred permutation splits the plane into four quadrants, numbered anticlockwise.}
\label{fig:quadnumbering}
\end{figure}
Every centred permutation $\sigma^{\circ}$ of length $n$ is naturally associated with an \emph{underlying} permutation $\sigma$ of length $n$, obtained by removing the origin point from $\sigma^{\circ}$. We also have the \emph{filled-in} permutation $\sigma^{\tikzcircle{1.5pt}}$, the permutation of length $n+1$ obtained by replacing the origin of $\sigma^{\circ}$ with a true point. Though we are ultimately studying centred permutations in order to understand their associated underlying permutations, it is the filled-in permutation which provides us with the most convenient notation for a centred permutation:
\begin{defn}[One-Line Notation for Centred Permutations]
Suppose that $\sigma^{\circ}$ is a centred permutation of length $n$, with corresponding filled-in permutation $\sigma^{\tikzcircle{1.5pt}}$ of length $n+1$. Suppose that the $k$th entry of  $\sigma^{\tikzcircle{1.5pt}}$ is the origin of $\sigma^{\circ}$. Then we can write $\sigma^{\circ}$ in one-line notation as follows:
\[
\sigma^{\circ} = \sigma^{\tikzcircle{1.5pt}}(1)\sigma^{\tikzcircle{1.5pt}}(2)\dots\underline{\sigma^{\tikzcircle{1.5pt}}(k)}\dots\sigma^{\tikzcircle{1.5pt}}(n)\sigma^{\tikzcircle{1.5pt}}(n+1)
\]
In other words, this is the one-line notation for $\sigma^{\tikzcircle{1.5pt}}$ with the centre point underlined. See Fig. \ref{fig:centred1} for an example.
\end{defn}

\begin{figure}[h]
\begin{center}
\begin{tikzpicture}[scale=0.35]

\node[circle, draw, fill=none, inner sep=0pt, minimum width=\plotptradius] (0) at (4,3) {};
\node[permpt] (1) at (1,4) {}; 
\node[permpt] (2) at (2,2) {};
\node[permpt] (3) at (3,6) {};
\node[permpt] (4) at (5,5) {};
\node[permpt] (5) at (6,1) {};

\draw[thick] (4,0.5) -- ++ (0,6);
\draw[thick] (0.5,3) -- ++ (6,0);

\end{tikzpicture}
\end{center}
\caption{The centred permutation $426\underline{3}51$.}
\label{fig:centred1}
\end{figure}

\begin{comment}
When dealing with centred permutations, the origin does not contribute to the length, but \emph{does} need to be in the same place for two centred permutations to be the same. See Fig. \ref{fig:centred2} for an example.
\end{comment}

\begin{figure}[h]
\begin{center}
\begin{tikzpicture}[scale=0.35]

\node[circle, draw, fill=none, inner sep=0pt, minimum width=\plotptradius] (0) at (0,0) {};
\node[permpt] (1) at (-2,-1) {}; 
\node[permpt] (2) at (-1,2) {};
\node[permpt] (3) at (1,1) {};

\draw[thick] (0,-3) -- ++ (0,6);
\draw[thick] (-3,0) -- ++ (6,0);

\node at (6,0) {$\neq$};

\begin{scope}[shift={(12,0)}]

\node[circle, draw, fill=none, inner sep=0pt, minimum width=\plotptradius] (0) at (0,0) {};
\node[permpt] (1) at (-1,-1) {}; 
\node[permpt] (2) at (1,2) {};
\node[permpt] (3) at (2,1) {};

\draw[thick] (0,-3) -- ++ (0,6);
\draw[thick] (-3,0) -- ++ (6,0);

\end{scope}

\end{tikzpicture}
\end{center}
\caption{The centred permutations $14\underline{2}3$ and $1\underline{2}43$ are both of length $3$ (the origin point doesn't count) and both have the same underlying (uncentred) permutation $132$, but they are distinct as centred permutations as the origins are in different relative positions.}
\label{fig:centred2}
\end{figure}
Analogously with the uncentred case we define a notion of centred permutation containment:
\begin{defn}[Centred Permutation Containment]
Suppose that $\sigma^{\circ}$ is a centred permutation of length $n$ and $\pi^{\circ}$ is a centred permutation of length $m \leq n$. We say that $\pi^{\circ}$ is \emph{contained} in $\sigma^{\circ}$, denoted $\pi^{\circ} \leq \sigma^{\circ}$, if $\pi^{\tikzcircle{1.5pt}}$ can be embedded in $\sigma^{\tikzcircle{1.5pt}}$ in such a way that the origins match up. Explicitly, if
\[
\pi^{\circ} = \pi^{\tikzcircle{1.5pt}}(1)\pi^{\tikzcircle{1.5pt}}(2)\dots\underline{\pi^{\tikzcircle{1.5pt}}(k)}\dots\pi^{\tikzcircle{1.5pt}}(m)\pi^{\tikzcircle{1.5pt}}(m+1)
\]
and
\[
\sigma^{\circ} = \sigma^{\tikzcircle{1.5pt}}(1)\sigma^{\tikzcircle{1.5pt}}(2)\dots\underline{\sigma^{\tikzcircle{1.5pt}}(\ell)}\dots\sigma^{\tikzcircle{1.5pt}}(n)\sigma^{\tikzcircle{1.5pt}}(n+1)
\]
then $\pi^{\circ} \leq \sigma^{\circ}$ if and only if there is an ascending sequence of $m+1$ indices
\begin{equation*}
1 \leq i_{1} < i_{2} < \dots < i_{m+1} \leq n + 1
\end{equation*}
such that the sequence $\sigma^{\tikzcircle{1.5pt}}(i_{1})\sigma^{\tikzcircle{1.5pt}}(i_{2})\dots\sigma^{\tikzcircle{1.5pt}}(i_{m+1})$ is order-isomorphic to $\pi^{\tikzcircle{1.5pt}}$ \emph{and} $i_{k} = \ell$.
\end{defn}

As with regular permutation containment, centred permutation containment is much easier to understand graphically -- see Fig. \ref{fig:centcont} for an example. This is one of the reasons that we generally prefer to draw centered permutations out rather than work with one-line notation.

\begin{figure}[h]
\begin{center}
\begin{tikzpicture}[scale=0.5]

\node[circle, draw, fill=none, inner sep=0pt, minimum width=\plotptradius] (0) at (3,2) {};
\node[permpt] (1) at (1,1) {}; 
\node[permpt] (2) at (2,4) {};
\node[permpt] (3) at (4,3) {};

\draw[thick] (0,2) -- ++ (5,0);
\draw[thick] (3,0) -- ++ (0,5);

\node at (7,2) {$\leq$};

\begin{scope}[shift={(9,-2)}]

\node[circle, blue, draw, fill=none, inner sep=0pt, minimum width=\plotptradius] (0) at (5,4) {};
\node[permpt] (1) at (1,7) {}; 
\node[permpt,blue] (2) at (2,1) {};
\node[permpt,blue] (3) at (3,8) {};
\node[permpt] (4) at (4,3) {};
\node[permpt,blue] (5) at (6,5) {};
\node[permpt] (6) at (7,2) {};
\node[permpt] (7) at (8,6) {};

\draw[thick] (0,4) -- ++ (9,0);
\draw[thick] (5,0) -- ++ (0,9);

\end{scope}

\end{tikzpicture}
\end{center}
\caption{The centred permutation $14\underline{2}3$ is contained in $7183\underline{4}526$; the relevant embedding is shown by the points marked in blue in the diagram of $7183\underline{4}526$. Note that the embedding must make the origins match up.}
\label{fig:centcont}
\end{figure}

Centred permutation containment, of course, immediately allows us to define the centred analogue of a permutation class, along with basic related notions:

\begin{defn}[Centred Permutation Class] \label{def:centclass}
A \emph{centred permutation class} is a non-empty set $\mathcal{C}^{\circ}$ of centred permutations which is closed under centred permutation containment. We call a centred permutation class \emph{proper} if the set of all underlying (uncentred) permutations of centred permutations in $\mathcal{C}^{\circ}$ does not contain every (uncentred) permutation.

Given a centred permutation class $\mathcal{C}^{\circ}$, we denote by $\mathcal{C}^{\circ}_{n}$ the set of centred permutations in $\mathcal{C}^{\circ}$ of length $n$. We refer to $(C_{n}) = (|\mathcal{C}^{\circ}_{n}|)$ as the \emph{enumeration sequence} of $\mathcal{C}^{\circ}$ and define the \emph{generating function} of $\mathcal{C}^{\circ}$ to be the formal power series
\begin{equation} \label{lkfjaskz8hjk}
f(z) = 1 + C_{1}z + C_{2}z^{2} + C_{3}z^{3} + C_{4}z^{4} + \dots
\end{equation}

The \emph{upper} and \emph{lower growth rates} of a centred permutation class $\mathcal{C}^{\circ}$ are defined, respectively, to be 
\[
\overline{gr}(\mathcal{C}^{\circ}) = \limsup_{n\to\infty}\sqrt[n]{C_n}, \quad\text{ and }\quad \underline{gr}(\mathcal{C}^{\circ})=\liminf_{n\to\infty}\sqrt[n]{C_n} \ ,
\]
both of which always exist in $[0,\infty]$ by basic real analysis. If the quantities $\overline{gr}(\mathcal{C}^{\circ})$ and $\underline{gr}(\mathcal{C}^{\circ})$ are finite and equal then we refer to the common value as the \emph{growth rate} of $\mathcal{C}^{\circ}$, denoted $gr(\mathcal{C}^{\circ})$.
\end{defn}

\begin{comment}[Non-empty generation functions]
We will usually, as in (\ref{lkfjaskz8hjk}), include the single empty centred permutation $(\circ)$ in the generating function. This distinction is usually unimportant, but if we explictly wish to exclude the empty centred permutation we refer instead to the \emph{non-empty} generating function of $\mathcal{C}^{\circ}$. Thus the non-empty generating function of $\mathcal{C}^{\circ}$ is given by
\begin{equation*}
f(z) = C_{1}z + C_{2}z^{2} + C_{3}z^{3} + C_{4}z^{4} + \dots
\end{equation*}
\end{comment}

Note that $\sigma^{\circ} \leq \pi^{\circ}$ implies that $\sigma \leq \pi$. This means that, for every centred permutation class $\mathcal{C}^{\circ}$, the set $\mathcal{C}$ consisting of the underlying permutations of $\mathcal{C}^{\circ}$ is itself a permutation class. We call $\mathcal{C}$ the \emph{underlying} permutation class of $\mathcal{C}^{\circ}$. We wish to use centred permutation classes as a means to study regular permutation classes: there are many interesting permutation classes that are most succinctly described as the underlying class of some given centred permutation class. In particular, we are interested in the growth rates of (centred and uncentred) permutation classes: recall from the introduction that the Marcus-Tardos Theorem guarantees that the upper and lower growth rates of a proper permutation class are always finite. We plan to use centred permutation classes to study the growth rates of their underlying permutation classes, so the following is crucial to note:

\begin{prop} \label{eqgrs}
Suppose that $\mathcal{C}^{\circ}$ is a proper centred permutation class whose underlying permutation class is $\mathcal{C}$. Then:
\begin{enumerate}
\item The upper growth rate of $\mathcal{C}^{\circ}$ exists and is equal to the upper growth rate of $\mathcal{C}$.
\item The lower growth rate of $\mathcal{C}^{\circ}$ exists and is equal to the lower growth rate of $\mathcal{C}$.
\item The proper growth rate $gr(\mathcal{C}^{\circ})$ exists if and only if $gr(\mathcal{C})$ exists; if so then $gr(\mathcal{C}^{\circ}) = gr(\mathcal{C})$.
\end{enumerate}
\end{prop}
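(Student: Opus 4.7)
The plan is to compare $C_n^{\circ} = |\mathcal{C}^{\circ}_n|$ directly with $C_n = |\mathcal{C}_n|$ via a two-sided bound that differs only by a polynomial factor, from which all three growth-rate claims drop out after taking $n$th roots.

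First I would consider the forgetful map sending a centred permutation in $\mathcal{C}^{\circ}_n$ to its underlying uncentred permutation of length $n$. This map lands in $\mathcal{C}_n$ by definition of the underlying class, and I would argue it is surjective: if $\sigma \in \mathcal{C}_n$, then by definition $\sigma$ is the underlying permutation of some $\pi^{\circ} \in \mathcal{C}^{\circ}$, and since the length of a centred permutation equals the length of its underlying permutation, this $\pi^{\circ}$ already lies in $\mathcal{C}^{\circ}_n$. Surjectivity yields $C_n \leq C_n^{\circ}$. For the reverse direction, I would count preimages: a centred permutation with underlying $\sigma$ of length $n$ is uniquely determined by the placement of its origin, whose horizontal and vertical ranks each take one of $n+1$ values, giving at most $(n+1)^2$ centred permutations per underlying permutation. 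Combining yields the sandwich
\[
C_n \;\leq\; C_n^{\circ} \;\leq\; (n+1)^2 \, C_n.
\]

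Taking $n$th roots and using $(n+1)^{2/n} \to 1$, the sequences $\sqrt[n]{C_n}$ and $\sqrt[n]{C_n^{\circ}}$ share the same $\limsup$ and $\liminf$. Properness of $\mathcal{C}^{\circ}$ ensures that $\mathcal{C}$ omits some uncentred permutation, so the Marcus--Tardos theorem guarantees that the upper growth rate of $\mathcal{C}$ (and hence of $\mathcal{C}^{\circ}$) is finite rather than $\infty$. This settles parts (1) and (2) simultaneously, and part (3) is then immediate, since a proper growth rate exists precisely when upper and lower growth rates coincide.

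I do not anticipate a genuine obstacle. The only point requiring a moment's care is the surjectivity of the forgetful map, which depends essentially on the convention that the origin does not contribute to the length; everything else is routine counting and a standard $n$th-root argument.
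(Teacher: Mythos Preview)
Your proof is correct and is essentially the same argument as the paper's. The paper invokes Marcus--Tardos for finiteness and then cites a result of Vatter on gridded permutations, but its informal explanation is exactly your sandwich $C_n \leq C_n^{\circ} \leq (n+1)^2 C_n$ followed by $n$th roots; you have simply written out the details in a self-contained way rather than citing the reference.
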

\begin{proof}
As $\mathcal{C}^{\circ}$ is a proper centred permutation class, $\mathcal{C}$ is, by definition, a proper permutation class. Hence by the Marcus-Tardos Theorem~\cite{marcus:excluded-permut:}, $\mathcal{C}$ has an upper growth rate $\rho < \infty$. 

Denote by $\mathcal{C}_{n}$ and $\mathcal{C}^{\circ}_{n}$ respectively the set of elements of $\mathcal{C}$ and $\mathcal{C}^{\circ}$ of length $n$. The map that removes the origin maps $\mathcal{C}^{\circ}_{n}$ onto $\mathcal{C}_{n}$, though not injectively, as distinct centred permutations can have the same underlying permutation. In fact, an element of $\mathcal{C}_{n}$ could have been produced by up to $(n+1)^2$ distinct elements of $\mathcal{C}^{\circ}_{n}$, corresponding to the $(n+1)^2$ different positions an origin point can be placed in a permutation of length $n$. These two facts combine to give us the inequality:
\[
|\mathcal{C}_{n}| \leq |\mathcal{C}^{\circ}_{n}| \leq (n+1)^2 |\mathcal{C}_{n}|,
\]
which in turn implies that
\[
\sqrt[n]{|\mathcal{C}_{n}|} \leq \sqrt[n]{|\mathcal{C}^{\circ}_{n}|} \leq \sqrt[n]{(n+1)^2} \sqrt[n]{|\mathcal{C}_{n}|}
\]
for all $n$. As $\sqrt[n]{(n+1)^2} \rightarrow 1$ as $n \rightarrow \infty$ we have thus sandwiched $\sqrt[n]{|\mathcal{C}^{\circ}_{n}|}$ between two sequences which both have lim sup $\rho$; we conclude that $\sqrt[n]{|\mathcal{C}^{\circ}_{n}|}$ also has lim sup $\rho$, which is to say that $\mathcal{C}^{\circ}$ has upper growth rate $\rho$.

Precisely the same proof goes through for lower growth rates, from which the equality of proper growth rates follows.
\end{proof}

\begin{comment}
On noting that a centred permutation can be identified with a $2$-by-$2$ gridded permutation, the fact that $\mathcal{C}^{\circ}$ has the same upper and lower growth rates as $\mathcal{C}$ may also be deduced immediately from a more general result of~{Vatter~\cite[Proposition 2.1]{vatter:small-permutati:}} dealing with \emph{gridded permutation classes} in general.
\end{comment}

We also note the following related result:
\begin{lemma} \label{eqgrs2}
Let $\mathcal{C}^{\circ}$ be a proper centred permutation class and let $\mathcal{C}^{\circ+n}$ denote the class consisting of centred permutations in $\mathcal{C}^{\circ}$ with at most $n$ points added anywhere. Then $\overline{gr}(\mathcal{C}^{\circ+n})$ exists and is equal to $\overline{gr}(\mathcal{C}^{\circ})$. Similarly, $\underline{gr}(\mathcal{C}^{\circ+n}) = \underline{gr}(\mathcal{C}^{\circ})$.
\end{lemma}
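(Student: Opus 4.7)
Proof plan:

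The plan is to sandwich $|\mathcal{C}^{\circ+n}_m|$ between $|\mathcal{C}^{\circ}_m|$ and a polynomial multiple of $\max_{0\leq k\leq n}|\mathcal{C}^{\circ}_{m-k}|$, and then to pass from the max back to $|\mathcal{C}^{\circ}_m|^{1/m}$ itself by exploiting the structural upper-smoothness of the sequence $(|\mathcal{C}^{\circ}_m|)$ that comes from the centred-permutation structure.

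First I would note the trivial inclusion $\mathcal{C}^{\circ}\subseteq\mathcal{C}^{\circ+n}$, giving $\overline{gr}(\mathcal{C}^{\circ})\leq\overline{gr}(\mathcal{C}^{\circ+n})$ and $\underline{gr}(\mathcal{C}^{\circ})\leq\underline{gr}(\mathcal{C}^{\circ+n})$. Then I would establish the main counting inequality
\[
|\mathcal{C}^{\circ+n}_m| \;\leq\; \sum_{k=0}^{n}(m+2)^{2k}\,|\mathcal{C}^{\circ}_{m-k}| \;\leq\; (n+1)(m+2)^{2n}\max_{0\leq k\leq n}|\mathcal{C}^{\circ}_{m-k}|,
\]
obtained by observing that each element of $\mathcal{C}^{\circ+n}_m$ arises from some $\sigma^{\circ}\in\mathcal{C}^{\circ}_{m-k}$ by successively inserting $k\leq n$ non-origin points, and an insertion into a centred permutation of current length $\ell$ admits at most $(\ell+2)^2$ choices (one row and one column in the $2$-by-$2$ gridding). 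The upper growth rate case is then immediate: writing $\rho=\overline{gr}(\mathcal{C}^{\circ})$ (finite by Proposition~\ref{eqgrs}) and using $|\mathcal{C}^{\circ}_j|\leq(\rho+\epsilon)^j$ for all large $j$, the displayed bound gives $|\mathcal{C}^{\circ+n}_m|\leq(n+1)(m+2)^{2n}(\rho+\epsilon)^m$ for $m\geq J_\epsilon+n$; taking $m$th roots and letting $m\to\infty$ and then $\epsilon\to 0$ shows $\overline{gr}(\mathcal{C}^{\circ+n})\leq\rho$, which also witnesses that $\overline{gr}(\mathcal{C}^{\circ+n})$ exists as a finite quantity.

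The main obstacle is the lower growth rate, for which the counting bound is not enough on its own: a liminf subsequence of $(|\mathcal{C}^{\circ}_m|^{1/m})$ need not simultaneously control all of the lagged terms $|\mathcal{C}^{\circ}_{m-k}|$ that appear in the max. I would resolve this using the single-insertion estimate $|\mathcal{C}^{\circ}_{j+1}|\leq(j+2)^2|\mathcal{C}^{\circ}_j|$ (the $n=1$ case of the counting argument). Iterating and using that $\overline{gr}(\mathcal{C}^{\circ})<\infty$ keeps $|\mathcal{C}^{\circ}_j|^{1/j}$ bounded, this gives the \emph{upper-smoothness} estimate
\[
|\mathcal{C}^{\circ}_{j+i}|^{1/(j+i)}\;\leq\;|\mathcal{C}^{\circ}_{j}|^{1/j}+o_j(1)
\qquad\text{for each fixed }0\leq i\leq n.
\]
By definition of the lower growth rate there are infinitely many $j$ with $|\mathcal{C}^{\circ}_j|^{1/j}\leq\underline{gr}(\mathcal{C}^{\circ})+\epsilon$; for each such $j$, setting $m=j+n$, the upper-smoothness estimate forces $|\mathcal{C}^{\circ}_{m-k}|^{1/m}\leq\underline{gr}(\mathcal{C}^{\circ})+\epsilon+o_j(1)$ for every $0\leq k\leq n$, and substituting into the counting inequality yields $|\mathcal{C}^{\circ+n}_m|^{1/m}\leq\underline{gr}(\mathcal{C}^{\circ})+\epsilon+o_j(1)$ along these infinitely many values of $m$. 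Hence $\underline{gr}(\mathcal{C}^{\circ+n})\leq\underline{gr}(\mathcal{C}^{\circ})+\epsilon$, and letting $\epsilon\to 0$ completes the proof.
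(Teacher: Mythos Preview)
Your argument is correct. The paper, by contrast, does not give a proof at all: it simply records the lemma as an immediate consequence of Vatter~\cite[Proposition~2.1]{vatter:small-permutati:}, which encapsulates the general principle that polynomial multipliers (such as those arising from a bounded number of point-insertions or grid lines) cannot change exponential growth rates. Your write-up unpacks precisely this principle by hand. One remark: your lower-growth-rate step can be streamlined. Rather than introducing the ``upper-smoothness'' estimate and a subsequence argument, you can use the single-insertion bound $|\mathcal{C}^{\circ}_{j+1}|\leq(j+2)^2|\mathcal{C}^{\circ}_{j}|$ (valid by downward closure of $\mathcal{C}^{\circ}$) to push the entire max down to the smallest index, obtaining
\[
|\mathcal{C}^{\circ+n}_m|\;\leq\;(n+1)(m+2)^{4n}\,|\mathcal{C}^{\circ}_{m-n}|,
\]
and then observe directly that $\liminf_m|\mathcal{C}^{\circ}_{m-n}|^{1/m}=\liminf_j|\mathcal{C}^{\circ}_j|^{1/j}$ since the sequence $|\mathcal{C}^{\circ}_j|^{1/j}$ is bounded. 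This avoids the $\epsilon$-$J_\epsilon$ bookkeeping while using exactly the same ingredients.
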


\begin{proof}
The case $\mathcal{C}^{\circ+1}$ is identical to the proof of Proposition \ref{eqgrs}. The general result for $\mathcal{C}^{\circ+n}$ then follows by induction. \qedhere
\end{proof}

We shall also require a centred analogue of the notion of an interval of a permutation:
\begin{defn}[$\circ$-intervals of a Centred Permutation]
A \emph{centred-interval} (also written \emph{$\circ$-interval}) of a centred permutation
\[
\sigma^{\circ} = \sigma^{\tikzcircle{1.5pt}}(1)\sigma^{\tikzcircle{1.5pt}}(2)\dots\underline{\sigma^{\tikzcircle{1.5pt}}(k)}\dots\sigma^{\tikzcircle{1.5pt}}(n)\sigma^{\tikzcircle{1.5pt}}(n+1)
\]
is a set of consecutive indices $\ell, \ell+1, \dots , m$ \emph{including $k$} such that the set of values
\[
\{\sigma^{\tikzcircle{1.5pt}}(i) \mid \ell \leq i \leq m\}
\]
is itself an interval of integers.

Informally, a $\circ$-interval is an interval (in the sense of Definition \ref{def:uncentredinvtervals}) which contains the origin. Visually, a $\circ$-interval of $\sigma^{\circ}$ is a bounding rectangle containing the origin which is not sliced horizontally or vertically by any outside point.

We call a $\circ$-interval \emph{non-trivial} if it contains at least one point in addition to the origin, and \emph{proper} if it does not contain every point of the centred permutation. A non-trivial $\circ$-interval is \emph{minimal} if it contains no smaller non-trivial $\circ$-interval. We note that every non-empty centred permutation contains at least one minimal $\circ$-interval, as the permutation itself is a non-trivial $\circ$-interval and it is either itself minimal or contains a smaller non-trivial $\circ$-interval; we can then iterate until we obtain a minimal $\circ$-interval.
\end{defn}

Minimal $\circ$-intervals will be especially important in the theory going forward, so we note the following restriction on their form and number:

\begin{lemma}[Minimal $\circ$-intervals in a centred permutation] \label{minint}
Let $\pi^{\circ}$ be a centred permutation. Then $\pi^{\circ}$ has either:
\begin{itemize}
\item one minimal $\circ$-interval $\mathcal{I}$;\ \ \ \emph{or}
\item two minimal $\circ$-intervals $\mathcal{I}_{1}$, $\mathcal{I}_{2}$. In this case, each $\mathcal{I}_{i}$ will contain points in one quadrant only, and from diagonally opposite quadrants to each other.
\end{itemize}
\end{lemma}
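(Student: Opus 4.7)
The plan rests on one fundamental observation: the intersection of any two $\circ$-intervals of $\pi^{\circ}$ is itself a $\circ$-interval. To establish this, I would note that the position-intersection of two $\circ$-intervals is automatically a contiguous block of positions containing the origin, and then argue that the value-intersection is contiguous too — the key point being that an intermediate value attained inside both $\mathcal{I}$ and $\mathcal{J}$ must be attained at the \emph{same} position (since $\pi^{\tikzcircle{1.5pt}}$ is a permutation), and that common position then lies in $\mathcal{I}\cap\mathcal{J}$. Applied to two minimal non-trivial $\circ$-intervals $\mathcal{I}_1,\mathcal{I}_2$, this forces $\mathcal{I}_1\cap\mathcal{I}_2$ to be a $\circ$-interval contained in each; by minimality it must be trivial, i.e.\ consist solely of the origin.

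Next I would write the origin's position as $p$ and value as $v$, and denote the position and value ranges of $\mathcal{I}_i$ by $[a_i,b_i]$ and $[c_i,d_i]$. Since every position of $\pi^{\tikzcircle{1.5pt}}$ carries a point, the fact that $\mathcal{I}_1\cap\mathcal{I}_2$ contains no positions other than $p$ forces the range intersection $[\max(a_1,a_2),\min(b_1,b_2)]$ to collapse to $\{p\}$. Ruling out the degenerate options (which would make one $\mathcal{I}_i$ trivial), one obtains, up to relabelling, $b_1=p=a_2$: one interval sits weakly left of the origin and the other weakly right. The identical argument on value ranges shows one sits weakly below and the other weakly above. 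Hence, excluding the origin itself, each $\mathcal{I}_i$ is confined to a single quadrant, and the two chosen quadrants must be opposite (quadrant $1$ with $3$, or $2$ with $4$).

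The \emph{at most two} conclusion is then immediate: a third minimal non-trivial $\circ$-interval would, by the same pairwise reasoning, have to lie in the quadrant opposite both $\mathcal{I}_1$ and $\mathcal{I}_2$, which is impossible since each quadrant has a unique opposite. Existence of at least one minimal non-trivial $\circ$-interval (assuming $\pi^{\circ}$ has any true point) follows because $\pi^{\circ}$ is itself such an interval, so we may descend to a minimal one. The main obstacle, I expect, is making the opening intersection lemma precise — the subsequent analysis is essentially bookkeeping on position and value ranges, but the intersection step is the place where the permutation structure of $\pi^{\tikzcircle{1.5pt}}$ enters essentially (not just the fact that two contiguous ranges intersect in a contiguous range).
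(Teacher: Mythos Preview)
Your proposal is correct and follows essentially the same approach as the paper: both arguments hinge on the observation that the intersection of two $\circ$-intervals is again a $\circ$-interval, which by minimality forces $\mathcal{I}_1\cap\mathcal{I}_2=\{\circ\}$, and then deduce the opposite-quadrant structure (you via explicit position/value range bookkeeping, the paper via a geometric ``slicing'' contradiction showing the two intervals cannot share a half-plane). Your treatment of the intersection lemma is in fact more explicit than the paper's, which simply asserts it.
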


\begin{proof}
Suppose that $\pi^{\circ}$ has two distinct minimal $\circ$-intervals $\mathcal{I}_{1}$, $\mathcal{I}_{2}$. Then, as the intersection of two $\circ$-intervals is necessarily a $\circ$-interval, $\mathcal{I}_{1} \cap \mathcal{I}_{2}$ is a $\circ$-interval.

Hence $\mathcal{I} = \mathcal{I}_{1} \cap \mathcal{I}_{2}$ is a $\circ$-interval contained in both $\mathcal{I}_{1}$ and $\mathcal{I}_{2}$. As these were assumed to be distinct we can deduce that $\mathcal{I}$ is \emph{strictly} contained in at least one of these. Hence, by the assumed minimality of $\mathcal{I}_{1}$ and $\mathcal{I}_{2}$,
\[
\mathcal{I} = \mathcal{I}_{1} \cap \mathcal{I}_{2} = \{\circ\}.
\]
Hence $\mathcal{I}_{1}, \mathcal{I}_{2}$ are non-trivial $\circ$-intervals with trivial overlap. Suppose that $\mathcal{I}_{1}, \mathcal{I}_{2}$ both have at least one point in the upper half-plane: say $p_{1} \in \mathcal{I}_{1},p_{2} \in \mathcal{I}_{2}$ are both points in the upper half-plane. Note that $p_{1} \notin \mathcal{I}_{2},p_{2} \notin \mathcal{I}_{1}$ (by $\mathcal{I}_{1} \cap \mathcal{I}_{2} = \{\circ\}$) and that one of $p_{1},p_{2}$ must be lower than the other -- without loss of generality, say that $p_{1}$ is lower than $p_{2}$. But then $p_{1}$ is vertically between the origin and $p_{2}$ and hence slices the rectangle $\mathcal{I}_{2}$, contradicting the assumption that $\mathcal{I}_{2}$ is a $\circ$-interval. Hence our supposition is impossible, and $\mathcal{I}_{1}, \mathcal{I}_{2}$ cannot both have points in the upper half-plane. See Fig. \ref{fig:minintervals} for an illustration of this argument.

By precisely the same logic $\mathcal{I}_{1}, \mathcal{I}_{2}$ cannot both have points in any given half-plane, and thus the only possibility remaining is that $\mathcal{I}_{1}$ and $\mathcal{I}_{2}$ are both one-quadrant intervals occupying diagonally opposite quadrants, as required. Note that this argument immediately precludes the existence of a third minimal $\circ$-interval $\mathcal{I}_{3}$ as this would have to be a one-quadrant interval diagonally opposite to both $\mathcal{I}_{1}$ and $\mathcal{I}_{2}$. \qedhere
\end{proof}

\begin{figure}[h]
\begin{center}
\begin{tikzpicture}[scale=0.5]

\node[circle, draw, fill=none, inner sep=0pt, minimum width=\plotptradius] (0) at (0,0) {};
\node[permpt,label={\tiny $p_{1}$}] (1) at (-2,1) {}; 
\node[permpt,label={\tiny $p_{2}$}] (2) at (1,2) {};
\node[empty,label={\tiny $\mathcal{I}_{2}$}] (3) at (2,0) {};

\draw[thick] (0,-1) -- ++ (0,4);
\draw[thick] (-3,0) -- ++ (6,0);

\draw[dotted] (-0.5,-0.5) rectangle (1.5,2.5);

\end{tikzpicture}
\end{center}
\caption{The 'interval' $\mathcal{I}_{2}$ is intersected horizontally by the point $p_{1}$ -- and hence is not an interval at all.}
\label{fig:minintervals}
\end{figure}

The two possible cases for the number and type of minimal $\circ$-intervals in a centred permutation, as described in Lemma \ref{minint}, are illustrated by the two centred permutations in Fig. \ref{fig:centredintervals2types}.

\begin{figure}[h]
\begin{center}
\begin{tikzpicture}[scale=0.35]

\node[circle, draw, fill=none, inner sep=0pt, minimum width=\plotptradius] (0) at (5,3) {};
\node[permpt] (1) at (1,1) {}; 
\node[permpt] (2) at (2,7) {};
\node[permpt] (3) at (3,4) {};
\node[permpt] (4) at (4,6) {};
\node[permpt] (5) at (6,5) {};
\node[permpt] (6) at (7,2) {};
\node[permpt] (7) at (8,8) {};

\draw[thick] (0,3) -- ++ (9,0);
\draw[thick] (5,0) -- ++ (0,9);

\draw[dotted] (2.5,2.5) rectangle (6.5,6.5);

\begin{scope}[shift={(12,0)}]

\node[circle, draw, fill=none, inner sep=0pt, minimum width=\plotptradius] (0) at (3,3) {};
\node[permpt] (1) at (1,7) {}; 
\node[permpt] (2) at (2,2) {};
\node[permpt] (3) at (4,5) {};
\node[permpt] (4) at (5,4) {};
\node[permpt] (5) at (6,8) {};
\node[permpt] (6) at (7,1) {};
\node[permpt] (7) at (8,6) {};

\draw[thick] (0,3) -- ++ (9,0);
\draw[thick] (3,0) -- ++ (0,9);

\draw[dotted] (2.5,2.5) rectangle (5.5,5.5);
\draw[dotted] (1.5,1.5) rectangle (3.5,3.5);

\end{scope}

\end{tikzpicture}
\end{center}
\caption{The centred permutations $\pi^{\circ}_{1} = 1746\underline{3}528$ and $\pi^{\circ}_{2} = 72\underline{3}54816$, along with their minimal $\circ$-intervals. Note that $\pi^{\circ}_{1}$ has a unique minimal $\circ$-interval, whereas $\pi^{\circ}_{2}$ has two minimal $\circ$-intervals which (as is guaranteed to be the case by Lemma \ref{minint}) both contain points from one quadrant only, and from diagonally opposite quadrants.}
\label{fig:centredintervals2types}
\end{figure}

\subsection{The $\boxplus$-decomposition} \label{sec:2.2}

The following operation on centred permutations will be fundamental in describing the structure of the permutation classes we work with in this paper:

\begin{defn}[The Box Sum]
Given two centred permutations $\pi^{\circ}$ and $\sigma^{\circ}$ their \emph{$\boxplus$-sum} (read: \emph{box-sum}), written $\pi^{\circ} \boxplus \sigma^{\circ}$, is obtained by inflating the origin of $\sigma^{\circ}$ with a copy of $\pi^{\circ}$ - see Fig. \ref{fig:boxsumdef} for an illustration.
\end{defn}

\begin{figure}[h]
\begin{center}
\begin{tikzpicture}[scale=0.35]

\begin{scope}[shift={(0,4)}]
\node[circle, draw, fill=none, inner sep=0pt, minimum width=\plotptradius] (0) at (4,3) {};
\node[permpt] (1) at (5,5) {};
\node[permpt] (2) at (2,4) {};
\node[permpt] (3) at (3,1) {};
\node[permpt] (4) at (1,2) {};

\draw[thin] (4,0.5) -- ++ (0,6);
\draw[thin] (0.5,3) -- ++ (6,0);

\node at (9,3) {$\boxplus$};
\end{scope}

\begin{scope}[shift={(11,2)}]
\node[circle, draw, fill=none, inner sep=0pt, minimum width=\plotptradius] (0) at (4,5) {};
\node[permpt] (1) at (3,3) {};
\node[permpt] (2) at (1,4) {};
\node[permpt] (3) at (2,1) {};
\node[permpt] (4) at (5,2) {};

\draw[thin] (4,0.5) -- ++ (0,8);
\draw[thin] (0.5,5) -- ++ (7,0);

\node at (10,5) {=};
\end{scope}

\begin{scope}[shift={(23,0)}]
\node[circle, draw, fill=none, inner sep=0pt, minimum width=\plotptradius] (0) at (7,7) {};
\node[permpt] (1) at (8,9) {};
\node[permpt] (2) at (5,8) {};
\node[permpt] (3) at (6,5) {};
\node[permpt] (4) at (4,6) {};
\node[permpt] (1) at (3,3) {};
\node[permpt] (2) at (1,4) {};
\node[permpt] (3) at (2,1) {};
\node[permpt] (4) at (9,2) {};

\draw[thin] (7,0.5) -- ++ (0,12);
\draw[thin] (0.5,7) -- ++ (12,0);

\draw[dotted] (8.5,0.5) -- ++ (0,12);
\draw[dotted] (3.5,0.5) -- ++ (0,12);
\draw[dotted] (0.5,9.5) -- ++ (12,0);
\draw[dotted] (0.5,4.5) -- ++ (12,0);


\end{scope}

\end{tikzpicture}
\end{center}
\caption{The $\boxplus$-sum of the centred permutations $\pi^{\circ} = 241\underline{3}5$ and $\sigma^{\circ} = 413\underline{5}2$ is $413685\underline{7}92$; the origin of $\sigma^{\circ}$ is simply replaced ('inflated') by a copy of $\pi^{\circ}$.}
\label{fig:boxsumdef}
\end{figure}

\begin{comment}
As the visual representation makes clear, the $\boxplus$-sum is associative. On the other hand, the $\boxplus$-sum is certainly \emph{not} commutative, as discussed in more detail below.
\end{comment}

\begin{comment} \label{comment:directsum}
The $\boxplus$-sum can be thought of as a generalisation of the direct and skew sums of (uncentred) permutations, two of the most fundamental operations in the study of permutation patterns - see Bevan~\cite{bevan2015defs} for the definitions of these operations. When $\pi^{\circ}$ and $\sigma^{\circ}$ are centred permutations whose points are in the first quadrant only, $\pi^{\circ} \boxplus \sigma^{\circ}$ is equivalent to the usual direct sum $\pi \oplus \sigma$ of the corresponding uncentred permutations. If, on the other hand, $\pi^{\circ}$ and $\sigma^{\circ}$ are centred permutations whose points are in the \emph{second} quadrant only, then $\pi^{\circ} \boxplus \sigma^{\circ}$ is the usual skew sum $\sigma \ominus \pi$ of the uncentred permutations (note the reversed order).
\end{comment}

Fig. \ref{fig:boxsumdef} suggests a close relationship between the $\boxplus$-sum and $\circ$-intervals. Note that if $\mathcal{I}$ is a $\circ$-interval of a centred permutation $\pi^{\circ}$ then $\mathcal{I}$ contains the origin, and so the set of points contained in $\mathcal{I}$ forms a centred permutation $\sigma^{\circ}$. We say that $\mathcal{I}$ \emph{encloses} the centred permutation $\sigma^{\circ}$, and we immediately observe the following:

\begin{obs}[The $\boxplus$-sum and centred intervals] \label{intsum}
Let $\pi^{\circ},\sigma^{\circ}$ be centred permutations, with $\sigma^{\circ}$ having strictly smaller length than $\pi^{\circ}$. Then:
\begin{enumerate}
\item $\pi^{\circ} = \sigma^{\circ} \boxplus \tau^{\circ}$ for some centred permutation $\tau^{\circ}$ if and only if $\pi^{\circ}$ contains a (necessarily unique) $\circ$-interval $\mathcal{I}_{\sigma}$ enclosing $\sigma^{\circ}$.
\item If $\pi^{\circ} = \sigma^{\circ} \boxplus \tau^{\circ}$ then $\tau^{\circ}$ is the permutation obtained from $\pi^{\circ}$ by deleting all non-origin points contained in the (unique) $\circ$-interval $\mathcal{I}_{\sigma}$.
\end{enumerate}
\end{obs}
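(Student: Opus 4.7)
The plan is to prove both parts by carefully unpacking the definition of the $\boxplus$-sum.

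For the forward direction of (1), I would take $\pi^{\circ} = \sigma^{\circ} \boxplus \tau^{\circ}$ and note that by the definition of the $\boxplus$-sum, $\pi^{\circ}$ is produced from $\tau^{\circ}$ by replacing its origin point with a copy of $\sigma^{\circ}$. In the plot of $\pi^{\circ}$ this inflated copy occupies a rectangle sitting inside the row and column strips formerly occupied by the origin of $\tau^{\circ}$. Since the origin of $\tau^{\circ}$ shared no row or column with any other point of $\tau^{\circ}$, no points of $\pi^{\circ}$ outside the inflated copy lie in these strips. Hence the set of points of $\pi^{\circ}$ in this rectangle occupies a contiguous range of positions and of values, so is a $\circ$-interval $\mathcal{I}_{\sigma}$ enclosing $\sigma^{\circ}$.

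For the reverse direction, together with part (2), I would start from a $\circ$-interval $\mathcal{I}_{\sigma}$ enclosing $\sigma^{\circ}$ and define $\tau^{\circ}$ to be the centred permutation obtained by deleting all non-origin points of $\mathcal{I}_{\sigma}$ from $\pi^{\circ}$, retaining the origin of $\pi^{\circ}$ as the origin of $\tau^{\circ}$. Because $\mathcal{I}_{\sigma}$ is an interval, the deleted points occupy a contiguous range of positions and of values, so after reindexing the remaining points form a valid centred permutation; reinflating the origin of this $\tau^{\circ}$ with a copy of $\sigma^{\circ}$ then reproduces $\pi^{\circ}$ in precisely the positions and values it originally occupied. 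This gives $\pi^{\circ} = \sigma^{\circ} \boxplus \tau^{\circ}$ and simultaneously establishes (2).

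For the uniqueness of $\mathcal{I}_{\sigma}$, the bounding rectangle of any such interval is forced by $\sigma^{\circ}$ together with the location of the origin in $\pi^{\circ}$: if $\sigma^{\circ}$ has $l$ non-origin points to the left of its origin, $r$ to the right, $d$ below, and $u$ above, then any $\circ$-interval of $\pi^{\circ}$ enclosing $\sigma^{\circ}$ must span exactly $l$ positions to the left and $r$ positions to the right of the origin of $\pi^{\circ}$, and similarly $d$ values below and $u$ values above. This pins down the rectangle, and so the interval is uniquely determined. The main subtlety I anticipate is in this uniqueness step: one must be careful that the four counts $l,r,d,u$ really are invariants of $\sigma^{\circ}$ (independent of any particular embedding) and that no strictly smaller rectangle centred on the same origin could also happen to enclose a copy of $\sigma^{\circ}$. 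Beyond this, the result is essentially a direct translation of the definition of $\boxplus$ into the language of $\circ$-intervals.
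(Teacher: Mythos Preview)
Your proposal is correct. The paper does not actually supply a proof of this statement: it is introduced with ``we immediately observe the following'' and left as an observation, so there is no argument to compare against. Your write-up is precisely the kind of direct unpacking of the definition of $\boxplus$ that the paper expects the reader to perform, and your handling of uniqueness via the counts $l,r,d,u$ of points around the origin in $\sigma^{\circ}$ is the right way to pin down the interval.
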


We note the following immediate consequence of this result:

\begin{cor}[Left-cancellation]
Suppose $\sigma^{\circ},\tau^{\circ}_{1},\tau^{\circ}_{2}$ are centred permutations such that
\[
\sigma^{\circ} \ \boxplus \ \tau^{\circ}_{1} \ = \ \sigma^{\circ} \ \boxplus \ \tau^{\circ}_{2} \ .
\]
Then $\tau^{\circ}_{1} = \tau^{\circ}_{2}$.
\end{cor}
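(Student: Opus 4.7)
The plan is to invoke Observation \ref{intsum} and recover each $\tau_i^{\circ}$ directly from the common box-sum $\pi^{\circ} := \sigma^{\circ} \boxplus \tau^{\circ}_{1} = \sigma^{\circ} \boxplus \tau^{\circ}_{2}$. Comparing lengths on each side of the given equality immediately yields $|\tau^{\circ}_{1}| = |\tau^{\circ}_{2}|$, so the two unknowns have the same length; in particular, if this common length is zero then both are the empty centred permutation and there is nothing more to prove.

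Otherwise $\sigma^{\circ}$ has strictly smaller length than $\pi^{\circ}$, which is precisely the hypothesis required by Observation \ref{intsum}. Part (1) of that observation then guarantees that $\pi^{\circ}$ contains a \emph{unique} $\circ$-interval $\mathcal{I}_{\sigma}$ enclosing $\sigma^{\circ}$, and part (2) identifies each of $\tau^{\circ}_{1}$ and $\tau^{\circ}_{2}$ with the centred permutation obtained from $\pi^{\circ}$ by deleting all non-origin points in this \emph{same} interval $\mathcal{I}_{\sigma}$. Since both are the result of the same deletion operation on the same centred permutation, $\tau^{\circ}_{1} = \tau^{\circ}_{2}$.

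There is no substantive obstacle: the entire argument is essentially an application of the uniqueness clause in Observation \ref{intsum}, which is what forces the two decompositions of $\pi^{\circ}$ to reference the same $\circ$-interval and hence to extract the same right-hand factor. The only mild care needed is the brief length-comparison step to dispose of the boundary case $|\tau^{\circ}_i| = 0$, where the hypothesis of Observation \ref{intsum} would not formally apply.
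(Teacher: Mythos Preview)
Your proof is correct and follows essentially the same approach as the paper's own proof: set $\pi^{\circ}=\sigma^{\circ}\boxplus\tau_1^{\circ}=\sigma^{\circ}\boxplus\tau_2^{\circ}$, invoke Observation~\ref{intsum} to obtain the unique $\circ$-interval $\mathcal{I}_\sigma$ enclosing $\sigma^{\circ}$, and recover both $\tau_i^{\circ}$ by the same deletion. Your treatment is in fact slightly more careful than the paper's, since you explicitly handle the degenerate case $|\tau_i^{\circ}|=0$ needed to ensure the length hypothesis of Observation~\ref{intsum} is met.
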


\begin{proof}
Let $\pi^{\circ} = \sigma^{\circ} \boxplus \tau^{\circ}_{1} = \sigma^{\circ} \boxplus \tau^{\circ}_{2}$. Then $\pi^{\circ}$ contains a unique $\circ$-interval $\mathcal{I}_{\sigma}$ enclosing $\sigma^{\circ}$ by Observation \ref{intsum}. Deleting all (non-origin) points in $\mathcal{I}_{\sigma}$ from $\pi^{\circ}$ will result in $\tau^{\circ}_{1}$, but by the same token will also result in $\tau^{\circ}_{2}$; hence $\tau^{\circ}_{1} = \tau^{\circ}_{2}$.
\end{proof}

Right-cancellation is also easy to prove, but will not be required here.

\begin{defn}[$\boxplus$-decomposables and -indecomposables] \label{def:boxinds}
We say that a centred permutation $\sigma^{\circ}$ is $\boxplus$-\emph{decomposable} if it can be written as $\pi_{1}^{\circ} \boxplus \pi_{2}^{\circ}$ for two shorter centred permutations $\pi_{1}^{\circ}$ and $\pi_{2}^{\circ}$. Conversely, $\sigma^{\circ}$ is $\boxplus$-\emph{indecomposable} if it cannot be split up like this.
\end{defn}

It will be important to note for later that $\sigma^{\circ}$ is $\boxplus$-decomposable if and only if there is a proper, non-trivial $\circ$-interval containing the origin; this is an immediate consequence of Observation \ref{intsum}. For example, the inner rectangle in Fig. \ref{fig:boxsumdef} is an interval because there are no points in the 'shadow' of the rectangle in either the $x$- or $y$-direction.

Any $\boxplus$-decomposable centred permutation can be decomposed as a $\boxplus$-sum of $\boxplus$-indecomposables. This decomposition is not necessarily unique, as certain elements commute, as in Fig. \ref{fig:commpair}.

\begin{figure}[h]
\begin{center}
\begin{tikzpicture}[scale=0.35]

\node[circle, draw, fill=none, inner sep=0pt, minimum width=\plotptradius] (0) at (2,2) {};
\node[permpt] (2) at (3,4) {};
\node[permpt] (3) at (4,3) {};

\draw[thin] (2,-1) -- ++ (0,6);
\draw[thin] (-1,2) -- ++ (6,0);

\node at (7,2) {$\boxplus$};

\begin{scope}[shift={(10,0)}]
\node[circle, draw, fill=none, inner sep=0pt, minimum width=\plotptradius] (0) at (2,2) {};
\node[permpt] (1) at (1,1) {};

\draw[thin] (2,-1) -- ++ (0,6);
\draw[thin] (-1,2) -- ++ (6,0);

\node at (7,2) {=};
\end{scope}

\begin{scope}[shift={(20,0)}]
\node[circle, draw, fill=none, inner sep=0pt, minimum width=\plotptradius] (0) at (2,2) {};
\node[permpt] (1) at (1,1) {};
\node[permpt] (2) at (3,4) {};
\node[permpt] (3) at (4,3) {};

\draw[thin] (2,-1) -- ++ (0,6);
\draw[thin] (-1,2) -- ++ (6,0);

\node at (7,2) {=};
\end{scope}

\begin{scope}[shift={(30,0)}]
\node[circle, draw, fill=none, inner sep=0pt, minimum width=\plotptradius] (0) at (2,2) {};
\node[permpt] (1) at (1,1) {};

\draw[thin] (2,-1) -- ++ (0,6);
\draw[thin] (-1,2) -- ++ (6,0);

\node at (7,2) {$\boxplus$};
\end{scope}

\begin{scope}[shift={(40,0)}]
\node[circle, draw, fill=none, inner sep=0pt, minimum width=\plotptradius] (0) at (2,2) {};
\node[permpt] (2) at (3,4) {};
\node[permpt] (3) at (4,3) {};

\draw[thin] (2,-1) -- ++ (0,6);
\draw[thin] (-1,2) -- ++ (6,0);
\end{scope}

\end{tikzpicture}
\end{center}
\caption{The two $\boxplus$-indecomposable centred permutations $\underline{1}32$ and $1\underline{2}$ commute under the $\boxplus$-sum, due to being one-quadrant permutations from opposite quadrants.}
\label{fig:commpair}
\end{figure}

More generally, we have:

\begin{lemma}[Commuting elements under the $\boxplus$-sum] \label{commuters}\ \\
Suppose $\pi^{\circ}$ and $\sigma^{\circ}$ are $\boxplus$-indecomposable centred permutations.
Then
\[
\pi^{\circ} \ \boxplus \ \sigma^{\circ} \ = \ \sigma^{\circ} \ \boxplus \ \pi^{\circ}
\]
if and only if either $\pi^{\circ} = \sigma^{\circ}$ or $\pi^{\circ}$ and $\sigma^{\circ}$ are one-quadrant permutations from diagonally opposite quadrants.
\end{lemma}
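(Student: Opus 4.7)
The plan is to prove each direction of the biconditional separately. The "if" direction requires just a direct geometric check: when $\pi^{\circ} = \sigma^{\circ}$ commutativity is trivial, and when $\pi^{\circ}$ and $\sigma^{\circ}$ are one-quadrant centred permutations occupying opposite quadrants (say quadrants $1$ and $3$), a quick picture shows that, regardless of which is inflated into the other, the resulting centred permutation places $\pi^{\circ}$'s points in quadrant $1$ and $\sigma^{\circ}$'s points in quadrant $3$ with identical relative positions.

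For the "only if" direction, set $\tau^{\circ} = \pi^{\circ} \boxplus \sigma^{\circ} = \sigma^{\circ} \boxplus \pi^{\circ}$. By Observation \ref{intsum}, the first decomposition yields a unique $\circ$-interval $\mathcal{I}_{\pi}$ of $\tau^{\circ}$ enclosing $\pi^{\circ}$, and the second a unique $\circ$-interval $\mathcal{I}_{\sigma}$ enclosing $\sigma^{\circ}$. The argument then proceeds by a case analysis on the relationship between $\mathcal{I}_{\pi}$ and $\mathcal{I}_{\sigma}$. If the two intervals coincide, they enclose the same centred permutation and hence $\pi^{\circ} = \sigma^{\circ}$. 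If one is strictly contained in the other, say $\mathcal{I}_{\pi} \subsetneq \mathcal{I}_{\sigma}$, then $\mathcal{I}_{\pi}$ is a proper non-trivial $\circ$-interval within $\mathcal{I}_{\sigma}$ (non-trivial since $\pi^{\circ}$ has at least one non-origin point), which via Observation \ref{intsum} makes $\sigma^{\circ}$ $\boxplus$-decomposable, contradicting the hypothesis. The reverse containment is handled symmetrically, contradicting the indecomposability of $\pi^{\circ}$.

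The remaining and essential case is when $\mathcal{I}_{\pi}$ and $\mathcal{I}_{\sigma}$ are incomparable. Their intersection $\mathcal{I}_{\pi} \cap \mathcal{I}_{\sigma}$ is a $\circ$-interval strictly contained in each, so by indecomposability of $\pi^{\circ}$ and $\sigma^{\circ}$ it must be trivial, namely $\{\circ\}$. At this point we reuse, essentially verbatim, the half-plane argument from the proof of Lemma \ref{minint}: if $\mathcal{I}_{\pi}$ and $\mathcal{I}_{\sigma}$ both contained a non-origin point in some common open half-plane, the lower (with respect to that half-plane's natural ordering) would slice the interval containing the other, contradicting the interval property. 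Applying this argument to each of the four half-planes forces $\mathcal{I}_{\pi}$ and $\mathcal{I}_{\sigma}$ each into a single quadrant, and moreover into opposite quadrants, whence $\pi^{\circ}$ and $\sigma^{\circ}$ are one-quadrant centred permutations from opposite quadrants.

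The main technical point is making rigorous the correspondence between "$\mathcal{I}_{\pi}$ strictly contained in $\mathcal{I}_{\sigma}$" and "$\sigma^{\circ}$ has a proper non-trivial $\circ$-interval"; this is the linchpin of the strict-containment cases and follows from the fact that $\mathcal{I}_{\sigma}$ literally hosts a copy of $\sigma^{\circ}$ inside $\tau^{\circ}$, combined with Observation \ref{intsum}. A trivial edge case (when $\pi^{\circ}$ or $\sigma^{\circ}$ is the empty centred permutation, so that the $\boxplus$-sum acts as the identity on the other operand) is absorbed into the $\pi^{\circ} = \sigma^{\circ}$ clause.
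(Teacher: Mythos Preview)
Your proof is correct and follows essentially the same approach as the paper: both identify the two $\circ$-intervals $\mathcal{I}_\pi$ and $\mathcal{I}_\sigma$ in $\tau^{\circ}$ and use indecomposability together with the half-plane slicing argument of Lemma~\ref{minint} to conclude. The paper streamlines your case analysis by observing upfront that indecomposability of $\pi^{\circ}$ and $\sigma^{\circ}$ forces $\mathcal{I}_\pi$ and $\mathcal{I}_\sigma$ to be \emph{minimal} non-trivial $\circ$-intervals, after which Lemma~\ref{minint} applies directly; your strict-containment case is precisely what establishes that minimality.
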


\begin{proof}

One direction here is obvious: if either $\pi^{\circ} = \sigma^{\circ}$ or $\pi^{\circ}$ and $\sigma^{\circ}$ are one-quadrant permutations from opposite quadrants then clearly $\pi^{\circ}$ and $\sigma^{\circ}$ commute under the $\boxplus$-sum. For the converse, we assume that the centred permutations $\pi^{\circ}$ and $\sigma^{\circ}$ commute and aim to deduce that they must be of one of these two forms:

Consider the centred permutation $\tau^{\circ} = \pi^{\circ} \boxplus \sigma^{\circ} = \sigma^{\circ} \boxplus \pi^{\circ}$. This can be generated in two ways: either by inflating the origin of $\sigma^{\circ}$ by $\pi^{\circ}$ or vice versa. Thus there is a $\circ$-interval $\mathcal{I}_{1}$ of $\tau^{\circ}$ enclosing the permutation $\pi^{\circ}$ and another $\circ$-interval $\mathcal{I}_{2}$ of $\tau^{\circ}$ enclosing the permutation $\sigma^{\circ}$. As $\pi^{\circ}, \sigma^{\circ}$ are both $\boxplus$-indecomposable, $\mathcal{I}_{1}$ and $\mathcal{I}_{2}$ are both minimal $\circ$-intervals. Hence, by Lemma \ref{minint}, either $\mathcal{I}_{1} = \mathcal{I}_{2}$, in which case $\pi^{\circ} = \sigma^{\circ}$, or $\mathcal{I}_{1}$ and $\mathcal{I}_{2}$ each contain points in only one quadrant, and from opposite quadrants to each other, in which case $\pi^{\circ},\sigma^{\circ}$ are one-quadrant permutations from opposite quadrants, as required.\end{proof}

We shall henceforth call a pair $\{\pi^{\circ},\sigma^{\circ}\}$ of one-quadrant centred permutations from opposite quadrants a \emph{commuting pair}.

In fact, commutativity of these commuting pairs is the \emph{only} way that uniqueness of the $\boxplus$-decomposition can fail, as is demonstrated by the following result:

\begin{thm} \label{uniqueness}
Let $\pi^{\circ}$ be a centred permutation. Then there exist $\boxplus$-indecomposables $\sigma_{1}^{\circ}, \sigma_{2}^{\circ}, \dots, \sigma_{n}^{\circ}$ such that
\[
\pi^{\circ} = \sigma_{1}^{\circ} \ \boxplus \ \sigma_{2}^{\circ} \ \boxplus \ \dots \ \boxplus \ \sigma_{n}^{\circ}.
\]
Furthermore, this decomposition is unique up to commutativity of adjacent commuting pairs. In other words, if 
\[
\pi^{\circ} = \tau_{1}^{\circ} \ \boxplus \ \tau_{2}^{\circ} \ \boxplus \ \dots \ \boxplus \ \tau_{k}^{\circ}.
\]
where $\tau_{1}^{\circ}, \tau_{2}^{\circ}, \dots, \tau_{k}^{\circ}$ are $\boxplus$-indecomposables, then:
\begin{enumerate}
\item k = n;
\item $[\sigma_{1}^{\circ}, \sigma_{2}^{\circ}, \dots, \sigma_{n}^{\circ}] = [\tau_{1}^{\circ}, \tau_{2}^{\circ}, \dots, \tau_{k}^{\circ}]$ as multisets;
\item the $n$-tuple $(\sigma_{1}^{\circ}, \sigma_{2}^{\circ}, \dots, \sigma_{n}^{\circ})$ can be transformed into $(\tau_{1}^{\circ}, \tau_{2}^{\circ}, \dots, \tau_{k}^{\circ})$ by a sequence of transpositions of adjacent commuting pairs.
\end{enumerate}

\end{thm}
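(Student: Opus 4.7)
The plan is to prove existence and uniqueness separately, both by induction on the length $|\pi^{\circ}|$. Existence is routine: when $\pi^{\circ}$ is $\boxplus$-indecomposable, take $n=1$; otherwise write $\pi^{\circ} = \alpha^{\circ} \boxplus \beta^{\circ}$ with both factors strictly shorter, apply induction to each, and concatenate. For uniqueness, I note that claim (3) implies (1) and (2), so it suffices to prove (3). The base case (length $0$ or $1$) is trivial, since $\pi^{\circ}$ is automatically indecomposable. The key observation for the inductive step is that the \emph{innermost} factor of any decomposition into indecomposables must enclose a \emph{minimal} non-trivial $\circ$-interval of $\pi^{\circ}$: by Observation~\ref{intsum}, $\sigma_{1}^{\circ}$ is enclosed by some $\circ$-interval $\mathcal{I}_{\sigma_{1}}$, and any strictly smaller non-trivial $\circ$-interval of $\pi^{\circ}$ inside $\mathcal{I}_{\sigma_{1}}$ would correspond to a proper non-trivial $\circ$-interval of $\sigma_{1}^{\circ}$, contradicting its indecomposability. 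The same holds for $\tau_{1}^{\circ}$ and its enclosing interval $\mathcal{I}_{\tau_{1}}$.

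Lemma~\ref{minint} then splits the analysis. If $\mathcal{I}_{\sigma_{1}} = \mathcal{I}_{\tau_{1}}$, then $\sigma_{1}^{\circ} = \tau_{1}^{\circ}$ and left-cancellation reduces the problem to a strictly shorter identity handled by the inductive hypothesis. Otherwise, Lemma~\ref{minint} forces $\pi^{\circ}$ to have exactly two minimal non-trivial $\circ$-intervals in opposite single quadrants, and consequently $\{\sigma_{1}^{\circ}, \tau_{1}^{\circ}\}$ is a commuting pair (in the sense of Lemma~\ref{commuters}). Setting $\rho^{\circ} = \sigma_{2}^{\circ} \boxplus \dots \boxplus \sigma_{n}^{\circ}$, the crucial subclaim is that $\mathcal{I}_{\tau_{1}}$ remains a minimal non-trivial $\circ$-interval of $\rho^{\circ}$. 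This holds because any smaller non-trivial $\circ$-interval $\mathcal{I}''$ of $\rho^{\circ}$ contained in $\mathcal{I}_{\tau_{1}}$ has all its non-origin points in the quadrant opposite to $\sigma_{1}^{\circ}$, so re-inserting the points of $\sigma_{1}^{\circ}$ (which lie strictly on the other side of the origin in both coordinates) does not disrupt the positional or value consecutiveness of $\mathcal{I}''$; hence $\mathcal{I}''$ would be a $\circ$-interval of $\pi^{\circ}$ strictly inside $\mathcal{I}_{\tau_{1}}$, contradicting minimality there. Verifying this preservation-of-minimality subclaim rigorously is the main technical obstacle.

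Once the subclaim is in hand, Observation~\ref{intsum} applied to $\rho^{\circ}$ yields $\rho^{\circ} = \tau_{1}^{\circ} \boxplus \rho'^{\circ}$ for some $\rho'^{\circ}$, so $\pi^{\circ} = \sigma_{1}^{\circ} \boxplus \tau_{1}^{\circ} \boxplus \rho'^{\circ} = \tau_{1}^{\circ} \boxplus \sigma_{1}^{\circ} \boxplus \rho'^{\circ}$ by commutativity of the pair. Comparing the latter with $\tau_{1}^{\circ} \boxplus \tau_{2}^{\circ} \boxplus \dots \boxplus \tau_{k}^{\circ}$ and left-cancelling $\tau_{1}^{\circ}$ gives $\sigma_{1}^{\circ} \boxplus \rho'^{\circ} = \tau_{2}^{\circ} \boxplus \dots \boxplus \tau_{k}^{\circ}$, while the identity $\sigma_{2}^{\circ} \boxplus \dots \boxplus \sigma_{n}^{\circ} = \tau_{1}^{\circ} \boxplus \rho'^{\circ}$ holds in $\rho^{\circ}$; both sides of both identities are centred permutations strictly shorter than $\pi^{\circ}$. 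Fixing any indecomposable decomposition $\rho'^{\circ} = d_{1}^{\circ} \boxplus \dots \boxplus d_{m}^{\circ}$ and applying the inductive hypothesis to each identity gives sequences of commuting-pair transpositions $(\sigma_{2}^{\circ}, \dots, \sigma_{n}^{\circ}) \sim (\tau_{1}^{\circ}, d_{1}^{\circ}, \dots, d_{m}^{\circ})$ and $(\sigma_{1}^{\circ}, d_{1}^{\circ}, \dots, d_{m}^{\circ}) \sim (\tau_{2}^{\circ}, \dots, \tau_{k}^{\circ})$. Prepending $\sigma_{1}^{\circ}$, swapping the commuting pair $\sigma_{1}^{\circ}, \tau_{1}^{\circ}$, and then prepending $\tau_{1}^{\circ}$ concatenates these into a single sequence of commuting-pair transpositions transforming $(\sigma_{1}^{\circ}, \dots, \sigma_{n}^{\circ})$ into $(\tau_{1}^{\circ}, \dots, \tau_{k}^{\circ})$, as required.
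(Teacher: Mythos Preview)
Your proof is correct and rests on the same key ingredient as the paper's (Lemma~\ref{minint} forcing $\sigma_{1}^{\circ}$ and $\tau_{1}^{\circ}$ into opposite single quadrants when they differ), but the induction is organised differently. The paper, after establishing that $\sigma_{1}^{\circ}$ lies in a quadrant opposite to $\tau_{1}^{\circ}$, considers the nested chain of $\circ$-intervals $\mathcal{J}_{1}\subset\mathcal{J}_{2}\subset\cdots\subset\mathcal{J}_{k}$ enclosing the partial $\tau$-sums and locates the least $l$ with $\mathcal{I}_{\sigma_{1}}\subseteq\mathcal{J}_{l}$; it then shows directly that $\sigma_{1}^{\circ}=\tau_{l}^{\circ}$ and that $\tau_{1}^{\circ},\dots,\tau_{l-1}^{\circ}$ all sit in the quadrant opposite $\sigma_{1}^{\circ}$, so $\sigma_{1}^{\circ}$ can be commuted to the front of the $\tau$-sequence in $l{-}1$ swaps, after which one left-cancellation and one call to the inductive hypothesis finishes. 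Your route instead peels $\tau_{1}^{\circ}$ off the $\sigma$-remainder $\rho^{\circ}$ and invokes the inductive hypothesis \emph{twice} (once on $\rho^{\circ}$, once on $\sigma_{1}^{\circ}\boxplus\rho'^{\circ}$), gluing the two resulting transposition sequences with a single swap of the commuting pair in the middle. The paper's approach is slightly more economical (one induction per step and no auxiliary decomposition of $\rho'^{\circ}$), while yours avoids having to track where $\sigma_{1}^{\circ}$ lands in the $\tau$-sequence.

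One small point of exposition: your ``crucial subclaim'' overstates what you need. For the application of Observation~\ref{intsum} you only require that $\mathcal{I}_{\tau_{1}}$ is a $\circ$-interval of $\rho^{\circ}$, not that it is \emph{minimal}; and your argument as written actually addresses minimality (``any smaller $\mathcal{I}''$ would lift to $\pi^{\circ}$'') without explicitly saying why $\mathcal{I}_{\tau_{1}}$ itself remains an interval after deleting the points of $\sigma_{1}^{\circ}$. Of course the same opposite-quadrant reasoning supplies this immediately, so the gap is cosmetic rather than substantive.
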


\begin{proof}
On noting that centred permutations are equivalent to $2$-by-$2$-gridded permutations, this follows from a much stronger result of~{Bevan, Brignall \& Ru\v{s}kuc~\cite[Lemma 3.4]{bbr:unicyclicgrids:}}. For completeness we outline a proof here:

The existence of a $\boxplus$-decomposition is clear: we may take minimal $\circ$-intervals inductively and apply Observation \ref{intsum}.

For uniqueness the key is to prove that if $\sigma_{1}^{\circ} \neq \tau_{1}^{\circ}$ then there is some $\ell \in \{2,3, \dots, k\}$ such that $\sigma_{1}^{\circ} = \tau_{\ell}^{\circ}$ and $\sigma_{1}^{\circ} = \tau_{\ell}^{\circ}$ commutes with $\tau_{j}^{\circ}$ for $j = 1,2, \dots, \ell-1$. All three claims follow inductively from this.

To prove this, note that, as
\[
\pi^{\circ} = \tau_{1}^{\circ} \ \boxplus \ \tau_{2}^{\circ} \ \boxplus \ \dots \ \boxplus \ \tau_{k}^{\circ} \ ,
\]
there is, by Lemma \ref{intsum} an increasing sequence of $\circ$-intervals $\mathcal{J}_{1}, \mathcal{J}_{2}, \dots, \mathcal{J}_{k}$ where $\mathcal{J}_{j}$ encloses the centred permutation
\[
\tau_{1}^{\circ} \ \boxplus \ \tau_{2}^{\circ} \ \boxplus \ \dots \ \boxplus \ \tau_{j}^{\circ} \ .
\]
Also by Observation \ref{intsum} there is a unique $\circ$-interval $\mathcal{I}$ enclosing $\sigma_{1}^{\circ}$. Note that $\mathcal{J}_{1} \cap \mathcal{I} = \{\circ\}$ (as these are distinct and $\mathcal{I}$ is minimal due to $\boxplus$-indecomposability of $\sigma_{1}^{\circ}$), and so by Lemma \ref{minint} these are both one-quadrant intervals from opposite quadrants -- without loss of generality, say that $\sigma_{1}^{\circ}$ is entirely contained in quadrant $1$ and $\tau_{1}^{\circ}$ is in quadrant $3$. Now, as $\mathcal{I}$ is minimal $\mathcal{J}_{j} \cap \mathcal{I}$ is either $\{\circ\}$ or $\mathcal{I}$ for all $j$. Further $\mathcal{J}_{k} \cap \mathcal{I} = \mathcal{I}$. As $\mathcal{J}_{j}$ is an increasing sequence of intervals, this implies that there is some $\ell$ such that $\mathcal{J}_{\ell} \cap \mathcal{I} = \mathcal{I}$ but $\mathcal{J}_{j} \cap \mathcal{I} = \{\circ\}$ for all $j \leq \ell-1$. As $\mathcal{J}_{\ell-1} \cap \mathcal{I} = \{\circ\}$, Lemma \ref{minint} implies that $\tau_{1}^{\circ} \boxplus \tau_{2}^{\circ} \boxplus \dots \boxplus \tau_{\ell-1}^{\circ}$ is entirely contained in quadrant $3$, so $\sigma_{1}^{\circ}$ commutes with all of $\tau_{j}^{\circ}$ for $j = 1,2, \dots, \ell-1$. And as $\mathcal{J}_{\ell} \cap \mathcal{I} = \mathcal{I}$, $\tau_{1}^{\circ} \boxplus \tau_{2}^{\circ} \boxplus \dots \boxplus \tau_{\ell}^{\circ}$ contains a $\circ$-interval enclosing $\sigma_{1}^{\circ}$.

But then $\tau_{1}^{\circ} \boxplus \tau_{2}^{\circ} \boxplus \dots \boxplus \tau_{\ell}^{\circ}$ can be thought of as '$\tau_{\ell}^{\circ}$ plus some points in the third quadrant', whereas $\sigma_{1}^{\circ}$ is entirely contained in the first quadrant. Hence the interval enclosing $\sigma_{1}^{\circ}$ is in fact entirely contained in $\tau_{\ell}^{\circ}$, so $\sigma_{1}^{\circ} \leq \tau_{\ell}^{\circ}$. But by $\boxplus$-indecomposability of $\tau_{\ell}^{\circ}$ we conclude that $\sigma_{1}^{\circ} = \tau_{\ell}^{\circ}$.\end{proof}

\subsection{The Generating Function Specification} \label{sec:2.3}

Having introduced the $\boxplus$-sum, we now define in this section the notion of a $\boxplus$-closed centred permutation class, along with the $\boxplus$-closure of any set of centred permutations. Our main aim is to outline a procedure for enumerating a $\boxplus$-closed centred permutation class $\mathcal{C}^{\circ}$, given the enumeration sequence of the $\boxplus$-indecomposables in $\mathcal{C}^{\circ}$. The natural way to do this is by identifying elements of a $\boxplus$-closed class with tuples of $\boxplus$-indecomposables from the same class. This runs into problems resulting from the non-uniqueness of the $\boxplus$-decomposition, but having classifed this behaviour in the previous section we are able to tweak this idea in order to describe a general enumeration scheme for a $\boxplus$-closed class.

\begin{defn} \label{def:boxclosed}
A centred permutation class $\mathcal{C}^{\circ}$ is \emph{$\boxplus$-closed} if, for any $\pi^{\circ},\sigma^{\circ} \in \mathcal{C}^{\circ}$, $\pi^{\circ} \boxplus \sigma^{\circ} \in \mathcal{C}^{\circ}$.

For any set of centred permutations $S$ we define its \emph{$\boxplus$-closure} $\boxplus(S)$ to be the intersection of all $\boxplus$-closed centred permutation classes containing $S$ (this is of course non-empty as the class of \emph{all} centred permutations is itself $\boxplus$-closed).
\end{defn}

We collect some basic results about $\boxplus$-closures:

\begin{lemma}[$\boxplus$-closure properties] \label{lemma:boxprops}
Suppose $S$ is a set of centred permutations. Then:
\begin{enumerate}
\item The $\boxplus$-closure $\boxplus(S)$ is itself a $\boxplus$-closed centred permutation class. \label{lemma:boxprops1}
\item The $\boxplus$-closure $\boxplus(S)$ is the set of all centred permutations of the form \label{lemma:boxprops2}
\[
\sigma^{\circ}_{1} \ \boxplus \ \sigma^{\circ}_{2} \ \boxplus \ \dots \ \boxplus \ \sigma^{\circ}_{n} \ ,
\]
where each $\sigma^{\circ}_{i}$ is a $\boxplus$-indecomposable centred permutation contained in some element of $S$.
\item If $S$ is itself a centred permutation class, say $S = \mathcal{C}^{\circ}$, then the $\boxplus$-closure $\boxplus(\mathcal{C}^{\circ})$ is the set of all centred permutations of the form \label{lemma:boxprops3}
\[
\sigma^{\circ}_{1} \ \boxplus \ \sigma^{\circ}_{2} \ \boxplus \ \dots \ \boxplus \ \sigma^{\circ}_{n} \ ,
\]
where each $\sigma^{\circ}_{i}$ is a $\boxplus$-indecomposable centred permutation in $\mathcal{C}^{\circ}$.
\item A centred permutation class $\mathcal{C}^{\circ}$ is $\boxplus$-closed if and only if $\mathcal{C}^{\circ} = \boxplus(\mathcal{C}^{\circ})$. \label{lemma:boxprops4}
\end{enumerate}
\end{lemma}

\begin{proof}
\begin{enumerate}
\item The intersection of \emph{any} collection of centred permutation classes is a centred permutation class, so $\boxplus(S)$ is certainly a centred permutation class. To see that $\boxplus(S)$ is $\boxplus$-closed, suppose that $\pi^{\circ},\sigma^{\circ} \in \boxplus(S)$, and suppose that $\mathcal{D}^{\circ}$ is a $\boxplus$-closed permutation class containing $S$ as a subset. By definition, $\pi^{\circ}$ and $\sigma^{\circ}$ must be in $\mathcal{D}^{\circ}$, and so, by the fact that $\mathcal{D}^{\circ}$ is assumed to be $\boxplus$-closed, $\pi^{\circ} \boxplus \sigma^{\circ} \in \mathcal{D}^{\circ}$. But this is true for \emph{any} $\boxplus$-closed class containing $S$, and so $\pi^{\circ} \boxplus \sigma^{\circ} \in \boxplus(S)$, as required.
\item Consider the set
\[
\widetilde{S} = \left\{\sigma^{\circ}_{1} \ \boxplus \ \sigma^{\circ}_{2} \ \boxplus \ \dots \ \boxplus \ \sigma^{\circ}_{n} \ \mid \text{$\sigma^{\circ}_{i}$ is a $\boxplus$-indecomposable contained in an element of $S$}\right\}.
\]
This is clearly a $\boxplus$-closed centred permutation class containing $S$ as a subset, and so by definition must contain $\boxplus(S)$ as a subset. But the fact that $\boxplus(S)$ is itself $\boxplus$-closed implies that it must contain all of the elements in $\widetilde{S}$. Thus $\widetilde{S}$ is in fact equal to $\boxplus(S)$.
\item Immediate from part 2 on noting that $\mathcal{C}^{\circ}$ is $\leq$-closed, and so any $\boxplus$-indecomposable contained in an element of $\mathcal{C}$ is itself in $\mathcal{C}^{\circ}$.
\item Immediate from part 1 and the definition.\qedhere
\end{enumerate}
\end{proof}

Thus we may reasonably describe $\boxplus(S)$ as `the smallest $\boxplus$-closed centred permutation class containing $S$ as a subset'.

We note that Lemma \ref{lemma:boxprops} implies that the $\boxplus$-closure of a centred permutation class $\mathcal{C}^{\circ}$ is entirely determined by the set of $\boxplus$-indecompoables in $\mathcal{C}^{\circ}$. Our aim in this section is to describe a general method for determining the generating function of a $\boxplus$-closed centred permutation class from the enumeration sequence of its $\boxplus$-indecomposables.

Suppose then that we have a $\boxplus$-closed centred permutation class $\mathcal{C}^{\circ}$. Then every element of $\mathcal{C}^{\circ}$ is of the form
\begin{equation} \label{sdfjl789nnn}
\pi^{\circ} \ = \ \sigma_{1}^{\circ} \ \boxplus \ \sigma_{2}^{\circ} \ \boxplus \ \dots \ \boxplus \ \sigma_{n}^{\circ} \ ,
\end{equation}
where each $\sigma_{i}^{\circ}$ is a $\boxplus$-indecomposable in $\mathcal{C}^{\circ}$. Suppose further that we have enumerated the $\boxplus$-indecomposables in $\mathcal{C}^{\circ}$, meaning that we have found the enumeration sequence $(D_{n})$, where $D_{n}$ is the number of $\boxplus$-indecomposables of length $n$ in $\mathcal{C}^{\circ}$. We may summarise this information as the \emph{generating function $g(z)$ of the (non-empty) $\boxplus$-indecomposables in $\mathcal{C}^{\circ}$}:
\[
g(z) = D_{1}z + D_{2}z^2 + D_{3}z^3 + D_{4}z^4 + D_{5}z^5 + \dots
\]
Crucially, for the family of permutation classes that will be the main focus of this paper, this generating function will be relatively easy to find. We note, simply by multiplying out, that $g(z)^2$ is the generating function of ordered pairs of $\boxplus$-indecomposables from $\mathcal{C}^{\circ}$ (where the length of an ordered pair of $\boxplus$-indecomposables is defined to be the sum of the lengths of its two elements), $g(z)^3$ is the generating function of ordered triples of $\boxplus$-indecomposables, and, more generally, $g(z)^n$ is the generating function of ordered $n$-tuples of $\boxplus$-indecomposables from $\mathcal{C}^{\circ}$.

Hence, \emph{if the decomposition given in (\ref{sdfjl789nnn}) were unique} we could, by identifying elements of $\mathcal{C}^{\circ}$ with tuples of $\boxplus$-indecomposables, immediately obtain the generating function of the entire class by applying the \emph{sequent operator}\footnote{Explicitly, the sequent operator $Seq$ is the operator that sends a non-empty generating function $g(z)$ to $Seq(g(z)) = (1-g(z))^{-1}$.}:
\[
\begin{split}
f(z) = Seq(g(z)) & = 1 + g(z) + g(z)^{2} + g(z)^{3} + \dots \\
 & = \frac{1}{1 - g(z)} \ .
\end{split}
\]
Of course, as was demonstrated in the previous section, the decomposition (\ref{sdfjl789nnn}) is \emph{not} in general unique: the issue is that pairs of one-quadrant $\boxplus$-indecomposables from opposite quadrants commute, and so elements of $\mathcal{C}^{\circ}$ cannot be uniquely identified with $n$-tuples of $\boxplus$-indecomposables from the class. As it happens, however, we can amend the sequent operator to provide a specification of the generating function of $\mathcal{C}^{\circ}$ in terms of $g(z)$, though, unsurprisingly, we shall also have to include the generating functions $g_{i}(z)$ of the one-quadrant $\boxplus$-indecomposables of $\mathcal{C}^{\circ}$ contained entirely in the $i$th quadrant, for each $i \in \{1,2,3,4\}$, as these are the elements that introduce commutativity into the $\boxplus$-decomposition:

\begin{thm}[Generating Function Specification for a $\boxplus$-closed Class]\label{genfuncspec}
Let $\mathcal{C}^{\circ}$ be a centred permutation class, with $\boxplus$-closure $\boxplus\left(\mathcal{C}^{\circ}\right)$. Let
\[
G(z) = g(z) - g_{1}(z)g_{3}(z) - g_{2}(z)g_{4}(z) \ ,
\]
where $g(z)$ is the generating function of the (non-empty) $\boxplus$-indecomposables of $\mathcal{C}^{\circ}$, and $g_{i}(z)$ is the generating function of the (non-empty) one-quadrant $\boxplus$-indecomposables of $\mathcal{C}^{\circ}$ contained entirely in the $i$th quadrant.
Then
\[
f(z) = \frac{1}{1 - G(z)}
\]
is the generating function of $\boxplus\left(\mathcal{C}^{\circ}\right)$.

Furthermore, if $\mathcal{C}^{\circ}$ is itself $\boxplus$-closed then $f(z)$ is in fact the generating function of $\mathcal{C}^{\circ}$.
\end{thm}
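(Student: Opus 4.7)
The plan is to set up a canonical-form bijection between $\boxplus\mathcal{C}^\circ$ and a restricted class of sequences of $\boxplus$-indecomposables in $\mathcal{C}^\circ$, then to enumerate the restricted sequences via a short transfer-matrix calculation.

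\emph{Step 1 (Canonical form).} We call a sequence $(\sigma_1^\circ, \dots, \sigma_n^\circ)$ of $\boxplus$-indecomposables in $\mathcal{C}^\circ$ \emph{canonical} if no $\sigma_i^\circ$ lying in quadrant~$3$ is immediately followed by a $\sigma_{i+1}^\circ$ in quadrant~$1$, and no $\sigma_i^\circ$ in quadrant~$4$ is immediately followed by a $\sigma_{i+1}^\circ$ in quadrant~$2$. The claim is that each $\pi^\circ \in \boxplus\mathcal{C}^\circ$ admits a unique canonical decomposition. For existence, take any decomposition (available by the definition of $\boxplus\mathcal{C}^\circ$) and repeatedly apply the rewriting rules ``swap an adjacent quadrant-$3$/quadrant-$1$ pair into quadrant-$1$/quadrant-$3$'' and its quadrant-$4$/quadrant-$2$ analogue, both legal by Lemma \ref{commuters}; this terminates because the sum of the positions of the quadrant-$1$ and quadrant-$2$ indecomposables strictly decreases at each step. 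For uniqueness, Theorem \ref{uniqueness} implies that any two decompositions of $\pi^\circ$ differ by adjacent commuting-pair transpositions, so both reduce to the same normal form provided the rewriting is confluent. Confluence follows from termination and local confluence via Newman's lemma, and local confluence is automatic because two applicable rewrites can never overlap in a shared letter --- such a letter would have to lie simultaneously in quadrants $\{1,2\}$ and $\{3,4\}$.

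\emph{Step 2 (Generating function).} Let $F(z)$ be the generating function of canonical sequences weighted by total length, and decompose it by the type of the last letter: write $F^{(i)}(z)$ for canonical sequences ending in a one-quadrant indecomposable in quadrant~$i$ (for $i \in \{1,2,3,4\}$), $F^{(0)}(z)$ for those ending in an indecomposable not contained in a single quadrant, and include the empty sequence with weight~$1$. A canonical sequence extends by any letter except under the forbidden transitions ``quadrant~$3$ followed by quadrant~$1$'' and ``quadrant~$4$ followed by quadrant~$2$'', which gives the system
\[
F^{(1)} = g_1(F - F^{(3)}),\quad F^{(2)} = g_2(F - F^{(4)}),\quad F^{(3)} = g_3 F,\quad F^{(4)} = g_4 F,\quad F^{(0)} = (g - g_1 - g_2 - g_3 - g_4)F,
\]
together with $F = 1 + \sum_{i=0}^{4} F^{(i)}$. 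Substituting and cancelling yields $F = 1 + (g - g_1 g_3 - g_2 g_4)F = 1 + G(z) F$, so $F(z) = 1/(1 - G(z)) = Seq(G(z))$. By Step~1 this is also the generating function of $\boxplus\mathcal{C}^\circ$; the final clause of the theorem is then immediate, since $\mathcal{C}^\circ = \boxplus\mathcal{C}^\circ$ when $\mathcal{C}^\circ$ is itself $\boxplus$-closed.

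The main obstacle is designing the canonical form in Step~1 so that the ``uniqueness up to commuting pairs'' supplied by Theorem \ref{uniqueness} can be upgraded to genuine uniqueness; once the canonical form is in place the confluence check is short, and Step~2 is a routine bookkeeping calculation in which the identity $1 - G = (1 - g) + g_1 g_3 + g_2 g_4$ makes the cancellation drop out.
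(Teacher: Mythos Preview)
Your proof is correct. The paper, however, does not give a self-contained argument: it simply observes that the result is a special case of the Cartier--Foata theorem on trace monoids (with the commutation graph on $\boxplus$-indecomposables having exactly the two edges $\{Q_1,Q_3\}$ and $\{Q_2,Q_4\}$), so that the M\"obius-type formula $f = 1/(1 - g + g_1g_3 + g_2g_4)$ drops out directly.

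What you have done is essentially unpack that citation in this specific setting: your canonical form is the lexicographic normal form for the trace monoid, your Newman's-lemma argument is the standard confluence proof that such normal forms are unique, and your transfer-matrix computation reproduces the Cartier--Foata identity by hand. This buys you a fully self-contained proof with no external references, at the cost of a page of bookkeeping; the paper's approach buys brevity and situates the result in a more general framework, at the cost of requiring the reader to know (or look up) trace-monoid enumeration. Both are sound, and your derivation could usefully replace the bare citation for a reader unfamiliar with Cartier--Foata.
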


\begin{comment}
This can be derived almost immediately from much more general result of Cartier and Foata~\cite{cartier:problemes-combi:} on trace monoids; see also Flajolet \& Sedgewick~\cite[Note V.10, p.307]{flajolet:analytic-combin:} for a comment on this result as applied to words with commutativity. For the sake of completeness we shall give a direct proof here.
\end{comment}

\begin{proof}
Suppose $\tau^{\circ} \in \boxplus\left(\mathcal{C}^{\circ}\right)$. Then $\tau^{\circ}$ can be expressed in the form
\begin{equation} \label{kkkkjj2}
\tau^{\circ} = \sigma^{\circ}_{1} \ \boxplus \sigma^{\circ}_{2} \ \boxplus \ \dots \ \boxplus \ \sigma^{\circ}_{m} \ ,
\end{equation}
where each $\sigma^{\circ}_{i}$ is a $\boxplus$-indecomposable in $\mathcal{C}^{\circ}$, and, conversely, every centred permutation of this form is in $\boxplus\left(\mathcal{C}^{\circ}\right)$. The expression in (\ref{kkkkjj2}) is not in general unique, however: by Lemma \ref{commuters}, if $\sigma^{\circ}_{i}, \sigma^{\circ}_{i+1}$ are one-quadrant centred permutations from opposite quadrants then they can be switched around. Lemma \ref{commuters} also tells us, however, that this is the only thing that can go wrong with commutativity: in particular, if some $\boxplus$-indecomposable $\sigma^{\circ}_{i}$ from the decomposition above has points in more than one quadrant then it doesn't commute with any other element and therefore remains fixed in place in any $\boxplus$-decomposition of $\tau^{\circ}$. We refer to a $\boxplus$-indecomposable with points in more than one quadrant a \emph{multi-quadrant $\boxplus$-indecomposable}. By the previous comment, these multi-quadrant $\boxplus$-indecomposables occur in a fixed position in any $\boxplus$-decomposition of $\tau^{\circ}$, and so by summing together any elements occuring between these fixed points we obtain a \emph{unique} decomposition of $\tau^{\circ}$ in the following form:
\begin{equation} \label{commdecomp1}
\tau^{\circ} = \sigma^{\circ}_{0} \ \boxplus \ \pi^{\circ}_{1} \ \boxplus \ \sigma^{\circ}_{1} \ \boxplus \ \pi^{\circ}_{2} \ \boxplus \ \sigma^{\circ}_{2} \ \boxplus \ \dots \ \boxplus \ \pi^{\circ}_{n} \ \boxplus \ \sigma^{\circ}_{n} \ ,
\end{equation}
where the $\pi^{\circ}_{i}$ are multi-quadrant $\boxplus$-indecomposables and the intermediate centred permutations $\sigma^{\circ}_{j}$ are the $\boxplus$-sums of any intervening one-quadrant $\boxplus$-indecomposables.

We thus define the \emph{commuting layer} of $\boxplus\left(\mathcal{C}^{\circ}\right)$, denoted $\mathcal{C}^{\circ}_{X}$, to be the $\boxplus$-closure of the set of all one-quadrant $\boxplus$-indecomposables in $\mathcal{C}^{\circ}$:
\[
\mathcal{C}^{\circ}_{X} = \boxplus\left\{\sigma^{\circ} \ | \ \sigma^{\circ} \in \mathcal{C}^{\circ} \ \text{is a one-quadrant $\boxplus$-indecomposable}\right\}.
\]
The name alludes to the fact that $\mathcal{C}^{\circ}_{X}$ is the $\boxplus$-closure of the set of $\boxplus$-indecomposables in $\mathcal{C}^{\circ}$ that \emph{might} commute with something else in $\mathcal{C}^{\circ}$.

With this terminology in hand, we may now \emph{uniquely} decompose any $\tau^{\circ} \in \boxplus\left(\mathcal{C}^{\circ}\right)$ in the form
\begin{equation} \label{commdecomp}
\tau^{\circ} = \sigma^{\circ}_{0} \ \boxplus \ \pi^{\circ}_{1} \ \boxplus \ \sigma^{\circ}_{1} \ \boxplus \ \pi^{\circ}_{2} \ \boxplus \ \sigma^{\circ}_{2} \ \boxplus \ \dots \ \boxplus \ \pi^{\circ}_{n} \ \boxplus \ \sigma^{\circ}_{n} \ ,
\end{equation}
where each $\pi^{\circ}_{i}$ is a multi-quadrant $\boxplus$-indecomposable in $\mathcal{C}^{\circ}$ and each $\sigma^{\circ}_{j}$ is a \emph{possibly-empty} element of $\mathcal{C}^{\circ}_{X}$. Coversely, every centred permutation of the form (\ref{commdecomp}) is contained in $\boxplus\left(\mathcal{C}^{\circ}\right)$ by $\boxplus$-closure. 

If we therefore write $g_{*}(z)$ for the generating function of the (non-empty) multi-quadrant $\boxplus$-indecomposables of $\mathcal{C}^{\circ}$ and $h_{X}(z)$ for the generating function of the commuting layer $\mathcal{C}^{\circ}_{X}$ (including the empty centred permutation) then the generating function of elements of the form (\ref{commdecomp}) -- that is, those elements of $\boxplus\mathcal{C}^{\circ}$ whose $\boxplus$-decomposition contains precisely $n$ multi-quadrant $\boxplus$-decomposables -- is given by
\[
h_{X}(z)^{n+1}g_{*}(z)^{n},
\]
and summing this over all $n \geq 0$ gives us the generating function of $\boxplus\left(\mathcal{C}^{\circ}\right)$:
\begin{equation}
\begin{split}
f(z) & = h_{X}(z) + h_{X}(z)^{2}g_{*}(z) + h_{X}(z)^{3}g_{*}(z)^{2} + h_{X}(z)^{4}g_{*}(z)^{3} \\
 & = h_{X}(z)\left[1 + h_{X}(z)g_{*}(z) + (h_{X}(z)g_{*}(z))^{2} + (h_{X}(z)g_{*}(z))^{3} + \dots\right] \\
 & = \frac{h_{X}(z)}{1 - h_{X}(z)g_{*}(z)}. \label{sder56}
\end{split}
\end{equation}

Our problem therefore reduces to that of finding expressions for $h_{X}(z)$ and $g_{*}(z)$ in terms of $g(z)$ and $g_{i}(z), (i=1,2,3,4)$. The latter is easy: the multi-quadrant $\boxplus$-indecomposables of $\mathcal{C}^{\circ}$ are precisely the $\boxplus$-indecomposables of $\mathcal{C}^{\circ}$ which are not entirely contained in one of the four quadrants, hence:
\begin{equation} \label{mquadgf}
g_{*}(z) = g(z) - g_{1}(z) - g_{2}(z) - g_{3}(z) - g_{4}(z).
\end{equation}
Finding a expression for $h_{X}(z)$ will be more difficult -- this is precisely where commutativity enters the picture --  but we can do this by building up the generating function of the commuting layer from a sequence of simpler subclasses:

\begin{enumerate}
\item \textbf{$\boxplus$-closure of a single quadrant:} \\ We begin by finding the generating function of the class $\mathcal{D}^{\circ}_{1}$, the $\boxplus$-closure of all one-quadrant $\boxplus$-indecomposables in $\mathcal{C}^{\circ}$ contained entirely in the first quadrant. As $\mathcal{D}^{\circ}_{1}$ is $\boxplus$-closed:
\[
\pi^{\circ} \in \mathcal{D}^{\circ}_{1} \ \ \text{iff} \ \ \pi^{\circ} = \sigma_{1}^{\circ} \ \boxplus \ \dots \ \boxplus \ \sigma_{k}^{\circ} \ ,
\]
where each $\sigma_{j}^{\circ}$ is a one-quadrant $\boxplus$-indecomposable of $\mathcal{C}^{\circ}$ in the first quadrant. Crucially, by Lemma \ref{commuters}, no two distinct one-quadrant $\boxplus$-indecomposables in the first quadrant commute, and so Theorem \ref{uniqueness} tells us that this representation is in fact unique, and hence $\pi^{\circ}$ can be uniquely identified with a $k$-tuple of one-quadrant $\boxplus$-indecomposables in $\mathcal{C}^{\circ}$ contained in the first quadrant. As the generating function of the $\boxplus$-indecomposables of $\mathcal{C}^{\circ}$ contained entirely in the first quadrant is $g_{1}(z)$, the generating function of $k$-tuples of these elements is $g_{1}(z)^{k}$ and so the generating function of $\mathcal{D}_{1}$ (including the empty permutation) is obtained by taking the sequent operator of $g_{1}(z)$:
\[
\begin{split}
h_{1}(z) & = 1 + g_{1}(z) + g_{1}(z)^{2} + \dots \\
 & = \frac{1}{1 - g_{1}(z)} \ .
\end{split}
\]
More generally, the generating function of the class $\mathcal{D}^{\circ}_{i}$, the $\boxplus$-closure all one-quadrant $\boxplus$-indecomposables in $\mathcal{C}^{\circ}$ contained entirely within the $i$th quadrant, is given by
\[
h_{i}(z) = \frac{1}{1 - g_{i}(z)} \ .
\]

\item \textbf{$\boxplus$-closure of diagonals:} \\ Next, we determine the generating function of $\mathcal{C}^{\circ}_{A}$, the subclass of $\mathcal{C}^{\circ}$ on the \emph{ascending diagonal}: that is, $\mathcal{C}^{\circ}_{A}$ consists of all centred permutations in $\mathcal{C}^{\circ}$ with points only in the first and third quadrants. Note that if a centred permutation $\pi^{\circ}$ contains points only in these two (opposite) quadrants then the points in just one of these quadrants will form a $\circ$-interval along with the origin, and so:
\[
\pi^{\circ} \in \mathcal{C}^{\circ}_{A} \ \ \text{iff} \ \ \pi^{\circ} \ = \ \tau_{1}^{\circ} \ \boxplus \ \tau_{3}^{\circ} \ ,
\]
where $\tau_{1}^{\circ} \in \mathcal{D}^{\circ}_{1}$ and $\tau_{3}^{\circ} \in \mathcal{D}^{\circ}_{3}$, and, crucially, both $\tau_{1}^{\circ}$ and $\tau_{3}^{\circ}$ are allowed to be empty to allow for $\pi^{\circ}$ being a one-quadrant centred permutation. This expression is unique (we are forcing $\tau_{1}^{\circ}$ to appear before $\tau_{3}^{\circ}$ in this representation as these commute), and so any $\pi^{\circ} \in \mathcal{C}^{\circ}_{A}$ can be uniquely identified with an element of the Cartesian product $\mathcal{D}^{\circ}_{1} \times \mathcal{D}^{\circ}_{3}$. Hence the generating function of $\mathcal{C}^{\circ}_{A}$ is:
\[
\begin{split}
h_{A}(z) & = h_{1}(z)h_{3}(z) \\
 & = \frac{1}{(1 - g_{1}(z))(1 - g_{3}(z))} \ .
\end{split}
\]
Similarly, $\mathcal{C}^{\circ}_{D}$, the subclass of $\mathcal{C}^{\circ}$ on the \emph{descending diagonal} (that is, consisting of all centred permutations in $\mathcal{C}^{\circ}$ with points only in the second and fourth quadrants), has generating function
\[
h_{D}(z) = \frac{1}{(1 - g_{2}(z))(1 - g_{4}(z))} \ .
\]

\item \textbf{Commuting layer:} \\ Now we may finally calculate the generating function of the $\boxplus$-closure of all one-quadrant permutations in $\mathcal{C}^{\circ}$: that is, the commuting layer, $\mathcal{C}^{\circ}_{X}$. We may conceptualise this as consisting of an alternating $\boxplus$-sum between permutations from the ascending and descending diagonals, classified by the number of adjacent pairs $\sigma^{\circ} \boxplus \tau^{\circ}$, where $\sigma^{\circ} \in \mathcal{C}^{\circ}_{A}$ and $\tau^{\circ} \in \mathcal{C}^{\circ}_{D}$. Specifically, if $\pi^{\circ} \in \mathcal{C}^{\circ}_{X}$ then for some (unique) $k$ we can write:
\[
\pi^{\circ} = \tau_{*}^{\circ} \ \boxplus \ \sigma_{1}^{\circ} \ \boxplus \ \tau_{1}^{\circ} \ \boxplus \ \sigma_{2}^{\circ} \ \boxplus \ \tau_{2}^{\circ} \ \boxplus \ \dots \ \boxplus \ \sigma_{k}^{\circ} \ \boxplus \ \tau_{k}^{\circ} \ \boxplus \ \sigma_{*}^{\circ} \ ,
\]
where $\tau^{\circ}_{i} \in \mathcal{C}^{\circ}_{D}$ and $\sigma^{\circ}_{i} \in \mathcal{C}^{\circ}_{A}$ for all $i \in \{1,2, \dots, k, *\}$, and where $\tau^{\circ}_{*}$ and $\sigma^{\circ}_{*}$ are allowed to be empty, but all other $\tau^{\circ}_{i}, \sigma^{\circ}_{i}$ are not (otherwise there would be, for example, two elements of the ascending diagonal next to each other. As the ascending diagonal is $\boxplus$-closed these would in fact form a single member of the ascending diagonal, thus actually giving a permutation with fewer than $k$ pairs). This expression is unique by the fact that no adjacent pair commutes, hence the generating function of elements from the commuting layer with precisely $k \geq 0$ pairs is given by:
\[
h_{D}(z) \cdot \left[(h_{A}(z) - 1)(h_{D}(z) - 1)\right]^{k} \cdot h_{A}(z) \ .
\]
The generating function of the commuting layer $\mathcal{C}^{\circ}_{X}$ is thus this expression summed over all $k$:
\[
\begin{split}
h_{X}(z) & = \sum_{k=0}^{\infty} h_{D}(z) \cdot \left[(h_{A}(z) - 1)(h_{D}(z) - 1)\right]^{k} \cdot h_{A}(z) \\
 & = h_{A}(z)h_{D}(z) \cdot \left[1 + (h_{A}(z) - 1)(h_{D}(z) - 1) + ((h_{A}(z) - 1)(h_{D}(z) - 1))^2 + \dots\right] \\
 & = \frac{h_{A}(z)h_{D}(z)}{1 - (h_{A}(z) - 1)(h_{D}(z) - 1)} \ .
\end{split}
\]

We may now substitute our expressions for $h_{A}(z)$ and $h_{D}(z)$ into this to obtain the following expression (we omit the variable $z$ for readability):

\begin{equation}
\begin{split}
h_{X}(z) & = \frac{\left(\frac{1}{1-g_{1}}\right) \cdot \left(\frac{1}{1-g_{2}}\right) \cdot \left(\frac{1}{1-g_{3}}\right) \cdot \left(\frac{1}{1-g_{4}}\right)}{1 - \left(\frac{1}{(1-g_{1})(1-g_{3})} - 1\right)\left(\frac{1}{(1-g_{2})(1-g_{4})} - 1\right)} \\
 & = \frac{1}{\left[(1-g_{1})(1-g_{2})(1-g_{3})(1-g_{4})\right] - \left[\left(1 - (1-g_{1})(1-g_{3})\right) \cdot \left(1 - (1-g_{2})(1-g_{4})\right)\right]} \\
 & = \frac{1}{(1-g_{1})(1-g_{3}) + (1-g_{2})(1-g_{4}) - 1} \\
 & = \frac{1}{1 - (g_{1} + g_{2} + g_{3} + g_{4}) + (g_{1}g_{3} + g_{2}g_{4})} \label{klklj29}
\end{split}
\end{equation}
\end{enumerate}

We may now at last substitute the expression (\ref{klklj29}) for $h_{X}(z)$, along with that already obtained for $g_{*}(z)$ in (\ref{mquadgf}), into (\ref{sder56}) to obtain the generating function of $\boxplus\mathcal{C}^{\circ}$:

\[
\begin{split}
f(z) & = \frac{h_{X}(z)}{1 - h_{X}(z)(g(z) - g_{1}(z) - g_{2}(z) - g_{3}(z) - g_{4}(z))} \\
 & = \frac{\left(\frac{1}{1 - (g_{1} + g_{2} + g_{3} + g_{4}) + (g_{1}g_{3} + g_{2}g_{4})}\right)}{1 - (\frac{g - g_{1} - g_{2} - g_{3} - g_{4}}{1 - (g_{1} + g_{2} + g_{3} + g_{4}) + (g_{1}g_{3} + g_{2}g_{4})})} \\
 & = \frac{1}{\left[1 - (g_{1} + g_{2} + g_{3} + g_{4}) + (g_{1}g_{3} + g_{2}g_{4})\right] - (g - g_{1} - g_{2} - g_{3} - g_{4})} \\
 & = \frac{1}{1 - g + (g_{1}g_{3} + g_{2}g_{4})} \\
 & = \frac{1}{1 - \left[g(z) - (g_{1}(z)g_{3}(z) + g_{2}(z)g_{4}(z))\right]} \\
 & = \frac{1}{1 - G(z)} \ ,
\end{split}
\]
as required.
\end{proof}

\begin{comment}
We call the power series
\[
G(z) = g(z) - g_{1}(z)g_{3}(z) - g_{2}(z)g_{4}(z)
\]
the \emph{amended $G$-sequence} of $\mathcal{C}^{\circ}$. The generating function of $\boxplus\left(\mathcal{C}^{\circ}\right)$ is then given by the sequent operator not of $g(z)$ but of $G(z)$. We note something curious however: our usual intuition for the sequent operator is that it takes as its input a sequence of positive integers describing the number of distinguishable types of indecomposable `block' of each length $n$ in some collection; the sequent operator then gives us the enumeration sequence of the set of compound structures we can form by gluing any number of these blocks together. But this intuition breaks down when allowing amended $G$-sequences as inputs for the sequent operator as $G(z)$ can have negative coefficients. Of course, if $G(z)$ is the amended $G$-sequence of a centred permutation class then $Seq(G(z))=(1-G(z))^{-1}$ necessarily has non-negative coefficients by dint of the fact that this is the generating function of a combinatorial class. There are in fact $\boxplus$-closed centred permutation classes whose amended $G$-sequences become unbounded and negative and yet (of course) their sequent operators give entirely positive sequences.
\end{comment}

\subsection{The Exponential Growth Theorem} \label{sec:2.4}

We now have a method for determining the generating function of a $\boxplus$-closed centred permutation class. This may seem initially to be of limited interest, as we are mostly interested in the underlying (uncentred) classes, and these will of course have different generating functions to their centred counterparts. We recall, however, that if $\mathcal{C}^{\circ}$ is a centred permutation class with underlying class $\mathcal{C}$ then $\mathcal{C}^{\circ}$ and $\mathcal{C}$ have the same (upper) growth rate by Proposition \ref{eqgrs}, and we can calculate this from the generating function of the centred class $\mathcal{C}^{\circ}$ by the following specialisation of Theorem \ref{EGT0}.

\begin{thm}[Exponential Growth Theorem for Centred Permutation Classes]\label{EGT}
Suppose that $f(z)$ is the generating function of a proper centred permutation class $\mathcal{C}^{\circ}$. Then the upper growth rate of $\mathcal{C}^{\circ}$ is equal to the reciprocal of the radius of convergence of $f(z)$. This radius of convergence is equal to
\[
R = \sup\left\{r \geq 0 \mid \ \text{$f(z)$ converges on $[0,r)$}\right\}.
\]
\end{thm}

\begin{proof}
Immediate from Theorem \ref{EGT0}, on noting that $f(z)$ has non-negative coefficients. \qedhere
\end{proof}

Suppose now that $\mathcal{C}^{\circ}$ is a proper centred permutation class with amended $G$-sequence $G(z)$. Then the generating function of $\boxplus\left(\mathcal{C}^{\circ}\right)$ is given by
\[
f(z) = \frac{1}{1 - G(z)}.
\]
By Theorem \ref{EGT} the growth rate of $\boxplus\left(\mathcal{C}^{\circ}\right)$ is then equal to the reciprocal of the radius of convergence of $f(z)$. At first glace it looks like this radius of convergence should be equal to the modulus of the smallest solution of $G(z)=1$ (and we will see that this is true for the classes that we are interested in in this paper) but we need to be careful about $G(z)$ itself possibly having singularities before any solutions of $G(z)=1$. For the time-being we will deal with this issue on a case-by-case basis.

\subsection{Examples} \label{sec:2.5}

We conclude this section by determining the growth rates of some specific $\boxplus$-closed centred permutation classes. This process will be used freely without comment going forward, for example in the many growth rate calculations of $\boxplus$-closures we will undertake in the long proofs in Sections \ref{sec:3} and \ref{sec:4}.

Theorems \ref{genfuncspec}  and \ref{EGT} are remarkably useful in tandem, suggesting as they do a powerful method for constructing a large number of centred permutation classes whose growth rates we can easily calculate. For example, the $\boxplus$-closure of a given finite set of $\boxplus$-indecomposables is now easy to calculate, as demonstrated by the following examples:

\begin{example}[$\boxplus$-closure of $41\underline{3}52$]
We begin by noting that $\pi^{\circ} = 41\underline{3}52$ is a $\boxplus$-indecomposable centred permutation - see Fig. \ref{fig:pi41352} for an illustration.

\begin{figure}[h]
\begin{center}
\begin{tikzpicture}[scale=0.35]

\node[circle, draw, fill=none, inner sep=0pt, minimum width=\plotptradius] (0) at (3,3) {}; 
\node[permpt] (1) at (1,4) {};
\node[permpt] (2) at (2,1) {};
\node[permpt] (3) at (4,5) {};
\node[permpt] (4) at (5,2) {};

\draw[thick] (0,3) -- ++ (6,0);
\draw[thick] (3,0) -- ++ (0,6);
	
\end{tikzpicture}
\end{center}
\caption{The centred permutation $\pi^{\circ} = 41\underline{3}52$. Note that $\pi^{\circ}$ is a $\boxplus$-indecomposable centred permutation which strictly contains precisely four $\boxplus$-indecomposables, namely the four single point centred permutations.}
\label{fig:pi41352}
\end{figure}

 We shall determine the growth rate of its $\boxplus$-closure $\mathcal{C}^{\circ} = \boxplus(\pi^{\circ})$, that is, the smallest $\boxplus$-closed class containing $\pi^{\circ}$. As $\mathcal{C}^{\circ}$ is (by definition) $\boxplus$-closed we need only enumerate the $\boxplus$-indecomposables in $\mathcal{C}^{\circ}$ in each quadrant.

First, the only $\boxplus$-indecomposables contained in $\pi^{\circ}$ (and hence in $\mathcal{C}^{\circ}$) are $\pi^{\circ}$ itself and the four single point centred permutations. Hence the generating function of the $\boxplus$-indecomposables in $\mathcal{C}^{\circ}$ is $g(z) = 4z + z^4$. Of course, as the only single-quadrant $\boxplus$-indecomposables in this list are the four single point permutations, the generating functions $g_{i}(z)$ of the single-quadrant $\boxplus$-indecomposables in $\mathcal{C}^{\circ}$ are given by $g_{1}(z) = g_{2}(z) = g_{3}(z) = g_{4}(z) = z$. Hence the generating function of $\mathcal{C}^{\circ}$ is given by:
\[
\begin{split}
f(z) & = \frac{1}{1 - [(4z + z^4) -  2z^2]} \\
 & = \frac{1}{1 - 4z + 2z^2 - z^4}
\end{split}
\]
And so by Theorem \ref{EGT} we can deduce that the growth rate of $gr(\mathcal{C}^{\circ})$ is the reciprocal of the modulus of the smallest root of the denominator, which is $\approx 3.44372$.
\end{example}

\begin{example}[The $\mathcal{X}$-class] \label{ex:Xclass}
Next, we consider the centred permutation class $\mathcal{X}^{\circ}$, defined to be the $\boxplus$-closure of the four single point centred permutations: $\mathcal{X}^{\circ} = \boxplus\{(\nept),(\nwpt),(\swpt),(\sept)\}$. Then for $\mathcal{X}^{\circ}$ we have
\[
g(z) = 4z
\]
and
\[
g_{1}(z) = g_{2}(z) = g_{3}(z) = g_{4}(z) = z.
\]
Hence $\mathcal{X}^{\circ}$ has generating function
\[
f(z) = \frac{1}{1 - 4z + 2z^2} \ .
\]
We can now apply the Exponential Growth Theorem \ref{EGT} to deduce that $\mathcal{X}^{\circ}$ has growth rate $2 + \sqrt{2} \approx 3.41421$.

\begin{comment}
The underlying permutation class $\mathcal{X}$ of $\mathcal{X}^{\circ}$ has been studied previously in the literature: this is the $\mathcal{X}$-class, introduced by Waton~\cite{waton:on-permutation-:} in the context of picture classes. Informally, this is the permutation class consisting of all permutations which can be drawn on an $X$. Waton showed that this class has generating function
\[
f(z) = \frac{z(1 - 2z)}{1 - 4z + 2z^2} \ .
\]
Note that this has the same denominator as the centred class $\mathcal{X}^{\circ}$, demonstrating, as expected by Proposition \ref{eqgrs}, that these two classes have the same growth rate.
\end{comment}

\end{example}

\section{Pin Words and Pin Classes} \label{sec:3}

In this section we introduce pin permutation classes and outline a scheme for the enumeration of a large subfamily: the recurrent pin classes. We begin in Section \ref{sec:3.1} by defining pin words as (finite or infinite) words over the pin alphabet $\mathbb{A}_{P}$ subject to some specific construction rules, and we explain a process, known as the \emph{$\pi$-map}, by which a finite pin word can be used to generate a permutation, and an infinite pin word can generate a (centred or regular) permutation class. We describe how the pin permutations generated by this procedure are in a sense generalisations of the oscillations, as defined in the introduction. In Section \ref{sec:3.2} we define a factor containment relation on the set of pin words, and describe how this maps on to centred permutation containment. This will allow us to describe a particularly important subfamily of pin classes: the \emph{recurrent pin classes}, those generated by the $\pi$-map from a recurrent infinite pin word. In Section \ref{sec:3.3} we use the $\boxplus$-sum to describe a structure theorem for centred pin classes, the \emph{pin decomposition}, which we use to prove that every recurrent pin class is $\boxplus$-closed. We use this fact in Section \ref{sec:3.4} to demonstrate that every recurrent pin class has a proper growth rate. In Section \ref{sec:3.5} we state a classification (whose proof is relegated to the appendix to this paper) of certain sets of pin words which behave unexpectedly under the $\pi$-map, namely collisions (sets of pin words which generate the same centred permutation) and pin words which generate $\boxplus$-decomposable centred permutations, which we use in Section \ref{sec:3.6} to outline a procedure for determining the growth rate and generating function of any recurrent pin class. We conclude by applying this procedure to obtain examples of recurrent pin class enumerations, focusing on pin classes contained entirely in two quadrants in Section \ref{sec:3.7} before moving on to three and four quadrants in Section \ref{sec:3.8}.

\subsection{Pin Words} \label{sec:3.1}

We begin with a definition, following Brignall, Ru\v{s}kuc and Vatter~\cite{brignall:simple-permutat:b}:

\begin{defn}[Finite Pin Words] \label{def:finpinword}
A finite word $w$ over the pin alphabet $\mathbb{A}_{P} = \left\{1,2,3,4,u,d,l,r\right\}$ is a \emph{finite pin word} if:
\begin{itemize}
\item The first letter of $w$ is one of the numerals $1,2,3,4$.
\item Every further letter of $w$ is one of $u,d,l,r$.
\item If the $n$th letter of $w$ is in $\left\{u,d\right\}$ then the $(n+1)$th letter (if it exists) of $w$ is in $\left\{l,r\right\}$.
\item If the $n$th letter of $w$ is in $\left\{l,r\right\}$ then the $(n+1)$th letter (if it exists) of $w$ is in $\left\{u,d\right\}$.
\end{itemize}
We refer to the set of all finite pin words as the \emph{language of finite pin words}, denoted $\mathcal{L}_{P}$.
\end{defn}

We plan to use pin words to construct centred permutations, a construction described below in Definition \ref{def:pimap}. Under this correspondence we think of the letters $1,2,3$ and $4$ as referring to the four quadrants of a centred permutation -- we thus refer to these as the \emph{quadrant numerals}. Similarly, we think of the letters $u,d,l$ and $r$ as representing the directions up, down, left and right, respectively. We then say that the letters $u$ and $d$ have \emph{vertical alignment} and the letters $l$ and $r$ have \emph{horizontal alignment}. We can then informally describe a pin word as consisting of an initial numeral from $\{1,2,3,4\}$ followed by a sequence of letters from $\{u,l,d,r\}$ which must alternate between horizontal and vertical alignment. Note that this implies that there are $4$ pin words of length $1$ and $2^{n+2}$ pin words of every length $n \geq 2$.

\begin{example}
The words $w_{1} = 3ldrdrdlurdl$, $w_{2} = 4urdldr$ and $w_{3} = 2$ are all finite pin words, whereas $w_{4} = urd3ldr$ (which has the quadrant numeral in the wrong place) and $w_{5} = 1uruldur$ (which has two consecutive letters of the same alignment) are not.
\end{example}

We will also have cause to refer to infinite pin words:

\begin{defn}[Infinite Pin Words]
An \emph{infinite pin word} $w$ is an infinite word over the alphabet $\left\{1,2,3,4,u,d,l,r\right\}$ for which every finite prefix is a finite pin word.
We write $\mathcal{L}^{\infty}_{P}$ for the set of all infinite pin words.
\end{defn}

Informally, an infinite pin word is simply an infinite word following the same construction rules give in Definition \ref{def:finpinword}.

\begin{example}
The word
\[
w_{1} = 4\overline{urul} = 4urulurulurulurul\dots
\]
is an infinite pin word\footnote{Recall that we use the notation $\overline{f}$ to denote the word $f$ recurring indefinitely.}, as is
\[
w_{2} = 2ldluldldluldldldluldldldldlu\dots,
\]
in which $l$ is the only horizontally-aligned letter, and $u$'s are spaced-out by increasing sequences of $d$'s.
\end{example}

We shall also have cause to refer to the sets $\overline{\mathcal{L}_{P}}$ and $\overline{\mathcal{L}^{\infty}_{P}}$, consisting of all finite and infinite pin words, respectively, with the initial quadrant numeral removed. Note that a finite or infinite pin word consists of an initial numeral between $1$ and $4$ followed by a (possibly empty) pin word in $\overline{\mathcal{L}_{P}}$ and $\overline{\mathcal{L}^{\infty}_{P}}$.

A finite pin word $w$ of length $n$ can be converted into a centred permutation $\pi_{w}^{\circ}$ of length $n$ by the following procedure (see Fig. \ref{fig:pinpermexample} for an illustration of this process):

\begin{defn}[The $\pi$-map and pin permutations] \label{def:pimap}

Let $n \in \mathbb{N}$; furthermore, let $\Pi_{n}$ denote the set of pin words of length $n$, and $S_{n}^{\circ}$ denote the set of centred permutations of length $n$. The $\pi$-\emph{map} is the map
\[
\pi^{\circ}: \Pi_{n} \rightarrow S_{n}^{\circ}
\]
defined by the following procedure. Given $w \in \Pi_{n}$:

\begin{enumerate}
\item Place the initial origin point $p_{0}$; use this point to split the plane into four numbered quadrants, as in Fig. \ref{fig:quadnumbering}.
\item Place the first point $p_{1}$ in the quadrant specified by the initial numeral of the pin word $w$.
\item Once the first $k-1$ points have been placed, place the point $p_{k}$ either up, down, left or right (depending on the letter $u$, $d$, $l$, or $r$ appearing in the $k$-th place in the pin word) of the bounding rectangle of all previous points $\{p_{0},p_{1}, \dots , p_{k-1} \}$ at the end of a 'pin' which separates the last point $p_{k-1}$ from all previous points.
\item Once all $n$ points have been placed, read off the centred permutation $\pi^{\circ}_{w}$ given by the points  $\{p_{0}, p_{1}, \dots , p_{n} \}$, with $p_{0}$ as the origin.
\end{enumerate}

We refer to $\pi^{\circ}_{w}$ as the (centred) pin permutation generated by $w$; similarly, we write $\pi_{w}$ for the underlying (uncentred) permutation of $\pi^{\circ}_{w}$ and refer to this as the (uncentred) pin permutation generated by $w$.

We refer to a centred permutation $\sigma^{\circ}$ as a (centred) \emph{pin permutation} if there is some pin word $w$ such that $\sigma^{\circ} \leq \pi^{\circ}_{w}$. Similarly, an uncentred permutation $\sigma$ is an (uncentred) pin permutation if it is the underlying permutation of a centred pin permutation.

Finally, we refer to a centred permutation $\sigma^{\circ}$ as a \emph{contiguous} pin permutation if there is some pin word $w$ such that $\sigma^{\circ} = \pi^{\circ}_{w}$.

\end{defn}

\begin{figure}[h]
\begin{center}
\begin{tikzpicture}[scale=0.5]

\node[circle, draw, fill=none, inner sep=0pt, minimum width=\plotptradius] (0) at (6,4) {};

\node[permpt,label={\tiny $p_{1}$}] (1) at (5,6) {}; 
\node[permpt,label={\tiny $p_{2}$}] (2) at (3,5) {}; \draw[thin] (2) -- ++ (2.5,0);
\node[permpt,label={\tiny $p_{3}$}] (3) at (4,8) {}; \draw[thin] (3) -- ++ (0,-3.5);
\node[permpt,label={\tiny $p_{4}$}] (4) at (8,7) {}; \draw[thin] (4) -- ++ (-4.5,0);
\node[permpt,label={[yshift=-16pt]{\tiny $p_{5}$}}] (5) at (7,2) {}; \draw[thin] (5) -- ++ (0,5.5);
\node[permpt,label={\tiny $p_{6}$}] (6) at (1,3) {}; \draw[thin] (6) -- ++ (6.5,0);
\node[permpt,label={[yshift=-16pt]{\tiny $p_{7}$}}] (7) at (2,1) {}; \draw[thin] (7) -- ++ (0,2.5);

\draw[thick] (6,0) -- ++ (0,9);
\draw[thick] (0,4) -- ++ (9,0);

\end{tikzpicture}
\end{center}
\caption{The (centred) pin permutation $31586\underline{4}27$ (and its underlying uncentred permutation $3147526$), constructed from the pin word $2lurdld$.}
\label{fig:pinpermexample}
\end{figure}

\begin{obs} \label{slicing}
By definition, the point $p_{k}$ slices the bounding rectangle $rec(p_{0},p_{1}, \dots , p_{k-1})$ of all previous points, including the origin. In fact, the definition implies that $p_{k}$ is the \emph{only} point after $p_{k-1}$ that slices this rectangle; this means that if we remove $p_{k}$ from the corresponding centred permutation $rec(p_{0},p_{1}, \dots , p_{k-1})$ becomes a $\circ$-interval, and hence the resulting centred permutation is $\boxplus$-decomposable. This will be important in our later structure theorem for a pin class.
\end{obs}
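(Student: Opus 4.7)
The observation bundles three claims: (i) $p_k$ slices $R_{k-1} := \mathrm{rec}(p_0, \ldots, p_{k-1})$; (ii) no subsequent pin $p_j$ for $j > k$ slices $R_{k-1}$; and (iii) removing $p_k$ yields a centred permutation in which $R_{k-1}$ is a $\circ$-interval, hence that permutation is $\boxplus$-decomposable. Part (i) is immediate from the definition: $p_k$ sits at the end of a pin extending outwards from $R_{k-1}$ in one of the four cardinal directions, and the requirement that this pin separate $p_{k-1}$ from $\{p_0, \ldots, p_{k-2}\}$ forces $p_k$'s transverse coordinate strictly inside the corresponding shadow of $R_{k-1}$. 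Part (iii) is a consequence of (ii) combined with Observation \ref{intsum}: the set of points enclosed by $R_{k-1}$ contains the origin $p_0$, so once no outside point slices $R_{k-1}$, this set is a non-trivial $\circ$-interval of the reduced permutation, yielding the $\boxplus$-decomposition.

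The substance of the proof is therefore part (ii). Write $[a,b]$ and $[c,d]$ for the $x$- and $y$-ranges of $R_{k-1}$; the goal is to show that for every $j > k$ the point $p_j$ lies strictly outside both intervals, placing it in one of the four corner regions and hence not slicing $R_{k-1}$. By construction each such $p_j$ is placed outside the bounding rectangle $R_{j-1} \supseteq R_{k-1}$, so \emph{one} of its coordinates is automatically outside the corresponding range of $R_{k-1}$; the whole task is to control the \emph{other} coordinate.

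Here alternation of directions in the pin word is the key mechanism. Suppose $p_j$ is placed $u$ (the other three directions are symmetric); then $y_j > \max\{y_0, \ldots, y_{j-1}\} \geq d$, handling the $y$-coordinate. Alternation forces $p_{j-1}$ to be $l$ or $r$, say $r$, so $x_{j-1} > \max\{x_0, \ldots, x_{j-2}\}$, making $p_{j-1}$ uniquely the rightmost of $p_0, \ldots, p_{j-1}$. The separating condition on $p_j$ then forces $x_j$ strictly between the second-largest $x$-coordinate (which is at least $b$, since $\{p_0,\ldots,p_{k-1}\} \subseteq \{p_0, \ldots, p_{j-2}\}$) and $x_{j-1}$, giving $x_j > b$ as required.

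The main obstacle is keeping the case analysis manageable: four choices for the direction of $p_j$ combined with two possible alignments of $p_{j-1}$. However, the symmetries of the plane that swap $u \leftrightarrow d$ and $l \leftrightarrow r$ reduce everything to the single case treated above. Notably, this is not really an induction on $j$: the conclusion for each $p_j$ follows directly from the pin construction at steps $j-1$ and $j$ together with the containment $R_{k-1} \subseteq R_{j-2}$, so no strengthened inductive hypothesis is needed. With (ii) in hand, (iii) follows immediately from Observation \ref{intsum}.
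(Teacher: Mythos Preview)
Your proof is correct and carefully fills in what the paper leaves implicit: in the paper this is stated as an \emph{observation} with no argument beyond ``the definition implies'', so there is no proof to compare against. Your decomposition into (i)--(iii), with the real work concentrated in (ii) via the alternation-of-directions argument, is exactly the natural justification; the only minor remark is that your phrase ``alternation forces $p_{j-1}$ to be $l$ or $r$'' tacitly assumes $j-1\geq 2$, but since the interesting case has $k\geq 2$ (and hence $j\geq 3$) this is harmless, and the degenerate case $k=1$ is trivial anyway.
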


We denote the class consisting of \emph{all} centred pin permutations $\mathcal{P}^{\circ}$, and its uncentred counterpart by $\mathcal{P}$. We call this the \emph{complete pin class}. As noted in the introduction, Bassino, Bouvel and Rossin~\cite{bassino:enumeration-of-:} proved that $\mathcal{P}$ has rational generating function
\[ 
f(z) = 1 + \frac{z - 6z^2 + 9z^3 - 12z^4 + 4z^5 + 20z^6 - 8z^7}{1 - 8z + 19z^2 - 26z^3 + 14z^4 - 12z^5 - 8z^6 + 20z^7 - 8z^8},
\]
and (by applying the Exponential Growth Theorem \ref{EGT1} to this function) growth rate $\omega_{\infty} \approx 5.24112$. We shall later find the generating function of $\mathcal{P}^{\circ}$, which of course has the same growth rate as $\mathcal{P}$ by Proposition \ref{eqgrs}. We are primarily interested in this paper in certain subclasses of $\mathcal{P}$, known as \emph{pin classes}. These are defined somewhat analogously to pin permutations: just as we can convert a finite pin word into a (centred or uncentred) permutation, we can also convert an infinite pin word into a (centred or uncentred) pin class:

\begin{defn}[Pin Classes]
Suppose that $w \in \mathcal{L}^{\infty}_{P}$. Let $w_{1,n}$ denote the prefix of $w$ of length $n$ (note that this is itself a pin word) and let $\pi^{\circ}_{n}$ be the pin permutation generated by $w_{n}$ via the process described in Definition \ref{def:pimap}. Consider the set
\[
\Pi = \{ \pi^{\circ}_{1}, \pi^{\circ}_{2}, \pi^{\circ}_{3}, \dots \} \ .
\]
The downward closure of $\Pi$ under the pattern containment order $\leq$ forms a centred permutation class $\mathcal{C}^{\circ}_{w}$, known as the \emph{centred pin class} of $w$. We similarly define the \emph{uncentred pin class} of $w$, $\mathcal{C}_{w}$, to be the underlying permutation class of $\mathcal{C}^{\circ}_{w}$.
\end{defn}

Informally, we can think of this as using the infinite pin word $w$ to draw an 'infinite diagram' by the same process as in the finite case; the pin class $\mathcal{C}_{w}$ is then the class of all permutations that can be found somewhere within this infinite diagram (by picking a finite collection of points and forgetting everything else).

\begin{example}
Taking $w=1\overline{(ldrdluru)}$ we generate the pin diagram shown in Fig. \ref{fig:fountain}. We could of course extend this diagram indefinitely. The class $\mathcal{C}_{w}$ then consists of all permutations that can be found somewhere in this infinite diagram (with $\mathcal{C}^{\circ}_{w}$ being the class of all \emph{centred} permutations that can be found).
\end{example}

\begin{figure}[h]
\begin{center}
\begin{tikzpicture}[scale=0.35]

\node[circle, draw, fill=none, inner sep=0pt, minimum width=\plotptradius] (0) at (9,9) {};

\node[permpt] (1) at (10,11) {}; 
\node[permpt] (2) at (7,10) {}; \draw[thin] (2) -- ++ (3.5,0);
\node[permpt] (3) at (8,7) {}; \draw[thin] (3) -- ++ (0,3.5);
\node[permpt] (4) at (12,8) {}; \draw[thin] (4) -- ++ (-4.5,0);
\node[permpt] (5) at (11,5) {}; \draw[thin] (5) -- ++ (0,3.5);
\node[permpt] (6) at (5,6) {}; \draw[thin] (6) -- ++ (6.5,0);
\node[permpt] (7) at (6,13) {}; \draw[thin] (7) -- ++ (0,-7.5);
\node[permpt] (8) at (14,12) {}; \draw[thin] (8) -- ++ (-8.5,0);
\node[permpt] (9) at (13,15) {}; \draw[thin] (9) -- ++ (0,-3.5);
\node[permpt] (10) at (3,14) {}; \draw[thin] (10) -- ++ (10.5,0);
\node[permpt] (11) at (4,3) {}; \draw[thin] (11) -- ++ (0,11.5);
\node[permpt] (12) at (16,4) {}; \draw[thin] (12) -- ++ (-12.5,0);
\node[permpt] (13) at (15,1) {}; \draw[thin] (13) -- ++ (0,3.5);
\node[permpt] (14) at (1,2) {}; \draw[thin] (14) -- ++ (14.5,0);
\node[permpt] (15) at (2,17) {}; \draw[thin] (15) -- ++ (0,-15.5);
\node[permpt] (16) at (17,16) {}; \draw[thin] (16) -- ++ (-15.5,0);

\draw[thick] (9,0) -- ++ (0,18);
\draw[thick] (0,9) -- ++ (18,0);

\end{tikzpicture}
\end{center}
\caption{The beginning of the pin diagram generated by the infinite pin word $w = 1\overline{(ldrdluru)}$. The pin class $\mathcal{C}_{w}$ consists of all permutations that can be found somewhere in this infinite diagram, whereas $\mathcal{C}^{\circ}_{w}$ consists of all centred permutations which can be found (with the origin staying in place).}
\label{fig:fountain}
\end{figure}

\subsubsection*{Oscillations} \label{sec:3.1.1.1}

One particular collection of pin permutations is already well-known and will play a key role in our later theory: the \emph{oscillations}, as briefly introduced in Fig. \ref{fig:osc0}. In fact, we have so far only mentioned the \emph{increasing} oscillations. We therefore define more generally:

\begin{defn}[Oscillations]
Let $w$ be a finite pin word that stays in its initial quadrant (for example, $1ururur$, $1rururur$, $3ldl$ and $4drdrd$). We call the centred permutation $\pi^{\circ}_{w}$ generated by $w$ an \emph{oscillation} (sometimes a \emph{one-quadrant oscillation} for emphasis).
\end{defn}

Oscillations have natural `staircase' structures - see Fig. \ref{fig:oscexamples} for examples. Note that there are precisely two oscillations of each length $n\geq3$ in each quadrant (e.g. those generated by $1ururu$ and $1rurur$ of length $6$ in the first quadrant), but only one of length $2$ (as $1u$ and $1r$ in fact generate the same (centred) permutation (\netwo). See Fig. \ref{fig:2osccollision} for an illustration).

\begin{figure}[h]
\begin{center}
\begin{tikzpicture}[scale=0.3]

\node[circle, draw, fill=none, inner sep=0pt, minimum width=\plotptradius] (0) at (0,0) {};
\node[permpt] (1) at (2,1) {}; 
\node[permpt] (2) at (1,3) {}; \draw[thin] (2) -- ++ (0,-2.5);
\node[permpt] (3) at (4,2) {}; \draw[thin] (3) -- ++ (-3.5,0);
\node[permpt] (4) at (3,5) {}; \draw[thin] (4) -- ++ (0,-3.5);
\node[permpt] (5) at (6,4) {}; \draw[thin] (5) -- ++ (-3.5,0);
\node[permpt] (6) at (5,6) {}; \draw[thin] (6) -- ++ (0,-2.5);

\draw[thick] (-1,0) -- ++ (8,0);
\draw[thick] (0,-1) -- ++ (0,8);

\node at (0,-2) {$1ururu$};
\node at (0,-4) {$\underline{1}426375$};

\begin{scope}[shift={(12,0)}]

\node[circle, draw, fill=none, inner sep=0pt, minimum width=\plotptradius] (0) at (0,0) {};
 
\node[permpt] (1) at (1,2) {};
\node[permpt] (2) at (3,1) {}; \draw[thin] (2) -- ++ (-2.5,0);
\node[permpt] (3) at (2,4) {}; \draw[thin] (3) -- ++ (0,-3.5);
\node[permpt] (4) at (5,3) {}; \draw[thin] (4) -- ++ (-3.5,0);
\node[permpt] (5) at (4,6) {}; \draw[thin] (5) -- ++ (0,-3.5);
\node[permpt] (6) at (6,5) {}; \draw[thin] (6) -- ++ (-2.5,0);

\draw[thick] (-1,0) -- ++ (8,0);
\draw[thick] (0,-1) -- ++ (0,8);

\node at (0,-2) {$1rurur$};
\node at (0,-4) {$\underline{1}352746$};

\end{scope}

\begin{scope}[shift={(29,0)}]

\node[circle, draw, fill=none, inner sep=0pt, minimum width=\plotptradius] (0) at (0,0) {};
\node[permpt] (1) at (-1,2) {}; 
\node[permpt] (2) at (-3,1) {}; \draw[thin] (2) -- ++ (2.5,0);
\node[permpt] (3) at (-2,4) {}; \draw[thin] (3) -- ++ (0,-3.5);
\node[permpt] (4) at (-5,3) {}; \draw[thin] (4) -- ++ (3.5,0);
\node[permpt] (5) at (-4,5) {}; \draw[thin] (5) -- ++ (0,-2.5);

\draw[thick] (1,0) -- ++ (-7,0);
\draw[thick] (0,-1) -- ++ (0,8);

\node at (0,-2) {$2lulu$};
\node at (0,-4) {$46253\underline{1}$};

\end{scope}

\end{tikzpicture}
\end{center}
\caption{Three oscillations and the pin words that generate them. Note the two distinct oscillations of length $6$ in the first quadrant. In general, there are two distinct oscillations of each length $n \geq 3$ in each quadrant.}
\label{fig:oscexamples}
\end{figure}

\begin{figure}[h]
\begin{center}
\begin{tikzpicture}[scale=0.3]

\begin{scope}[shift={(10,0)}]

\node[circle, draw, fill=none, inner sep=0pt, minimum width=\plotptradius] (0) at (0,0) {};
\node[permpt] (1) at (1,2) {};
\node[permpt] (2) at (2,1) {}; \draw[thin] (2) -- ++ (-1.5,0);

\node[empty] (-1) at (0,4) {};

\draw[thick] (-3,0) -- ++ (6,0);
\draw[thick] (0,-3) -- ++ (0,6);

\node[] (4) at (0,-5) {$1r$};


\end{scope}

\node[circle, draw, fill=none, inner sep=0pt, minimum width=\plotptradius] (0) at (0,0) {};
\node[permpt] (1) at (2,1) {};
\node[permpt] (2) at (1,2) {}; \draw[thin] (2) -- ++ (0,-1.5);

\draw[thick] (-3,0) -- ++ (6,0);
\draw[thick] (0,-3) -- ++ (0,6);

\node[] (3) at (5,0) {=};


\node[] (4) at (0,-5) {$1u$};


\end{tikzpicture}
\end{center}
\caption{Whilst there are two distinct oscillations in each quadrant for all lengths $n \geq 3$, there is only one at length $2$. This is because the pin words $1u$ and $1r$ in fact generate the \emph{same} centred permutation $\underline{1}32$. This is an example of a \emph{collision} of pin factors, a phenomenon that we will study in detail later.}
\label{fig:2osccollision}
\end{figure}

We can also consider the pin class $\mathcal{O}^{\circ}$ of \emph{increasing oscillations}, defined to be $\mathcal{C}^{\circ}_{w}$ for the pin word $1\overline{(ru)}$ (see Fig. \ref{fig:oscclass}). This class\footnote{Or more precisely, the underlying uncentred class $\mathcal{O}$, but this is essentially identical as $\mathcal{O}^{\circ}$ is contained entirely in the first quadrant.} has been shown by Vatter~\cite[Proposition 1.10]{vatter:small-permutati:} to have growth rate $\kappa \approx 2.20557$, a fact which we shall be able to prove later. We will also be able to prove that $\kappa$ is in fact the \emph{smallest} possible growth rate of a pin class, and is only achievable by pin classes which are `essentially' $\mathcal{O}^{\circ}$.

\begin{figure}[h]
\begin{center}
\begin{tikzpicture}[scale=0.25]

\node[circle, draw, fill=none, inner sep=0pt, minimum width=\plotptradius] (0) at (0,0) {};
\node[permpt] (1) at (1,2) {};
\node[permpt] (2) at (3,1) {}; \draw[thin] (2) -- ++ (-2.5,0);
\node[permpt] (3) at (2,4) {}; \draw[thin] (3) -- ++ (0,-3.5);
\node[permpt] (4) at (5,3) {}; \draw[thin] (4) -- ++ (-3.5,0);
\node[permpt] (5) at (4,6) {}; \draw[thin] (5) -- ++ (0,-3.5);
\node[permpt] (6) at (7,5) {}; \draw[thin] (6) -- ++ (-3.5,0);
\node[permpt] (7) at (6,8) {}; \draw[thin] (7) -- ++ (0,-3.5);
\node[permpt] (8) at (9,7) {}; \draw[thin] (8) -- ++ (-3.5,0);
\node[permpt] (9) at (8,10) {}; \draw[thin] (9) -- ++ (0,-3.5);
\node[permpt] (10) at (11,9) {}; \draw[thin] (10) -- ++ (-3.5,0);


\draw[thick] (0,0) -- ++ (10,0);
\draw[thick] (0,0) -- ++ (0,12);

\end{tikzpicture}
\end{center}
\caption{The class $\mathcal{O}^{\circ}$ of \emph{increasing oscillations}, generated by the infinite pin word $1\overline{ru}$. Note that, due to being confined to only one quadrant, this is `essentially' the same as the uncentred class $\mathcal{O}$, which has been studied extensively in the literature.}
\label{fig:oscclass}
\end{figure}

(Uncentred) oscillations have been studied extensively in the literature. In fact, to a significant extent pin permutations are worth studying because they generalise the oscillations and end up being interesting for many of the same reasons. For example, Bevan~\cite{bevan:intervals} demonstrated that an infinite antichain contained in the two-point extension $\mathcal{O}^{+2}$ can be used to construct permutation classes (all of which contain long oscillations) at \emph{every} growth rate $\lambda_{B} \geq 2.35526$. This construction in fact generalises to pin classes in general, allowing us to construct genuinely distinct antichains at every real number which is the growth rate of a pin class.

We have thus established a method for converting an infinite pin word into a (centred or uncentred) permutation class. This allows us to describe an extraordinarily large number of classes which would be awkward to describe by other means (it would, for example, take a lot of work to describe the class in Fig. \ref{fig:fountain} in terms of its basis). There are two main reasons that this construction will prove useful. First, controlling the initial pin word allows us to control various properties of the permutation class produced (for example, the connection with simple permutations mentioned in Section \ref{sec:1.4} allows us to control the number of simples in a given pin class). Second, and most importantly, we have a process for determining the growth rates of the pin permutation classes produced: this will depend on an in-built structure possessed by pin permutation classes, the \emph{$\boxplus$-decomposition}.

\subsection{Pin Factors} \label{sec:3.2}

We will need to have a notion of a 'factor' of a pin word, one that should hopefully preserve pattern containment in both the centred and uncentred case:

\begin{defn}[Pin Factor]
Suppose that $w$ is a (finite or infinite) pin word. A \emph{pin factor} of $w$ is either:
\begin{itemize}
\item a finite prefix of $w$; or:
\item a non-initial contiguous subsequence of $w$ in which the first letter is replaced by the quadrant in which the corresponding point appears in $\pi_{w}^{\circ}$.
\end{itemize}
\end{defn}

Note that a pin factor of a given pin word is by definition itself a pin word. We write $w_{i,j}$ for the pin factor obtained by taking the contiguous subsequence of $w$ between the $i$th and $j$th place.

\begin{example}
Consider the pin word $w = 2ruldlurdru$. Then $w_{1,2} = 2r$, $w_{1,4} = 2rul$, $w_{1,9} = 2ruldlurd$ and $w$ itself are all pin factors of $w$ as they are all (finite) initial subsequences. If instead we take out the contiguous subsequence from the fifth to ninth terms we obtain the word $dlurd$. This is a factor in the word sense, but it is not a pin factor as it does not begin with a numeral (and so is not a pin word). However, if we draw out the (centred) permutation $\pi_{w}^{\circ}$ generated by $w$ (see Fig. \ref{fig:pinfactorexample}) we can see that the fifth point placed is in the $3$rd quadrant, so we replace the initial $d$ in $dlurd$ with a $3$ to obtain the pin word $3lurd$, which is now a pin factor of $w$; specifically $w_{5,9} = 3lurd$. For an internal pin word we can always work our what this initial numeral should be by looking at the previous letter: for example, if we wish to work out $w_{9,11}$ we first take out the subword $dru$ (between the $9$th and $11$th places); to work out which numeral we replace the initial $d$ with, note that it was preceded by an $r$ and the $d$ in any $rd$ clearly must be placed in the $4$th quadrant. Hence $w_{9,11} = 4ru$. This suggests a correspondence between pin factors of length $n$ and $\overline{\mathcal{L}}$-subwords of length $n+1$, stated below in Lemma \ref{pfsubwcorr}.

\begin{figure}[h]
\begin{center}
\begin{tikzpicture}[scale=0.35]

\node[circle, draw, fill=none, inner sep=0pt, minimum width=\plotptradius] (0) at (6,5) {};
\node[permpt] (1) at (5,7) {}; 
\node[permpt] (2) at (8,6) {}; \draw[thin] (2) -- ++ (-3.5,0);
\node[permpt] (3) at (7,9) {}; \draw[thin] (3) -- ++ (0,-3.5);
\node[permpt] (4) at (3,8) {}; \draw[thin] (4) -- ++ (4.5,0);
\node[permpt,blue] (5) at (4,3) {}; \draw[thin] (5) -- ++ (0,5.5);
\node[permpt,blue] (6) at (1,4) {}; \draw[thin] (6) -- ++ (3.5,0);
\node[permpt,blue] (7) at (2,11) {}; \draw[thin] (7) -- ++ (0,-7.5);
\node[permpt,blue] (8) at (10,10) {}; \draw[thin] (8) -- ++ (-8.5,0);
\node[permpt,blue] (9) at (9,1) {}; \draw[thin] (9) -- ++ (0,9.5);
\node[permpt] (10) at (12,2) {}; \draw[thin] (10) -- ++ (-3.5,0);
\node[permpt] (11) at (11,12) {}; \draw[thin] (11) -- ++ (0,-10.5);

\draw[thick] (6,0.5) -- ++ (0,12);
\draw[thick] (0.5,5) -- ++ (12,0);

\draw[dotted] (3.5,2.5) rectangle (4.5,3.5);

\end{tikzpicture}
\end{center}
\caption{The construction of the centred permutation $\pi_{w}^{\circ}$ from the pin word $w = 2ruldlurdru$. Note that the fifth-placed point, $p_{5}$, corresponding to the first $d$ in the word, is in the $3$rd quadrant, so when we extract the word factor $dlurd$ from $w$ we replace the initial $d$ with a $3$ to obtain the pin factor $\tilde{w} = 3lurd$. Note that the centred permutation $\sigma_{\tilde{w}}^{\circ}$ (highlighted in blue) is a subpermutation of $\pi_{w}^{\circ}$.}
\label{fig:pinfactorexample}
\end{figure}

\end{example}

As Fig. \ref{fig:pinfactorexample} suggests, this notion preserves pattern containment:

\begin{obs} \label{pinfactorcontainment}
Suppose $w_{1},w_{2}$ are finite pin words and that $w_{1}$ is a pin factor of $w_{2}$. Then:
\[
\pi^{\circ}_{w_{1}} \leq \pi^{\circ}_{w_{2}} \ .
\]
\end{obs}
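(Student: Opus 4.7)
The plan is to directly exhibit the required embedding. Let $w_1$ be the pin factor of $w_2$ obtained by extracting the contiguous subsequence between positions $i$ and $j$, and let $p_0, p_1, \dots, p_n$ denote the points produced during the construction of $\pi^{\circ}_{w_2}$, with $p_0$ the origin. I will argue that the centred sub-permutation of $\pi^{\circ}_{w_2}$ induced by the subset $S = \{p_0, p_i, p_{i+1}, \dots, p_j\}$, with $p_0$ again playing the role of origin, is in fact equal to $\pi^{\circ}_{w_1}$; this immediately yields $\pi^{\circ}_{w_1} \leq \pi^{\circ}_{w_2}$.

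The argument proceeds by induction on $k$ ranging from $i$ to $j$, showing that $\{p_0, p_i, \dots, p_k\}$ is order-isomorphic as a centred permutation (with origin $p_0$) to the configuration built by the first $k - i + 1$ non-origin steps in the construction of $\pi^{\circ}_{w_1}$. The base case $k = i$ follows immediately from the definition of a pin factor: whether $i = 1$ or $i \geq 2$, the initial numeral of $w_1$ is by definition the quadrant containing $p_i$ relative to the origin in $\pi^{\circ}_{w_2}$, so the first non-origin point placed in $\pi^{\circ}_{w_1}$ occupies the same quadrant relative to its own origin.

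For the inductive step, let $\ell$ be the $k$-th letter of $w_2$, which is also the corresponding letter of $w_1$, and suppose without loss of generality that $\ell = u$. By the pin construction applied to $w_2$, the point $p_k$ lies strictly above the bounding rectangle of $\{p_0, p_1, \dots, p_{k-1}\}$, and the vertical pin beneath it horizontally separates $p_{k-1}$ from all earlier points. Both facts restrict cleanly to the subset $\{p_0, p_i, \dots, p_{k-1}\}$: the subset's bounding rectangle is contained in that of the full set of predecessors, so $p_k$ is still strictly above it; and any $x$-coordinate separating $p_{k-1}$ from a superset of $\{p_0, p_i, \dots, p_{k-2}\}$ certainly separates it from the subset. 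Hence $p_k$ is a valid pin placement for the letter $u$ relative to the already-embedded portion of $S$, and any such valid placement forces the same relative order as the corresponding step in building $\pi^{\circ}_{w_1}$. The induction is therefore routine once this separation-preservation observation is in hand, and this really is the only subtle point in the argument; once the induction terminates at $k = j$ we obtain $\pi^{\circ}_{w_1} = \pi^{\circ}_{w_2}|_S$, with matching origins, and hence $\pi^{\circ}_{w_1} \leq \pi^{\circ}_{w_2}$ as required.
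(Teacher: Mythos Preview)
Your proof is correct and formalises exactly what the paper leaves implicit: the result is stated there as an observation without proof, with only Figure~\ref{fig:pinfactorexample} offered as justification. Your inductive argument makes rigorous the key point that the pin-placement conditions for $p_k$ relative to $\{p_0,\dots,p_{k-1}\}$ (extremality in the specified direction, and separation of $p_{k-1}$ from all earlier points) restrict cleanly to any subset containing both $p_0$ and $p_{k-1}$, which is precisely the content the figure is meant to convey.
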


It's also worth noting that, while pin factors of $w$ of length $n$ cannot in general be identified with subwords of $w$ of length $n$ (for example, the subword $ulur$ could instantiate either $1lur$ or $2lur$ depending on the context) they can \emph{almost} be identified with subwords of length $n+1$ (with the first letter helping to determine the quadrant numeral): the only issue being initial pin factors, which may not reappear. In practice this will be only a minor inconvenience (which may be completely ignored in the recurrent case, defined below) as initial pin factors cannot affect the growth rate of a pin class, at worst only affecting the enumeration sequence.

We now have the terminology to define an important category of infinite pin words, whose associated pin classes have a structure which will be particularly amenable to the enumeration methods developed in Section \ref{sec:2}:

\begin{defn}[Recurrent Pin Words] \label{def:pinwordrecurrence}
We refer to an infinite pin word $w$ as \emph{recurrent} if every pin factor that occurs in $w$ occurs infinitely-often; similarly, $w$ is \emph{eventually recurrent} if every pin factor that occurs after a certain point occurs infinitely-often. We refer to a pin class $\mathcal{C}^{\circ}_{w}$ generated by a recurrent pin word $w$ as a \emph{recurrent pin class}.
\end{defn}

Note that this definition is subtly different from recurrence in the usual (factor) sense, due to complications introduced by initial subsequences (which begin with a numeral): these initial words can never occur as factors again in $w$ by the definition of a pin word, but (in order for $w$ to be recurrent) they \emph{must} appear infinitely often as pin factors. This means that it may not immediately be obvious whether an infinite pin word is recurrent or not, even in the periodic case, without drawing its pin diagram: for example, $2\overline{(ul)}$ is recurrent, but $1\overline{(ul)}$ is not, as $w_{1,2}=1u$ never reoccurs as a pin factor.

We wish to ignore, as far as possible, this subtle and often irritating distinction between regular factors and pin factors. First, we call a pin factor of $w$ \emph{fully internal} if it occurs in $w$ starting after the second place; that is, fully internal pin factors are those of the form $w_{i,j}$ where $3 \leq i < j$. Note that fully internal pin factors can be found somewhere in $w$ with a preceeding letter (as opposed to numeral). This letter is enough to reconstruct the initial numeral of the pin factor, thus leading to the following\footnote{Here we use the term \emph{$\overline{\mathcal{L}}_{P}$-factor} to refer to a factor of an infinite pin word in the usual subword sense that is in $\overline{\mathcal{L}}_{P}$. In other words, this is simply a substring of a pin word, starting after the first position, for which we do not change the first letter to a numeral.}:

\begin{lemma} \label{pfsubwcorr}
Let $w$ be an infinite pin word. Then the number of fully internal pin factors of $w$ of length $n \geq 2$ is equal to the number of $\overline{\mathcal{L}}_{P}$-factors of $w$ of length $n+1$.
\end{lemma}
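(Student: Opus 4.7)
The plan is to set up an explicit bijection $\phi$ between the fully internal pin factors of $w$ of length $n$ and the $\mathcal{L}^*$-subwords of $w$ of length $n+1$, namely
\[
\phi(w_{i,j}) = w_{i-1,j}, \quad \text{for } i \geq 3, \ j = i + n - 1.
\]
The codomain is correct because $i - 1 \geq 2$ avoids the initial numeral of $w$, so $w_{i-1,j}$ is an $\mathcal{L}^*$-word of length $n+1$. Surjectivity is immediate: any $\mathcal{L}^*$-subword $w_{k,k+n}$ (which must have $k \geq 2$) arises as the image under $\phi$ of the fully internal pin factor $w_{k+1,k+n}$.

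The heart of the proof is showing that $\phi$ is both well-defined on equivalence classes of pin factors and injective. Both properties reduce to the geometric claim that the quadrant numeral $c_i$, which the definition attaches to the pin factor $w_{i,j}$ as the quadrant of $p_i$ relative to $p_0$, depends only on the ordered pair $(w_{i-1}, w_i)$ according to the fixed table
\[
f(l,u) = f(u,l) = 2, \quad f(r,u) = f(u,r) = 1, \quad f(l,d) = f(d,l) = 3, \quad f(r,d) = f(d,r) = 4.
\]
Given this table, the pin factor $w_{i,j}$ reconstructs the pair $(w_{i-1}, w_i)$ as follows. Because $n \geq 2$, the pin factor contains $w_{i+1}$ as its second letter; since $w_{i+1}$ and $w_i$ have opposite alignment, the alignment of $w_i$ can be read off the pin factor. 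As each quadrant is achieved by exactly one pair of each alignment type, the numeral $c_i$ together with this alignment uniquely recovers $(w_{i-1}, w_i)$. Hence equal pin factors pull back to equal subwords, and distinct pin factors correspond to distinct subwords.

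The main obstacle is establishing the geometric table, which I would handle by a straightforward case analysis based directly on the $\pi$-map. In each case, the letter $w_{i-1}$ makes $p_{i-1}$ a new extremal point of the bounding rectangle of $\{p_0, \ldots, p_{i-1}\}$ in that direction, and then the pin constraint on $w_i$ forces $p_i$ to lie strictly between $p_{i-1}$ and the next-most-extreme previous point in the orthogonal direction. For example, when $w_{i-1} = r$ and $w_i = u$, the point $p_{i-1}$ is the rightmost point so far, and $p_i$'s $x$-coordinate lies strictly between that of the previous rightmost point and that of $p_{i-1}$, hence strictly greater than the $x$-coordinate of $p_0$; meanwhile $p_i$'s $y$-coordinate exceeds that of every earlier point, including $p_0$. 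Thus $p_i$ lies in quadrant $1$, matching $f(r,u) = 1$. The remaining seven cases are symmetric, completing the verification.
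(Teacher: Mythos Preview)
Your proof is correct and takes the same approach the paper intends: the paper does not give a formal proof of this lemma, only the one-sentence observation preceding it that ``this letter is enough to reconstruct the initial numeral of the pin factor.'' Your argument makes this precise by exhibiting the bijection $\phi(w_{i,j}) = w_{i-1,j}$, writing down the explicit quadrant table, and verifying both directions (that the pin factor determines the extended subword via the alignment-plus-numeral trick, and conversely). The only addition worth noting is that your proof is more careful than the paper's sketch: the paper's phrasing suggests the preceding letter alone recovers the numeral, whereas in fact one needs the pair $(w_{i-1},w_i)$, and your use of $w_{i+1}$ to pin down the alignment of $w_i$ (which requires $n\geq 2$) is exactly what is needed to make the inverse well-defined.
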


Of course, the fully internal restriction can be removed if we further assume that $w$ is recurrent, as in this case any pin factor beginning in the first or second place can also be found later on; hence all pin factors are fully internal, giving us the following simplification:

\begin{cor}\label{recpfsubwcorr}
Let $w$ be a recurrent infinite pin word.
Then the number of pin factors of $w$ of length $n$ is equal to the number of $\overline{\mathcal{L}}_{P}$-factors of $w$ of length $n+1$.
\end{cor}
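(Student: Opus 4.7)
The plan is to deduce this directly from Lemma \ref{pfsubwcorr} by showing that, when $w$ is recurrent, the collection of \emph{pin factors} of $w$ coincides with the collection of \emph{fully internal} pin factors of $w$. Since fully internal pin factors are, by definition, a sub-collection of all pin factors, the non-trivial direction is to show that every pin factor of $w$ appears as some $w_{i,j}$ with $i \geq 3$.

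So let $u$ be a pin factor of $w$ of length $n$. By definition $u = w_{i,j}$ for some $1 \leq i < j$, with $j - i + 1 = n$. If $i \geq 3$ we are done, so suppose $i \in \{1, 2\}$. By the definition of recurrence, the pin factor $u$ occurs infinitely often as a pin factor of $w$; in particular, there must be some occurrence $u = w_{i',j'}$ with $i' \geq 3$ (as only finitely many pin factors can start at positions $1$ or $2$). Hence $u$ is realisable as a fully internal pin factor, and so every pin factor of $w$ is a fully internal pin factor.

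Combining this with the obvious reverse inclusion, the number of (distinct) pin factors of $w$ of length $n$ equals the number of (distinct) fully internal pin factors of $w$ of length $n$. Applying Lemma \ref{pfsubwcorr} to the right-hand side gives the desired equality with the number of $\mathcal{L}^*$-subwords of $w$ of length $n+1$.

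The only subtle point is making sure that ``occurs infinitely often as a pin factor'' really does force an occurrence with $i \geq 3$; but this is immediate, since there are at most two starting positions (namely $1$ and $2$) that fail this requirement, whereas infinitely many occurrences are guaranteed. No other obstacles arise — the substantive counting content has already been absorbed into Lemma \ref{pfsubwcorr}, and the corollary is essentially a cleanup step that eliminates the fully internal caveat under the recurrence hypothesis.
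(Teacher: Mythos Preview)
Your proof is correct and follows exactly the approach the paper sketches in the paragraph preceding the corollary: use recurrence to show that every pin factor (including those starting in position $1$ or $2$) reappears as a fully internal pin factor, and then invoke Lemma~\ref{pfsubwcorr}. The only content you have added beyond the paper is the explicit observation that infinitely many occurrences force one with $i\geq 3$, which is exactly the point the paper leaves implicit.
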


\subsubsection*{Left-truncations}

We shall also require an infinite analogue of a pin factor of $w$, the infinite pin word obtained by starting $w$ at a later point:

\begin{defn}[Left-truncation of an infinite pin word]
Let $w$ be an infinite pin word and $n \in \mathbb{N}$. The $n$th \emph{left-truncation} of $w$, $w_{\geq n}$, is the infinite pin word obtained from $w$ by replacing the symbol in the $n$th place with the number of the quadrant in which $p_{n}$ is placed, and then removing all of the previous symbols.
\end{defn} 

Informally, $w_{\geq n}$ is `$w$ but we start in the $n$th place'. For example, if $w = 3rurdlurur$ then $w_{\geq 4} = 1dlurur$ and $w_{\geq 7} = 2rur$. Again, we can always work out the numeral to replace the letter in the $n$th place with by looking at the previous symbol.

Crucially, left-truncating an infinite pin word does not affect the asymptotics of its pin class:

\begin{lemma}[Finite Prefix Lemma] \label{fplem}
Let $w$ be an infinite pin word and $n \in \mathbb{N}$. Then $\overline{gr}(\mathcal{C}^{\circ}_{w}) = \overline{gr}(\mathcal{C}^{\circ}_{w_{\geq n}})$ and $\underline{gr}(\mathcal{C}^{\circ}_{w}) = \underline{gr}(\mathcal{C}^{\circ}_{w_{\geq n}})$.
\end{lemma}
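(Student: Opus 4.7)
My plan is to sandwich $\mathcal{C}^{\circ}_w$ between $\mathcal{C}^{\circ}_{w_{\geq n}}$ and its $(n-1)$-point extension $(\mathcal{C}^{\circ}_{w_{\geq n}})^{+(n-1)}$, after which Lemma~\ref{eqgrs2} immediately gives equality of both upper and lower growth rates. The key technical input is the following claim: for each $k \geq 1$, the centred permutation obtained from $\pi^{\circ}_{w_{1, k+n-1}}$ by deleting the $n-1$ ``prefix'' points $p_1, p_2, \ldots, p_{n-1}$ coincides, as a centred permutation, with $\pi^{\circ}_{(w_{\geq n})_{1, k}}$.

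I would prove this claim by induction on $k$. The base case $k=1$ is immediate from the definition of $w_{\geq n}$, whose initial numeral is precisely the quadrant of $p_n$ relative to $p_0$. For the inductive step, I would use the observation that in the original sequence $p_{n+k}$ is placed outside the bounding rectangle of $\{p_0, p_1, \ldots, p_{n+k-1}\}$ in the direction specified by $w_{n+k}$, with transverse coordinate separating $p_{n+k-1}$ from every earlier point. Restricting to the subset $S = \{p_0, p_n, p_{n+1}, \ldots, p_{n+k-1}\}$, the bounding rectangle only shrinks, and the separation condition is inherited, since separating $p_{n+k-1}$ from \emph{all} of $\{p_0, \ldots, p_{n+k-2}\}$ is strictly stronger than separating it from $S \setminus \{p_{n+k-1}\}$ alone. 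Thus the position of $p_{n+k}$ relative to $S$ already obeys the pin rule that the $(k+1)$st symbol of $w_{\geq n}$ prescribes. Since that rule determines the order type of the new point uniquely (the principal coordinate is extremal in the given direction, and the transverse coordinate is pinned between $p_{n+k-1}$'s and the next extremum in $S$), the centred permutation on $S \cup \{p_{n+k}\}$ must equal $\pi^{\circ}_{(w_{\geq n})_{1, k+1}}$, completing the induction.

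Given this lemma, both sandwich inclusions follow easily. Each $\pi^{\circ}_{(w_{\geq n})_{1, k}}$ is a sub-centred-permutation of $\pi^{\circ}_{w_{1, k+n-1}} \in \mathcal{C}^{\circ}_w$, so downward closure gives $\mathcal{C}^{\circ}_{w_{\geq n}} \subseteq \mathcal{C}^{\circ}_w$. Conversely, any $\sigma^{\circ} \in \mathcal{C}^{\circ}_w$ embeds into some $\pi^{\circ}_{w_{1, m}}$, which by the lemma is $\pi^{\circ}_{(w_{\geq n})_{1, m-n+1}}$ with exactly $n-1$ extra points added, so $\sigma^{\circ}$ lies in the downward-closed class $(\mathcal{C}^{\circ}_{w_{\geq n}})^{+(n-1)}$. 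Applying Lemma~\ref{eqgrs2} to this sandwich yields the required equalities of both growth rates. The main obstacle I anticipate is the careful bookkeeping in the inductive step of the technical lemma; the saving grace is that the truncated pin rule is a strictly weaker condition than the original, so the transfer between the two pin sequences boils down to a monotonicity observation rather than a genuine combinatorial difficulty.
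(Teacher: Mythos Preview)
Your proposal is correct and follows essentially the same approach as the paper: sandwich $\mathcal{C}^{\circ}_{w}$ between $\mathcal{C}^{\circ}_{w_{\geq n}}$ and its $(n-1)$-point extension, then invoke Lemma~\ref{eqgrs2}. The paper's proof simply asserts in one line that ``the (infinite) pin diagram generated by $w$ is simply that of $w_{\geq n}$ with $n-1$ extra points'' and writes down the sandwich; your inductive technical lemma is a careful justification of exactly that assertion, so you have filled in details the paper leaves implicit rather than taken a different route.
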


\begin{proof}
Recall that we write $\mathcal{C}^{\circ+k}$ for the $k$-point extension of $\mathcal{C}^{\circ}$ (that is, the class of centred permutations from $\mathcal{C}^{\circ}$ with at most $k$ extra points added anywhere). Note that the (infinite) pin diagram generated by $w$ is simply that of $w_{\geq n}$ with $n-1$ extra points, hence:
\[
\mathcal{C}^{\circ}_{w_{\geq n}} \subseteq \mathcal{C}^{\circ}_{w} \subseteq \mathcal{C}^{\circ+(n-1)}_{w_{\geq n}}
\]
and by Lemma \ref{eqgrs2} the classes on the left and right have the same (upper and lower) growth rates.
\end{proof}

We call this the \emph{Finite Prefix Lemma}, as it tells us that any finite prefix of an infinite pin word cannot affect the growth rate of the associated pin class, a fact that we shall use often.

\subsection{The Pin Decomposition} \label{sec:3.3}

The key idea we will use to enumerate pin classes comes from the following observation: when we take a centred permutation generated by a pin word and remove any interior point, we decompose the resulting permutation as the $\boxplus$-sum of two smaller pin permutations. Fig. \ref{fig:boxdecomposition} illustrates this process.

\begin{figure}[h]
\begin{center}
\begin{tikzpicture}[scale=0.3]

\node[circle, draw, fill=none, inner sep=0pt, minimum width=\plotptradius] (0) at (7,6) {};
\node[permpt] (1) at (8,8) {}; 
\node[permpt] (2) at (5,7) {}; \draw[thin] (2) -- ++ (3.5,0);
\node[permpt] (3) at (6,4) {}; \draw[thin] (3) -- ++ (0,3.5);
\node[permpt] (4) at (3,5) {}; \draw[thin] (4) -- ++ (3.5,0);
\node[permpt] (5) at (4,2) {}; \draw[thin] (5) -- ++ (0,3.5);
\node[permpt] (6) at (10,3) {}; \draw[thin] (6) -- ++ (-6.5,0);
\node[permpt] (7) at (9,10) {}; \draw[thin] (7) -- ++ (0,-7.5);
\node[permpt] (8) at (12,9) {}; \draw[thin] (8) -- ++ (-3.5,0);
\node[permpt] (9) at (11,12) {}; \draw[thin] (9) -- ++ (0,-3.5);
\node[permpt] (10) at (1,11) {}; \draw[thin] (10) -- ++ (10.5,0);
\node[permpt] (11) at (2,1) {}; \draw[thin] (11) -- ++ (0,10.5);

\draw[thick] (7,0.5) -- ++ (0,12);
\draw[thick] (0.5,6) -- ++ (12,0);

\draw[dotted] (9.5,2.5) rectangle (10.5,3.5);

\node at (15,6) {$\rightarrow$};

\begin{scope}[shift={(17,0)}]
\node[circle, draw, fill=none, inner sep=0pt, minimum width=\plotptradius] (0) at (7,6) {};
\node[permpt] (1) at (8,8) {}; 
\node[permpt] (2) at (5,7) {}; \draw[thin] (2) -- ++ (3.5,0);
\node[permpt] (3) at (6,4) {}; \draw[thin] (3) -- ++ (0,3.5);
\node[permpt] (4) at (3,5) {}; \draw[thin] (4) -- ++ (3.5,0);
\node[permpt] (5) at (4,2) {}; \draw[thin] (5) -- ++ (0,3.5);

\node[permpt] (7) at (9,10) {};
\node[permpt] (8) at (12,9) {}; \draw[thin] (8) -- ++ (-3.5,0);
\node[permpt] (9) at (11,12) {}; \draw[thin] (9) -- ++ (0,-3.5);
\node[permpt] (10) at (1,11) {}; \draw[thin] (10) -- ++ (10.5,0);
\node[permpt] (11) at (2,1) {}; \draw[thin] (11) -- ++ (0,10.5);

\draw[thick] (7,0.5) -- ++ (0,12);
\draw[thick] (0.5,6) -- ++ (12,0);


\draw[pattern=crosshatch,pattern color=black!80,draw=none] (0.5,1.5) rectangle (2.5,8.5);
\draw[pattern=crosshatch,pattern color=black!80,draw=none] (8.5,1.5) rectangle (11.5,8.5);
\draw[pattern=crosshatch,pattern color=black!80,draw=none] (2.5,8.5) rectangle (8.5,12.5);
\draw[pattern=crosshatch,pattern color=black!80,draw=none] (2.5,0.5) rectangle (8.5,1.5);

\node at (15,6) {=};

\end{scope}

\begin{scope}[shift={(34,2)}]
\node[circle, draw, fill=none, inner sep=0pt, minimum width=\plotptradius] (0) at (5,4) {};
\node[permpt] (1) at (6,6) {}; 
\node[permpt] (2) at (3,5) {}; \draw[thin] (2) -- ++ (3.5,0);
\node[permpt] (3) at (4,2) {}; \draw[thin] (3) -- ++ (0,3.5);
\node[permpt] (4) at (1,3) {}; \draw[thin] (4) -- ++ (3.5,0);
\node[permpt] (5) at (2,1) {}; \draw[thin] (5) -- ++ (0,2.5);

\draw[thick] (5,0.5) -- ++ (0,6);
\draw[thick] (0.5,4) -- ++ (6,0);

\node at (9,4) {$\boxplus$};

\end{scope}

\begin{scope}[shift={(45,2)}]
\node[circle, draw, fill=none, inner sep=0pt, minimum width=\plotptradius] (0) at (3,2) {};
\node[permpt] (1) at (4,4) {}; 
\node[permpt] (2) at (6,3) {}; \draw[thin] (2) -- ++ (-2.5,0);
\node[permpt] (3) at (5,6) {}; \draw[thin] (3) -- ++ (0,-3.5);
\node[permpt] (4) at (1,5) {}; \draw[thin] (4) -- ++ (4.5,0);
\node[permpt] (5) at (2,1) {}; \draw[thin] (5) -- ++ (0,4.5);

\draw[thick] (3,0.5) -- ++ (0,6);
\draw[thick] (0.5,2) -- ++ (6,0);

\end{scope}

\end{tikzpicture}
\end{center}
\caption{When we remove the point $p_{6}$ from the pin permutation generated by $1ldldruruld$ we remove the only point that slices the bounding rectangle of the first five points; we can thus contract this rectangle down to a single point and in doing so express the resulting centred permutation as the box sum of $\pi^{\circ}_{1ldld}$ and $\pi^{\circ}_{1ruld}$.}
\label{fig:boxdecomposition}
\end{figure}

We formalise this as follows:

\begin{lemma}[Removing an interior point from a pin permutation]\ \\ \label{lemma:pindec}
Suppose that $w$ is a pin word of length $n$, and that $p_{1}, p_{2}, \dots, p_{n}$ are the corresponding points of the centred pin permutation $\pi_{w}^{\circ}$. Let $k \in \{2,3,\dots,n-1\}$. Then:
\[
\pi_{w}^{\circ} - \{p_{k}\} = \pi_{w_{1,k-1}}^{\circ} \boxplus \pi_{w_{k+1,n}}^{\circ}
\]
\end{lemma}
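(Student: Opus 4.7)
The plan is to leverage Observation \ref{slicing} together with the $\boxplus$-interval correspondence of Observation \ref{intsum}. By Observation \ref{slicing}, $p_k$ is the only point after $p_{k-1}$ that slices the bounding rectangle of $\{p_0, p_1, \ldots, p_{k-1}\}$; removing $p_k$ therefore turns this bounding rectangle into a $\circ$-interval of $\pi_w^\circ - \{p_k\}$, and by construction the centred sub-permutation it encloses is precisely $\pi_{w_{1,k-1}}^\circ$. Applying Observation \ref{intsum} yields a unique decomposition
\[
\pi_w^\circ - \{p_k\} = \pi_{w_{1,k-1}}^\circ \boxplus \tau^\circ,
\]
where $\tau^\circ$ is obtained from $\pi_w^\circ - \{p_k\}$ by deleting the non-origin points $p_1, \ldots, p_{k-1}$, leaving $p_0$ as origin together with the points $p_{k+1}, \ldots, p_n$.

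The remaining task is to show $\tau^\circ = \pi_{w_{k+1,n}}^\circ$, which I would prove by induction on $j \in \{k+1, k+2, \ldots, n\}$: the sub-configuration of $\tau^\circ$ on $\{p_0, p_{k+1}, \ldots, p_j\}$ equals the centred permutation produced by running the pin construction on the prefix $w_{k+1,j}$ of $w_{k+1,n}$. The base case $j = k+1$ reduces to checking that the quadrant of $p_{k+1}$ relative to $p_0$ matches the initial numeral of $w_{k+1,n}$; but this initial numeral is, by the very definition of a pin factor, the quadrant of $p_{k+1}$ in $\pi_w^\circ$ (which is computed relative to the same origin $p_0$).

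For the inductive step, compare the bounding rectangle $B$ of $\{p_0, p_{k+1}, \ldots, p_{j-1}\}$ (relevant to placing $p_j$ in $\tau^\circ$) with the bounding rectangle $R$ of $\{p_0, p_1, \ldots, p_{j-1}\}$ (relevant to placing $p_j$ in $\pi_w^\circ$). Since $B$ is the bounding rectangle of a subset of the points of $R$, we have the rectangle containment $B \subseteq R$. Hence the fact that $p_j$ lies strictly beyond $R$ in the direction specified by the letter $w_j$---which holds by the original pin construction---automatically places $p_j$ strictly beyond $B$ in the same direction. For the pin-separation condition, the pin through $p_j$ must, in $\tau^\circ$, separate $p_{j-1}$ from $\{p_0, p_{k+1}, \ldots, p_{j-2}\}$; this set is contained in $\{p_0, p_1, \ldots, p_{j-2}\}$, which the pin already strictly separates from $p_{j-1}$ in $\pi_w^\circ$, and strict separation of a superset passes to any subset.

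I expect the main obstacle to be the bookkeeping in the inductive step---keeping track of all four possible pin directions and the interplay between the two coordinate dimensions---rather than any conceptual difficulty. The crucial structural point is simply that passing to a subset of points weakly shrinks the bounding rectangle, and this shrinking preserves (and indeed strengthens) every strict outside/separation inequality required by the pin construction.
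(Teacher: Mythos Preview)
Your proof is correct and follows essentially the same approach as the paper: both use Observation~\ref{slicing} to see that removing $p_k$ turns the bounding rectangle of $\{p_0,\dots,p_{k-1}\}$ into a $\circ$-interval, then apply Observation~\ref{intsum} to obtain the decomposition $\pi_{w_{1,k-1}}^\circ \boxplus \tau^\circ$. For the identification $\tau^\circ = \pi_{w_{k+1,n}}^\circ$, the paper simply appeals to Observation~\ref{pinfactorcontainment} (``this corresponds to the remaining points''), whereas you unpack this with an explicit induction via the bounding-rectangle containment $B \subseteq R$; your version supplies the details the paper leaves implicit, but the underlying idea is the same.
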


\begin{proof}
We note that by Observation \ref{slicing}, $p_{k}$ is the \emph{only} point after $p_{k-1}$ that slices the bounding rectangle of $p_{0}, p_{1}, \dots, p_{k-1}$. Hence when $p_{k}$ is removed, $rec(p_{0}, p_{1}, \dots, p_{k-1})$ becomes a $\circ$-interval $\mathcal{I}$ in the resulting permuation, enclosing the permutation $\pi^{\circ}_{w_{1,k-1}}$. Hence, by Observation \ref{intsum}:
\[
\pi_{w}^{\circ} - \{p_{k}\} = \pi_{w_{1,k-1}}^{\circ} \boxplus \tau^{\circ}
\]
for some centred permuation $\tau^{\circ}$. That $\tau^{\circ}$ is $\pi_{w_{k+1,n}}^{\circ}$ follows from Observation \ref{pinfactorcontainment}, as this corresponds to the remaining points.
\end{proof}

This process equips pin classes with an in-built structure theorem:

\begin{thm}[The Pin Decomposition] \label{pindec}\ \\
Suppose that $w$ is an infinite pin word and that $\C_{w}^{\circ}$ is the pin class it generates. Then
\[
\sigma^{\circ} \in \C_{w}^{\circ} \ \text{iff} \ \sigma^{\circ} = \pi_{w_{1}}^{\circ} \ \boxplus \ \pi_{w_{2}}^{\circ} \ \boxplus \ \dots \ \boxplus \ \pi_{w_{k}}^{\circ} \ ,
\]
where $w_{1}, w_{2}, \dots, w_{k}$ is a sequence of pin factors of $w$ that occur in that order, in non-overlapping instances and separated from each other by at least one letter in $w$.
\end{thm}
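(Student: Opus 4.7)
The plan is to prove a stronger claim by induction, from which both directions of the iff follow at once.

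Stronger claim: let $v$ be any finite pin word of length $N$ and $T \subseteq \{1, \dots, N\}$ any subset of indices, with the complement $S = \{1, \dots, N\} \setminus T$ having maximal consecutive runs $[a_1, b_1], [a_2, b_2], \dots, [a_k, b_k]$. Then
\[
\pi^\circ_v \setminus \{p_t : t \in T\} = \pi^\circ_{v_{a_1, b_1}} \boxplus \pi^\circ_{v_{a_2, b_2}} \boxplus \dots \boxplus \pi^\circ_{v_{a_k, b_k}}.
\]

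I would prove this by strong induction on $|T|$. The base case $T = \emptyset$ is trivial: both sides equal $\pi^\circ_v$. For the inductive step I pick some $t \in T$; if $1 < t < N$ I apply the previously established interior-point removal lemma to obtain $\pi^\circ_v \setminus \{p_t\} = \pi^\circ_{v_{1, t-1}} \boxplus \pi^\circ_{v_{t+1, N}}$, and then apply the inductive hypothesis to each factor using the corresponding portions of $T \setminus \{t\}$. The runs of $S$ split cleanly across the removed index $t$ into a prefix list inside $\{1, \dots, t-1\}$ and a suffix list inside $\{t+1, \dots, N\}$, and the two resulting $\boxplus$-sums concatenate to the full decomposition. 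If instead $t = 1$ or $t = N$, I use the facts that removing $p_1$ from $\pi^\circ_v$ yields $\pi^\circ_{v_{2, N}}$ and removing $p_N$ yields $\pi^\circ_{v_{1, N-1}}$ to reduce to a pin word one letter shorter, and then apply the induction.

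Both directions of the theorem then fall out by taking $v = w_{1, N}$ for a suitably large prefix of $w$. For $(\Leftarrow)$, the hypothesis that the given pin factors $w_i = w_{a_i, b_i}$ occur in order, non-overlappingly, and separated by at least one letter translates exactly to the condition that the intervals $[a_i, b_i]$ are the maximal consecutive runs of their union in $\{1, \dots, b_k\}$; the claim then expresses the $\boxplus$-sum as a subpermutation of $\pi^\circ_{w_{1, b_k}}$, placing it in $\mathcal{C}^\circ_w$. For $(\Rightarrow)$, any $\sigma^\circ \in \mathcal{C}^\circ_w$ is contained in some $\pi^\circ_{w_{1, N}}$, hence arises as $\pi^\circ_{w_{1, N}} \setminus \{p_t : t \in T\}$ for some $T$; the claim produces the required $\boxplus$-decomposition, and maximality of the runs guarantees that consecutive pin factors are separated by at least one member of $T$, i.e. by at least one letter of $w$.

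The main obstacle I anticipate is verifying the boundary case of the induction: that removing $p_1$ from $\pi^\circ_v$ literally yields $\pi^\circ_{v_{2, N}}$. Although the bounding rectangle used to place the subsequent points changes when $p_1$ is deleted, the relative pattern of $\{p_0, p_2, \dots, p_N\}$ must still coincide with the centred permutation generated by the pin word $v_{2, N}$ (in which the leading letter has been replaced by the quadrant of $p_2$). This matches the general pin-factor embedding implicit in Observation \ref{pinfactorcontainment}, and is already used tacitly in the excerpt's proof of the interior-point removal lemma when identifying its second factor with $\pi^\circ_{w_{k+1, N}}$; once that identification is made explicit, the rest of the induction is routine bookkeeping.
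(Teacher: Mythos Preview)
Your proposal is correct and takes essentially the same approach as the paper: the paper's proof is a one-paragraph sketch (``every time we [delete a point] we split the resulting permutation into the $\boxplus$-sum of the pin factors directly before and after; repeatedly doing this gives the required result''), and your version is the properly fleshed-out induction on the number of deleted points, with the boundary cases $t=1$ and $t=N$ made explicit. The only point you might tighten is the tacit use of the identity $(\alpha^\circ\boxplus\beta^\circ)\setminus(A\cup B)=(\alpha^\circ\setminus A)\boxplus(\beta^\circ\setminus B)$ when passing the remaining deletions into the two factors, but this is immediate from the fact that the $\alpha^\circ$-block remains a $\circ$-interval after deletions.
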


\begin{proof}
This is almost immediate from the process derived above: $\sigma^{\circ} \in \C_{w}^{\circ}$ must be contained in some centred permutation generated by an initial subsequence $\widetilde{w}$ of $w$, so we can obtain $\sigma^{\circ}$ by deleting letters from $\widetilde{w}$; but every time we do we split the resulting permutation into the $\boxplus$-sum of the pin factors directly before and after. Repeatedly doing this gives the required result.
\end{proof}

We note one immediate consequence of this theorem:

\begin{cor}[$\boxplus$-indecomposables in a pin class] \label{pinfactorsareindecs}\ \\
Let $w$ be an infinite pin word and $\C_{w}^{\circ}$ the associated pin class. Suppose $\pi^{\circ} \in \C_{w}^{\circ}$ is a $\boxplus$-indecomposable pin permutation. Then $\pi^{\circ} = \pi_{\widetilde{w}}^{\circ}$ for some pin factor $\widetilde{w}$ of $w$.
\end{cor}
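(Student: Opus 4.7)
The plan is to read this off directly from the Pin Decomposition Theorem (Theorem \ref{pindec}). Since $\pi^{\circ} \in \C_w^{\circ}$, that theorem immediately expresses
\[
\pi^{\circ} = \pi_{w_1}^{\circ} \boxplus \pi_{w_2}^{\circ} \boxplus \cdots \boxplus \pi_{w_k}^{\circ}
\]
for some sequence of pin factors $w_1, w_2, \dots, w_k$ of $w$ appearing in order, in non-overlapping instances and separated by at least one letter. The goal is then to show that the hypothesis of $\boxplus$-indecomposability forces $k = 1$, in which case $\tilde w := w_1$ does the job.

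The key point I would use is that every pin factor is by definition a pin word, and every pin word begins with a numeral from $\{1,2,3,4\}$, which places at least one true point in its associated centred permutation. Consequently each $\pi_{w_i}^{\circ}$ has length at least $1$, so if $k \geq 2$ then both $\pi_{w_1}^{\circ}$ and $\tau^{\circ} := \pi_{w_2}^{\circ} \boxplus \cdots \boxplus \pi_{w_k}^{\circ}$ are centred permutations of length at least $1$ and hence strictly smaller than $\pi^{\circ}$. Writing
\[
\pi^{\circ} = \pi_{w_1}^{\circ} \boxplus \tau^{\circ}
\]
then exhibits $\pi^{\circ}$ as a $\boxplus$-sum of two strictly smaller centred permutations, contradicting the assumption that $\pi^{\circ}$ is $\boxplus$-indecomposable. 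Hence $k=1$, and $\pi^{\circ} = \pi_{w_1}^{\circ}$ with $w_1$ a pin factor of $w$.

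There is essentially no obstacle here: the corollary is a direct unpacking of Theorem \ref{pindec} combined with the definition of $\boxplus$-indecomposability. The only subtle point worth remarking on is the verification that pin factors are non-empty (so that both halves of the putative $\boxplus$-decomposition are genuinely smaller than $\pi^{\circ}$), which is immediate from the definition of the language $\mathcal{L}_P$ and the rule for how internal pin factors are formed by prepending a quadrant numeral in place of the first letter.
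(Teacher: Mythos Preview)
Your proposal is correct and matches the paper's approach exactly: the paper states this corollary as ``one immediate consequence'' of Theorem~\ref{pindec} without further proof, and your argument supplies precisely the obvious unpacking (apply the decomposition, then note that $\boxplus$-indecomposability forces $k=1$ since each $\pi_{w_i}^{\circ}$ has positive length).
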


Theorem \ref{pindec} is often awkward to apply due to the conditions on the pin factors $w_{i}$; it becomes much easier however, if we assume that $w$ is a \emph{recurrent} infinite pin word - that is, every pin factor of $w$ occurs infinitely often. The theorem then becomes:

\begin{thm}[The Pin Decomposition - Recurrent Case]\ \\
Suppose that $w$ is a recurrent infinite pin word and that $\C_{w}^{\circ}$ is the pin class it generates. Then:
\[
\sigma^{\circ} \in \C_{w}^{\circ} \ \text{iff} \ \sigma^{\circ} \ = \ \pi_{w_{1}}^{\circ} \ \boxplus \ \pi_{w_{2}}^{\circ} \ \boxplus \ \dots \ \boxplus \ \pi_{w_{k}}^{\circ} \ ,
\]
where $w_{1}, w_{2}, \dots w_{k}$ is a sequence of pin factors of $w$, and $\pi_{w_{i}}^{\circ}$ is the centred permutation generated by $w_{i}$.
\end{thm}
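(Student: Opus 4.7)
The plan is to deduce this from the general Pin Decomposition Theorem~\ref{pindec}. The forward implication is essentially immediate: if $\sigma^{\circ} \in \C_{w}^{\circ}$, then by Theorem~\ref{pindec} there exists a decomposition $\sigma^{\circ} = \pi_{w_{1}}^{\circ} \boxplus \cdots \boxplus \pi_{w_{k}}^{\circ}$ where the $w_{i}$ are pin factors of $w$ satisfying the stronger conditions (in order, non-overlapping, separated by at least one letter). In particular the $w_{i}$ are just pin factors of $w$, which is all that is required for this direction; note that this half does not use recurrence at all.

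For the reverse direction, assume $\sigma^{\circ} = \pi_{w_{1}}^{\circ} \boxplus \cdots \boxplus \pi_{w_{k}}^{\circ}$ for some pin factors $w_{1}, \ldots, w_{k}$ of $w$, with no a priori ordering or separation. The goal is to upgrade this list into a family of actual occurrences in $w$ that are in order, non-overlapping, and separated by at least one letter, so that Theorem~\ref{pindec} then gives $\sigma^{\circ} \in \C_{w}^{\circ}$. The strategy is greedy, using recurrence to push each successive occurrence arbitrarily far out in $w$. Concretely, I would construct indices $a_{1} \leq b_{1} < a_{2} \leq b_{2} < \cdots < a_{k} \leq b_{k}$ inductively so that the pin factor $w_{a_{i}, b_{i}}$ equals $w_{i}$ and $a_{i} \geq b_{i-1} + 2$. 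To start, fix any occurrence of $w_{1}$ as a pin factor. Inductively, once occurrences of $w_{1}, \ldots, w_{i-1}$ have been chosen with the last ending at position $b_{i-1}$, use the recurrence hypothesis: $w_{i}$ is a pin factor of $w$ and hence, by recurrence, occurs infinitely often as a pin factor, so in particular there is an occurrence starting at some $a_{i} \geq b_{i-1} + 2$. This yields a sequence of occurrences of $w_{1}, \ldots, w_{k}$ meeting the hypotheses of Theorem~\ref{pindec}, and we conclude $\sigma^{\circ} \in \C_{w}^{\circ}$.

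The one technical point to watch is the subtle distinction between \emph{pin factor} and \emph{subword} emphasised earlier in this section: an initial pin factor such as $1u$ never literally reappears as an initial contiguous subsequence of $w$. However the definition of recurrence was stated precisely to paper over this distinction: every pin factor that occurs (including initial ones) occurs infinitely often as a pin factor, meaning that the corresponding non-initial subword (with the appropriate quadrant numeral reconstructed from the preceding letter) recurs infinitely often in $w$. Consequently the greedy step above goes through uniformly for every $w_{i}$, without any need to treat initial and fully-internal pin factors separately. This is really the only place where the recurrence hypothesis is used, and it is what allows the stronger combinatorial constraints of Theorem~\ref{pindec} to be dropped in the recurrent setting.
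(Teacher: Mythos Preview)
Your proposal is correct and is exactly the argument the paper has in mind: the theorem is stated in the paper without proof, immediately after Theorem~\ref{pindec}, as the simplification one obtains by dropping the order/non-overlap/separation constraints, and your greedy use of recurrence to recover those constraints is precisely how that simplification is justified. Your remark on the initial-versus-internal pin factor subtlety is also apt and matches the paper's earlier discussion of recurrence.
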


\begin{proof}
Immediate from Theorem \ref{pindec} on noting that, as $w$ is recurrent, each $w_{i}$ occurs infinitely-often as a pin factor of $w$, and so some sequence of instantiations of the $w_{i}$ satisfying the position and overlap conditions required by Theorem \ref{pindec} can be found for any ordering of the set $\left\{w_{1}, w_{2}, \dots w_{k}\right\}$. \qedhere
\end{proof}

This has the following crucial corollary:

\begin{cor}[Recurrent pin classes are $\boxplus$-closed]\label{recbox}\ \\
Suppose that $w$ is a recurrent infinite pin word. Then the pin class $\mathcal{C}^{\circ}_{w}$ is $\boxplus$-closed.
\end{cor}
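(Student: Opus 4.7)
The plan is to leverage the recurrent-case version of the Pin Decomposition, which is immediately available since the corollary comes right after its statement. The key feature of recurrence is that it removes the ordering and non-overlap constraints present in the general statement of Theorem \ref{pindec}: \emph{any} concatenated sequence of pin factors of $w$ (regardless of where they sit in $w$ or whether they repeat) gives a valid element of $\mathcal{C}^{\circ}_{w}$.

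First, I would take arbitrary $\pi^{\circ}, \sigma^{\circ} \in \mathcal{C}^{\circ}_{w}$. Applying the recurrent form of the Pin Decomposition (the forward direction) to each, we may write
\[
\pi^{\circ} = \pi_{u_{1}}^{\circ} \boxplus \pi_{u_{2}}^{\circ} \boxplus \dots \boxplus \pi_{u_{j}}^{\circ}, \qquad \sigma^{\circ} = \pi_{v_{1}}^{\circ} \boxplus \pi_{v_{2}}^{\circ} \boxplus \dots \boxplus \pi_{v_{k}}^{\circ},
\]
where each $u_{i}$ and each $v_{i}$ is a pin factor of $w$.

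Next, using the associativity of $\boxplus$ (noted in the comment following the definition of the box sum), we concatenate the two decompositions to obtain
\[
\pi^{\circ} \boxplus \sigma^{\circ} = \pi_{u_{1}}^{\circ} \boxplus \dots \boxplus \pi_{u_{j}}^{\circ} \boxplus \pi_{v_{1}}^{\circ} \boxplus \dots \boxplus \pi_{v_{k}}^{\circ},
\]
which exhibits $\pi^{\circ} \boxplus \sigma^{\circ}$ as a $\boxplus$-sum of pin factors of $w$. Applying the reverse direction of the recurrent Pin Decomposition then places $\pi^{\circ} \boxplus \sigma^{\circ}$ in $\mathcal{C}^{\circ}_{w}$, establishing $\boxplus$-closure.

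There is essentially no obstacle: the entire weight of the argument is carried by the recurrent version of Theorem \ref{pindec}. It is worth emphasising in the write-up, however, \emph{why} recurrence is essential — in the general (non-recurrent) case, Theorem \ref{pindec} requires the pin factors $w_{1}, \dots, w_{k}$ to appear in $w$ in that order and in non-overlapping instances, so simply concatenating the decompositions of $\pi^{\circ}$ and $\sigma^{\circ}$ need not give an admissible sequence. Recurrence is precisely what lets us find arbitrarily many copies of each pin factor arbitrarily far along $w$, and hence arrange any concatenation we please.
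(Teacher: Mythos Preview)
Your proposal is correct and is precisely the argument the paper has in mind: the corollary is stated without proof immediately after the recurrent Pin Decomposition, and your concatenation argument is the obvious (and intended) way to read it off. Your remark about why recurrence is essential is also apt and matches the paper's discussion.
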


\begin{comment}
We note now that the converse of Corollary \ref{recbox} is true, but the proof involves results derived in the next section, specifically regarding the possibility of certain pin factors generating $\boxplus$-decomposable permutations (we know that every $\boxplus$-indecomposable in a pin class is of the form $\pi^{\circ}_{w_{i,j}}$, but, as we shall see, the converse is certainly not true). As we shall not require this converse we omit the proof.
\end{comment}

Corollary \ref{recbox} tells us that to enumerate a \emph{recurrent} pin class $\C_{w}^{\circ}$ it will suffice to enumerate its $\boxplus$-indecomposables and then apply the generating function specification given in Theorem \ref{genfuncspec}. It is thus important for us to know how to enumerate the $\boxplus$-indecomposables in a pin class. Corollary \ref{pinfactorsareindecs} suggests a way of doing this that we investigate in Section \ref{sec:3.5}.

\subsection{Recurrent Pin Classes Have Growth Rates} \label{sec:3.4}

We know that pin classes all have \emph{upper} growth rates by Proposition \ref{eqgrs}. In this section we show that \emph{recurrent} pin classes in fact have \emph{proper} growth rates. In fact, as our eventual aim is to show that \emph{all} pin classes have proper growth rates we will have to be slightly more general here and aim to prove that all $\boxplus$-closed subclasses of $\mathcal{P}^{\circ}$ (the class of all centred pin permutations) have proper growth rates.

Our strategy in proving this will be to follow Arratia~\cite{arratia:on-the-stanley-:} who proved that $\oplus$-closed permutation classes have proper growth rates. Arratia's strategy was to note that, in a $\oplus$-closed class $\mathcal{C}$, the map
\[
\left(\sigma, \tau\right) \mapsto \sigma \oplus \tau
\]
is an injection from $\mathcal{C}_{m} \times \mathcal{C}_{n}$ to $\mathcal{C}_{m+n}$, and so the enumeration sequence of $\mathcal{C}$ satisfies the \emph{supermultiplicative inequality:}
\[
C_{m+n} \geq C_{m}C_{n} \ .
\]
Finally, Arratia applies the supermultiplicative form of Fekete's Lemma~\cite{fekete:uber-die-vertei:} to deduce that $gr(C_{n})$ exists.

We aim to apply this method to $\boxplus$-closed centred permutation classes, but we have a problem: as we have seen, the $\boxplus$-decomposition is certainly \emph{not} unique and so the map
\begin{equation}\label{pairinjection}
\begin{split}
\mathcal{C}^{\circ}_{m} \times \mathcal{C}^{\circ}_{n} & \rightarrow \mathcal{C}^{\circ}_{m+n} \\
 & \\
\left(\sigma^{\circ}, \tau^{\circ}\right) & \mapsto \sigma^{\circ} \boxplus \tau^{\circ}
\end{split}
\end{equation}
is not necessarily an injection and we will not, in general, be able to prove the supermultiplicative inequality for a $\boxplus$-closed subclass of $\mathcal{P}^{\circ}$.

We note, however, that even the weaker `supermultiplicative-like' inequality $C_{m+n} \geq kC_{m}C_{n}$ (for some constant $k<1$) would be sufficient to deduce the existence $gr(C_{n})$, on applying Fekete's Lemma to $D_{n}=kC_{n}$ instead. Unfortunately, the enumeration sequence of a $\boxplus$-closed subclass of $\mathcal{P}^{\circ}$ does not in general satisfy even this weaker form of supermultiplicativity: for example, if we take $\mathcal{C}^{\circ}$ to be the $\boxplus$-closure of $(\nept)$ and $(\swpt)$ we can easily show that
\[
\lim_{n\rightarrow\infty}|\mathcal{C}^{\circ}_{2n}|/|\mathcal{C}^{\circ}_{n}|^{2} = 0.
\]
The problem in this example is that there is \emph{too much commutativity} due to the class only containing points in the first and third quadrants, which are opposite to each other. As it turns out, this is really the only thing that can go wrong in proving a supermultiplicative-like identity: if we have any points from a pair of adjacent quadrants then we already have enough pairs that don't commute to conclude that $C_{m+n} \geq kC_{m}C_{n}$ for some constant $k$. We thus exclude the bad case in which we have only points from opposite quadrants via the following definition:

\begin{defn}[Adjacency Condition]
Let $p^{\circ}_{i}$ denote the centred permutation consisting of a single point in the $i$th quadrant (so $p^{\circ}_{1} = (\nept)$, etc.). We say that a centred permutation class $\mathcal{C}^{\circ}$ satisfies the \emph{adjacency condition} if it contains either precisely one of $p^{\circ}_{1}, p^{\circ}_{2}, p^{\circ}_{3}, p^{\circ}_{4}$ or it contains a pair $p^{\circ}_{i}, p^{\circ}_{j}$ from adjacent quadrants.
\end{defn}

Assuming this condition we may now prove a weak form of supermultiplicativity for the enumeration sequence of a $\boxplus$-closed subclass of $\mathcal{P}^{\circ}$:

\begin{prop}\label{supermultgen}
Let $\mathcal{C}^{\circ}$ be a $\boxplus$-closed subclass of $\mathcal{P}^{\circ}$ which satisfies the adjacency condition. Let $\mathcal{C}^{\circ}_{n}$ denote the set of centred permutations of length $n$ in $\mathcal{C}^{\circ}$ and write $C_{n} = |\mathcal{C}^{\circ}_{n}|$. Then, for all $m,n \in \mathbb{N}$:
\[
C_{m+n} \geq C_{m-1}C_{n-1} \ .
\]
\end{prop}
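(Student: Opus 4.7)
The plan is to follow Arratia's strategy but replace his use of $\oplus$ with a $\boxplus$-sum carrying an explicit two-point separator, designed to defeat the commutativity in Theorem \ref{uniqueness}. Concretely, I construct an injection
\[
\phi\colon\mathcal{C}^{\circ}_{m-1}\times\mathcal{C}^{\circ}_{n-1}\longrightarrow\mathcal{C}^{\circ}_{m+n},
\]
from which $C_{m+n}\geq C_{m-1}C_{n-1}$ follows immediately.

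Using the adjacency condition, pick a pair $\mu^{\circ}_i,\mu^{\circ}_j\in\mathcal{C}^{\circ}$ from adjacent quadrants; the special case in which $\mathcal{C}^{\circ}$ contains only a single single-point centred permutation $\mu^{\circ}_k$ is treated separately as noted below. Define
\[
\phi(\sigma^{\circ},\tau^{\circ}):=\sigma^{\circ}\boxplus\mu^{\circ}_i\boxplus\mu^{\circ}_j\boxplus\tau^{\circ}.
\]
Closure of $\mathcal{C}^{\circ}$ under $\boxplus$ ensures $\phi$ takes values in $\mathcal{C}^{\circ}_{m+n}$. To prove injectivity, suppose $\phi(\sigma^{\circ}_1,\tau^{\circ}_1)=\phi(\sigma^{\circ}_2,\tau^{\circ}_2)=\pi^{\circ}$, and write out the two ``natural'' tuples of $\boxplus$-indecomposables corresponding to the two factorisations. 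By Theorem \ref{uniqueness} these lie in the same equivalence class under swaps of adjacent commuting pairs. Since $\mu^{\circ}_i$ and $\mu^{\circ}_j$ are from \emph{adjacent} quadrants, Lemma \ref{commuters} tells us that they do \emph{not} commute with each other, so in every tuple representation of $\pi^{\circ}$ the separator's $\mu^{\circ}_i$ must lie to the left of its partner $\mu^{\circ}_j$. The only commutativity swaps that can move the separator are with opposite-quadrant elements (copies of $\mu^{\circ}_{i+2}$ for $\mu^{\circ}_i$ and $\mu^{\circ}_{j+2}$ for $\mu^{\circ}_j$) coming from the decompositions of $\sigma^{\circ}$ or $\tau^{\circ}$, but such swaps are internal to the prefix or the suffix and do not alter the centred permutations these encode. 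Consequently, the prefix centred permutation, of length $m-1$, is the same in any equivalent tuple representation with the separator in place, forcing $\sigma^{\circ}_1=\sigma^{\circ}_2$; symmetrically $\tau^{\circ}_1=\tau^{\circ}_2$.

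If $\mathcal{C}^{\circ}$ contains only one $\mu^{\circ}_k$ then, by downward-closure, every element of $\mathcal{C}^{\circ}$ has all its points in quadrant $k$. In this situation Lemma \ref{commuters} tells us that no two \emph{distinct} $\boxplus$-indecomposables of $\mathcal{C}^{\circ}$ commute, so Theorem \ref{uniqueness} gives literal uniqueness of the tuple decomposition. Then $(\sigma^{\circ},\tau^{\circ})\mapsto\sigma^{\circ}\boxplus\tau^{\circ}$ is already an injection into $\mathcal{C}^{\circ}_{m+n-2}$; combined with the inequality $C_n\geq C_{n-1}$ (obtained by inflating any $\sigma^{\circ}\in\mathcal{C}^{\circ}_{n-1}$ by $\mu^{\circ}_k$, which is injective by left-cancellation), this gives $C_{m+n}\geq C_{m+n-2}\geq C_{m-1}C_{n-1}$ as required.

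The hardest part of the argument will be formalising the ``separator tracking'' claim used in the adjacent-pair case. The separator copies of $\mu^{\circ}_i$ and $\mu^{\circ}_j$ are not intrinsically distinguishable from other copies of these single-point centred permutations that may appear inside $\sigma^{\circ}_k$ or $\tau^{\circ}_k$, so one has to argue that the relevant length-$(m-1)$ prefix centred permutation is well-defined across all equivalent tuple representations, including those where the separator has drifted and its two components are no longer sitting adjacently in the tuple. The non-commutativity of $\mu^{\circ}_i$ and $\mu^{\circ}_j$, guaranteed by their adjacency, is the structural fact that ultimately makes this possible, together with the observation that all commuting pairs in Lemma \ref{commuters} consist of single points (so every allowed swap preserves the cumulative centred-permutation length of any prefix of the tuple).
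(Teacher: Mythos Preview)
Your approach is essentially the paper's: insert a short separator between $\sigma^{\circ}$ and $\tau^{\circ}$ chosen so that nothing can commute past the whole separator, and invoke Theorem~\ref{uniqueness}. The paper splits into three cases according to how many of the $\mu^{\circ}_k$ lie in $\mathcal{C}^{\circ}$, using zero, one, or two separator points respectively; you use two separator points whenever an adjacent pair is available and zero otherwise. Both routes lead to the same inequality, and the paper's justification of injectivity in its Case~3 (``nothing can commute with both of the interior permutations'') is just as terse as yours.

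That said, two of your supporting claims are wrong and should be dropped. First, ``such swaps are internal to the prefix or the suffix'' is false: if the last indecomposable of $\sigma^{\circ}$ is a one-quadrant permutation opposite to $\mu^{\circ}_i$, it can swap across $\mu^{\circ}_i$, so the separator need not stay contiguous. Second, ``all commuting pairs in Lemma~\ref{commuters} consist of single points'' is also false: the lemma says commuting pairs are \emph{one-quadrant} permutations from opposite quadrants, of arbitrary length. The correct one-line justification (which is what the paper means) is that no $\boxplus$-indecomposable commutes with both $\mu^{\circ}_i$ and $\mu^{\circ}_j$, so in the dependency order on the trace every indecomposable of $\sigma^{\circ}$ precedes $\mu^{\circ}_j$, and $\mu^{\circ}_i$ precedes every indecomposable of $\tau^{\circ}$; hence no element can cross the separator block as a whole, and $\sigma^{\circ}$ is recoverable as the $\boxplus$-sum of everything preceding the first occurrence of $\mu^{\circ}_i\boxplus\mu^{\circ}_j$ (once one tracks the correct copies, which is where the paper's case split earns its keep by reducing the bookkeeping).
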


\begin{proof}
Note first that certainly $p_{i} \in \mathcal{C}^{\circ}$ for some $i$: now, if $\sigma^{\circ} \in \mathcal{C}^{\circ}_{n-1}$ then $p_{i} \boxplus \sigma^{\circ} \in \mathcal{C}^{\circ}_{n}$, and by left-cancellation the implied map from $\mathcal{C}^{\circ}_{n-1}$ to $\mathcal{C}^{\circ}_{n}$ is injective. Hence:
\[
C_{n} \geq C_{n-1} \ .
\]
We now split into cases based on how many of the one-point permutations $p^{\circ}_{1}, p^{\circ}_{2}, p^{\circ}_{3}, p^{\circ}_{4}$ are contained in $\mathcal{C}^{\circ}$:
\begin{itemize}
\item \textbf{Case 1:} Suppose $\mathcal{C}^{\circ}$ contains either one or two of $p^{\circ}_{1}, p^{\circ}_{2}, p^{\circ}_{3}, p^{\circ}_{4}$. Then, by the adjacency condition, $\mathcal{C}^{\circ}$ is contained entirely within one half-plane. Hence by Lemma \ref{commuters} there are no pairs of distinct elements in $\mathcal{C}^{\circ}$ which commute, and so, by Theorem \ref{uniqueness}, the map
\[
\begin{split}
\mathcal{C}^{\circ}_{m} \times \mathcal{C}^{\circ}_{n} & \rightarrow \mathcal{C}^{\circ}_{m+n} \\
\left(\sigma^{\circ}, \tau^{\circ}\right) & \mapsto \sigma^{\circ} \boxplus \tau^{\circ}
\end{split}
\]
is an injection, hence
\[
\begin{split}
C_{m+n} & \geq C_{m}C_{n} \\
 & \geq C_{m-1}C_{n-1}
\end{split}
\]
as required.
\item \textbf{Case 2:} Suppose $\mathcal{C}^{\circ}$ contains precisely three of $p^{\circ}_{1}, p^{\circ}_{2}, p^{\circ}_{3}, p^{\circ}_{4}$. Without loss of generality, assume that these are $p^{\circ}_{1}, p^{\circ}_{2}, p^{\circ}_{3}$. Note that $p^{\circ}_{2}$ commutes with no other element in $\mathcal{C}^{\circ}$. Thus
\[
\begin{split}
\mathcal{C}^{\circ}_{m} \times \mathcal{C}^{\circ}_{n-1} & \rightarrow \mathcal{C}^{\circ}_{m+n} \\
\left(\sigma^{\circ}, \tau^{\circ}\right) & \mapsto \sigma^{\circ} \boxplus p^{\circ}_{2} \boxplus \tau^{\circ}
\end{split}
\]
is an injection. Hence:
\[
\begin{split}
C_{m+n} & \geq C_{m}C_{n-1} \\
 & \geq C_{m-1}C_{n-1}
\end{split}
\]
as required.
\item \textbf{Case 3:} Suppose $\mathcal{C}^{\circ}$ contains all four of $p^{\circ}_{1}, p^{\circ}_{2}, p^{\circ}_{3}, p^{\circ}_{4}$. Then
\[
\begin{split}
\mathcal{C}^{\circ}_{m-1} \times \mathcal{C}^{\circ}_{n-1} & \rightarrow \mathcal{C}^{\circ}_{m+n} \\
\left(\sigma^{\circ}, \tau^{\circ}\right) & \mapsto \sigma^{\circ} \boxplus p^{\circ}_{2} \boxplus p^{\circ}_{3} \boxplus \tau^{\circ}
\end{split}
\]
is an injection, as no centred permutation in $\mathcal{C}^{\circ}$ can commute with both of the interior permutations. Hence:
\[
C_{m+n} \geq C_{m-1}C_{n-1}
\]
as required.
\end{itemize}
\end{proof}

This, as it stands, is slightly weaker than a supermultiplicativity result (which should relate $C_{m+n}$ with $C_{m}C_{n}$, not $C_{m-1}C_{n-1}$), but we could easily convert it into such if we could obtain a bound on the ratio between consecutive terms in the enumeration sequence of a $\boxplus$-closed subclass of $\mathcal{P}^{\circ}$. To find such a bound we notice that all pin permutations of length $n+1$ can be obtained as `extensions' of pin permutations of length $n$, motivating the following definition:

\begin{defn}[Pin Representations]
Let $\sigma^{\circ}$ be a pin permutation. We call a $k$-tuple of pin words $\left(w_{1}, w_{2}, \dots, w_{k}\right)$ a \emph{pin representation of $\sigma^{\circ}$} if
\[
\sigma^{\circ} = \pi^{\circ}_{w_{1}} \ \boxplus \ \pi^{\circ}_{w_{2}} \ \boxplus \ \dots \ \boxplus \ \pi^{\circ}_{w_{k}} \ ,
\]
where each $w_{i}$ is a pin word.
\end{defn}

Clearly, every pin permutation $\sigma^{\circ}$ has a pin representation (as every $\boxplus$-indecomposable is of the form $\pi^{\circ}_{\widetilde{w}}$; but note that we do not require that each $\pi^{\circ}_{w_{i}}$ is $\boxplus$-indecomposable). Note, however, that pin representations are in general highly non-unique (it may easily be checked, for example, that $\left(1, 3, 1ul\right)$, $\left(3, 1, 1ul\right)$ and $\left(1, 1uld\right)$ are all representations of the same centred permutation, namely $51\underline{2}364$). We can now formalise the notion of one pin permutation being an extension of another:

\begin{defn}[One-point Extensions]
Let $\sigma^{\circ}, \widetilde{\sigma}^{\circ} \in \mathcal{P}^{\circ}$ with $|\widetilde{\sigma}^{\circ}| = |\sigma^{\circ}| + 1$. We say that $\widetilde{\sigma}^{\circ}$ is a \emph{one-point extension of $\sigma^{\circ}$} if there is some pin representation $\left(w_{1}, w_{2}, \dots, w_{k}\right)$ of $\sigma^{\circ}$ for which there exists \emph{either:}
\begin{itemize}
\item some $L \in \{u,d,l,r\}$ such that the concatenation $w_{k}L$ is a valid pin word and
\[
\left(w_{1}, w_{2}, \dots, w_{k}L\right)
\]
is a pin representation of $\widetilde{\sigma}^{\circ}$; \ \ \ \emph{or:}
\item some $Q \in \{1,2,3,4\}$ such that
\[
\left(w_{1}, w_{2}, \dots, w_{k}, Q\right)
\]
is a pin representation of $\widetilde{\sigma}^{\circ}$.
\end{itemize}
\end{defn}

Informally, $\widetilde{\sigma}^{\circ}$ is a one-point extension of $\sigma^{\circ}$ if there is a pin representation of $\sigma^{\circ}$ which can be extended to a pin representation of $\widetilde{\sigma}^{\circ}$ by the appendment of one final symbol, either as an extension of the final pin word in the representation, or as an additional pin word which consists of a single quadrant numeral. We note the following crucial facts about one-point extensions:

\begin{lemma}\label{onepointfacts}
Suppose $\mathcal{C}^{\circ}$ is a subclass of the complete pin class $\mathcal{P}^{\circ}$. Then:
\begin{enumerate}
\item Every $\widetilde{\sigma}^{\circ} \in \mathcal{C}^{\circ}$ of length $n+1$ is a one-point extension of some $\sigma^{\circ} \in \mathcal{C}^{\circ}$ of length $n$.
\item Let $\sigma^{\circ} \in \mathcal{C}^{\circ}$. Then $\sigma^{\circ}$ has at most $12$ one-point extensions in $\mathcal{C}^{\circ}$.
\end{enumerate}
\end{lemma}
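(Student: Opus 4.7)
Part 1 is straightforward. Given $\widetilde{\sigma}^{\circ} \in \mathcal{C}^{\circ}$ of length $n+1$, take any pin representation $(w_1, \ldots, w_k)$ of $\widetilde{\sigma}^{\circ}$ (which exists because $\widetilde{\sigma}^{\circ}$ is a pin permutation) and delete its last symbol: if $|w_k| \geq 2$ drop the final letter of $w_k$, while if $w_k$ is a single quadrant numeral drop $w_k$ entirely. The truncated sequence is still a pin representation, of some centred pin permutation $\sigma^{\circ}$ of length $n$ with $\sigma^{\circ} \leq \widetilde{\sigma}^{\circ}$. Downward closure of $\mathcal{C}^{\circ}$ places $\sigma^{\circ}$ in $\mathcal{C}^{\circ}$, and $\widetilde{\sigma}^{\circ}$ is a one-point extension of $\sigma^{\circ}$ by construction.

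For Part 2, I would split the one-point extensions of $\sigma^{\circ}$ into \emph{numeral} extensions, which append a fresh single-symbol pin word $Q$ and yield the centred permutations $\sigma^{\circ} \boxplus \mu^{\circ}_{Q}$ for $Q \in \{1,2,3,4\}$, and \emph{letter} extensions, which append a letter $L$ of opposite alignment to the final pin word $w_k$ of some representation. The numeral family contributes at most $4$ extensions; the remaining work is bounding the letter family by $8$. The strategy is to parametrise letter extensions by the pair consisting of (a) the final $\boxplus$-indecomposable summand $\tau^{\circ}$ of $\sigma^{\circ}$ ``enclosing'' the new pin, and (b) the direction in which the new pin is added. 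By Theorem \ref{uniqueness} there are at most two candidates for this final indecomposable (a unique one, or one of the two elements of a final commuting pair), and for each candidate $\tau^{\circ}$, the indecomposable pin permutation $\tau^{\circ}$ admits at most four pin-extensions: any $\boxplus$-indecomposable pin permutation has at most two generating pin words (the only source of non-uniqueness being length-$2$ oscillation collisions such as $\pi^{\circ}_{1u} = \pi^{\circ}_{1r}$), and each such pin word admits at most two letter continuations under the alternation rule. This gives $2 \cdot 4 = 8$ letter extensions, hence $12$ in total.

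The main obstacle is verifying that every letter extension can indeed be accounted for in terms of a pin-extension of a final indecomposable. A pin representation of $\sigma^{\circ}$ may have its final pin word $w_k$ generate a $\boxplus$-decomposable pin permutation spanning several of $\sigma^{\circ}$'s indecomposable summands, and extending such a ``coarse'' $w_k$ by a letter produces extensions that do not at first sight fit the mould of pin-extending a single indecomposable. Resolving this will require careful use of Lemma \ref{minint} (to control how many $\boxplus$-suffixes of the decomposition of $\sigma^{\circ}$ can themselves be generated by a single pin word) together with Observation \ref{slicing} (to control where a pin extension can place the new point). Finally, the commuting-pair case in which both final indecomposables are length-$2$ oscillations in opposite quadrants confirms that the bound $12$ is tight.
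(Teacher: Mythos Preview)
Your Part 1 is correct and essentially identical to the paper's argument.

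For Part 2, the paper takes a much more direct route than you do. Rather than counting pin representations and reducing, the paper observes that the one-point extension $\widetilde{\sigma}^{\circ}$ is determined as a \emph{permutation} solely by the position of the new point relative to the bounding rectangle of $\sigma^{\circ}$. A numeral extension places the point in one of the four extreme corners; a letter extension with letter $L$ places a point that is the most extreme in direction $L$ and the second-most extreme in some perpendicular direction. That gives exactly $4+8=12$ possible positions (see Figure~\ref{fig:extensionpossibilities}), and hence at most $12$ distinct extensions---regardless of how many pin representations $\sigma^{\circ}$ has. No appeal to the $\boxplus$-decomposition or the classification of collisions is needed.

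Your approach, by contrast, attempts to bound the number of letter extensions via the final $\boxplus$-indecomposable summand and the number of pin words generating it. This is considerably more delicate, and two points would need repair. First, your claim that a $\boxplus$-indecomposable pin permutation has at most two generating pin words is false: Theorem~\ref{classification} records collisions at every length $n\geq 2$, including two quadruples at length $4$, so a single indecomposable can have up to four generating pin words. Second, the obstacle you yourself flag---that the final pin word $w_k$ in a representation may be $\boxplus$-decomposable and span several summands---is genuine, and resolving it would essentially require rediscovering the geometric observation the paper uses. The paper's argument sidesteps all of this by working directly with the position of the added point.
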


\begin{comment}
Note that \textit{2.} is a statement about \emph{permutations} not representations: any given $\sigma^{\circ} \in \mathcal{C}^{\circ}$ of length $n$ may have many distinct representations, which may extend to many more than $12$ distinct representations of length $n+1$; the lemma tells us that these representations represent at most $12$ distinct permutations between them.
\end{comment}

\begin{proof}
\begin{enumerate}
\item Let $\widetilde{\sigma}^{\circ} \in \mathcal{C}^{\circ}$ of length $n+1$ and take a pin representation $\left(w_{1}, w_{2}, \dots, w_{k}\right)$ of $\widetilde{\sigma}^{\circ}$. Simply by deleting the final symbol of $w_{k}$ (which may be all of $w_{k}$ if $w_{k}$ is a quadrant numeral) we obtain a pin representation of a permutation $\sigma^{\circ}$ which must also be in $\mathcal{C}^{\circ}$ (by Observation \ref{pinfactorcontainment} and closure of $\mathcal{C}^{\circ}$ under the containment order $\leq$) and which has $\widetilde{\sigma}^{\circ}$ as a one-point extension.
\item Let $\sigma^{\circ} \in \mathcal{C}^{\circ}$ and suppose that $\widetilde{\sigma}^{\circ}$ is a one-point extension of $\sigma^{\circ}$ (which may or may not be in $\mathcal{C}^{\circ}$). We claim that \emph{regardless of the pin representation chosen for} $\sigma^{\circ}$, $\widetilde{\sigma}^{\circ}$ must be obtained by adding a point in one of the $12$ positions indicated in Fig. \ref{fig:extensionpossibilities}: if we append a numeral $Q$  to the pin representation then $\widetilde{\sigma}^{\circ} = \sigma^{\circ} \boxplus p^{\circ}_{Q}$ and we have added a point in one of the extreme corners; if, on the other hand, we append a letter $L$ to the pin representation then we add a point which is the most extreme in the direction indicated by $L$ and the second-most extreme in another (perpendicular) direction, determined by the final symbol of the pin representation of $\sigma^{\circ}$. \qedhere
\end{enumerate}
\end{proof}

\begin{figure}[h]
\begin{center}
\begin{tikzpicture}[scale=0.35]

\node[circle, draw, fill=none, inner sep=0pt, minimum width=\plotptradius] (0) at (0,0) {};
\node[permpt] (1) at (5,3) {}; \draw[thin] (1) -- ++ (-2.5,0);
\node[permpt] (2) at (3,5) {}; \draw[thin] (2) -- ++ (0,-2.5);
\node[permpt] (3) at (-3,5) {}; \draw[thin] (3) -- ++ (0,-2.5);
\node[permpt] (4) at (-5,3) {}; \draw[thin] (4) -- ++ (2.5,0);
\node[permpt] (5) at (-5,-3) {}; \draw[thin] (5) -- ++ (2.5,0);
\node[permpt] (6) at (-3,-5) {}; \draw[thin] (6) -- ++ (0,2.5);
\node[permpt] (7) at (3,-5) {}; \draw[thin] (7) -- ++ (0,2.5);
\node[permpt] (8) at (5,-3) {}; \draw[thin] (8) -- ++ (-2.5,0);

\node[permpt] (9) at (5,5) {};
\node[permpt] (10) at (-5,5) {};
\node[permpt] (11) at (-5,-5) {};
\node[permpt] (11) at (5,-5) {};

\draw[thick] (-7,0) -- ++ (14,0);
\draw[thick] (0,-7) -- ++ (0,14);

\draw[thick] (-4,-4) rectangle (4,4);

\end{tikzpicture}
\end{center}
\caption{If $\sigma^{\circ}$ is enclosed by the square then \emph{any} one-point extension of $\sigma^{\circ}$ in $\mathcal{C}^{\circ}$ is formed by adding a point in one of the $12$ positions shown. As in all pin diagrams, a pin separates precisely one point from all the rest, so, for example, a point at the end of an upward pin in the right half-plane has precisely one point to its right, with all other points to its left.}
\label{fig:extensionpossibilities}
\end{figure}

Lemma \ref{onepointfacts} immediately implies the following:

\begin{prop} \label{upratiobound}
Let $\mathcal{C}^{\circ}$ be a subclass of the complete pin class $\mathcal{P}^{\circ}$. Let $\mathcal{C}^{\circ}_{n}$ denote the set of centred permutations of length $n$ in $\mathcal{C}^{\circ}$ and write $C_{n} = |\mathcal{C}^{\circ}_{n}|$. Then, for all $n \in \mathbb{N}$:
\[
C_{n} \leq 12C_{n-1} \ .
\]
\end{prop}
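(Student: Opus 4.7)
The plan is to deduce the bound as an immediate corollary of Lemma \ref{onepointfacts}, which does essentially all the work. The strategy is a straightforward double-counting argument using the ``one-point extension'' relation.

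First I would set up the counting frame. Define
\[
E = \bigl\{(\sigma^{\circ}, \widetilde{\sigma}^{\circ}) : \sigma^{\circ} \in \mathcal{C}^{\circ}_{n-1},\ \widetilde{\sigma}^{\circ} \in \mathcal{C}^{\circ}_{n},\ \widetilde{\sigma}^{\circ} \text{ is a one-point extension of } \sigma^{\circ}\bigr\}
\]
and count $|E|$ in two ways. By part (2) of Lemma \ref{onepointfacts}, each $\sigma^{\circ} \in \mathcal{C}^{\circ}_{n-1}$ has at most $12$ one-point extensions (in $\mathcal{C}^{\circ}$ or otherwise), so
\[
|E| \leq 12 C_{n-1}.
\]
By part (1) of the same lemma, every $\widetilde{\sigma}^{\circ} \in \mathcal{C}^{\circ}_{n}$ is a one-point extension of at least one $\sigma^{\circ} \in \mathcal{C}^{\circ}_{n-1}$, hence each such $\widetilde{\sigma}^{\circ}$ contributes at least one pair to $E$, giving
\[
|E| \geq C_{n}.
\]
Combining the two inequalities yields $C_{n} \leq 12 C_{n-1}$, as required.

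There is essentially no obstacle here since all the substantive content has been absorbed into Lemma \ref{onepointfacts}; the only thing to be a little careful about is to note that the bound of $12$ in part (2) applies to \emph{all} one-point extensions (without a membership assumption), so it certainly applies to those lying in $\mathcal{C}^{\circ}$, and that part (1) produces an extender $\sigma^{\circ}$ actually inside $\mathcal{C}^{\circ}_{n-1}$ (which follows because $\mathcal{C}^{\circ}$ is downward closed, as used in the proof of Lemma \ref{onepointfacts}). One could also phrase the argument purely via a surjection from the multiset of one-point extensions of elements of $\mathcal{C}^{\circ}_{n-1}$ onto $\mathcal{C}^{\circ}_{n}$, but the double-counting of $E$ is the cleanest form. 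The edge case $n=1$ is handled trivially by the trivial bound $C_{1} \leq 4 \leq 12 C_{0}$ (or by interpreting $\mathcal{C}^{\circ}_{0}$ as containing the empty centred permutation so that $C_{0} = 1$).
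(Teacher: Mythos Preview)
Your proof is correct and is precisely the argument the paper has in mind: the paper simply states that the proposition follows immediately from Lemma~\ref{onepointfacts}, and your double-counting of the extension relation $E$ is exactly the unpacking of that immediacy. There is nothing to add.
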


Finally, we combine Propositions \ref{supermultgen} and \ref{upratiobound} to deduce a supermultiplicativity-like identity for $\boxplus$-closed subclasses of $\mathcal{P}^{\circ}$ satisfying the adjacency condition:

\begin{cor}\label{supermultbound}
Let $\mathcal{C}^{\circ}$ be a $\boxplus$-closed subclass of $\mathcal{P}^{\circ}$ which satisfies the adjacency condition and let $C_{n} = |\mathcal{C}^{\circ}_{n}|$. Then, for all $m,n \in \mathbb{N}$:
\[
C_{m+n} \geq \frac{1}{144}C_{m}C_{n} \ .
\]
\end{cor}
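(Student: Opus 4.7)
The plan is to combine the two previous results directly: Proposition \ref{supermultgen} provides a lower bound for $C_{m+n}$ in terms of $C_{m-1}C_{n-1}$, and Proposition \ref{upratiobound} gives an upper bound on the ratio of consecutive enumeration terms, allowing us to trade $C_{m-1}$ for $\tfrac{1}{12}C_m$ (and similarly for $n$).

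More explicitly, I would first observe that applying Proposition \ref{upratiobound} twice yields $C_{m-1} \geq \tfrac{1}{12}C_m$ and $C_{n-1} \geq \tfrac{1}{12}C_n$, valid whenever $m, n \geq 1$. Substituting these into the inequality $C_{m+n} \geq C_{m-1}C_{n-1}$ from Proposition \ref{supermultgen} immediately produces
\[
C_{m+n} \;\geq\; C_{m-1}C_{n-1} \;\geq\; \frac{1}{12}C_m \cdot \frac{1}{12}C_n \;=\; \frac{1}{144}C_m C_n,
\]
which is the desired bound.

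The only thing to mention is the degenerate case $m = 0$ or $n = 0$, in which $C_{m-1}$ is not defined; here the claim is trivial since $C_0 = 1$ (the empty centred permutation lies in every class) and so $C_{m+n} = C_{\max(m,n)} \geq \tfrac{1}{144}C_m C_n$ follows from the obvious inequality $C_n \geq \tfrac{1}{144}C_n$. There is no real obstacle: all the combinatorial work has already been done in the two preceding propositions, and this corollary is simply the algebraic synthesis of their bounds.
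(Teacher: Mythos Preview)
Your proof is correct and matches the paper's approach exactly: the paper presents this corollary without an explicit proof, simply stating that it follows by combining Propositions \ref{supermultgen} and \ref{upratiobound}, which is precisely the algebraic substitution you carry out.
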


This is finally enough to prove our main result:

\begin{thm} \label{grexist}
Suppose that $\mathcal{C}^{\circ}$ is a $\boxplus$-closed subclass of $\mathcal{P}^{\circ}$ which satisfies the adjacency condition. Then $\mathcal{C}^{\circ}$ has a proper growth rate.
\end{thm}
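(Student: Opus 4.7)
The plan is straightforward: apply the supermultiplicative form of Fekete's Lemma to the enumeration sequence (after a harmless rescaling) and combine this with the already-established existence of the upper growth rate.

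First I would check that $C_n \geq 1$ for all $n \geq 0$, so that logarithms and $n$-th roots are safe. The adjacency condition forces $\mathcal{C}^{\circ}$ to contain at least one single-point centred permutation $\mu^{\circ}_i$; by $\boxplus$-closure, the iterated $\boxplus$-sum $\mu^{\circ}_i \boxplus \mu^{\circ}_i \boxplus \dots \boxplus \mu^{\circ}_i$ of any length lies in $\mathcal{C}^{\circ}$, so $(C_n)$ is everywhere positive.

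Next I would invoke Corollary \ref{supermultbound} to obtain $C_{m+n} \geq \tfrac{1}{144} C_m C_n$ and rescale. Setting $D_n = \tfrac{1}{144} C_n$, a one-line calculation shows $D_{m+n} \geq D_m D_n$, so $(D_n)$ is a genuinely supermultiplicative sequence of positive reals. Applying the supermultiplicative form of Fekete's Lemma to $\log D_n$ then yields that $\lim_{n\to\infty} D_n^{1/n}$ exists in $[0, \infty]$. Since $D_n^{1/n} = C_n^{1/n}/144^{1/n}$ and $144^{1/n} \to 1$, the limit $\lim_{n\to\infty} C_n^{1/n}$ also exists, which is exactly the statement $\underline{gr}(\mathcal{C}^{\circ}) = \overline{gr}(\mathcal{C}^{\circ})$.

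To conclude that this common value is finite, I would appeal to Proposition \ref{eqgrs}: since $\mathcal{C}^{\circ} \subseteq \mathcal{P}^{\circ}$ is a proper centred permutation class, $\overline{gr}(\mathcal{C}^{\circ})$ exists and is finite (bounded above by the Marcus--Tardos growth rate of the underlying uncentred class). Hence $\mathcal{C}^{\circ}$ has a finite proper growth rate, as required. At this stage I would not expect any real obstacle: the substantive work was already carried out in Propositions \ref{supermultgen} and \ref{upratiobound}, and the present theorem is a formal consequence via Fekete once the constant-factor supermultiplicativity bound is in place. The only mildly delicate point is remembering to rescale by $1/144$ before applying Fekete's Lemma, and noting that the $144^{1/n}$ factor disappears asymptotically.
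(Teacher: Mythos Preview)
Your proposal is correct and follows essentially the same approach as the paper: rescale by $1/144$ to obtain a genuinely supermultiplicative sequence $D_n$, apply Fekete's Lemma, and then observe that the factor $144^{1/n}\to 1$ so the limit for $C_n^{1/n}$ exists and is finite by Marcus--Tardos. Your explicit check that $C_n\geq 1$ for all $n$ is a nice extra detail that the paper leaves implicit.
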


\begin{proof}
Let $C_{k}$ be the enumeration sequence of $\mathcal{C}^{\circ}$ and set $D_{k} = \frac{1}{144}C_{k}$ for all $k \in \mathbb{N}$. Then, by Corollary \ref{supermultbound},
\[
D_{m+n} \geq D_{m}D_{n} \ ,
\]
so we may apply Fekete's Lemma to deduce that $\lim_{n\rightarrow\infty}\sqrt[n]{D_{n}}$ exists and is equal to $\sup_{n\in\mathbb{N}}\sqrt[n]{D_{n}}$, which is finite by Marcus-Tardos. Hence:
\[
\begin{split}
\lim_{n\rightarrow\infty}\sqrt[n]{C_{n}} & = \lim_{n\rightarrow\infty}(\sqrt[n]{144})(\sqrt[n]{D_{n}}) \\
 & = \lim_{n\rightarrow\infty}\sqrt[n]{D_{n}} \\
 & < \infty,
\end{split}
\]
which is to say that $gr(\mathcal{C}^{\circ})$ exists. \end{proof}

As any recurrent pin class is $\boxplus$-closed and satisfies the adjacency condition (it is impossible for a pin word to move between opposite quadrants without moving through the quadrant adjacent to both first), we immediately obtain:

\begin{cor}[Recurrent pin classes have proper growth rates] \label{recgrs}
Let $w$ be a recurrent infinite pin word. Then the pin class $\mathcal{C}^{\circ}_{w}$ has a proper growth rate.
\end{cor}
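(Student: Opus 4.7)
The plan is to invoke Theorem \ref{grexist} directly. Since $\mathcal{C}^{\circ}_{w}$ is a subclass of $\mathcal{P}^{\circ}$ by construction and is $\boxplus$-closed by Corollary \ref{recbox}, the only remaining hypothesis to check is the adjacency condition. I will in fact prove the slightly stronger statement that \emph{every} pin class, recurrent or not, satisfies the adjacency condition.

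Since $\mu^{\circ}_{i} \in \mathcal{C}^{\circ}_{w}$ precisely when the pin diagram of $w$ places some point in the $i$th quadrant, the task reduces to showing that the set of quadrants visited by any pin sequence is either a singleton or contains at least one pair of orthogonally adjacent quadrants. The key ingredient is the local claim that consecutive points $p_{k-1}$ and $p_{k}$ of the pin diagram always lie in the same or adjacent quadrants, never in opposite ones. I would prove this by unpacking the slicing condition of the $\pi$-map. By the alternation in $\mathcal{L}_{P}$, without loss of generality $p_{k}$ is placed by a horizontal letter and $p_{k-1}$ by a vertical one; the latter makes $p_{k-1}$ extreme (maximum or minimum) in the $y$-coordinate among $\{p_{0},\dots,p_{k-1}\}$. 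The separating pin for $p_{k}$ is then a horizontal line whose $y$-value (and hence that of $p_{k}$) lies strictly between the $y$-coordinate of $p_{k-1}$ and the next-most-extreme $y$-value $y^{*}$ among $\{p_{0},\dots,p_{k-2}\}$. Because the origin is always among those points with $y$-coordinate $0$, the sign of $y^{*}$ matches that of the $y$-coordinate of $p_{k-1}$ (or $y^{*}=0$), and consequently so does the sign of the $y$-coordinate of $p_{k}$. This places $p_{k}$ in the same horizontal half-plane as $p_{k-1}$, ruling out opposite quadrants; a symmetric argument handles the case where $p_{k}$ is placed by a vertical letter.

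Given this lemma, the sequence of quadrants visited by the pin diagram is a walk on the graph whose vertices are $\{1,2,3,4\}$ and whose edges join orthogonally adjacent quadrants (with self-loops allowed). If the walk stays at a single vertex, $\mathcal{C}^{\circ}_{w}$ contains exactly one $\mu^{\circ}_{i}$ and the adjacency condition holds trivially; otherwise the first non-self-loop edge of the walk produces a pair of adjacent $\mu^{\circ}_{i}$'s in the class. Either way the adjacency condition is met, and Theorem \ref{grexist} delivers the proper growth rate of $\mathcal{C}^{\circ}_{w}$. I expect the main obstacle to be stating and proving the local lemma cleanly: it is a direct case analysis but depends crucially on the fact that the origin provides a fixed reference point at $(0,0)$ inside every bounding rectangle, and the alternation condition in $\mathcal{L}_{P}$ must be invoked to ensure that $p_{k-1}$ really is extreme in the relevant coordinate.
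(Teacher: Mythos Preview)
Your proposal is correct and follows the same strategy as the paper: apply Theorem~\ref{grexist} after noting that $\mathcal{C}^{\circ}_{w}\subseteq\mathcal{P}^{\circ}$, that it is $\boxplus$-closed by Corollary~\ref{recbox}, and that it satisfies the adjacency condition. The paper's proof at this point simply \emph{asserts} the adjacency condition and moves on; you have supplied a full argument for it via the local lemma that consecutive pin points never land in opposite quadrants. This is precisely the idea the paper eventually uses (more tersely) in the proof of Lemma~\ref{G+}'s predecessor, where it observes that to pass from quadrant~1 to quadrant~3 the sequence must visit quadrant~2 or~4. Your version is more explicit and handles the general case directly; the only small thing to tidy is the base case $k=2$, where $p_{1}$ is placed by the initial numeral rather than a vertical letter, but there $p_{1}$ is trivially $y$-extreme among $\{p_{0},p_{1}\}$ and your separating-pin argument goes through unchanged.
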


Of course, Corollary \ref{recgrs} (along with Proposition \ref{eqgrs}) immediately implies that \emph{uncentred} pin classes generated by recurrent infinite pin words also have growth rates, which will equal that of their centred counterparts: $gr(\mathcal{C}_{w}) = gr(\mathcal{C}^{\circ}_{w})$.

We aim eventually to remove the assumption of recurrence from Corollary \ref{recgrs} -- this will be the content of Section \ref{sec:3.2}. For now, we move on to establish the theory that will allow us to calculate explicitly the growth rate of a given recurrent pin class.

\subsection{Classification of Collisions and $\boxplus$-decomposables} \label{sec:3.5}

We now know that any recurrent pin class $\mathcal{C}^{\circ}_{w}$ has a proper growth rate. In order to find the growth rate of a given recurrent pin class in practice we shall have to enumerate the $\boxplus$-indecomposables contained in $\mathcal{C}^{\circ}_{w}$: if we can do this then Theorem \ref{genfuncspec} will allow us to write down the generating function of the class, from which we can deduce the growth rate via the Exponential Growth Theorem. Accordingly, in this section we take up the problem of enumerating the $\boxplus$-indecomposables in a pin class.

We begin by noting that Corollary \ref{pinfactorsareindecs} immediately implies the following property of the restriction of the $\pi$-map to pin factors of length $n$:

\begin{prop}[Induced $\pi$-map contains all $\boxplus$-indecomposables in its image]
Let $w$ be an infinite pin word and $\mathcal{C}^{\circ}_{w}$ the associated (centred) pin class. Let $P_{n}$ denote the set of pin factors of $w$ of length $n$ and $\mathcal{B}^{\circ}_{n}$ denote the set of $\boxplus$-indecomposables in $\mathcal{C}^{\circ}_{w}$ of length $n$. Then the induced map
\[
\pi_{n}: P_{n} \rightarrow \mathcal{C}^{\circ}_{w}
\]
defined by 
\[
\pi_{n}: \tilde{w} \mapsto \pi^{\circ}_{\tilde{w}}
\]
contains the set $\mathcal{B}^{\circ}_{n}$ in its image.
\end{prop}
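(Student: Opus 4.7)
The plan is to observe that this proposition is essentially a repackaging of Corollary \ref{pinfactorsareindecs}, together with the trivial observation that the $\pi$-map preserves length. So the proof should be essentially immediate, with only a sentence or two of real content.

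First I would take an arbitrary element $\sigma^{\circ} \in \mathcal{B}^{\circ}_{n}$; by definition this is a $\boxplus$-indecomposable pin permutation in $\mathcal{C}^{\circ}_{w}$ of length $n$. Applying Corollary \ref{pinfactorsareindecs} directly, there must exist some pin factor $\tilde{w}$ of $w$ such that $\sigma^{\circ} = \pi^{\circ}_{\tilde{w}}$.

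Next I would verify that $\tilde{w} \in P_{n}$, i.e. that $\tilde{w}$ has length $n$. This is immediate from the definition of the $\pi$-map: a pin word of length $k$ produces a centred permutation of length $k$ (since each symbol after the initial numeral adds exactly one new point, and the initial numeral also contributes one point, with the origin not counting toward the length of the centred permutation). Since $|\sigma^{\circ}| = n$, we must have $|\tilde{w}| = n$, so $\tilde{w} \in P_{n}$.

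Finally, by the very definition of the induced map $\pi_{n}$, we have $\pi_{n}(\tilde{w}) = \pi^{\circ}_{\tilde{w}} = \sigma^{\circ}$, so $\sigma^{\circ}$ lies in the image of $\pi_{n}$. As $\sigma^{\circ} \in \mathcal{B}^{\circ}_{n}$ was arbitrary, this shows $\mathcal{B}^{\circ}_{n} \subseteq \operatorname{im}(\pi_{n})$, which is the required containment. There is no substantial obstacle here: all the real work was done in proving the pin decomposition theorem and its corollary in the previous subsection, and the present proposition merely reformulates that content in a way convenient for the subsequent enumeration of $\boxplus$-indecomposables via the $\pi_{n}$ map.
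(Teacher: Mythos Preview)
Your proposal is correct and matches the paper's approach exactly: the paper presents this proposition as an immediate consequence of Corollary~\ref{pinfactorsareindecs} (it introduces the statement with ``Corollary~\ref{pinfactorsareindecs} immediately implies the following property'') and gives no separate proof. Your write-up simply spells out that immediate deduction, including the trivial length check, and there is nothing more to it.
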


The study of this induced map will be vital for further progress as it suggests a natural method for counting the $\boxplus$-indecomposables in a given pin class $\mathcal{C}^{\circ}_{w}$: if (for some $n$) we had the following two properties:
\begin{itemize}
\item the image $\pi_{n}(P_{n})$ is equal to $\mathcal{B}^{\circ}_{n}$ (that is, $\pi_{n}(\tilde{w})$ is $\boxplus$-indecomposable for all pin factors $\tilde{w}$ of $w$);
\item $\pi_{n}$ is injective;
\end{itemize}
then $\pi_{n}$ would be a bijection between $P_{n}$ and $\mathcal{B}^{\circ}_{n}$, thus reducing the problem of enumerating the $\boxplus$-indecomposables of $\mathcal{C}^{\circ}_{w}$ to the (much simpler) combinatorial problem of counting the pin factors of $w$.

We do in fact have some intuitive reasons for suspecting that these two properties should hold, at least for sufficently large $n$: the definition of the $\pi$-map (specifically, the placing of each new point in order to intersect the bounding rectangle of all previous points) seems almost designed to ensure that $\pi_{\widetilde{w}}$ can have no proper non-trivial $\circ$-interval, and pin permutations have so much internal structure that it seems unlikely that two distinct pin words could conspire to generate the exact same centred permutation. Alas, both turn out not to be true for any $n$: for every length $n \geq 2$ there are pin words which generate $\boxplus$-decomposable permutations, as well as pairs of pin words which generate the same centred permutation. Fortunately, as our intuition suggests, both of these 'bad behaviours' are somewhat pathological and easily classified, so with some minor adjustments we will be able to make our strategy for counting $\boxplus$-indecomposables in a pin class work.

We begin by introducing some terminology for these pathological behaviours. First, those pin words whose image under the induced $\pi$-map is not contained in $\mathcal{B}^{\circ}_{n}$:

\begin{defn}[$\boxplus$-decomposable pin words]
We call a finite pin word $w$ a \emph{$\boxplus$-decomposable pin word} if the centred permution it generates, $\pi^{\circ}_{w}$, is $\boxplus$-decomposable.
\end{defn}

And those pin words which make injectivity of the induced $\pi$-map fail:

\begin{defn}[Collisions]
We call a set $\{w_{1}, w_{2}, \dots , w_{k}\}$ of pin words which generate the same centred permutation $\pi^{\circ}$ a \emph{collision} (sometimes a $k$-\emph{collision} when we wish to emphasise the size of the set) of pin words.
\end{defn}

It is fairly easy to find all $\boxplus$-decomposables and collisions of length $n\leq5$ through an exhaustive search, especially on applying symmetries of the square. This search reveals families of $\boxplus$-decomposables and collisions which generalise to all longer lengths. We can then, with considerable effort, prove that \emph{all} examples of lengths greater than or equal to $6$ belong to these families, giving us a full classification of these confounding behaviours. This is the content of the following two theorems: the statements will be crucial and will be quoted in full here; the proofs are technical and long and will be relegated to the appendix of this paper.

\newpage

\begin{thm}[Classification of $\boxplus$-decomposables and Collisions] \label{classification}
The following two tables form a complete list of $\boxplus$-decomposable pin words and collisions:
\end{thm}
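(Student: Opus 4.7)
The plan is to establish the two tables in two directions: first, that every family listed is genuinely a family of $\boxplus$-decomposable pin words or collisions; second, that no other such examples exist. The forward direction is a direct verification. For each family, I would draw the generic centred permutation it produces and, in the $\boxplus$-decomposable case, exhibit the proper non-trivial $\circ$-interval witnessing decomposability; in the collision case, I would compare the pin constructions of the two competing words step by step to confirm that they produce identical geometric configurations. Since each family is parametrised by at most one length parameter beyond a finite head, this reduces to a finite inspection followed by a routine induction on the length parameter.

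The converse is the substantive work. I would first perform an exhaustive search of all pin words up to some small bound (plausibly length six, exploiting the eight symmetries of the square to cut the load significantly), and check by hand that every $\boxplus$-decomposable pin word and every collision of small length matches one of the tabulated families. The induction step then proceeds as follows: suppose $w$ is a pin word of length $n$ strictly greater than the search bound that either is $\boxplus$-decomposable or participates in a collision $\{w, w'\}$. I would show that the pathology forces enough structural information about the interior of $w$ that the shape of the whole word is essentially determined, up to the listed families. The key tool is Observation \ref{slicing}: the final point $p_n$ is the unique point after $p_{n-1}$ that slices $\mathrm{rec}(p_0,\ldots,p_{n-1})$, and this makes the pin trajectory extremely rigid once the pathology has been set up.

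The main obstacle will be the collision case. Two distinct pin words agree only if, at each position where their letters differ, the placed point happens to coincide geometrically despite a nominal change of direction; this is the mechanism behind the $1u=1r$ degeneracy in Fig.\ \ref{fig:2osccollision}. I expect to need a rigidity lemma of the form: once two pin words have independent disagreements at two well-separated positions, their constructions necessarily diverge and cannot resynchronise, because a hypothetical agreement of the two bounding rectangles together with a common next point would force a double slicing configuration ruled out by the pin definition. A parallel rigidity principle will handle the $\boxplus$-decomposable case: a proper non-trivial $\circ$-interval must contain the origin and some but not all subsequent points, and Observation \ref{slicing} forces the interval to be established near the start of the word or not at all, pinning the suspicious configuration into a finite window that matches the listed families. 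The appendix-length treatment almost certainly reflects the number of subcases generated by these rigidity arguments rather than any single conceptually hard step.
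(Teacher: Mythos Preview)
Your overall shape is right --- verify the tables directly, do an exhaustive search for small lengths exploiting the eight symmetries, then argue that longer examples cannot arise --- but the paper's mechanism for the long-length step differs from your proposed induction-plus-rigidity scheme, and your rigidity lemma is too vague as stated to carry the weight you put on it.

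For collisions, the paper's organising idea is the notion of a \emph{minimal collision}: a colliding set in which every pair of words differs in the final letter. The argument then has two parts. First, classify minimal collisions of length $n\ge 6$ directly: normalise by symmetry so that one word $w_1$ ends in $dr$, let $(A)$ and $(B)$ be the two lowest points of $\pi^\circ$ (generated by that final $dr$), and do a three-way case split on whether $(A)$ is generated in $w_2$ by the final letter, the initial numeral, or an internal letter. In each case the extremal positions of $(A)$ and $(B)$ force the shape of $w_2$ almost entirely, and one reads off the family (Case~2 turns out to be empty). Second, show every collision is minimal: if $w_1,w_2$ share a final letter, that letter names the same extremal point in both, so stripping it yields a shorter collision; iterate down to a minimal one, and then observe from the classification that no minimal collision can be extended by appending a common letter. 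This reduction is the substitute for your rigidity lemma, and it is both sharper and easier to prove than a statement about ``two separated disagreements forcing divergence''.

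For $\boxplus$-decomposables, the paper does not work via Observation~\ref{slicing} and the position of the $\circ$-interval. It uses the inner/outer rectangle template of Fig.~\ref{fig:boxdec1} directly: after symmetry the first point lies in the outer region of quadrant~1, and the case split is on whether the first point outside quadrant~1 lands in the inner or outer region of quadrant~2. Each case pins the trajectory to one of the two listed families. Your heuristic that the $\circ$-interval must be established near the start of the word is not correct for the Type~1 family, where the minimal non-trivial $\circ$-interval consists of the origin together with the \emph{last} point placed; and in the Type~2 family the interval is everything except the first point $p_1$. So the location of the interval does not by itself localise the pathology in the way you suggest.
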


\begin{tabular}{@{}|>{\centering}m{2em}|>{\centering}m{4em}|>{\centering\arraybackslash}m{10em}|>{\centering}m{5em}|>{\centering\arraybackslash}m{6.5em}|>{\centering\arraybackslash}m{6.5em}|} \hline
\multicolumn{6}{|c|}{}\\[0.5pt]
\multicolumn{6}{|c|}{\textbf{List of $\boxplus$-decomposable pin words:}}\\[6pt] \hline
\multicolumn{2}{|c|}{\textbf{Length}}&\textbf{Representative}&\textbf{Total number}&\multicolumn{2}{|c|}{\textbf{Full List}}\\ \hline
\multicolumn{2}{|c|}{$2$}
&
\begin{tikzpicture}[scale=0.25]

\node[circle, draw, fill=none, inner sep=0pt, minimum width=\plotptradius] (0) at (2,1) {};
\node[permpt] (1) at (3,3) {};
\node[permpt] (2) at (1,2) {}; \draw[thin] (2) -- ++ (2.5,0);

\node[permpt,white] (3) at (2,5) {};

\draw[thick] (0,1) -- ++ (4,0);
\draw[thick] (2,0) -- ++ (0,4);


\node[] (4) at (2,-1) {$1l$};

\draw[dashed] (0.5,0.5) rectangle (2.5,2.5);
\end{tikzpicture}
&
8
&
\multicolumn{2}{|c|}{$1l, 1d, 2r, 2d, 3u, 3r, 4l, 4u$}
 \\ \hline
\multicolumn{2}{|c|}{$3$}
&
\begin{tikzpicture}[scale=0.25]

\node[circle, draw, fill=none, inner sep=0pt, minimum width=\plotptradius] (0) at (3,2) {};
\node[permpt] (1) at (4,4) {};
\node[permpt] (2) at (1,3) {}; \draw[thin] (2) -- ++ (3.5,0);
\node[permpt] (3) at (2,1) {}; \draw[thin] (3) -- ++ (0,2.5);

\node[permpt,white] (4) at (3,6) {};

\draw[thick] (0,2) -- ++ (5,0);
\draw[thick] (3,0) -- ++ (0,5);


\node[] (4) at (3,-1) {$1ld$};

\draw[dashed] (1.5,0.5) rectangle (3.5,2.5);
\end{tikzpicture}
&
8
&
\multicolumn{2}{|c|}{$1ld, 1dl, 2dr, 2rd, 3ru, 3ur, 4ul, 4lu$}
 \\ \hline
\multirow{2}{*}{$n\geq4$}
&
\textbf{Type 1:}
&
\begin{tikzpicture}[scale=0.25]

\node[circle, draw, fill=none, inner sep=0pt, minimum width=\plotptradius] (0) at (3,2) {};
\node[permpt] (1) at (6,6) {}; \draw[thin] (1) -- ++ (0,-1.5);
\node[permpt] (2) at (8,5) {}; \draw[thin] (2) -- ++ (-2.5,0);
\node[permpt] (3) at (7,8) {}; \draw[thin] (3) -- ++ (0,-3.5);
\node[permpt] (4) at (1,7) {}; \draw[thin] (4) -- ++ (6.5,0);
\node[permpt] (5) at (2,1) {}; \draw[thin] (5) -- ++ (0,6.5);

\begin{scope}[shift={(0.5,0.5)}]
\node[circle,fill,inner sep=0.5pt] (6) at (3.5,2.5) {};
\node[circle,fill,inner sep=0.5pt] (7) at (4,3) {};
\node[circle,fill,inner sep=0.5pt] (8) at (4.5,3.5) {};
\node[circle,fill,inner sep=0.5pt] (8) at (5,4) {};
\end{scope}

\node[empty] (-1) at (3,10) {};

\draw[thick] (0,2) -- ++ (9,0);
\draw[thick] (3,0) -- ++ (0,9);


\node[] (4) at (4,-1.5) {$1(ur)^{k}uld$ \text{or} $1(ru)^{k}ld$};

\draw[dashed] (1.5,0.5) rectangle (3.5,2.5);
\end{tikzpicture}
&
8
&
\textbf{$n \geq 4$ even:}
\[
\begin{split}
1(ur)^{k}uld, \\
1(ru)^{k}rdl, \\
2(ul)^{k}urd, \\
2(lu)^{k}ldr, \\
3(dl)^{k}dru, \\
3(ld)^{k}lur, \\
4(dr)^{k}dlu, \\
4(rd)^{k}rul
\end{split}
\]
&
\textbf{$n \geq 5$ odd:}
\[
\begin{split}
1(ru)^{k}ld, \\
1(ur)^{k}dl, \\
2(lu)^{k}rd, \\
2(ul)^{k}dr, \\
3(ld)^{k}ru, \\
3(dl)^{k}ur, \\
4(rd)^{k}lu, \\
4(dr)^{k}ul
\end{split}
\]
\\ \cline{2-6}
&
\textbf{Type 2:}
&
\begin{tikzpicture}[scale=0.25]

\node[circle, draw, fill=none, inner sep=0pt, minimum width=\plotptradius] (0) at (5,4) {};
\node[permpt] (1) at (6,6) {};
\node[permpt] (2) at (3,5) {}; \draw[thin] (2) -- ++ (3.5,0);
\node[permpt] (3) at (4,2) {}; \draw[thin] (3) -- ++ (0,3.5);
\node[permpt] (4) at (1,3) {}; \draw[thin] (4) -- ++ (3.5,0);
\node[permpt] (5) at (2,1) {}; \draw[thin] (5) -- ++ (0,2.5);

\begin{scope}[shift={(-4,-2.25)}]
\node[circle,fill,inner sep=0.5pt] (6) at (3.5,2.5) {};
\node[circle,fill,inner sep=0.5pt] (7) at (4,3) {};
\node[circle,fill,inner sep=0.5pt] (8) at (4.5,3.5) {};
\node[circle,fill,inner sep=0.5pt] (8) at (5,4) {};
\end{scope}

\node[empty] (-1) at (5,8) {};

\draw[thick] (-2,4) -- ++ (9,0);
\draw[thick] (5,-2) -- ++ (0,9);


\node[] (4) at (3,-3.5) {$1l(dl)^{k}$ \text{or} $1l(dl)^{k}d$};

\draw[dashed] (-1,-0.25) rectangle (5.5,5.5);
\end{tikzpicture}
&
8
&
\textbf{$n \geq 4$ even:}
\[
\begin{split}
1l(dl)^{k}, \\
1d(ld)^{k}, \\
2r(dr)^{k}, \\
2d(rd)^{k}, \\
3r(ur)^{k}, \\
3u(ru)^{k}, \\
4l(ul)^{k}, \\
4u(lu)^{k}
\end{split}
\]
&
\textbf{$n \geq 5$ odd:}
\[
\begin{split}
1(ld)^{k}, \\
1(dl)^{k}, \\
2(rd)^{k}, \\
2(dr)^{k}, \\
3(ru)^{k}, \\
3(ur)^{k}, \\
4(lu)^{k}, \\
4(ul)^{k} \\
\end{split}
\]
\\ \hline
\end{tabular}

\newpage

%
%
%
%
%
%
%
%
%
%
%

{
\centering
\begin{tabular}{@{}|>{\centering}m{1.5em}|>{\centering}m{5.5em}|>{\centering\arraybackslash}m{12em}|>{\centering}m{4em}|>{\centering\arraybackslash}m{17.5em}|} \hline
\multicolumn{5}{|c|}{}\\[0.4pt]
\multicolumn{5}{|c|}{\textbf{List of collisions of pin words:}}\\[6pt] \hline
\multicolumn{2}{|c|}{\textbf{Length}}&\textbf{Representative}&\textbf{Total number}&\textbf{Full List}\\ \hline
\multicolumn{2}{|c|}{$2$}
&
\begin{tikzpicture}[scale=0.2]

\begin{scope}[shift={(10,0)}]

\node[circle, draw, fill=none, inner sep=0pt, minimum width=\plotptradius] (0) at (0,0) {};
\node[permpt] (1) at (1,2) {};
\node[permpt] (2) at (2,1) {}; \draw[thin] (2) -- ++ (-1.5,0);

\node[empty] (-1) at (0,4) {};

\draw[thick] (-3,0) -- ++ (6,0);
\draw[thick] (0,-3) -- ++ (0,6);

\node[] (4) at (0,-5) {$1r$};


\end{scope}

\node[circle, draw, fill=none, inner sep=0pt, minimum width=\plotptradius] (0) at (0,0) {};
\node[permpt] (1) at (2,1) {};
\node[permpt] (2) at (1,2) {}; \draw[thin] (2) -- ++ (0,-1.5);

\draw[thick] (-3,0) -- ++ (6,0);
\draw[thick] (0,-3) -- ++ (0,6);

\node[] (3) at (5,0) {=};


\node[] (4) at (0,-5) {$1u$};


\end{tikzpicture}
&
4 \ \text{pairs}
&
\[
\{1u, 1r\}, \{2l, 2u\}, \{3d, 3l\}, \{4r, 4d\}
\]

 \\ \hline
\multicolumn{2}{|c|}{$3$}
&
\begin{tikzpicture}[scale=0.2]

\node[circle, draw, fill=none, inner sep=0pt, minimum width=\plotptradius] (0) at (1,0) {};
\node[permpt] (1) at (3,1) {};
\node[permpt] (2) at (2,3) {}; \draw[thin] (2) -- ++ (0,-2.5);
\node[permpt] (3) at (0,2) {}; \draw[thin] (3) -- ++ (2.5,0);

\draw[thick] (-2,0) -- ++ (6,0);
\draw[thick] (1,-3) -- ++ (0,7);

\node[] (3) at (6,0) {=};

\node[] (4) at (1,-4) {$1ul$};


\begin{scope}[shift={(10,0)}]

\node[circle, draw, fill=none, inner sep=0pt, minimum width=\plotptradius] (0) at (1,0) {};
\node[permpt] (1) at (0,2) {};
\node[permpt] (2) at (3,1) {}; \draw[thin] (2) -- ++ (-3.5,0);
\node[permpt] (3) at (2,3) {}; \draw[thin] (3) -- ++ (0,-2.5);

\node[empty] (-1) at (1,5) {};

\draw[thick] (-2,0) -- ++ (6,0);
\draw[thick] (1,-3) -- ++ (0,7);

\node[] (4) at (1,-4) {$2ru$};

\end{scope}

\end{tikzpicture}
&
8 \ \text{pairs}
&
\small
\[
\begin{split}
\{1ul, 2ru\}, \{1rd, 4ur\}, \{2ur, 1lu\}, \{2ld, 3ul\},\\ \{3lu, 2dl\}, \{3dr, 4ld\}, \{4ru, 1dr\}, \{4dl, 3rd\}
\end{split}
\]

 \\ \hline
\multicolumn{2}{|c|}{$4$}
&
\begin{tikzpicture}[scale=0.2]

\node[circle, draw, fill=none, inner sep=0pt, minimum width=\plotptradius] (0) at (3,3) {};
\node[permpt] (1) at (4,5) {}; \draw[thin] (1) -- ++ (0,-3.5);
\node[permpt] (2) at (1,4) {}; \draw[thin] (2) -- ++ (3.5,0);
\node[permpt] (3) at (2,1) {}; \draw[thin] (3) -- ++ (0,3.5);
\node[permpt] (4) at (5,2) {}; \draw[thin] (4) -- ++ (-3.5,0);

\node[empty] (4) at (3,7) {};

\draw[thick] (0,3) -- ++ (6,0);
\draw[thick] (3,0) -- ++ (0,6);

\node[] (4) at (3,-1) {$1ldr = 2dru = 3rul = 4uld$};


\end{tikzpicture}
&
2 \ \text{quad.s}
&
\[
\begin{split}
\{1ldr, 2dru, 3rul, 4uld\}, \\ \{1dlu, 2rdl, 3urd, 4lur\}
\end{split}
\]

 \\ \hline
\multirow{2}{*}{$5$}
&
\textbf{Irregular:}
&
\begin{tikzpicture}[scale=0.2]

\node[circle, draw, fill=none, inner sep=0pt, minimum width=\plotptradius] (0) at (4,3) {};
\node[permpt] (1) at (6,4) {};
\node[permpt] (2) at (5,6) {}; \draw[thin] (2) -- ++ (0,-2.5);
\node[permpt] (3) at (2,5) {}; \draw[thin] (3) -- ++ (3.5,0);
\node[permpt] (4) at (3,1) {}; \draw[thin] (4) -- ++ (0,4.5);
\node[permpt] (5) at (1,2) {}; \draw[thin] (5) -- ++ (2.5,0);

\node[empty] (-1) at (4,8) {};

\draw[thick] (0,3) -- ++ (8,0);
\draw[thick] (4,0) -- ++ (0,7);

\node[] (3) at (10,3) {=};

\node[] (4) at (4,-1) {$1uldl$};


\begin{scope}[shift={(12,0)}]

\node[circle, draw, fill=none, inner sep=0pt, minimum width=\plotptradius] (0) at (4,3) {};
\node[permpt] (1) at (6,4) {}; \draw[thin] (1) -- ++ (-4.5,0);
\node[permpt] (2) at (5,6) {}; \draw[thin] (2) -- ++ (0,-2.5);
\node[permpt] (3) at (2,5) {}; \draw[thin] (3) -- ++ (0,-3.5);
\node[permpt] (4) at (3,1) {}; 
\node[permpt] (5) at (1,2) {}; \draw[thin] (5) -- ++ (2.5,0);

\draw[thick] (0,3) -- ++ (8,0);
\draw[thick] (4,0) -- ++ (0,7);

\node[] (4) at (4,-1) {$3luru$};

\end{scope}

\end{tikzpicture}
&
4 \ \text{pairs}
&
\[
\begin{split}
\{1uldl, 3luru\}, \{1rdld, 3drur\}, \\ \{2urdr, 4rulu\}, \{2ldrd, 4dlul\}
\end{split}
\]
\\ \cline{2-5}
&
\textbf{Regular:}
&
\begin{tikzpicture}[scale=0.2]

\node[circle, draw, fill=none, inner sep=0pt, minimum width=\plotptradius] (0) at (5,3) {};
\node[permpt] (1) at (6,5) {};
\node[permpt] (2) at (3,4) {}; \draw[thin] (2) -- ++ (3.5,0);
\node[permpt] (3) at (4,1) {}; \draw[thin] (3) -- ++ (0,3.5);
\node[permpt] (4) at (1,2) {}; \draw[thin] (4) -- ++ (3.5,0);
\node[permpt] (5) at (2,6) {}; \draw[thin] (5) -- ++ (0,-4.5);

\draw[thick] (0,3) -- ++ (8,0);
\draw[thick] (5,0) -- ++ (0,7);

\node[] (3) at (10,3) {=};

\node[] (4) at (5,-1) {$1ldlu$};


\begin{scope}[shift={(12,0)}]

\node[circle, draw, fill=none, inner sep=0pt, minimum width=\plotptradius] (0) at (5,3) {};
\node[permpt] (1) at (6,5) {}; \draw[thin] (1) -- ++ (-4.5,0);
\node[permpt] (2) at (3,4) {};
\node[permpt] (3) at (4,1) {}; \draw[thin] (3) -- ++ (0,3.5);
\node[permpt] (4) at (1,2) {}; \draw[thin] (4) -- ++ (3.5,0);
\node[permpt] (5) at (2,6) {}; \draw[thin] (5) -- ++ (0,-4.5);

\draw[thick] (0,3) -- ++ (8,0);
\draw[thick] (5,0) -- ++ (0,7);

\node[] (4) at (5,-1) {$2dlur$};

\end{scope}

\end{tikzpicture}
&
8 \ \text{pairs}
&
\small
\[
\begin{split}
\{1ldlu, 2dlur\}, \{1dldr, 4ldru\}, \\ \{2rdru, 1drul\}, \{2drdl, 3rdlu\}, \\
\{3urul, 2ruld\}, \{3rurd, 4urdl\}, \\ \{4luld, 3uldr\}, \{4ulur, 1lurd\}
\end{split}
\]

\\ \hline

\multicolumn{2}{|c|}{$n \geq 6$ \ \textbf{even:}}
&
\begin{tikzpicture}[scale=0.2]

\node[circle, draw, fill=none, inner sep=0pt, minimum width=\plotptradius] (0) at (8,8) {};
\node[permpt] (1) at (9,10) {}; 
\node[permpt] (2) at (6,9) {}; \draw[thin] (2) -- ++ (3.5,0);
\node[permpt] (3) at (7,6) {}; \draw[thin] (3) -- ++ (0,3.5);
\node[permpt] (4) at (4,7) {}; \draw[thin] (4) -- ++ (3.5,0);
\node[permpt] (5) at (5,5) {};  \draw[thin] (5) -- ++ (0,2.5);
\node[permpt] (6) at (1,3) {}; \draw[thin] (6) -- ++ (1.5,0);
\node[permpt] (7) at (2,1) {}; \draw[thin] (7) -- ++ (0,2.5);
\node[permpt] (8) at (11,2) {}; \draw[thin] (8) -- ++ (-9.5,0);
\node[permpt] (9) at (10,11) {}; \draw[thin,dashed] (9) -- ++ (0,-9.5);

\node[empty] (-1) at (8,13) {};

\node[circle,fill,inner sep=0.5pt] (10) at (4.5,5.5) {};
\node[circle,fill,inner sep=0.5pt] (11) at (4,5) {};
\node[circle,fill,inner sep=0.5pt] (12) at (3.5,4.5) {};
\node[circle,fill,inner sep=0.5pt] (13) at (3,4) {};
\node[circle,fill,inner sep=0.5pt] (14) at (2.5,3.5) {};

\draw[thick] (8,0) -- ++ (0,12);
\draw[thick] (0,8) -- ++ (12,0);

\draw[dotted] (9.5,10.5) rectangle (10.5,11.5);

\node[] (15) at (6.5,-1) {$1(ld)^{k}r=2(dl)^{k}dru$};
\end{tikzpicture}
&
8 \ \text{pairs}
&
\small
\[
\begin{split}
\{1(ld)^{k}r, 2(dl)^{k}dru\}, \{1(dl)^{k}u, 4(ld)^{k}lur\}, \\
\{2(dr)^{k}u, 3(rd)^{k}rul\}, \{2(rd)^{k}l, 1(dr)^{k}dlu\}, \\
\{3(ru)^{k}l, 4(ur)^{k}uld\}, \{3(ur)^{k}d, 2(ru)^{k}rdl\}, \\
\{4(ul)^{k}d, 1(lu)^{k}ldr\}, \{4(lu)^{k}r, 3(ul)^{k}urd\}
\end{split}
\]

 \\ \hline

\multicolumn{2}{|c|}{$n \geq 7$ \ \textbf{odd:}}
&
\begin{tikzpicture}[scale=0.2]

\node[circle, draw, fill=none, inner sep=0pt, minimum width=\plotptradius] (0) at (8,6) {};
\node[permpt] (1) at (9,8) {};
\node[permpt] (2) at (6,7) {}; \draw[thin] (2) -- ++ (3.5,0);
\node[permpt] (3) at (7,4) {}; \draw[thin] (3) -- ++ (0,3.5);
\node[permpt] (4) at (5,5) {}; \draw[thin] (4) -- ++ (2.5,0);
\node[permpt] (5) at (3,1) {};  \draw[thin] (5) -- ++ (0,1.5);
\node[permpt] (6) at (1,2) {}; \draw[thin] (6) -- ++ (2.5,0);
\node[permpt] (7) at (2,10) {}; \draw[thin] (7) -- ++ (0,-8.5);
\node[permpt] (8) at (10,9) {}; \draw[thin,dashed] (8) -- ++ (-8.5,0);

\node[empty] (-1) at (8,12) {};

\node[circle,fill,inner sep=0.5pt] (8) at (5.5,4.5) {};
\node[circle,fill,inner sep=0.5pt] (9) at (5,4) {};
\node[circle,fill,inner sep=0.5pt] (10) at (4.5,3.5) {};
\node[circle,fill,inner sep=0.5pt] (11) at (4,3) {};
\node[circle,fill,inner sep=0.5pt] (12) at (3.5,2.5) {};

\draw[thick] (8,0) -- ++ (0,11);
\draw[thick] (0,6) -- ++ (11,0);

\draw[dotted] (9.5,8.5) rectangle (10.5,9.5);

\node[] (13) at (6,-1) {$1(ld)^{k}lu=2(dl)^{k}ur$};

\end{tikzpicture}
&
8 \ \text{pairs}
&
\small
\[
\begin{split}
\{1(ld)^{k}lu, 2(dl)^{k}ur\}, \{1(dl)^{k}dr, 4(ld)^{k}ru\}, \\
\{2(dr)^{k}dl, 3(rd)^{k}lu\}, \{2(rd)^{k}ru, 1(dr)^{k}ul\}, \\
\{3(ru)^{k}rd, 4(ur)^{k}dl\}, \{3(ur)^{k}ul, 2(ru)^{k}ld\}, \\
\{4(ul)^{k}ur, 1(lu)^{k}rd\}, \{4(lu)^{k}ld, 3(ul)^{k}dr\}
\end{split}
\]
 \\ \hline
\end{tabular}\par
}

Note: in the previous table we write $(f)^{k}$ to denote the factor $f$ repeating $k$ times for some $k \geq 0$.

\begin{obs}
We emphasise some noteworthy features of these lists:
\begin{enumerate}
\item There is no overlap between the two lists: no pin word which generates a $\boxplus$-decomposable is also involved in a collision.
\item All collisions are pairs except for the two quadruples at length $4$.
\item There are two distinct families of collisions at length $5$: the \emph{regular} length $5$ collisions, which generalise to a family of collisions for all $n \geq 6$; and the \emph{irregular} length $5$ collisions, which are a family unique to length $5$.
\item All even-length collisions of length $n \geq 4$ require all four quadrants, whereas the odd-length collisions of length $n \geq 5$ require only three.
\end{enumerate}
\end{obs}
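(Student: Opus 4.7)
My plan is to treat each of the four parts of the observation as a finite verification against the tables in Theorem \ref{classification}, which we take as established. Parts \emph{(1)}, \emph{(2)} and \emph{(3)} reduce to direct listing or to comparison of pin-word suffixes; only part \emph{(4)} requires a nontrivial trace through the pin construction.

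For \emph{(1)}, I would first check by direct listing that none of the eight $\boxplus$-decomposable words at length $2$ or the eight at length $3$ appears in any of the $4$ or $8$ collision pairs at those lengths. For $n \geq 4$, every $\boxplus$-decomposable ends either in a Type 1 triple (one of $uld, rdl, urd, ldr, dru, lur, dlu, rul$) or as the final letter of a Type 2 alignment-alternation. Scanning the collision table shows that no collision entry at length $\geq 4$ has such a final segment, so the two lists are disjoint. For \emph{(2)}, I would simply tabulate the collision table by length: $4$ pairs (length $2$), $8$ pairs (length $3$), $2$ quadruples (length $4$), $4 + 8 = 12$ pairs (length $5$), and $8$ pairs in each of the two rows for $n \geq 6$; only the length-$4$ row contains quadruples.

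For \emph{(3)}, substituting $k = 1$ into the eight odd-length generic families $\{1(ld)^k lu, 2(dl)^k ur\}$ and their seven symmetry copies produces exactly the eight regular length-$5$ pairs listed in the table (e.g.\ $\{1ldlu, 2dlur\}$). Hence the regular length-$5$ pairs are the $n = 5$ (i.e.\ $k = 1$) instances of the odd $n \geq 7$ family. The four pathological length-$5$ pairs, such as $\{1uldl, 3luru\}$, fail to match any generic family for any choice of $k \geq 0$, and a direct check using the classification shows that they cannot be extended to any longer collision listed in the lower table — their would-be continuations violate the alignment rule of a pin word or fall outside the permitted suffix shapes.

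For \emph{(4)}, I would trace the quadrants occupied by each pin word using the slicing rule of Observation \ref{slicing}. For the length-$4$ quadruples each of the four words starts with a distinct numeral, and a short trace shows each word individually visits all four quadrants. For a representative even-length collision, say $1(ld)^k r$: $p_1$ lies in quadrant $1$, the $(ld)^k$ block alternates pins between quadrants $2$ and $3$, and the terminal $r$ extends rightward from a point already below the origin, placing the last pin in quadrant $4$ — all four quadrants appear. For a representative odd-length collision, say $1(ld)^k lu$: the $(ld)^k l$ block only visits quadrants $2$ and $3$, and the terminal $u$ slices upward from a quadrant-$2$ point, staying in quadrant $2$ (its horizontal coordinate lies between the preceding $l$-pin and $p_1$), so only quadrants $1, 2, 3$ appear. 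Symmetries of the square then propagate each trace to the remaining seven families in each case. The main obstacle is exactly this pin-geometry bookkeeping in \emph{(4)}: one must justify the quadrant placement at each step inductively, but the fourfold dihedral symmetry of the construction reduces the work to a single representative family per parity.
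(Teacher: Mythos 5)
The paper states this Observation without proof; the four items are intended to follow by inspection of the tables once Theorem~\ref{classification} (proved in the appendix) is taken as established, so you are supplying a verification the paper leaves implicit. Your overall strategy is the natural one: parts \emph{(1)}--\emph{(3)} are finite checks against the two lists, and part \emph{(4)} is a trace of the pin construction, with dihedral symmetry cutting the casework to one representative per family and parity.

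Two corrections. For \emph{(3)}, the claim that the pathological family is unique to length $5$ needs no argument about continuations violating alignment rules or suffix shapes: it is immediate from the exhaustiveness of Theorem~\ref{classification} that the only collisions of length $\geq 6$ are those in the two generic rows, hence none continues the pathological pattern. For \emph{(4)}, your trace of $1(ld)^{k}lu$ contains a bookkeeping slip. For $k \geq 1$ the final $l$-pin lies in quadrant $3$, not quadrant $2$ (it lies in quadrant $2$ only when $k=0$, i.e.\ at length $3$), so the terminal $u$ slices a quadrant-$3$ point. Moreover, by the pin rule the $u$-pin's $x$-coordinate must separate that final $l$-pin from \emph{all} earlier points, so it is squeezed between the final $l$-pin (leftmost) and the second-leftmost of the earlier points — which for $k \geq 1$ is an earlier $l$- or $d$-pin left of the origin, \emph{not} $p_{1}$ (which is rightmost). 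Since both bounding points lie left of the origin, the $u$-pin does land in quadrant $2$ and your conclusion holds; but as written, placement ``between the preceding $l$-pin and $p_{1}$'' would not force the $u$-pin left of the origin, since $p_{1}$ lies to the right of it.
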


\subsection{Enumeration of Recurrent Pin Classes} \label{sec:3.6}

Combining the results of the previous section with the generating function specification of a $\boxplus$-closed centred permutation class established in Section \ref{sec:2}, we now have a general strategy for obtaining the generating function of the $\boxplus$-closure of a pin class:

\begin{procedure}[$\boxplus\left(\mathcal{C}^{\circ}_{w}\right)$ Enumeration Procedure]\label{recproc}

Suppose that $w$ is a pin word. We can find the generating function of the centred pin class $\boxplus\left(\mathcal{C}^{\circ}_{w}\right)$ by the following procedure:

\begin{enumerate}
\item Find the generating function $h(z)$ of the pin factors of $w$.
\item Enumerate the pin factors of $w$ that generate $\boxplus$-decomposable pin permutations, as well as colliding sets of pin factors of $w$, by comparison with the lists in Theorem \ref{classification}. Subtract these from $h(z)$ to obtain $g(z)$, the generating function of $\boxplus$-indecomposables in $\mathcal{C}^{\circ}_{w}$.
\item Amend for commutativity: find the generating function $g_{i}(z)$ of the one-quadrant $\boxplus$-indecomposables of $\mathcal{C}^{\circ}_{w}$ in the $i$th quadrant, for each $i \in \{1,2,3,4\}$. (In other words: enumerate the one-quadrant oscillations.) Set
\[
G(z) = g(z) - g_{1}(z)g_{3}(z) - g_{2}(z)g_{4}(z).
\]
This is the amended $G$-sequence of $\mathcal{C}^{\circ}_{w}$.
\item Then the generating function of $\boxplus\left(\mathcal{C}^{\circ}_{w}\right)$ is given by
\[
f(z) = \frac{1}{1 - G(z)} \ ,
\]
and the growth rate of $\boxplus\left(\mathcal{C}^{\circ}_{w}\right)$ is the reciprocal of the radius of convergence of $f(z)$.
\end{enumerate}

Of course, if $w$ is recurrent then $\mathcal{C}^{\circ}_{w}$ is $\boxplus$-closed and so $f(z)$ is in fact the generating function of $\mathcal{C}^{\circ}_{w}$.

\end{procedure}

\subsection{Examples in Two Quadrants} \label{sec:3.7}

We illustrate Procedure \ref{recproc} with a sequence of examples, beginning in this section with recurrent pin classes contained entirely in two quadrants, which we may without loss of generality take to be quadrants $1$ and $2$. These two-quadrant pin classes tend to be the simplest to enumerate due to the lack of commutativity of the $\boxplus$-sum in two quadrants, along with the fact that there are only finitely-many collisions and $\boxplus$-decomposables amongst the pin words restricted to two quadrants by Theorem \ref{classification}. We begin with the simplest pin class: the class of increasing oscillations $\mathcal{O}^{\circ}$, already introduced in Fig. \ref{fig:oscclass}:

\begin{example}[The Increasing Oscillations] \label{ex:defO}
The growth rate of $\mathcal{O}^{\circ}$ is $\kappa \approx 2.20557$, defined to be the reciprocal of the smallest positive real root of
\[
1 - 2z - z^3 = 0
\]
\end{example}

\begin{proof}
The defining infinite pin word of $\mathcal{O}^{\circ}$ is $w=1\overline{(ru)}$, which has two pin factors of every length $\geq 2$ (corresponding to the two distinct starting positions within the period) and one pin factor (namely $1$ itself) of length $1$, so
\[
h(z) = z + 2z^2 + 2z^3 + 2z^4 + 2z^5 \dots
\]
By comparison with the lists in Theorem \ref{classification}, there are no $\boxplus$-decomposables amongst the pin factors of $w$ and precisely one colliding pair, namely $\{1u,1r\}$, at length $2$. We thus subtract $1$ at length $2$ to obtain the generating function of the $\boxplus$-indecomposables in $\mathcal{O}^{\circ}$:
\[
\begin{split}
g(z) & = z + z^2 + 2z^3 + 2z^4 + 2z^5 + \dots \\
 & = \frac{z+z^3}{1-z} \ .
\end{split}
\]
Noting that $g_{2}(z) = g_{3}(z) = g_{4}(z) = 0$ we see that $G(z)=g(z)$ and the generating function of $\mathcal{O}^{\circ}$ is
\[
\begin{split}
f(z) & = \frac{1}{1 - g(z)} \\
 & = \frac{1-z}{1 - 2z - z^3} \ ,
\end{split}
\]
and we may now deduce the growth rate by the Exponential Growth Theorem \ref{EGT}. \end{proof}

We shall see later that $\kappa$ is in fact the smallest possible growth rate of a pin class. For now we move on to the next simplest pin class:

\begin{figure}[h]
\begin{center}
\reflectbox{\begin{tikzpicture}[scale=0.35]

\draw[thick] (9,2) -- ++ (0,17);
\draw[thick] (0,2) -- ++ (18,0);

\node[circle, draw, fill=none, inner sep=0pt, minimum width=\plotptradius] (0) at (9,2) {};

\node[permpt] (3) at (7,3) {};
\node[permpt] (4) at (8,6) {}; \draw[thin] (4) -- ++ (0,-3.5);
\node[permpt] (5) at (13,5) {}; \draw[thin] (5) -- ++ (-5.5,0);
\node[permpt] (6) at (12,8) {}; \draw[thin] (6) -- ++ (0,-3.5);
\node[permpt] (7) at (5,7) {};  \draw[thin] (7) -- ++ (7.5,0);
\node[permpt] (8) at (6,10) {}; \draw[thin] (8) -- ++ (0,-3.5);
\node[permpt] (9) at (15,9) {}; \draw[thin] (9) -- ++ (-9.5,0);
\node[permpt] (10) at (14,12) {}; \draw[thin] (10) -- ++ (0,-3.5);
\node[permpt] (11) at (3,11) {};  \draw[thin] (11) -- ++ (11.5,0);
\node[permpt] (12) at (4,14) {}; \draw[thin] (12) -- ++ (0,-3.5);
\node[permpt] (13) at (17,13) {}; \draw[thin] (13) -- ++ (-13.5,0);
\node[permpt] (14) at (16,16) {}; \draw[thin] (14) -- ++ (0,-3.5);
\node[permpt] (15) at (1,15) {};  \draw[thin] (15) -- ++ (15.5,0);
\node[permpt] (16) at (2,18) {}; \draw[thin] (16) -- ++ (0,-3.5);

\draw[thin,dashed] (19,17) -- ++ (-17.5,0);

\node[circle,fill,inner sep=0.5pt] () at (19,17.5) {};
\node[circle,fill,inner sep=0.5pt] () at (19.5,18) {};
\node[circle,fill,inner sep=0.5pt] () at (20,18.5) {};
	
\end{tikzpicture}}
\end{center}
\caption{The beginning of the infinite pin diagram defined by the infinite pin word $w = 1\overline{(ulur)}$. The centred pin class $\mathcal{V}^{\circ}_{0}$ consists of all centred permutations that can be found somewhere in this infinite diagram, whilst $\mathcal{V}_{0}$ is its underlying (uncentred) permutation class.}
\label{fig:2pinclassV}
\end{figure}

\begin{example}[Pin Class $\mathcal{V}_{0}$] \label{ex:defV}
The pin class $\mathcal{V}_{0}$ is the underlying permutation class of the centred class $\mathcal{V}^{\circ}_{0} = \mathcal{C}^{\circ}_{w}$, where $w = 1\overline{(ulur)}$ - see Fig. \ref{fig:2pinclassV}. Writing this infinite pin word out with letters subscripted by the corresponding quadrant number
\[
w = 1u_{1}l_{2}u_{2}r_{1}u_{1}l_{2}u_{2}r_{1}u_{1}l_{2}u_{2}r_{1}u_{1}l_{2}u_{2}r_{1} \dots \ ,
\]
we note immediately that $w$ has $2$ pin factors of length $1$ and $4$ of every greater length. By comparison with the list of $\boxplus$-decomposables and collisions stated in Theorem \ref{classification}, we note that we have to remove $2$ from this count at lengths $2$ and $3$ (due to the $\boxplus$-decomposables $1l$ and $2r$ at length $2$ and the collisions $\{1ul,2ru\}$ and $\{2ur,1lu\}$ at length $3$) to obtain the generating function of the $\boxplus$-indecomposables of $\mathcal{V}^{\circ}_{0}$:
\[
\begin{split}
g(z) & = 2z + 2z^2 + 2z^3 + 4z^4 + 4z^5 + 4z^6 + 4z^7 + \dots \\
 & = \frac{2z + 2z^4}{1-z} \ .
\end{split}
\]
As we are in the upper half-plane, $g_{3}(z)=g_{4}(z)=0$ so the amended $G$-sequence is just $g(z)$ itself, and the generating function of $\mathcal{V}^{\circ}_{0}$ is given by
\[
\begin{split}
f(z) & = \frac{1}{1 - g(z)} \\
 & = \frac{1-z}{1 - 3z - 2z^4} \ .
\end{split}
\]
Taking the Taylor expansion of this function about $z=0$ gives us the enumeration sequence of the centred class $\mathcal{V}^{\circ}_{0}$:
\[
f(z) = 2z + 6z^2 + 18z^3 + 56z^4 + 172z^5 + 528z^6 + 1620z^7 + 4972z^8 +\dots .
\]
Furthermore, we can apply Theorem \ref{EGT} to the generating function $f(z)$ to deduce that $gr(\mathcal{V}_{0})$, which we shall call $\nu$, is the reciprocal of the smallest real root of 
\[
1 - 3z - 2z^4 = 0.
\]
We can now calculate,
\[
\nu = gr(\mathcal{V}_{0}) \approx 3.06918 \ .
\]
\end{example}

The pin class $\mathcal{V}_{0}$ is the second-smallest pin class, in the sense of having the second-smallest growth rate after $\kappa$, though we shall not prove this here. Both of the pin classes we have enumerated so far are members of a more general family of pin classes we call $\mathcal{V}(1,k)$.

\begin{example}[Pin Classes $\mathcal{V}(1,k)$] \label{v1kcentem}
Let $k \geq 0$. We denote by $\mathcal{V}^{\circ}(1,k)$ the centred pin class $\mathcal{C}^{\circ}_{w_{1,k}}$ generated by the infinite pin word
\[
w_{1,k} = 1\overline{(u(lu)^{k}r)} = 1u\underbrace{\mbox{$lululu\ldots lulu$}}_{\mbox{{\small $2k$}}}ru\underbrace{\mbox{$lululu\ldots lulu$}}_{\mbox{{\small $2k$}}}ru\underbrace{\mbox{$lululu\ldots lulu$}}_{\mbox{{\small $2k$}}}r \dots
\]
We similarly write $\mathcal{V}(1,k)$ for its underlying (uncentred) pin permutation class. See Fig. \ref{fig:v1k} for an illustration. Informally, the pin diagram stays for one `turn' (ie. two points) on the right, then takes $k$ turns on the left, then repeats. Note that $\mathcal{V}^{\circ}(1,0) = \mathcal{O}^{\circ}$, the class of increasing oscillations in the first quadrant, and $\mathcal{V}^{\circ}(1,1) = \mathcal{V}^{\circ}$, as enumerated in the previous example.
\end{example}

\begin{figure}[h]
\begin{center}
\begin{tikzpicture}[scale=0.35]

\node[circle, draw, fill=none, inner sep=0pt, minimum width=\plotptradius] (0) at (11,0) {};

\node[permpt] (1) at (13,1) {};
\node[permpt] (2) at (12,3) {}; \draw[thin] (2) -- ++ (0,-2.5);

\node[permpt] (3) at (9,2) {}; \draw[thin] (3) -- ++ (3.5,0);
\node[permpt] (4) at (10,4) {}; \draw[thin] (4) -- ++ (0,-2.5);

\node[circle,fill,inner sep=0.5pt] () at (9,3.5) {};
\node[circle,fill,inner sep=0.5pt] () at (8.5,4) {};
\node[circle,fill,inner sep=0.5pt] () at (8,4.5) {};

\node[permpt] (5) at (6,5) {}; \draw[thin] (5) -- ++ (1.5,0);
\node[permpt] (6) at (7,7) {}; \draw[thin] (6) -- ++ (0,-2.5);
\draw[decorate,decoration={brace,amplitude=10},-] (9,1.5) -- (5.5,5);
\node[] () at (6,2) {\small{2k}};

\node[permpt] (7) at (15,6) {}; \draw[thin] (7) -- ++ (-8.5,0);
\node[permpt] (8) at (14,9) {}; \draw[thin] (8) -- ++ (0,-3.5);

\begin{scope}[shift={(-5,6)}]
\node[permpt] (3) at (9,2) {}; \draw[thin] (3) -- ++ (10.5,0);
\node[permpt] (4) at (10,4) {}; \draw[thin] (4) -- ++ (0,-2.5);

\node[circle,fill,inner sep=0.5pt] () at (9,3.5) {};
\node[circle,fill,inner sep=0.5pt] () at (8.5,4) {};
\node[circle,fill,inner sep=0.5pt] () at (8,4.5) {};

\node[permpt] (5) at (6,5) {}; \draw[thin] (5) -- ++ (1.5,0);
\node[permpt] (6) at (7,7) {}; \draw[thin] (6) -- ++ (0,-2.5);
\draw[decorate,decoration={brace,amplitude=10},-] (9,1.5) -- (5.5,5);
\node[] () at (6,2) {\small{2k}};
\end{scope}

\node[permpt] (13) at (17,12) {}; \draw[thin] (13) -- ++ (-15.5,0);
\node[permpt] (14) at (16,15) {}; \draw[thin] (14) -- ++ (0,-3.5);

\draw[thick] (11,0) -- ++ (0,17);
\draw[thick] (-1,0) -- ++ (19,0);

\draw[thin,dashed] (0,14) -- ++ (16.5,0);

\begin{scope}[shift={(-9,11)}]
\node[circle,fill,inner sep=0.5pt] () at (9,3.5) {};
\node[circle,fill,inner sep=0.5pt] () at (8.5,4) {};
\node[circle,fill,inner sep=0.5pt] () at (8,4.5) {};
\end{scope}



\end{tikzpicture}
\end{center}
\caption{The beginning of the pin diagram for $\mathcal{V}(1,k)$; the $\mathcal{V}$-class $\mathcal{V}(1,k)$ consists of all permutations which can be found somewhere in this infinite diagram.}
\label{fig:v1k}
\end{figure}

We wish to determine the growth rates of the pin classes $\mathcal{V}(1,k)$, which we shall do by enumerating the centred classes $\mathcal{V}^{\circ}(1,k)$. As we have already calculated $gr(\mathcal{V}(1,0)) = \kappa \approx 2.20557$ and $gr(\mathcal{V}(1,1)) = \nu \approx 3.06918$ we shall assume that $k \geq 2$. By following Procedure \ref{recproc} we may now prove:

\begin{prop}[Enumeration of $\mathcal{V}^{\circ}(1,k)$] \label{ajfsd;kl}
Suppose $k \geq 2$. Then the generating function of the centred pin class $\mathcal{V}^{\circ}(1,k)$ is given by
\begin{equation} \label{kiosdfu8979i}
f(z) = \frac{(1-z)^2}{1 - 4z + 3z^2 - 2z^3 - z^4 + 2z^5 + z^{2k}}.
\end{equation}
\end{prop}

\begin{proof}
We begin by enumerating the pin factors of the (recurrent) infinite pin word $w_{1,k}$. In order to do this it is again useful to subscript each letter by the associated quadrant number:
\begin{equation} \label{fs8d0oufwui}
w_{1,k} = 1u_{1}\underbrace{\mbox{$l_{2}u_{2}l_{2}u_{2}\ldots l_{2}u_{2}l_{2}u_{2}$}}_{\mbox{{\small $2k$}}}r_{1}u_{1}\underbrace{\mbox{$l_{2}u_{2}l_{2}u_{2}\ldots l_{2}u_{2}l_{2}u_{2}$}}_{\mbox{{\small $2k$}}}r_{1}u_{1}\underbrace{\mbox{$l_{2}u_{2}l_{2}u_{2}\ldots l_{2}u_{2}l_{2}u_{2}$}}_{\mbox{{\small $2k$}}}r_{1} \dots
\end{equation}
We shall separately enumerate the pin factors of $w_{1,k}$ beginning in each quadrant numeral. Clearly there is one pin factor of length $1$ beginning in quadrant numeral $1$ (namely the pin word $1$ itself), and two pin factors beginning in $1$ of each length $\geq 2$, corresponding to the two possible starting positions at $u_{1}$ and $r_{1}$ within the period (both of which become distinct from length $2$). Hence the generating function of pin factors of $w_{1,k}$ beginning in quadrant numeral $1$ is given by
\[
\begin{split}
h_{1}(z) & = z + 2z^2 + 2z^3 + 2z^4 + 2z^5 + \dots\\
 & = \frac{z+z^2}{1 - z} \\
 & = \frac{z-z^3}{(1-z)^2} \ .
\end{split}
\]
Next, we enumerate the pin factors of $w_{1,k}$ beginning in quadrant numeral $2$. By an immediate count we may see that there is one such pin factor of length $1$ (namely the pin word $2$) and three of length $2$ (namely $2r$, $2u$ and $2l$). Longer lengths will prove somewhat more difficult: clearly at all sufficently-long lengths there will be $2k$ pin factors of $w_{1,k}$ beginning in quadrant numeral $2$, corresponding to the $2k$ possible starting points in the period, those enveloped by the braces in (\ref{fs8d0oufwui}). But for shorter lengths these will not all be distinct. Precisely, a pin word starting at a point to the left of the rightmost $l_{2}$ and $u_{2}$ in a $2k$-block in (\ref{fs8d0oufwui}) will have to be long enough to reach the rightmost $l_{2}$ in order to be distinguished from the pin word of the same length starting two places to the right. Thus at each new length $n \geq 3$ one more point from inside the $2k$-block will `activate', in the sense of being the starting point for a pin word distinct from all beginning to the right, until eventually all $2k$ starting points generate distinct pin words. Hence the generating function of the pin factors of $w_{1,k}$ beginning in quadrant numeral $2$ is given by
\[
\begin{split}
h_{2}(z) & = z + 3z^2 + 4z^3 + 5z^4 + 6z^5 \dots + (2k-1)z^{2k-2} + 2kz^{2k-1} + 2kz^{2k} + 2kz^{2k+1} + \dots\\
 & = \frac{z + 2z^2 + z^3 + z^4 + z^5 + \dots + z^{2k-1}}{1 - z} \\
 & = \frac{z + z^2 - z^3 - z^{2k}}{(1-z)^2} \ .
\end{split}
\]
Combining these expressions we obtain the generating function of all pin factors of $w_{1,k}$:
\[
\begin{split}
h(z) & = h_{1}(z) + h_{2}(z) \\
 & = \frac{2z + z^2 - 2z^3 - z^{2k}}{(1-z)^2} \ .
\end{split}
\]
Next, in order to obtain the generating function of $\boxplus$-indecomposables in $\mathcal{V}^{\circ}(1,k)$, we must remove from those counted in $h(z)$ all $\boxplus$-decomposable pin words along with the overcount casued by collisions. On inspecting the lists from Theorem \ref{classification} we see that amongst the pin factors of $w_{1,k}$ there are precisely two $\boxplus$-decomposables of length $2$ ($1l$ and $2r$), along with one colliding pair at length $2$ ($\left\{2l,2u\right\}$) and two more colliding pairs at length $3$ ($\{1ul,2ru\}$ and $\{2ur,1lu\}$); there are no more $\boxplus$-decomposables or collisions. Hence the generating function of $\boxplus$-indecomposables in $\mathcal{V}^{\circ}(1,k)$ is given by:
\[
\begin{split}
g(z) & = h(z) - 3z^2 - 2z^3 \\
 & = \frac{2z - 2z^2 + 2z^3 + z^4 - 2z^5 - z^{2k}}{(1-z)^2} \ .
\end{split}
\]
Again, $g_{3}(z)=g_{4}(z)=0$, so $G(z) = g(z)$, and so by the Generating Function Specification \ref{genfuncspec} the generating function of $\mathcal{V}^{\circ}(1,k)$ is given by
\[
\begin{split}
f(z) & = \frac{1}{1 - g(z)} \\
 & = \frac{(1-z)^2}{1 - 4z + 3z^2 - 2z^3 - z^4 + 2z^5 + z^{2k}} \ ,
\end{split}
\]
as required. \qedhere
\end{proof}

Finally, by the Exponential Growth Theorem \ref{EGT}, the growth rate of $\mathcal{V}^{\circ}(1,k)$ (and hence also of $\mathcal{V}(1,k)$) is the reciprocal of the smallest positive real root of the denominator of (\ref{kiosdfu8979i}), thus allowing us to extract from Proposition \ref{ajfsd;kl} the growth rates of the (uncentred) pin classes $\mathcal{V}(1,k)$:

\begin{cor}[Growth rate of $\mathcal{V}(1,k)$] \label{cor:v1kdefs}
Let $k \geq 0$ and define $\nu_{1,k} = gr(\mathcal{V}(1,k))$. Then
\begin{itemize}
\item $\nu_{1,0} = \kappa \approx 2.20557$, where $\kappa$ is defined as in Example \ref{ex:defO};
\item $\nu_{1,1} = \nu \approx 3.06918$, where $\nu$ is defined as in Example \ref{ex:defV};
\item For $k \geq 2$, $\nu_{1,k}$ is the reciprocal of the smallest positive real solution of the equation
\begin{equation} \label{kljdgfjsdl67222}
1 - 4z + 3z^2 - 2z^3 - z^4 + 2z^5 + z^{2k} = 0.
\end{equation}
\end{itemize}
\end{cor}

\begin{proof}
We have already established the cases $k=0,1$ in Examples \ref{ex:defO} and \ref{ex:defV}. For $k \geq 2$ we apply Theorem \ref{EGT} to Proposition \ref{ajfsd;kl}.
\end{proof}

We can now explicitly calculate approximate values of $gr(\mathcal{V}(1,k))$ for small $k$ (all values given correct to $7$ d.p.):

\begin{center}
\begin{tabular}{||c | c ||} 
 \hline
  $k$ & $\nu_{1,k} = gr(\mathcal{V}(1,k))$ \\ [0.8ex] 
 \hline \hline
0 & $\kappa \approx 2.2055694$ \\ 
 \hline
 1 & $\nu \approx 3.0691774$ \\ 
 \hline
 2 & $3.2479584$ \\
 \hline
 3 & $3.2796313$ \\
 \hline
 4 & $3.2824809$ \\
 \hline
 5 & $3.2827441$ \\
 \hline
 6 & $3.2827685$ \\
 \hline
 7 & $3.2827707$ \\
 \hline
\end{tabular}
\end{center}

We note that $(\nu_{1,k})_{k=0}^{\infty}$ appears to be a strictly increasing sequence which converges to a value at approximately $3.28277$. These facts can both be proved by basic real analysis on consideration of the defining equation (\ref{kljdgfjsdl67222}). In particular, the value $\lim_{k \rightarrow \infty}\nu_{1,k}$ is equal to the reciprocal of the smallest positive real root of the polynomial equation obtained by removing the $z^{2k}$-term from equation $(\ref{kljdgfjsdl67222})$, as this term becomes negligible on the interval $[0,1)$ as $k \rightarrow \infty$. We shall have further cause to refer to this constant later, so we formally state its definition:

\begin{defn} \label{def:nul}
We denote
\[
p_{\mathcal{L}}(z) = 1 - 4z + 3z^2 - 2z^3 - z^4 + 2z^5
\]
and define $\nu_{\mathcal{L}}$ to be the reciprocal of the smallest positive real root of $p_{\mathcal{L}}(z)$. By explicit calculation,
\[
\nu_{\mathcal{L}} \approx 3.28277097.
\]
\end{defn}

As $\lim_{k \rightarrow \infty}\nu_{1,k} = \nu_{\mathcal{L}}$, the constant $\nu_{\mathcal{L}}$ is an accumulation point in the set of pin class growth rates. It is natural to ask whether there is a pin class which achieves this growth rate. The answer to this is yes: there are in fact uncountably-many distinct pin classes with growth rate $\nu_{\mathcal{L}}$, and this is the smallest growth rate with this property, though we shall not prove this here. We will, however, be able to give one example of a (non-recurrent) pin class with this growth rate at the end of this paper in Section \ref{sec:4.4}.

\subsection{Examples in Three and Four Quadrants} \label{sec:3.8}

We now move on to consider some pin classes in three and four quadrants, which will force us to confront commutativity. We begin with what is, in a sense, the simplest pin class in three quadrants:

\begin{example}[The class $\mathcal{Y}$] \label{ex:gammadef}
Let $w$ be the infinite pin word $1\overline{(uldlur)}$. We refer to $\mathcal{C}_{w}$, the associated pin class, as $\mathcal{Y}$ - see Fig. \ref{fig:Y}. Then the growth rate of $\mathcal{Y}$ is $\gamma$, the reciprocal of the smallest positive real root of
\[
1 - 4z + 2z^2 + z^3 - z^4 - 2z^5 - 3z^6 = 0.
\]
Explicitly, $\gamma \approx 3.36637$.
\end{example}

\begin{figure}[h]
\begin{center}
\begin{tikzpicture}[scale=0.35]

\node[circle, draw, fill=none, inner sep=0pt, minimum width=\plotptradius] (0) at (9,5) {};

\node[permpt] (1) at (11,6) {}; 
\node[permpt] (2) at (10,8) {}; \draw[thin] (2) -- ++ (0,-2.5);
\node[permpt] (3) at (7,7) {}; \draw[thin] (3) -- ++ (3.5,0);
\node[permpt] (4) at (8,3) {}; \draw[thin] (4) -- ++ (0,4.5);
\node[permpt] (5) at (5,4) {}; \draw[thin] (5) -- ++ (3.5,0);
\node[permpt] (6) at (6,10) {}; \draw[thin] (6) -- ++ (0,-6.5);
\node[permpt] (7) at (13,9) {}; \draw[thin] (7) -- ++ (-7.5,0);
\node[permpt] (8) at (12,12) {}; \draw[thin] (8) -- ++ (0,-3.5);
\node[permpt] (9) at (3,11) {}; \draw[thin] (9) -- ++ (9.5,0);
\node[permpt] (10) at (4,1) {}; \draw[thin] (10) -- ++ (0,10.5);
\node[permpt] (11) at (1,2) {}; \draw[thin] (11) -- ++ (3.5,0);
\node[permpt] (12) at (2,13) {}; \draw[thin] (12) -- ++ (0,-11.5);

\draw[thick] (9,0) -- ++ (0,14);
\draw[thick] (0,5) -- ++ (14,0);

\end{tikzpicture}
\end{center}
\caption{The pin class $\mathcal{Y}^{\circ}$, generated by the infinite pin word $w = 1\overline{(uldlur)}$.}
\label{fig:Y}
\end{figure}

\begin{proof}
We shall find the generating function of the corresponding centred class $\mathcal{Y}^{\circ} = \mathcal{C}^{\circ}_{w}$. We note that $w$, being periodic, is recurrent and we can thus use Procedure \ref{recproc}. First, we enumerate the pin factors of $w=1\overline{(uldlur)}$; this will be easier if we first write out $w$ with letters subscripted by the corresponding quadrant number:
\[
w = 1u_{1}l_{2}d_{3}l_{3}u_{2}r_{1}u_{1}l_{2}d_{3}l_{3}u_{2}r_{1}u_{1}l_{2}d_{3}l_{3}u_{2}r_{1}u_{1}l_{2}d_{3}l_{3}u_{2}r_{1}\dots \ .
\]
Clearly there are three pin factors of length $1$, and for $n \geq 2$ the six distinct starting points in the period give six pin factors of length $n$. Hence the generating function of the pin factors of $w$ is
\[
h(z) = 3z + 6z^2 + 6z^3 + 6z^4 + 6z^5 + 6z^6 + 6z^7 + \dots \ .
\]
We now list the collisions and $\boxplus$-decomposables found amongst the pin factors of $w$ by comparison with the lists given in Theorem \ref{classification}:
\[
\begin{split}
\textbf{Collisions:} \  & \{1ul, 2ru\}, \{2dl, 3lu\} \ (\text{length} \ 3)\\
 & \{1uldl, 3luru\}, \{1ldlu, 2dlur\}, \{3urul, 2ruld\} \ (\text{length} \ 5) \\
\textbf{$\boxplus$-decomposables:} \ & 1l, 2d, 3u, 2r \ (\text{length} \ 2) \\
& 1ld, 3ur \ (\text{length} \ 3) \\
& 1uld, 1ldl, 3lur, 3uru \ (\text{length} \ 4)
\end{split}
\]
Hence we subtract these from the generating function of pin factors to obtain the generating function of $\boxplus$-indecomposables of $\mathcal{Y}^{\circ}$:
\[
\begin{split}
g(z) & = h(z) - (2z^3 + 3z^5) - (4z^2 + 2z^3 + 4z^4) \\
 & = 3z + 2z^2 + 2z^3 + 2z^4 + 3z^5 + 6z^6 + 6z^7 + 6z^8 + 6z^9 + \dots
\end{split}
\]
Next we must enumerate the one-quadrant $\boxplus$-indecomposables in each quadrant; this is easily-done by looking at the diagram: $g_{1}(z) = g_{3}(z) = z + z^2$, $g_{2}(z)=z$ and $g_{4}(z)=0$. Hence we can calculate the amended $G$-sequence:
\[
\begin{split}
G(z) & = g(z) - g_{1}(z)g_{3}(z) - g_{2}(z)g_{4}(z) \\
 & = 3z + z^2 + z^4 + 3z^5 + 6z^6 + 6z^7 + 6z^8 + 6z^9 + \dots \\
 & = \frac{3z - 2z^2 - z^3 + z^4 + 2z^5 + 3z^6}{1-z} \ .
\end{split}
\]
Finally, by the Generating Function Specification \ref{genfuncspec}, the generating function of $\mathcal{Y}^{\circ}$ is given by:
\[
\begin{split}
f(z) & = \frac{1}{1 - G(z)} \\
 & = \frac{1-z}{1-4z+2z^{2}+z^{3}-z^{4}-2z^{5}-3z^{6}} \ .
\end{split}
\]
This is a rational function, whose singularities are precisely the roots of
\[
1-4z+2z^{2}+z^{3}-z^{4}-2z^{5}-3z^{6} = 0.
\]
By computation, the smallest root of this equation is positive and real and has reciprocal $\gamma \approx 3.36637$, which by the Exponential Growth Theorem is therefore the growth rate of $\mathcal{Y}$.\end{proof}

Next, we consider a four-quadrant pin class that has in fact been studied before in the literature: the Widdershins Spiral, $\mathcal{W}^{\circ}$, generated by the infinite pin word $w = 1\overline{(ldru)}$ (see Fig. \ref{fig:widdershins}). The uncentred class $\mathcal{W}$ was first introduced by Murphy~\cite{murphy:restricted-perm:}, who gave the generating function of the uncentred class by an explicit enumeration. Using our specification we can now give an alternative derivation of its growth rate.

\begin{figure}[h]
\begin{center}
\begin{tikzpicture}[scale=0.35]

\node[circle, draw, fill=none, inner sep=0pt, minimum width=\plotptradius] (0) at (7,7) {};
\node[permpt] (1) at (8,9) {};
\node[permpt] (2) at (5,8) {}; \draw[thin] (2) -- ++ (3.5,0);
\node[permpt] (3) at (6,5) {}; \draw[thin] (3) -- ++ (0,3.5);
\node[permpt] (4) at (10,6) {}; \draw[thin] (4) -- ++ (-4.5,0);
\node[permpt] (5) at (9,11) {}; \draw[thin] (5) -- ++ (0,-5.5);
\node[permpt] (6) at (3,10) {}; \draw[thin] (6) -- ++ (6.5,0);
\node[permpt] (7) at (4,3) {}; \draw[thin] (7) -- ++ (0,7.5);
\node[permpt] (8) at (12,4) {}; \draw[thin] (8) -- ++ (-8.5,0);
\node[permpt] (9) at (11,13) {}; \draw[thin] (9) -- ++ (0,-9.5);
\node[permpt] (10) at (1,12) {}; \draw[thin] (10) -- ++ (10.5,0);
\node[permpt] (11) at (2,1) {}; \draw[thin] (11) -- ++ (0,11.5);
\node[permpt] (12) at (14,2) {}; \draw[thin] (12) -- ++ (-12.5,0);
\node[permpt] (13) at (13,14) {}; \draw[thin] (13) -- ++ (0,-12.5);



\draw[thick] (0,7) -- ++ (15,0);
\draw[thick] (7,0) -- ++ (0,15);






\end{tikzpicture}
\end{center}
\caption{The Widdershins Spiral: this is the smallest pin class in four quadrants.}
\label{fig:widdershins}
\end{figure}

\begin{example}[The Widdershins Spiral]\label{ex:Wdef} \ \\ 
Let $\mathcal{W} = \mathcal{C}_{w}$ be the pin class generated by the infinite pin word $w = 1\overline{(ldru)}$.
Then the generating function of the corresponding centred class $\mathcal{W}^{\circ}$ is given by:

\begin{equation*}
f(z) = \frac{1-z}{1 - 5z + 6z^2 - 2z^3 - z^4 - 3z^5} \ .
\end{equation*}

Hence $\omega_{0} = gr(\mathcal{W}^{\circ}) \approx 3.48806$.
\end{example}

\begin{proof}
The defining infinite pin word $w$ is recurrent so we can enumerate $\mathcal{W}^{\circ}$ using our standard procedure. We first determine the generating function of pin factors of $w$: as $w$ has period $4$ and all four pin factors are already distinct at length $1$ we have:
\[
h(z) = 4z + 4z^2 + 4z^3 + 4z^4 + 4z^5 + 4z^6 + \dots
\]
We must amend this for collisions and $\boxplus$-decomposables to obtain the generating function $g(z)$ of $\boxplus$-indecomposables in $\mathcal{W}^{\circ}$. By comparision with the lists in Theorem \ref{classification} we see that the only collision amongst the pin factors of $w$ is the colliding quadruple $\{1ldr, 2dru, 3rul, 4uld\}$ and that there are precisely four $\boxplus$-decomposables of length $2$ ($1l$, $2d$, $3r$ and $4u$), four $\boxplus$-decomposables of length $3$ ($1ld$, $2dr$, $3ru$ and $4ul$), and none of length $n \geq 4$. Hence the generating function of $\boxplus$-indecomposables in $\mathcal{W}^{\circ}$ is given by:
\[
\begin{split}
g(z) & = h(z) - 3z^4 - (4z^2 + 4z^3) \\
 & = 4z + z^4 + 4z^5 + 4z^6 + 4z^7 + 4z^8 + \dots \\
 & = \frac{4z - 4z^2 + z^4 + 3z^5}{1-z} \ .
\end{split}
\]
Next we note that there is precisely $1$ one-quadrant $\boxplus$-indecomposable of length $1$ in each quadrant and none of any greater length, so
\[
g_{1}(z) = g_{2}(z) = g_{3}(z) = g_{4}(z) = z.
\]
Hence the amended $G$-sequence of $\mathcal{W}^{\circ}$ is given by:
\[
\begin{split}
G(z) & = g(z) - g_{1}(z)g_{3}(z) - g_{2}(z)g_{4}(z) \\
 & = \frac{4z - 4z^2 + z^4 + 3z^5}{1-z} - 2z^2 \\
 & = \frac{4z - 6z^2 + 2z^3 + z^4 + 3z^5}{1-z} \ .
\end{split}
\]
And so, by the Generating Function Specification \ref{genfuncspec}, we can derive the generating function of $\mathcal{W}^{\circ}$:
\[
\begin{split}
f(z) & = \frac{1}{1 - G(z)} \\
 & = \frac{1-z}{1 - 5z + 6z^2 - 2z^3 - z^4 - 3z^5} \ ,
\end{split}
\]
as required. Finally, we can calculate the growth rate $\omega_{0}$ of $\mathcal{W}^{\circ}$ as the reciprocal of the smallest positive real root of the denominator: $\omega_{0} \approx 3.48806$. \end{proof}

We conclude this section by finding the growth rate of the permutation class consisting of \emph{all} pin permutations. The is the \emph{complete pin class}, mentioned earlier in this section. We reiterate its definition here:

\begin{defn}[The Complete Pin Class]
The \emph{complete pin class} $\mathcal{P}$ is the permutation class consisting of all pin permutations. Its centred counterpart $\mathcal{P}^{\circ}$ is the centred permutation class containing all centred pin permutations.
\end{defn}

This uncentred class $\mathcal{P}$ was enumerated by Bassino, Bouvel \& Rossin~\cite{bassino:enumeration-of-:}: we stated its generating function and growth rate $\omega_{\infty} \approx 5.24112$ in Section \ref{sec:1}. Clearly, $\mathcal{P}$ is a permutation class which contains every pin permutation class as a subclass, but it may not be immediately obvious that $\mathcal{P}$ is \emph{itself} a pin class (that is, that there is some infinite pin word for which $\mathcal{C}_{w} = \mathcal{P}$), but this is in fact true. We prove this below, as well as enumerating the \emph{centred} class $\mathcal{P}^{\circ}$, which will enable us to provide an alternative derivation of the growth rate $\omega_{\infty}$.

\begin{prop}[Complete Pin Class] \ \label{prop:comppinclassenum}
\begin{enumerate}
\item There is an infinite pin word $w_{c}$ which contains every finite pin word as a (recurrent) pin factor.
\item For any such infinite pin word $w_{c}$, $\mathcal{C}^{\circ}_{w_{c}} = \mathcal{P}^{\circ}$.
\item The complete pin class $\mathcal{P}$ has growth rate $\omega_{\infty} \approx 5.24112$, where $\omega_{\infty}$ is defined to be the reciprocal of the smallest positive real root of the equation

\[
1 - 8z + 19z^2 - 26z^3 + 14z^4 - 12z^5 - 8z^6 + 20z^7 - 8z^8 = 0 \ .
\]
\end{enumerate}
\end{prop}

\begin{proof}
\begin{enumerate}
\item Recall that we write $\overline{\mathcal{L}}_{P}$ to refer to the set of finite words which can occur after the initial quadrant numberal in a pin word. Equivalently, $\overline{\mathcal{L}}_{P}$ is the set of words over the alphabet $\left\{u,d,l,r\right\}$ which alternate between $\left\{u,d\right\}$ and $\left\{l,r\right\}$. Note that $\overline{\mathcal{L}}_{P}$ is countable, and so we can order all $\overline{\mathcal{L}}_{P}$-words and concatenate them (after any initial numeral) in this order, possibly placing a letter in between consecutive words to ensure the alignments alternate. This resulting infinite pin word $w_{c}$ will contain all $\overline{\mathcal{L}}_{P}$-words as subword factors, and will hence contain all possible finite pin words as pin factors by Lemma \ref{pfsubwcorr}.
\item This is clear by the definition of a pin permutation.
\item First, we enumerate the set of pin factors of $w_{c}$, which is to say the set of all finite pin words. Clearly, there are $4$ pin words of length $1$. For length $n \geq 2$, there are $4$ choices for the initial numeral, $4$ choices for the letter in second place, and then $2$ choices for every subsequent letter as the alignments must now alternate. Hence there are $2^{n+2}$ pin words of length $n \geq 2$, and the generating function of the set of all finite pin words is given by:
\[
h(z) = 4z + \sum_{n=2}^{\infty}2^{n+2}z^n.
\]
This will be an overcount of the $\boxplus$-indecomposables in $\mathcal{P}^{\circ}$ due to collisions and $\boxplus$-decomposable pin words, but we have already classified all of these in Theorem \ref{classification}. The generating function of the overcount due to collisions is
\[
h_{\text{Col}}(z) = 4z^2 + 8z^3 + 6z^4 + 12z^5 + 8z^6 + 8z^7 + 8z^8 + 8z^9 + \dots
\]
Note that this is the generating function of the \emph{overcount} due to collisions. At all $n \neq 4$ it is simply equal to the \emph{number} of collisions, as these are all pairs so we need only take away one from the count for each collision. But at $n=4$ there are two colliding \emph{quadruples} so the overcount is $6$.

Similarly, and also from Theorem \ref{classification}, the generating function of the $\boxplus$-decomposable pin words in $\mathcal{L}_{P}$ is
\[
h_{\boxplus\text{-Dec.}}(z) = 8z^2 + 8z^3 + 16z^4 + 16z^5 + 16z^6 + 16z^7 + 16z^8 + 16z^9 + \dots
\]
Subtracting $h_{\text{Col}}(z)$ and $h_{\boxplus\text{-Dec.}}(z)$ from $h(z)$ gives us the generating function of the set of all $\boxplus$-indecomposable pin permutations:
\[
\begin{split}
g(z) & = h(z) - h_{\text{Col}}(z) - h_{\boxplus\text{-Dec.}}(z) \\
 & = 4z + 4z^2 + 16z^3 + 42z^4 + 100z^5 + \sum_{n=6}^{\infty}(2^{n + 2} - 24)z^n \\
 & = \frac{4z - 8z^2 + 12z^3 + 2z^4 + 6z^5 + 16z^6 - 8z^7}{(1-z)(1-2z)} \ .
\end{split}
\]
Next, we shall require the generating functions $g_{i}(z)$ ($i \in \{1,2,3,4\}$) of the $\boxplus$-indecomposable pin permutations entirely contained in the $i$th quadrant. Clearly, these are all equal to the generating function of the one-quadrant oscillations in a specified quadrant; namely:
\[
\begin{split}
g_{1}(z) = g_{2}(z) = g_{3}(z) = g_{4}(z) & = z + z^2 + 2z^3 + 2z^4 + 2z^5 + 2z^6 + \dots \\
 & = \frac{z + z^3}{1-z} \ .
\end{split}
\]
Hence we obtain the amended $G$-sequence for the complete class of pin permutations:
\[
\begin{split}
G_{\infty}(z) & = g(z) - g_{1}(z)g_{3}(z) - g_{2}(z)g_{4}(z) \\
 & = \frac{4z - 14z^2 + 24z^3 - 14z^4 + 12z^5 + 8z^6 - 20z^7 + 8z^8}{(1-z)^{2}(1-2z)} \ .
\end{split}
\]
Finally, as $\mathcal{P}^{\circ}$ is $\boxplus$-closed\footnote{By the fact that $w_{c}$ is necessarily recurrent, or simply by the fact that the set of centred pin permutations is closed under the $\boxplus$-sum.}, we can apply the Generating Function Specification \ref{genfuncspec} to obtain the generating function for the class of all centred pin permutations:
\[
\begin{split}
f(z) & = \frac{1}{1 - G_{\infty}(z)} \\
 & = \frac{(1-z)^{2}(1-2z)}{1 - 8z + 19z^2 - 26z^3 + 14z^4 - 12z^5 - 8z^6 + 20z^7 - 8z^8} \ .
\end{split}
\]
By the Exponential Growth Theorem \ref{EGT}, and the fact that $f(z)$ has positive coefficients, the growth rate of $\mathcal{P}^{\circ}$, and hence also the uncentred class $\mathcal{P}$, is the reciprocal of the smallest positive real root of the denominator, as required.\qedhere
\end{enumerate}
\end{proof}

We note that the denominator of the generating function of the centred complete class $\mathcal{P}^{\circ}$ is identical to that of the uncentred class $\mathcal{P}$ derived by Bassino \emph{et al.}, stated in (\ref{eq:completepinclass}). This of course reflects the fact the the centred and uncentred complete classes have the same growth rate, both obtained by taking the reciprocal of the smallest positive real root of the respective denominators.

As every pin class is contained in the complete pin class, we immediately deduce the following upper bound on the upper growth rate of a pin class.

\begin{lemma} \label{pcupperbound}
Suppose $w$ is an infinite pin word. Then
\[
\overline{gr(\mathcal{C}_{w})} \leq \omega_{\infty},
\]
where $\omega_{\infty} \approx 5.24112$ is defined as in Proposition \ref{prop:comppinclassenum}.
\end{lemma}

\section{Non-recurrent Pin Classes} \label{sec:4}

In this section we move on to consider pin permutation classes generated by non-recurrent infinite pin words: these are not $\boxplus$-closed, and so the methods we have outlined in the previous section will not directly apply. We will not be able to find the generating functions of these (centred or uncentred) classes in general, but we can at least find their growth rates by sandwiching between some closely related $\boxplus$-closed classes. In Section \ref{sec:4.1} we introduce the \emph{$\boxplus$-interior} $\mathcal{C}^{\boxplus}_{w}$ of a centred pin class $\mathcal{C}_{w}$, which we may think of as the largest $\boxplus$-closed subclass of $\mathcal{C}^{\circ}_{w}$. We describe the close relationship between this class and the \emph{recurrent pin factors} of $w$, and use this to prove that $\boxplus$-interiors have proper growth rates. In Section \ref{sec:4.2} we state and prove some technical results pertaining to the amended $G$-sequences of $\boxplus$-interiors, and in Section \ref{sec:4.3} we apply these to prove the main result of this paper: that every pin class has a proper growth rate. We also outline a procedure for determining the growth rate of any pin class by reducing to the simpler combinatorial problem of enumerating the recurrent factors of its defining infinite pin word. We conclude in Section \ref{sec:4.4} by applying this process to determine the growth rate of a single non-recurrent pin permutation class, the \emph{Liouville} $\mathcal{V}$.

\subsection{The $\boxplus$-interior of a Pin Class} \label{sec:4.1}

We have now established a general method for determining the growth rate and generating function of a pin class $\mathcal{C}_{w}$ defined by a \emph{recurrent} infinite pin word $w$ in $\mathcal{L}^{\infty}_{P}$. This also allows us to determine the growth rate of an \emph{eventually} recurrent pin class, by the Finite Prefix Lemma \ref{fplem}. This leaves the non-eventually-recurrent case: these classes are not $\boxplus$-closed, so the methods outlined in the previous section will not enable us to determine their generating functions (in fact, the author does not know the generating function of \emph{any} not-eventually-recurrent pin class). Somewhat surprisingly, however, we \emph{can} at least determine the \emph{growth rate} of a general non-recurrent pin class $\mathcal{C}^{\circ}_{w}$. The key idea here is simple: the classes that we know how to enumerate are the $\boxplus$-closed classes, so we enumerate $\mathcal{C}^{\circ}_{w}$ by taking better and better $\boxplus$-closed approximations. The limiting behaviour of these approximations will give us the growth rate, but not the generating function, of $\mathcal{C}^{\circ}_{w}$.

We begin by noting that we already know how to bound $\mathcal{C}^{\circ}_{w}$ by a $\boxplus$-closed class from above: Procedure \ref{recproc} gives us the generating function of the $\boxplus$-closure of $\mathcal{C}^{\circ}_{w}$; in the non-recurrent case this is not equal to $\mathcal{C}^{\circ}_{w}$ but will function as an upper bound as $\mathcal{C}^{\circ}_{w} \subseteq \boxplus(\mathcal{C}^{\circ}_{w})$. We also note that, like pin classes, $\boxplus$-closures of pin classes have proper growth rates:

\begin{lemma}
Let $w$ be an infinite pin word. Then $\boxplus(\mathcal{C}^{\circ}_{w})$ has a proper growth rate.
\end{lemma}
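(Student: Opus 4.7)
The plan is to apply Theorem \ref{grexist} directly to $\boxplus\mathcal{C}^\circ_w$, verifying in turn that it is $\boxplus$-closed (immediate from the definition of the $\boxplus$-closure), a subclass of $\mathcal{P}^\circ$, and that it satisfies the adjacency condition.

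For the adjacency condition, I note that each one-point centred permutation $\mu^\circ_i$ is $\boxplus$-indecomposable, so $\boxplus\mathcal{C}^\circ_w$ contains precisely the same $\mu^\circ_i$'s as $\mathcal{C}^\circ_w$, namely those $i$ for which quadrant $i$ is visited somewhere along $w$. A short case analysis of the pin construction then shows that applying any single pin letter to a point in quadrant $q$ places the next point in $q$ itself or in a quadrant adjacent to $q$, but never in the opposite one. Consequently a pin sequence that visits two opposite quadrants must also visit some intermediate adjacent quadrant, so the set of $\mu^\circ_i$'s contained in $\mathcal{C}^\circ_w$ is either a singleton or contains an adjacent pair, which is exactly what the adjacency condition demands.

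The main obstacle is verifying that $\boxplus\mathcal{C}^\circ_w \subseteq \mathcal{P}^\circ$. By Corollary \ref{pinfactorsareindecs} every $\boxplus$-indecomposable of $\mathcal{C}^\circ_w$ is a contiguous pin permutation $\pi^\circ_{\tilde w}$, so each element of $\boxplus\mathcal{C}^\circ_w$ is a $\boxplus$-sum of contiguous pin permutations. It therefore suffices to prove the stand-alone fact that $\mathcal{P}^\circ$ is itself $\boxplus$-closed, and since the $\boxplus$-operation respects the pattern containment order (a straightforward embedding argument), this reduces further to producing, for each pair of pin words $v$ and $w$, a pin word $u$ such that $\pi^\circ_u$ contains $\pi^\circ_v \boxplus \pi^\circ_w$. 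I plan to construct $u$ directly by first running through $v$ (drawing a copy of $\pi^\circ_v$ around the origin) and then, in place of the leading quadrant numeral of $w$, inserting one or two pin letters of the appropriate alignment parity whose effect is to jump outside the current bounding rectangle into the quadrant called for by $w$'s initial numeral; the remaining letters of $w$ then faithfully reproduce the outer part of $\pi^\circ_w$ around the inflated copy of $\pi^\circ_v$. A routine case check over the possible alignment and quadrant transitions confirms that $\pi^\circ_u$ indeed contains $\pi^\circ_v \boxplus \pi^\circ_w$, and with the three hypotheses in place Theorem \ref{grexist} immediately yields the proper growth rate of $\boxplus\mathcal{C}^\circ_w$.
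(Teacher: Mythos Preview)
Your approach is essentially the paper's: apply Theorem~\ref{grexist} after checking that $\boxplus\mathcal{C}^\circ_w$ is $\boxplus$-closed, lies in $\mathcal{P}^\circ$, and satisfies the adjacency condition. The paper's own proof is a one-liner that simply asserts these hypotheses; you supply justification the paper omits, in particular for the containment $\boxplus\mathcal{C}^\circ_w\subseteq\mathcal{P}^\circ$, which amounts to showing $\mathcal{P}^\circ$ is $\boxplus$-closed. Your adjacency-condition argument (pins only step to adjacent quadrants, so visiting opposite quadrants forces an intermediate one) is correct and is exactly the reasoning the paper uses elsewhere.

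One small slip in your bridging construction: replacing the leading numeral of $w$ by a \emph{single} letter $M$ yields a pin word $u=v\cdot M\cdot t$ of the same length as $\pi^\circ_v\boxplus\pi^\circ_{w}$, so containment would force equality --- but $M$ necessarily slices the bounding rectangle of $\pi^\circ_v$ (it separates $v$'s last point), so equality fails. What actually works is $u=v\cdot L\cdot M\cdot t$ with one genuine ``gap'' letter $L$: deleting $L$'s point gives $\pi^\circ_v\boxplus\pi^\circ_{q t}$ exactly, by the pin decomposition, provided $M$ lands in quadrant $q$. When the alignment of $t$'s first letter does not match, a second gap letter is needed, so the correct count is ``two or three'' letters in place of the numeral (equivalently, one or two gap letters plus one letter for $w$'s first point). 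Your ``routine case check'' would surface this, and the fix is immediate, so the overall argument stands.
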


\begin{proof}
This is an application of Theorem \ref{grexist}: $\boxplus(\mathcal{C}^{\circ}_{w})$ is a $\boxplus$-closed class contained in the complete pin class $\mathcal{P}^{\circ}$ and satisfies the adjacency condition by the same reasoning as worked for $\mathcal{C}^{\circ}_{w}$, namely that a pin sequence cannot move between diagonally opposite quadrants without passing through a quadrant adjacent to both.\end{proof}

Hence, for any infinite pin word $w$,
\[
\underline{gr}(\mathcal{C}^{\circ}_{w}) \leq \overline{gr}(\mathcal{C}^{\circ}_{w}) \leq \overline{gr}(\boxplus(\mathcal{C}^{\circ}_{w})) = gr(\boxplus(\mathcal{C}^{\circ}_{w})).
\]

We now wish to determine a \emph{lower} bound on the growth rate of a pin class, which we again do by comparison with a $\boxplus$-closed class whose generating function we can find. We thus consider the `largest $\boxplus$-closed subclass contained in $\mathcal{C}^{\circ}_{w}$', which we refer to as the $\boxplus$-\emph{interior} of $\mathcal{C}^{\circ}_{w}$. In order to define this rigorously, recall that if $w$ is an infinite pin word in $\mathcal{L}^{\infty}_{P}$, we use the notation $w_{i,j}$ to denote the pin factor of $w$ taken between the $i$th and $j$th places. Now we can define a pin factor analogue to recurrent factors of an infinite word:

\begin{defn}[Recurrent Pin Factors]
Let $w$ be an infinite pin word and $\widetilde{w}$ a finite pin word. We say that $\widetilde{w}$ is a \emph{recurrent pin factor} of $w$ if for all $n \in \mathbb{N}$ there exist $j \geq i \geq n$ such that $w_{i,j} = \widetilde{w}$.

We call a $\boxplus$-indecomposable permutation $\pi^{\circ}_{\widetilde{w}}$ generated by a recurrent pin factor $\widetilde{w}$ of $w$ a \emph{recurrent} $\boxplus$-indecomposable in $\mathcal{C}^{\circ}_{w}$.
\end{defn}

That is, a recurrent pin factor of $w$ is a pin factor that occurs in $w$ infinitely-often. We may now define:

\begin{defn}[The $\boxplus$-interior of a pin class] \label{def:boxinteriorofpinclass} \ \\
Let $w$ be an infinite pin word with associated pin class $\mathcal{C}^{\circ}_{w}$. We define the $\boxplus$-\emph{interior}, $\mathcal{C}^{\boxplus}_{w}$, of $\mathcal{C}^{\circ}_{w}$ to be the $\boxplus$-closure of the set of recurrent $\boxplus$-indecomposables in $\mathcal{C}^{\circ}_{w}$.
\end{defn}

We note that the set of recurrent pin factors of $w$ is closed under taking pin factors, and so by Observation \ref{pinfactorcontainment} every $\boxplus$-indecomposable of $\mathcal{C}^{\circ}_{w}$ which is contained in a recurrent $\boxplus$-indecomposable is itself a recurrent $\boxplus$-indecomposable. It follows from part \ref{lemma:boxprops2} of Lemma \ref{lemma:boxprops} that the $\boxplus$-interior of a pin class $\mathcal{C}^{\circ}_{w}$ can alternatively be characterised as:
\[
\mathcal{C}^{\boxplus}_{w} = \left\{\sigma^{\circ} = \pi^{\circ}_{1} \ \boxplus \ \pi^{\circ}_{2} \ \boxplus \ \dots \ \boxplus \ \pi^{\circ}_{n} \ \mid \ \text{each $\pi^{\circ}_{i}$ is a recurrent $\boxplus$-indecomposable of $\mathcal{C}^{\circ}_{w}$}\right\}.
\]

We now collect some basis properties of the $\boxplus$-interior:

\begin{prop}[Basic properties of the $\boxplus$-interior] \label{prop:boxprops}
For any $w \in \mathcal{L}^{\infty}_{P}$:
\begin{enumerate}
\item $\mathcal{C}^{\boxplus}_{w}$ is a (non-empty) $\boxplus$-closed subclass of $\mathcal{C}^{\circ}_{w}$; \label{boxprops:1}
\item $\mathcal{C}^{\boxplus}_{w}$ is the union of all $\boxplus$-closed subclasses of $\mathcal{C}^{\circ}_{w}$; \label{boxprops:2}
\item $\mathcal{C}^{\boxplus}_{w} = \mathcal{C}^{\circ}_{w}$ if and only if $\mathcal{C}^{\circ}_{w}$ is $\boxplus$-closed; \label{boxprops:3}
\item $\sigma^{\circ} \in \mathcal{C}^{\boxplus}_{w}$ if and only if for all $i \in \mathbb{N}$ there exists $j \geq i$ such that $\sigma^{\circ} \leq \pi^{\circ}_{w_{i,j}}$. \label{boxprops:4}
\end{enumerate}
\end{prop}

\begin{proof}
\begin{enumerate}
\item As $w$ is an infinite sequence whereas there are only four pin words of length $1$, at least one of these must occur infinitely-often as a pin factor of $w$. Hence the set of recurrent $\boxplus$-indecomposables is non-empty, and so its $\boxplus$-closure $\mathcal{C}^{\boxplus}_{w}$ is a non-empty $\boxplus$-closed centred permutation class. To see that $\mathcal{C}^{\boxplus}_{w}$ is contained in $\mathcal{C}_{w}$, note first that every element of $\mathcal{C}^{\boxplus}_{w}$ is of the form
\[
\sigma^{\circ} = \pi^{\circ}_{w_{1}} \ \boxplus \ \pi^{\circ}_{w_{2}} \ \boxplus \ \dots \ \pi^{\circ}_{w_{n}},
\]
where each $w_{i}$ is a recurrent pin factor of $w$. As each $w_{i}$ occurs as a pin factor infinitely-often in $w$, it is certainly the case that the sequence of pin factors $w_{1}, w_{2}, \dots, w_{n}$ occurs at some point in $w$ in that order, in non-overlapping instances and separated from each other by at least one letter in $w$. Hence by Theorem \ref{pindec}, $\sigma^{\circ}$ is in $\mathcal{C}^{\circ}_{w}$, and so $\mathcal{C}^{\boxplus}_{w} \subseteq \mathcal{C}^{\circ}_{w}$.
\item Suppose $\mathcal{D}^{\circ}$ is a $\boxplus$-closed subclass of $\mathcal{C}^{\circ}_{w}$, and let $\pi^{\circ}$ be a $\boxplus$-indecomposable in $\mathcal{D}^{\circ}$. As $\mathcal{D}^{\circ}$ is a subset of $\mathcal{C}^{\circ}_{w}$ it follows that $\pi^{\circ} = \pi^{\circ}_{\widetilde{w}}$, where $\widetilde{w}$ is a pin factor of $w$. As $\mathcal{D}^{\circ}$ is $\boxplus$-closed, for all $n \in \mathbb{N}$ the centred permutation
\[
\underbrace{\mbox{$\pi^{\circ}_{\widetilde{w}} \ \boxplus \ \pi^{\circ}_{\widetilde{w}} \ \boxplus \ \ldots \ \boxplus \ \pi^{\circ}_{\widetilde{w}}$}}_{\mbox{{\small $n$}}}
\]
is in $\mathcal{D}^{\circ}$ and hence also in $\mathcal{C}^{\circ}_{w}$. But then by Theorem \ref{pindec} we deduce that $\widetilde{w}$ occurs as a pin factor in $n$ non-overlapping instances in $w$. As this holds for all $n$, it follows that $\widetilde{w}$ is a recurrent pin factor of $w$, and so $\pi^{\circ}$ is a recurrent $\boxplus$-indecomposable of $\mathcal{C}^{\circ}_{w}$, and is therefore contained in $\mathcal{C}^{\boxplus}_{w}$. As $\mathcal{D}^{\circ}$ is therefore a $\boxplus$-closed class all of whose $\boxplus$-indecomposables are contained in the $\boxplus$-closed class $\mathcal{C}^{\boxplus}_{w}$, it follows that
\[
\mathcal{D}^{\circ} \subseteq \mathcal{C}^{\boxplus}_{w} \ .
\]
Hence $\mathcal{C}^{\boxplus}_{w}$ is a $\boxplus$-closed subclass of $\mathcal{C}^{\circ}_{w}$ which itself contains every $\boxplus$-closed subclass of $\mathcal{C}^{\circ}_{w}$ as a subclass, and the result follows.
\item Immediate from part \ref{boxprops:2}.
\item Suppose $\sigma^{\circ} \in \mathcal{C}^{\boxplus}_{w}$. Then $\sigma^{\circ}$ can be written in the form
\[
\sigma^{\circ} = \pi^{\circ}_{w_{1}} \ \boxplus \ \pi^{\circ}_{w_{2}} \ \boxplus \ \dots \ \boxplus \ \pi^{\circ}_{w_{n}},
\]
where each $w_{i}$ is a recurrent pin factor of $w$. As in part \ref{boxprops:1} we know that we can find the sequence of pin factors $w_{1}, w_{2}, \dots, w_{n}$ at some point in $w$ in that order, in non-overlapping instances and separated from each other by at least one letter in $w$. In fact, as all of these factors are recurrent, this happens infinitely-often, so we can choose such a sequence beginning after any fixed position $i$. Taking $j$ to be the position at the end of $w_{n}$, we see that $\sigma^{\circ}$ is contained in $w_{i,j}$ by the pin decomposition. For the converse, suppose that $\sigma^{\circ}$ is contained in some $w_{i,j}$ for all $i \in \mathbb{N}$. It follows that every $\boxplus$-indecomposable in the $\boxplus$-decomposition of $\sigma^{\circ}$ is also contained in some $w_{i,j}$ for all $i$, and is therefore a recurrent $\boxplus$-indecomposable of $\mathcal{C}^{\circ}_{w}$, and so $\sigma^{\circ}$ is contained in $\mathcal{C}^{\boxplus}_{w}$. \qedhere
\end{enumerate}
\end{proof}

Informally, the $\boxplus$-interior of $\mathcal{C}^{\circ}_{w}$ is the set of all centred permutations that can be found in the pin diagram of $w$ in infinitely many instances.

As the $\boxplus$-interior of a pin class is $\boxplus$-closed, we can determine its generating function in terms of its $G$-sequence, which we can obtain from $w$ as follows:

\begin{lemma} \label{lemma:boxgenfunction}
Suppose that $g(z)$ is the generating function of the recurrent $\boxplus$-indecomposables in $\mathcal{C}^{\circ}_{w}$ and $g_{1}(z), g_{2}(z), g_{3}(z), g_{4}(z)$ are the generating functions of the recurrent one-quadrant $\boxplus$-indecomposables in quadrants $1,2,3,4$, respectively. Then
\[
G(z) = g(z) - g_{1}(z)g_{3}(z) - g_{2}(z)g_{4}(z)
\]
is the amended $G$-sequence for $\mathcal{C}^{\boxplus}_{w}$, and 
\[
f(z) = \frac{1}{1 - G(z)}
\]
is the generating function of $\mathcal{C}^{\boxplus}_{w}$.
\end{lemma}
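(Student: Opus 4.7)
The plan is to apply the Generating Function Specification (Theorem \ref{genfuncspec}) to $\mathcal{C}^{\boxplus}_{w}$ itself, which is $\boxplus$-closed. Since the formulas stated in the lemma match those demanded by Theorem \ref{genfuncspec}, the entire proof reduces to showing that the $\boxplus$-indecomposables of $\mathcal{C}^{\boxplus}_{w}$ are exactly the recurrent $\boxplus$-indecomposables of $\mathcal{C}^{\circ}_{w}$ (and similarly, by restriction to each quadrant, that the one-quadrant $\boxplus$-indecomposables of $\mathcal{C}^{\boxplus}_{w}$ coincide with the recurrent one-quadrant $\boxplus$-indecomposables of $\mathcal{C}^{\circ}_{w}$).

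One direction is immediate from the definitions: every recurrent $\boxplus$-indecomposable of $\mathcal{C}^{\circ}_{w}$ lies in $\mathcal{C}^{\boxplus}_{w}$, and $\boxplus$-indecomposability is an intrinsic property preserved on passing to a subclass. For the reverse direction I would first establish an \emph{absorption} fact: if $\pi^{\circ}$ is $\boxplus$-indecomposable and $\pi^{\circ} \leq \sigma^{\circ}_{1} \boxplus \cdots \boxplus \sigma^{\circ}_{n}$, then $\pi^{\circ} \leq \sigma^{\circ}_{i}$ for some $i$. By induction it suffices to treat $n = 2$: consider the $\circ$-interval of $\sigma^{\circ}_{1} \boxplus \sigma^{\circ}_{2}$ enclosing $\sigma^{\circ}_{1}$. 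If an embedding of $\pi^{\circ}$ had some non-origin points inside this interval and some outside, then those inside (together with the origin of $\pi^{\circ}$) would form a proper, non-trivial $\circ$-interval of $\pi^{\circ}$, contradicting $\boxplus$-indecomposability via Observation \ref{intsum}. Hence all non-origin points lie on one side, yielding $\pi^{\circ} \leq \sigma^{\circ}_{1}$ or (after contracting the enclosing interval to the origin) $\pi^{\circ} \leq \sigma^{\circ}_{2}$.

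Now let $\pi^{\circ}$ be a $\boxplus$-indecomposable of $\mathcal{C}^{\boxplus}_{w}$. Using the observation that $\mathcal{C}^{\boxplus}_{w}$ is the $\boxplus$-closure of the set of recurrent $\boxplus$-indecomposables, we have $\pi^{\circ} \leq \pi^{\circ}_{\tilde{w}_{1}} \boxplus \cdots \boxplus \pi^{\circ}_{\tilde{w}_{n}}$ for some recurrent pin factors $\tilde{w}_{j}$ of $w$, and absorption yields $\pi^{\circ} \leq \pi^{\circ}_{\tilde{w}_{i}}$ for some $i$. Fix any occurrence of $\tilde{w}_{i}$ inside $w$; applying the Pin Decomposition (Theorem \ref{pindec}) at this occurrence expresses $\pi^{\circ}$ as a $\boxplus$-sum $\pi^{\circ}_{w'_{1}} \boxplus \cdots \boxplus \pi^{\circ}_{w'_{k}}$ in which each $w'_{j}$ is a pin factor of $w$ appearing within this instance of $\tilde{w}_{i}$. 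Since $\pi^{\circ}$ is $\boxplus$-indecomposable we must have $k = 1$, so $\pi^{\circ} = \pi^{\circ}_{w'_{1}}$. As $\tilde{w}_{i}$ occurs infinitely often in $w$, the pin factor $w'_{1}$ occurs in the analogous relative position within each such copy, hence is itself recurrent; so $\pi^{\circ}$ is a recurrent $\boxplus$-indecomposable of $\mathcal{C}^{\circ}_{w}$, as required.

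The identical argument restricted to a single quadrant identifies the one-quadrant $\boxplus$-indecomposables of $\mathcal{C}^{\boxplus}_{w}$ with their recurrent counterparts, giving the correct $g_{i}(z)$. Theorem \ref{genfuncspec}, applied to $\mathcal{C}^{\boxplus}_{w}$ itself, then immediately yields both the stated formula for $G(z)$ and the generating function $f(z) = 1/(1 - G(z))$. The main obstacle is the absorption step, which leverages Observation \ref{intsum} and a careful case analysis of where the embedded points can sit; once in place, the rest is essentially bookkeeping combining the definition of the $\boxplus$-interior with the Pin Decomposition.
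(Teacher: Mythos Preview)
Your proposal is correct and follows the same approach as the paper: apply Theorem~\ref{genfuncspec} to the $\boxplus$-closed class $\mathcal{C}^{\boxplus}_{w}$, after identifying its $\boxplus$-indecomposables with the recurrent $\boxplus$-indecomposables of $\mathcal{C}^{\circ}_{w}$. The paper's proof is a one-liner that takes this identification for granted (it is recorded earlier as part of an ``Observation'' on basic properties of the $\boxplus$-interior, without argument), whereas you actually supply a proof of it via the absorption lemma and the Pin Decomposition --- so your write-up is strictly more complete than the paper's on this point.
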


\begin{proof}
Immediate from the Generating Function Specification \ref{genfuncspec} on noting that $\mathcal{C}^{\circ}_{w}$ is a $\boxplus$-closed class and the recurrent $\boxplus$-indecomposables of $\mathcal{C}^{\circ}_{w}$ are its $\boxplus$-indecomposables. \end{proof}

As with the $\boxplus$-closure we briefly note the following:

\begin{lemma}
Let $w$ be an infinite pin word. Then $\mathcal{C}^{\boxplus}_{w}$ has a proper growth rate.
\end{lemma}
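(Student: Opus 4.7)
The plan is to apply Theorem \ref{grexist}, whose hypotheses are that the class in question be a $\boxplus$-closed subclass of $\mathcal{P}^{\circ}$ satisfying the adjacency condition. Two of these hypotheses are immediate for $\mathcal{C}^{\boxplus}_w$: it is $\boxplus$-closed by construction (being defined as a $\boxplus$-closure), and each of its elements lies in $\mathcal{C}^{\circ}_w \subseteq \mathcal{P}^{\circ}$. All the work therefore lies in verifying the adjacency condition.

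To analyse the adjacency condition, let $S$ denote the set of quadrants $i$ for which the length-$1$ pin factor $i$ is recurrent in $w$; equivalently, $S$ is the set of quadrants receiving infinitely many of the pin-diagram points $p_1, p_2, \ldots$. Because $w$ is infinite, $S$ is nonempty by pigeonhole, and by definition $\mu^{\circ}_i \in \mathcal{C}^{\boxplus}_w$ precisely when $i \in S$. A short case check on $|S|$ shows that the adjacency condition holds automatically whenever $|S|=1$, $|S|\geq 3$, or $|S|=2$ with the two elements in adjacent quadrants; the only bad case is $|S|=2$ with $S$ an opposite pair, e.g.\ $\{1,3\}$ or $\{2,4\}$. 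The main obstacle of the proof is to rule out this case.

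The key geometric sub-claim is that no two consecutive points $p_k, p_{k+1}$ of a pin sequence can lie in diagonally opposite quadrants; I would prove this directly from the $\pi$-map construction. The letter $L \in \{\mathsf{u},\mathsf{d},\mathsf{l},\mathsf{r}\}$ placing $p_{k+1}$ pushes it strictly beyond the bounding rectangle of the earlier points, and since that bounding rectangle always contains the origin, the coordinate of $p_{k+1}$ in direction $L$ has a prescribed sign (for instance $L=\mathsf{u}$ forces $y_{p_{k+1}} > 0$). For the perpendicular coordinate, the slicing condition (Observation~\ref{slicing}) forces $p_k$'s perpendicular coordinate to be extremal among $\{p_0, \ldots, p_k\}$, and places $p_{k+1}$'s perpendicular coordinate strictly between $p_k$'s and the next-most-extreme value. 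Since the origin is always available as a candidate with perpendicular coordinate $0$, this pins $p_{k+1}$'s perpendicular coordinate down to share a sign with $p_k$'s. Hence $Q_{k+1}$ and $Q_k$ share at least one coordinate sign and cannot be opposite.

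Granted the sub-claim, the remaining step is immediate: if $S$ were an opposite pair such as $\{1,3\}$, then past some index $N$ every $Q_k$ would lie in $S$, so for $k \geq N$ some consecutive pair $(Q_k, Q_{k+1})$ would straddle $1$ and $3$, contradicting the sub-claim. Therefore $S$ is never an opposite pair, the adjacency condition holds, and Theorem \ref{grexist} delivers a proper growth rate for $\mathcal{C}^{\boxplus}_w$.
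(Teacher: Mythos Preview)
Your proof is correct and follows essentially the same approach as the paper's: both verify the adjacency condition for $\mathcal{C}^{\boxplus}_w$ and then invoke Theorem~\ref{grexist}, and both rule out the bad case $S=\{1,3\}$ (or $\{2,4\}$) by observing that consecutive pin points cannot lie in opposite quadrants. The only difference is presentational: the paper simply asserts that ``every time $w$ moves from quadrant $1$ to quadrant $3$ it must pass through either quadrant $2$ or $4$'', whereas you spell out the geometric justification for this sub-claim from the $\pi$-map construction.
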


\begin{proof}
If $\mathcal{C}^{\boxplus}_{w}$ contains $(\nept)$ and $(\swpt)$ then $w$ contains $1$ and $3$ as recurrent pin factors, which is to say that $w$ visits quadrants $1$ and $3$ infinitely-often. Every time $w$ moves from quadrant $1$ to quadrant $3$ it must pass through either quadrant $2$ or $4$, and so must also visit at least one of these quadrants infinitely-often; hence $\mathcal{C}^{\boxplus}_{w}$ also contains either $(\nwpt)$ or $(\sept)$ and so $w$ satisfies the adjacency condition. The same reasoning works if $\mathcal{C}^{\boxplus}_{w}$ contains $(\nwpt)$ or $(\sept)$, and so we conclude that $\mathcal{C}^{\boxplus}_{w}$ satisfies the adjacency condition. As $\mathcal{C}^{\boxplus}_{w}$ is $\boxplus$-closed we can now apply Theorem \ref{grexist} to deduce that $\mathcal{C}^{\boxplus}_{w}$ has a proper growth rate.
\end{proof}

Recall that $\kappa \approx 2.20557$ is the growth rate of the class $\mathcal{O}^{\circ}$ of increasing oscillations. We shall require the following elementary bound:

\begin{lemma} \label{lemma:kappabound}
Let $w$ be an infinite pin word. Then $gr(\mathcal{C}^{\boxplus}_{w}) \geq \kappa$.
\end{lemma}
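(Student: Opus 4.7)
The strategy is a case analysis on the set $\QQQ \subseteq \{1, 2, 3, 4\}$ of quadrants containing the points $p_n$ of $w$ for infinitely many $n$. By pigeonhole $\QQQ$ is non-empty, and by exactly the argument used in the preceding lemma, $\QQQ$ satisfies the adjacency condition. Note also that the recurrent pin factors of length $1$ in $w$ are precisely the quadrant labels $\{Q : Q \in \QQQ\}$, so $\mu^{\circ}_Q$ is a recurrent $\boxplus$-indecomposable of $\mathcal{C}^{\circ}_w$ exactly when $Q \in \QQQ$.

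Suppose first that $|\QQQ| = 1$, say $\QQQ = \{Q\}$. Then for all $n$ beyond some $N$ the point $p_n$ lies in $Q$. The only letters at position $n+1$ (for $n \geq N$) that keep $p_{n+1}$ in $Q$ are the two \emph{outward-pointing} letters for $Q$ (one horizontal, one vertical); any other letter would push $p_{n+1}$ across an axis and out of $Q$. Since pin letters alternate horizontal and vertical, the tail of $w$ is forced to be the period-$2$ oscillation using these two letters. All sufficiently long oscillation pin words in $Q$ are therefore recurrent pin factors, and their $\boxplus$-closure --- which coincides with a symmetric image of $\mathcal{O}^{\circ}$ --- is contained in $\mathcal{C}^{\boxplus}_w$. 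This gives $gr(\mathcal{C}^{\boxplus}_w) \geq gr(\mathcal{O}^{\circ}) = \kappa$.

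Suppose instead that $|\QQQ| \geq 2$. The plan here is to show directly that the amended $G$-sequence $G(z)$ of $\mathcal{C}^{\boxplus}_w$ satisfies $G(1/\kappa) > 1$, so that by Theorem~\ref{EGT} we have $gr(\mathcal{C}^{\boxplus}_w) > \kappa$. From the single points alone one has $g_Q(1/\kappa) \geq 1/\kappa$ for $Q \in \QQQ$ and $g_Q \equiv 0$ otherwise; writing $k = [\{1,3\} \subseteq \QQQ] + [\{2,4\} \subseteq \QQQ] \in \{0, 1, 2\}$ and evaluating at $z = 1/\kappa$ (numerically $1/\kappa \approx 0.453$ and $1/\kappa^2 \approx 0.206$), one obtains $G(1/\kappa) \geq |\QQQ|/\kappa - k/\kappa^2$. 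For $|\QQQ| = 3$ (where $k = 1$) this is $\approx 1.155 > 1$, and for $|\QQQ| = 4$ (where $k = 2$) this is $\approx 1.402 > 1$. For the remaining subcase $|\QQQ| = 2$, adjacency forces $k = 0$ but the single-point bound $2/\kappa \approx 0.907$ is insufficient. Here I would produce a recurrent length-$2$ $\boxplus$-indecomposable as follows: the letter which would push $p_{n+1}$ out of $\QQQ$ (e.g., $d$ when $\QQQ = \{1, 2\}$) appears only finitely often, so the opposite same-alignment letter (the common outward letter of the two quadrants, $u$ in the example) appears at cofinitely many positions of its alignment. By pigeonhole there are infinitely many positions $n + 1$ where this common outward letter appears and $p_n \in \QQQ$; the resulting length-$2$ pin factor $Q\ell$ with $Q \in \QQQ$ is $\boxplus$-indecomposable by Theorem~\ref{classification}. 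This contributes $1/\kappa^2$ to the lower bound, yielding $G(1/\kappa) \geq 2/\kappa + 1/\kappa^2 \approx 1.112 > 1$.

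The main obstacle is the subcase $|\QQQ| = 2$: it requires checking against the explicit classification in Theorem~\ref{classification} to verify that the common outward letter never falls on the list of length-$2$ $\boxplus$-decomposable pin words, and a double pigeonhole argument (first to locate the positions of the common outward letter, then to pin down a single recurring choice of $Q \in \QQQ$) to conclude recurrence. The other subcases are essentially numerical.
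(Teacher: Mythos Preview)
Your case split and the one- and two-quadrant arguments are essentially correct and parallel the paper's proof (the paper exhibits both $\netwo$ and $\nwtwo$ in the two-quadrant case, but your single length-$2$ oscillation suffices since $k=0$ there, so $G=g$ and the coefficient-wise bound $g(z)\succeq 2z+z^2$ is legitimate). The three- and four-quadrant cases, however, contain a real gap: the claimed bound $G(1/\kappa) \geq |\QQQ|/\kappa - k/\kappa^2$ does not follow from $g_Q(1/\kappa) \geq 1/\kappa$. A \emph{lower} bound on $g_Q$ gives $g_1 g_3 \geq 1/\kappa^2$, hence $-g_1 g_3 \leq -1/\kappa^2$; the inequality points the wrong way for the subtracted terms. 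And this is not a harmless slip: if $\mathcal{C}^{\boxplus}_w$ has arbitrarily long recurrent oscillations in opposite quadrants then $g_1(1/\kappa)\,g_3(1/\kappa)$ can approach $1$ (the maximal one-quadrant series $(z+z^3)/(1-z)$ equals exactly $1$ at $z=1/\kappa$, which is just a rearrangement of $1-2z-z^3=0$), swamping your single-point lower bound $g(1/\kappa)\geq |\QQQ|/\kappa$.

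The paper sidesteps this entirely by passing to an explicit $\boxplus$-closed \emph{subclass}: for $|\QQQ|\geq 3$ it notes $\mathcal{C}^{\boxplus}_w \supseteq \boxplus\{\nept,\nwpt,\swpt\}$ and computes that this subclass already has growth rate $(3+\sqrt5)/2\approx 2.618 > \kappa$. Your numerics are precisely the $G$-sequence calculation for this subclass (where $g_i(z)=z$ holds with \emph{equality}), so the fix is simply to make the containment explicit and compute the growth rate of the subclass, rather than asserting an inequality for the $G$-sequence of $\mathcal{C}^{\boxplus}_w$ itself. Alternatively you could rescue the direct approach by combining $g\succeq g_1+g_2+g_3+g_4$ with the identity $a+b-ab = 1-(1-a)(1-b)$ and the upper bound $g_i(1/\kappa)\leq 1$ noted above, but that extra ingredient is essential and absent from your write-up.
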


\begin{proof}

If $w$ visits only one quadrant recurrently, which by symmetry we may take to be the first quadrant, then $w=\widetilde{w}\overline{(ur)}$, where $\widetilde{w}$ is some finite prefix. Hence $\mathcal{C}^{\boxplus}_{w}$ is just $\mathcal{O}^{\circ}$ with a finite prefix, and so has growth rate $\kappa$ by the Finite Prefix Lemma \ref{fplem}.

Next, suppose $w$ visits precisely two (necessarily adjacent) quadrants recurrently: by symmetry we may take these to be quadrants $1$ and $2$. Then after some finite prefix, $w$ stays in the upper half-plane, and moves between quadrants $1$ and $2$ infinitely-often. Hence $w$ contains $1l$ and $2r$ as recurrent pin factors, and these must extend to recurrent pin factors $1lu$ and $2ru$ (as $w$ stays in the upper half-plane from this point). Hence $\pi^{\circ}_{1lu}$ and $\pi^{\circ}_{2ru}$ are contained in $\mathcal{C}^{\boxplus}_{w}$ and we note that $(\nwtwo)$ and $(\netwo)$ are contained in these two pin permutations, respectively. Hence $\mathcal{C}^{\boxplus}_{w}$ contains the $\boxplus$-closure $\boxplus\{(\nwtwo), (\netwo)\}$, which, by a calculation on similar lines to those given in Section \ref{sec:2.5}, has growth rate $\approx 2.73205 > \kappa$.

Finally, suppose $w$ visits three or four quadrants recurrently - by symmetry assume these include quadrants $1$, $2$ and $3$. Then $\mathcal{C}^{\boxplus}_{w}$ contains $\boxplus\{(\nept), (\nwpt), (\swpt)\}$, which by calculation has growth rate $\approx 2.61803 > \kappa$. \qedhere
\end{proof}

We note that this now implies the following as a corollary:

\begin{cor} \label{cor:1quadbound}
Let $w$ be an infinite pin word. Then:
\begin{itemize}
\item $\overline{gr}(\mathcal{C}_{w}) \geq \kappa$
\item $\overline{gr}(\mathcal{C}_{w}) = \kappa$ if and only if $w = \widetilde{w}\overline{(ur)}$ or some symmetry of this form, where $\widetilde{w}$ is a finite prefix.
\end{itemize}
\end{cor}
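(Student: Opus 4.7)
The plan is to read this corollary as a straightforward packaging of the preceding lemma together with Proposition~\ref{eqgrs} and the Finite Prefix Lemma~\ref{fplem}. The substantive casework (how many quadrants $w$ visits recurrently, and what the resulting lower bound on $gr(\mathcal{C}^{\boxplus}_w)$ is in each case) has already been carried out in the proof of the preceding lemma, so only bookkeeping remains.

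For the first bullet, I would observe that the inclusion $\mathcal{C}^{\boxplus}_w \subseteq \mathcal{C}^{\circ}_w$ of centred classes descends to an inclusion of underlying uncentred classes, so that the underlying class of $\mathcal{C}^{\boxplus}_w$ is a subclass of $\mathcal{C}_w$. Applying Proposition~\ref{eqgrs} to equate the (proper) growth rate of $\mathcal{C}^{\boxplus}_w$ with that of its underlying class, and then using monotonicity of upper growth rates under inclusion, gives
\[
\overline{gr}(\mathcal{C}_w) \;\geq\; gr(\mathcal{C}^{\boxplus}_w) \;\geq\; \kappa,
\]
where the second inequality is precisely the preceding lemma.

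For the ``only if'' direction of the second bullet, I would suppose $\overline{gr}(\mathcal{C}_w) = \kappa$ and use the same chain of inequalities in reverse to squeeze $gr(\mathcal{C}^{\boxplus}_w) = \kappa$; that is, Proposition~\ref{eqgrs} gives $\overline{gr}(\mathcal{C}^{\circ}_w) = \kappa$, and then $\mathcal{C}^{\boxplus}_w \subseteq \mathcal{C}^{\circ}_w$ bounds $gr(\mathcal{C}^{\boxplus}_w) \leq \kappa$. Re-inspecting the three cases in the proof of the preceding lemma, the two-quadrant case yielded growth rate $\approx 2.73205$ and the three-or-four-quadrant case yielded $\approx 2.61803$, both strictly greater than $\kappa$. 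Hence only the one-quadrant case is compatible with equality: after some finite prefix, $w$ must remain in a single quadrant, and within that quadrant it can only alternate between the two permitted letters. Up to the eight symmetries of the square this yields $w = \widetilde{w}(ur)^{*}$ for some finite prefix $\widetilde{w}$, as required.

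For the ``if'' direction I would appeal directly to the Finite Prefix Lemma~\ref{fplem}: writing $|\widetilde{w}| = n-1$, the left-truncation $w_{\geq n}$ is (up to symmetry) the defining pin sequence of the increasing oscillation class $\mathcal{O}^{\circ}$, so $\overline{gr}(\mathcal{C}^{\circ}_w) = \overline{gr}(\mathcal{C}^{\circ}_{w_{\geq n}}) = gr(\mathcal{O}^{\circ}) = \kappa$, and Proposition~\ref{eqgrs} transfers this to $\overline{gr}(\mathcal{C}_w) = \kappa$. I do not anticipate a real obstacle here; the only delicate points are the symmetry accounting (checking that each of the eight alphabet-symmetries of $1(ur)^{*}$ indeed produces a class with the same growth rate as $\mathcal{O}^{\circ}$) and keeping track of the passage between centred and uncentred classes throughout.
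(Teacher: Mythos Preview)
Your proposal is correct and matches the paper's intended argument: the paper states this corollary immediately after the lemma bounding $gr(\mathcal{C}^{\boxplus}_w)$ and offers no separate proof, treating it exactly as the bookkeeping exercise you describe (inclusion $\mathcal{C}^{\boxplus}_w \subseteq \mathcal{C}^{\circ}_w$, Proposition~\ref{eqgrs}, the case analysis already done, and the Finite Prefix Lemma for the ``if'' direction).
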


\begin{proof}
Immediate from the proof of Lemma \ref{lemma:kappabound} as
\[
\mathcal{C}^{\boxplus}_{w} \subseteq \mathcal{C}^{\circ}_{w} \ ,
\]
and hence
\[
gr(\mathcal{C}^{\boxplus}_{w}) \leq \overline{gr}(\mathcal{C}^{\circ}_{w}) \ .
\]
\end{proof}

\subsection{$G$-sequence Properties} \label{sec:4.2}

Let $w$ be an infinite pin word, and let $\mathcal{C}^{\circ}$ be \emph{either} the pin class generated by $w$ \emph{or} its $\boxplus$-interior. We now know how to obtain the amended $G$-sequence $G(z)$ of $\mathcal{C}^{\circ}$, by enumerating either the pin factors or recurrent pin factors of $w$. Then the generating function of $\boxplus\left(\mathcal{C}^{\circ}\right)$ (note that this is $\mathcal{C}^{\circ}$ itself in the $\boxplus$-interior case) is given by:
\[
f(z) = \frac{1}{1 - G(z)} \ .
\]
We know that the growth rate $\rho$ of $\boxplus\left(\mathcal{C}^{\circ}\right)$ is equal to the reciprocal of the radius of convergence of $f(z)$. We should like to be able to deduce from this that $G(\rho^{-1}) = 1$, as this will give us a strategy for studying growth rates of pin classes in terms of analytic properties of $G(z)$, but this will require a slightly more thorough study of the properties of $G(z)$:

\begin{prop}[$G$-sequence of pin classes and $\boxplus$-interiors] \label{Gproperties}
Let $w$ be an infinite pin word and suppose that $G(z)= \sum_{n=1}^{\infty}a_{n}z^{n}$ ($a_{n} \in \mathbb{Z}$) is the amended $G$-sequence of a class $\mathcal{C}^{\circ}$, which is either the pin class generated by $w$ or its $\boxplus$-interior. Then:

\begin{enumerate}
\item $a_{1} \in \left\{1, 2, 3, 4\right\}$. \label{Gproperties1}
\item For all $n \geq 2$: \label{Gproperties2}
\[
-8n \leq a_{n} < 2^{n+2}
\]
\item $G(z)$ converges to a smooth function on the interval $\left[0,\frac{1}{2}\right)$, with $G(0) = 0$. \label{Gproperties3}
\item $G(z) = 1$ has a solution in $(0,\frac{1}{\kappa}]$. \label{Gproperties4}
\item Let $\alpha$ be the smallest positive real solution of $G(z)=1$. The growth rate of $\boxplus\left(\mathcal{C}^{\circ}\right)$ is equal to $\alpha^{-1}$. \label{Gproperties5}
\end{enumerate}

\end{prop}

\begin{proof}

Recall that 

\begin{equation} \label{amend}
G(z) = g(z) - g_{1}(z)g_{3}(z) - g_{2}(z)g_{4}(z)
\end{equation}

where $g(z)$ counts the $\boxplus$-indecomposables in $\mathcal{C}^{\circ}$ and $g_{i}(z)$ counts the one-quadrant $\boxplus$-indecomposables contained in the $i$th quadrant.

\begin{enumerate}
\item This is clear from (\ref{amend}): the $z$-term of $G(z)$ must agree with the $z$-term of $g(z)$ (as the products $g_{1}(z)g_{3}(z)$ and $g_{2}(z)g_{4}(z)$ have no term of degree lower than $z^2$), and this simply counts the number of quadrants that $\mathcal{C}^{\circ}$ visits (either at all or recurrently), which is clearly either $1$, $2$, $3$ or $4$.

\item Let $\preceq$ denote coefficient-wise ordering on formal power series, so $p(z) \preceq q(z)$ means that the $z^n$-coefficient of $p(z)$ is less than or equal to the $z^n$-coefficient of $q(z)$ for all $n \in \mathbb{N}$. Then, from (\ref{amend}) (and using the fact that the coefficients of $g(z),g_{i}(z)$ are all non-negative):
\[
\begin{split}
G(z) & = g(z) - g_{1}(z)g_{3}(z) - g_{2}(z)g_{4}(z) \\
 & \preceq g(z) \\
 & \preceq 4z + \sum_{n=2}^{\infty}2^{n+2}z^n,
\end{split}
\]
where the final inequality is simply the number of pin words of length $n$ in $\mathcal{L}_{P}$.

On the other hand, using the fact that the generating function of \emph{all} one-quadrant $\boxplus$-indecomposable pin permutations in the $i$th quadrant is
\[
g_{i}(z) = z + z^2 + 2z^3 + 2z^4 + 2z^5 + \dots = \frac{z + z^3}{1-z},
\]
we obtain:
\[
\begin{split}
G(z) & = g(z) - g_{1}(z)g_{3}(z) - g_{2}(z)g_{4}(z) \\
 & \succeq - g_{1}(z)g_{3}(z) - g_{2}(z)g_{4}(z) \\
 & \succeq -2\frac{(z+z^3)^{2}}{(1-z)^{2}} \\
 & = -2z^2 - 4z^3 - 10z^4 - 16z^5 - 24z^6 - 32z^7 - 40z^8 - 48z^9 - \dots \\
 & \succeq \sum_{n=1}^{\infty}-8nz^n.
\end{split}
\]
Combining these two inequalities gives the desired result.

\item This is from the Ratio Test, on noting that the coefficients of $G(z)$ have magnitudes bounded by powers of $2$.

\item We know that the all the classes we are dealing with have upper growth rates greater than or equal to $\kappa$. Hence the generating function of $\boxplus\left(\mathcal{C}^{\circ}\right)$, namely
\[
f(z) = \frac{1}{1 - G(z)} ,
\]
has radius of convergence (in $\mathbb{C}$) $R$, where $R \leq \frac{1}{\kappa} < \frac{1}{2}$. As $f(z)$ has non-negative coefficients (regardless of whether $G(z)$ does), we can apply Pringsheim's Theorem~\cite[Theorem IV.7]{flajolet:analytic-combin:} to deduce that $f(z)$ has a singularity at $z = R$. Singularites of $f(z)$ correspond either to singularities of $G(z)$ or solutions of $G(z)=1$. But $G(z)$ converges on $\left[0,\frac{1}{2}\right)$ which includes $R$, so $G(R)=1$.

\item We saw that $R$ solves $G(z)=1$ in the proof of 4.; there cannot be a smaller solution to this equation as that would give a singularity with magnitude smaller than $R$. Now we may apply the Exponential Growth Theorem \ref{EGT}. \qedhere
\end{enumerate}
\end{proof}

We require one more property of $G$-sequences in the $\boxplus$-interior case only:

\begin{lemma} \label{G+}
Let $G(z)$ be the amended $G$-sequence of the $\boxplus$-interior $\mathcal{C}^{\boxplus}_{w}$ of some pin class $\mathcal{C}^{\circ}_{w}$. Let $\alpha$ be the smallest positive real root of the equation $G(z)=1$, guaranteed to exist by Proposition \ref{Gproperties}. Then $G(z)$ is positive on the interval $(0,\alpha)$.
\end{lemma}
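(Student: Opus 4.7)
The plan is to exploit the identity $f(z)(1 - G(z)) = 1$ coming from the generating function specification, combined with the fact that $f$ enumerates a genuine combinatorial class and so has non-negative coefficients. First I would observe that, since $\alpha$ is by definition the \emph{smallest} positive real solution of $G(z) = 1$, and since $G(0) = 0 < 1$ and $G$ is continuous on $[0, 1/2) \supseteq [0,\alpha]$ by Proposition \ref{Gproperties}(3), the intermediate value theorem forces $G(z) < 1$ throughout $[0, \alpha)$: any value of $G$ exceeding $1$ on this interval would produce a root of $G(z) = 1$ strictly smaller than $\alpha$.

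Next I would note that on $[0,\alpha)$ the analytic function $1/(1 - G(z))$ is well-defined and positive, and agrees with the power series $f(z) = \sum_{n \geq 0} C_n z^n$ of $\mathcal{C}^{\boxplus}_w$ there, since both sides are analytic in a neighbourhood of $0$ and agree as formal power series (and the power series has radius of convergence $\alpha$ by Proposition \ref{Gproperties}(5)). Crucially, $f$ enumerates a genuine centred permutation class, so all $C_n$ are non-negative integers, with $C_0 = 1$.

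The key additional input is that $\mathcal{C}^{\boxplus}_w$ is non-empty at length $1$, i.e.\ $C_1 \geq 1$. This holds because an infinite pin sequence $w$ must visit at least one quadrant $i \in \{1,2,3,4\}$ infinitely often, so the numeral $i$ occurs as a recurrent pin factor and the one-point centred permutation $\mu_i^{\circ} = \pi_i^{\circ}$ lies in $\mathcal{C}^{\boxplus}_w$. Therefore for every $z \in (0,\alpha)$,
\[
f(z) \;\geq\; 1 + C_1 z \;\geq\; 1 + z \;>\; 1 .
\]
Rewriting the specification as $G(z) = 1 - 1/f(z)$ then yields $G(z) > 0$ on $(0,\alpha)$, as required.

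The only delicate step is the passage from the formal identity $f(z) = 1/(1 - G(z))$ to a genuine equality of real-valued functions on $(0,\alpha)$; this is what forces the preliminary argument that $G(z) < 1$ on $[0,\alpha)$, since otherwise the right-hand side could blow up or change sign inside the interval. Once that is in hand, the rest is essentially one line of algebra, and no further case analysis in terms of which quadrants $w$ visits recurrently is needed.
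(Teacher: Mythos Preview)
Your argument is correct and takes a genuinely different route from the paper's. The paper proceeds by case analysis on the number of quadrants that $w$ visits recurrently: in the one- and two-quadrant cases $G(z)$ has non-negative coefficients and the claim is immediate, while in the three- and four-quadrant cases the paper establishes explicit coefficient-wise lower bounds for $G(z)$ (using, for instance, that $\netwo$ and $\swtwo$ must lie in the class in the three-quadrant case) and then checks numerically that these ``worst case'' functions remain positive on the relevant interval.

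Your approach sidesteps all of this by working directly with the identity $G(z)=1-1/f(z)$ and the single observation $C_1\geq 1$. This is more conceptual and entirely avoids both the quadrant case-split and the numerical verification. The paper's method has the minor advantage of giving explicit uniform lower bounds on $G(z)$ that might conceivably be reused elsewhere, but for the purpose of this lemma your argument is cleaner and self-contained. Note also that you do not strictly need to invoke Proposition~\ref{Gproperties}(5) for the radius of convergence of $f$: once you know $1/(1-G(z))$ is analytic on $[0,\alpha)$, its Taylor series at $0$ (which is $f$) automatically converges there.
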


\begin{proof}
We note that this property is obvious if $G(z)$ has non-negative coefficients, which immediately deals with the case in which $w$ visits only two of the quadrants (either recurrently or otherwise).

We next suppose that $\mathcal{C}^{\boxplus}_{w}$ has points in precisely three quadrants, which, without loss of generality, we take to be quadrants $1$, $2$ and $3$. Then $\mathcal{C}^{\boxplus}_{w}$ must contain the permutations $(\nept)$, $(\nwpt)$, $(\swpt)$, $(\netwo)$ and $(\swtwo)$ (as the defining infinite pin word $w$ must turn around in the first and third quadrants infinitely-often), and so
\[
3z + 2z^2 + 2z^3 \preceq g(z),
\]
where again we write $h_{1}(z) \preceq h_{2}(z)$ to mean that, for all $n \in \mathbb{N}$, the coefficient of $z^n$ in $h_{2}(z)$ is at least as large as the coefficient of $z^n$ in $h_{1}(z)$. At the other end of the spectrum, for each $i \in \left\{1,3\right\}$,
\[
\begin{split}
g_{i}(z) & \preceq z + z^2 + 2z^3 + 2z^4 + \dots \\
 & = \frac{z + z^3}{1-z} \ ,
\end{split}
\]
again, as this is the generating function of all one-quadrant pin permutations in the $i$th quadrant (namely, the one-quadrant oscillations).

Combining these two facts, we obtain
\begin{equation}
\begin{split}
G(z) & = g(z) - g_{1}(z)g_{3}(z) \\
 & \succeq (3z + 2z^2 + 2z^3) - \frac{(z+z^3)^2}{(1-z)^2} \\
 & = \frac{3z - 5z^2 + z^3 - 4z^4 + 2z^5 - z^6}{(1-z)^2} \label{skjdlkfj879sdkj}
\end{split}
\end{equation}
Hence:
\[
G(z) = \frac{3z - 5z^2 + z^3 - 4z^4 + 2z^5 - z^6}{(1-z)^2} + F(z) \ ,
\]
where $F(z)$ has non-negative coefficients. By direct computation the function (\ref{skjdlkfj879sdkj}) is positive on $(0,\kappa]$, and $F(z)$ is certainly positive here due to the non-negativity of its coefficients. Hence $G(z)$ is positive on $(0,\kappa]$, and as part \ref{Gproperties5} of Proposition \ref{Gproperties} tells us that $\alpha$ is in this interval, $G(z)$ is positive on $(0,\alpha)$.

For the final case, if $\mathcal{C}^{\boxplus}_{w}$ has points in all four quadrants then (as $g(z) \succeq 4z$):
\begin{equation}
\begin{split}
G(z) & = g(z) - g_{1}(z)g_{3}(z) - g_{2}(z)g_{4}(z)\\
 & \succeq 4z - 2\frac{(z+z^3)^2}{(1-z)^2} \\
 & = \frac{4z - 10z^2 + 4z^3 - 4z^4 - 2z^6}{(1-z)^2} \ , \label{sfkld7jsd2}
\end{split}
\end{equation}
and so $G(z)$ can be written in the form
\[
G(z) = \frac{4z - 10z^2 + 4z^3 - 4z^4 - 2z^6}{(1-z)^2} + F(z) \ ,
\]
where $F(z)$ has non-negative coefficients. Unlike in the three-quadrant case, (\ref{sfkld7jsd2}) is \emph{not} positive on $(0,\kappa]$, but it is positive on the shorter interval $(0,\frac{1}{3}]$; hence (by the non-negativity of the coefficients of $F(z)$) $G(z)$ itself is positive on the interval $(0,\frac{1}{3}]$. As $\mathcal{C}^{\boxplus}_{w}$ is a $\boxplus$-closed centred permutation class with points in all four quadrants, it contains the centred $\mathcal{X}$-class (as defined in Section \ref{sec:2.5}), which we demonstrated to have (upper) growth rate $2 + \sqrt{2} \approx 3.41421$ in Example \ref{ex:Xclass}. Hence $\mathcal{C}^{\boxplus}_{w}$ has growth rate greater than $2 + \sqrt{2} > 3$, and so by part \ref{Gproperties4} of Proposition \ref{Gproperties} the interval $(0,\alpha)$ is contained in $[0,\frac{1}{3})$, on which we have just shown $G(z)$ to be positive. \qedhere
 \end{proof}

\subsection{Pin Classes Have Proper Growth Rates} \label{sec:4.3}

The moral of the preceeding sections is that we can sandwich \emph{any} pin class (recurrent or otherwise) between two $\boxplus$-closed (centred) permutation classes that we know how to enumerate. If the pin class happens to be recurrent then these containments are in fact equalities and we have enumerated the pin class. If the pin class is \emph{not} recurrent then we at least have the following bound:

\begin{equation} \label{conts}
gr(\mathcal{C}^{\boxplus}_{w}) \leq \underline{gr}(\mathcal{C}^{\circ}_{w}) \leq \overline{gr}(\mathcal{C}^{\circ}_{w}) \leq gr(\boxplus(\mathcal{C}^{\circ}_{w}))
\end{equation}

The aim of this section is to prove the remarkable result that the two left-most inequalities in (\ref{conts}) are in fact equalities; that is, the upper and lower growth rates of a pin class are always equal to the growth rate of its $\boxplus$-interior. This will enable us to conclude that \emph{any} pin class (recurrent or otherwise) has a proper growth rate, and will enable us to determine this growth rate providing that we can enumerate the recurrent pin factors of the defining infinite pin word $w$. The core idea of the proof is relatively simple to understand: if $w$ is an infinite pin word then we know by the Finite Prefix Lemma \ref{fplem} that, for all $n \in \mathbb{N}$, $\mathcal{C}^{\circ}_{w}$ and $\mathcal{C}^{\circ}_{w_{\geq n}}$ have the same (upper) growth rate, and we may further note that, for any $k \in \mathbb{N}$, if we take $n$ sufficiently large then the pin factors of $w_{\geq n}$ will be precisely the \emph{recurrent} pin factors of $w$ up to length $k$. Thus we expect the growth rate of $\boxplus(\mathcal{C}^{\circ}_{w_{\geq n}})$, clearly an upper bound on $\overline{gr}(\mathcal{C}^{\circ}_{w_{\geq n}}) = \overline{gr}(\mathcal{C}^{\circ}_{w})$, to approach $gr(\mathcal{C}^{\boxplus}_{w})$ as $n \rightarrow \infty$. We formalise this as follows:

\begin{prop} \label{limprop}
Let $\mathcal{C}^{\circ}_{w}$ be a pin class generated by an infinite pin word $w$. Let $\mathcal{C}^{\boxplus}_{w}$ be the $\boxplus$-interior of $\mathcal{C}^{\circ}_{w}$ and write $w_{\geq n}$ for the left-truncation of $w$ starting in the $n$th position. Then:
\[
\lim_{n\rightarrow\infty}gr\left(\boxplus(\mathcal{C}^{\circ}_{w_{\geq n}})\right) = gr(\mathcal{C}^{\boxplus}_{w}).
\]
\end{prop}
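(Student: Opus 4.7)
The plan is to reduce the convergence of growth rates to the convergence of the amended $G$-sequences. Let $G_n(z)$ denote the amended $G$-sequence of $\boxplus\mathcal{C}^{\circ}_{w_{\geq n}}$ obtained via Procedure \ref{recproc}, and $G(z)$ the amended $G$-sequence of $\mathcal{C}^{\boxplus}_w$; by Proposition \ref{Gproperties}(5), the growth rates in question are the reciprocals of the smallest positive real roots $\alpha_n$ and $\alpha$ of $G_n(z) = 1$ and $G(z) = 1$. I will prove $\alpha_n \to \alpha$ by establishing matching upper and lower bounds.

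The upper bound $\alpha_n \leq \alpha$ (equivalently $gr(\boxplus\mathcal{C}^{\circ}_{w_{\geq n}}) \geq gr(\mathcal{C}^{\boxplus}_w)$) is essentially immediate: every recurrent pin factor of $w$ appears arbitrarily far along $w$ and so is also a pin factor of $w_{\geq n}$ for every $n$; taking $\boxplus$-closures yields $\mathcal{C}^{\boxplus}_w \subseteq \boxplus\mathcal{C}^{\circ}_{w_{\geq n}}$.

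For the lower bound, the key observation is coefficient-wise stabilisation: since each non-recurrent pin factor of $w$ has only finitely many instances, for each length $k$ there exists $N(k)$ such that for all $n \geq N(k)$ every length-$k$ pin factor of $w_{\geq n}$ (including the initial one, which is now forced to be recurrent) coincides with a recurrent length-$k$ pin factor of $w$, and conversely. The generating functions $g, g_1, g_2, g_3, g_4$ feeding into the formula $G = g - g_1 g_3 - g_2 g_4$ therefore stabilise degree-by-degree, and so $G_n(z) \to G(z)$ coefficient-wise. Combining this with the uniform absolute bound $\max(8k, 2^{k+2})$ on the $z^k$-coefficient from Proposition \ref{Gproperties}(2), a dominated-convergence argument upgrades coefficient-wise convergence to uniform convergence of $G_n$ to $G$ on every compact subinterval of $[0, 1/2)$.

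To finish, I will argue that this uniform convergence transports the root. Fix $\epsilon \in (0, \alpha)$. Lemma \ref{G+} gives $G(z) > 0$ on $(0, \alpha)$; combined with the minimality of $\alpha$, the continuity of $G$, and $G(0) = 0$, this forces $G(z) < 1$ throughout $(0, \alpha)$, so in particular $G(\alpha - \epsilon) < 1$. Uniform convergence on $[0, \alpha - \epsilon]$ then yields $\sup_{z \in [0, \alpha - \epsilon]} G_n(z) < 1$ for all sufficiently large $n$, and since each $G_n$ is analytic on $[0, 1/2)$ with $G_n(0) = 0$, its smallest positive real root $\alpha_n$ must exceed $\alpha - \epsilon$. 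Together with the upper bound this proves $\alpha_n \to \alpha$, and hence $\alpha_n^{-1} \to \alpha^{-1}$. The main obstacle is the lower bound step: because the amended $G$-sequence picks up negative coefficients from the commutativity correction and is a priori non-monotonic, one cannot simply read off $G(\alpha - \epsilon) < 1$ from $G(\alpha) = 1$, and it is precisely Lemma \ref{G+} that rules out pathological oscillation of $G$ on $[0, \alpha]$ and makes the transport argument go through.
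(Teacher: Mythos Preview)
Your proposal is correct and follows essentially the same approach as the paper: both establish $\alpha_n \leq \alpha$ via the containment $\mathcal{C}^{\boxplus}_w \subseteq \boxplus\mathcal{C}^{\circ}_{w_{\geq n}}$, use coefficient-wise stabilisation of the $G$-sequences together with the bounds of Proposition~\ref{Gproperties}(2) to obtain uniform convergence on $[0,\alpha-\epsilon]$, and then transport the root using $G<1$ on $[0,\alpha)$ (with Lemma~\ref{G+} invoked). The only cosmetic differences are that the paper works with an explicit tail bound $\frac{16}{\kappa-2}\left(\frac{2}{\kappa}\right)^t$ on $[0,1/\kappa]$ and indexes via a subsequence $n(t)$, whereas you phrase the same estimate as dominated convergence on compact subsets of $[0,1/2)$ and index directly by $n$; neither change is substantive.
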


\begin{proof}
Let $g(z)$ be the generating function of the $\boxplus$-indecomposables of $\mathcal{C}^{\boxplus}_{w}$, and $g_{i}(z)$ be the generating function of one-quadrant $\boxplus$-indecomposables of $\mathcal{C}^{\boxplus}_{w}$ in the $i$th quadrant. Then
\[
G(z) = g(z) - g_{1}(z)g_{3}(z) - g_{2}(z)g_{4}(z)
\]
is the amended $G$-sequence of $\mathcal{C}^{\boxplus}_{w}$, and $\alpha = gr(\mathcal{C}^{\boxplus}_{w})^{-1}$ is the smallest positive real root of the equation $G(z) = 1$.

Now, let $t \in \mathbb{N}$, and consider the set $\mathcal{W}^{*}_{t}$ of all non-recurrent pin factors of $w$ of length $\leq t$. This is a finite set and each element of it occurs as a pin factor of $w$ only a finite number of times, so there is some $n(t) \in \mathbb{N}$ such that no element of $\mathcal{W}^{*}_{k}$ is contained as a pin factor of $w_{\geq n(t)}$ (for concreteness, we may take $n(t)$ to be the \emph{smallest} positive integer with this property). But of course \emph{all} of the recurrent pin factors of $w$ of length $\leq t$ occur as pin factors of $w_{\geq n(t)}$, so we deduce that the pin factors of $w_{\geq n(t)}$ of length $ \leq t$ are precisely the recurrent pin factors of $w$ of length $\leq t$. Hence if we write $g_{t}(z)$ for the generating function of the $\boxplus$-indecomposables of $\mathcal{C}^{\circ}_{w_{\geq n(t)}}$, and $g_{t, i}(z)$ for generating function of the one-quadrant $\boxplus$-indecomposables of $\mathcal{C}^{\circ}_{w_{\geq n}}$ in the $i$th quadrant, then $g_{t}(z), g_{t,1}(z), g_{t,2}(z), g_{t,3}(z), g_{t,4}(z)$ will agree with $g(z), g_{1}(z), g_{2}(z), g_{3}(z), g_{4}(z)$, respectively, up to and including the $z^{t}$-term. Hence the amended $G$-sequence of $\mathcal{C}^{\circ}_{w_{\geq n(t)}}$, namely
\[
G_{t}(z) = g_{t}(z) - g_{t,1}(z)g_{t,3}(z) - g_{t,2}(z)g_{t,4}(z) ,
\]
agrees with $G(z)$ up to and including the $z^{t}$-term.

Fix a value $t$: we now consider some basic analytic facts about the functions $G(z)$ and $G_{t}(z)$. First, note that by Proposition \ref{Gproperties}, $G(z)$ and $G_{t}(z)$ are smooth functions defined on the interval $\left[0,\frac{1}{\kappa}\right]$. Further, $G(z) = 1$ and $G_{t}(z) = 1$ have solutions in this interval; we call the smallest solution to these equations in this interval $\alpha$ and $\alpha_{t}$, respectively. Then $\mathcal{C}^{\boxplus}_{w}$ and $\boxplus\mathcal{C}^{\circ}_{w_{\geq n(t)}}$ have growth rates $\rho = \alpha^{-1}$ and $\rho_{t} = \alpha^{-1}_{t}$, respectively. Note that we have:
\[
\rho = gr(\mathcal{C}^{\boxplus}_{w}) \leq \overline{gr}(\mathcal{C}^{\circ}_{w}) = \overline{gr}(\mathcal{C}^{\circ}_{w_{\geq n(t)}}) \leq gr(\boxplus\mathcal{C}^{\circ}_{w_{\geq n(t)}}) = \rho_{t}
\]
where the two inequalities follow from containment of the corresponding classes, and the middle equality follows from the Finite Prefix Lemma \ref{fplem}. Note that this implies that $\alpha_{t} \leq \alpha$. 

Now, write $G_{t}(z) = \sum_{n=1}^{\infty}a_{n}z^n$ and $G(z) = \sum_{n=1}^{\infty}b_{n}z^n$: then each $a_{n}, b_{n}$ is an integer (not necessarily positive) with magnitude bounded by $2^{n+2}$ and $a_{n} = b_{n}$ for all $n \leq t$. We combine these facts to deduce a bound on the difference between these two functions:
\[
\begin{split}
\left|G_{t}(z) - G(z)\right| & = \left|\sum_{n=t+1}^{\infty}(a_{n} - b_{n})z^{n}\right| \\
 & \leq \sum_{n=t+1}^{\infty}\left|a_{n} - b_{n}\right|z^n \\
 & \leq \sum_{n=t+1}^{\infty}(2^{n+2} + 8n)z^{n} \\
 & \leq \sum_{n=t+1}^{\infty}2^{n+3}z^{n} \\
 & = \frac{8(2z)^{t+1}}{1-2z} \ ,
\end{split}
\]
and so, for $z \in \left[0,\frac{1}{\kappa}\right]$:
\[
\begin{split}
\left|G_{t}(z) - G(z)\right| & \leq \sup_{z \in \left[0,\frac{1}{\kappa}\right]}\left\{\frac{8(2z)^{t+1}}{1-2z}\right\} \\
 & = \frac{8(\frac{2}{\kappa})^{t+1}}{1 - \frac{2}{\kappa}} \\
 & = \frac{16}{\kappa - 2} \cdot \left(\frac{2}{\kappa}\right)^{t}.
\end{split}
\]
Note, crucially, that (as $\kappa > 2$) this expression approaches $0$ as $t \rightarrow \infty$. We claim that this fact implies that $\alpha_{t} \rightarrow \alpha$:

Let $\epsilon > 0$ and consider $G(z)$ on the interval $\left[0, \alpha - \epsilon\right]$. As a continuous function on a closed interval, $G(z)$ achieves a maximum value $M$ on $\left[0, \alpha - \epsilon\right]$. Further, as $G(z)$ is positive on $(0,\alpha]$ (by Lemma \ref{G+}) and $G(z)=1$ does not have a root in $\left[0, \alpha - \epsilon\right]$ (as $\alpha$ is, by definition, the \emph{smallest} positive real root of $G(z)=1$), $M$ must be a positive number smaller than $1$. Now, choose a $K \in \mathbb{N}$ such that:
\[
\frac{16}{\kappa - 2} \cdot \left(\frac{2}{\kappa}\right)^{K} < 1 - M
\]
and take any $t \geq K$. Then, for $z \in \left[0, \alpha - \epsilon\right]$:
\[
\begin{split}
\left|G_{t}(z)\right| & = \left|(G_{t}(z) - G(z)) + G(z)\right| \\
 & \leq \left|G_{t}(z) - G(z)\right| + \left|G(z)\right| \\
 & \leq \left|G_{t}(z) - G(z)\right| + G(z) \\
 & \leq \frac{16}{\kappa - 2} \cdot \left(\frac{2}{\kappa}\right)^{t} + M \\
 & < (1 - M) + M \\
 & = 1.
\end{split}
\]
Hence, in particular, $G_{t}(z)=1$ does not have a root in $\left[0, \alpha - \epsilon\right]$. But $G_{t}(z)=1$ certainly does have a root, namely $\alpha_{t}$, which is smaller than $\alpha$. Hence $\alpha_{t} \in \left(\alpha - \epsilon, \alpha\right]$.

We have thus proved that for every $\epsilon > 0$ there exists $K \in \mathbb{N}$ such that $\alpha_{t} \in \left(\alpha - \epsilon, \alpha\right]$ for all $t \geq K$; hence $\alpha_{t} \rightarrow \alpha$ as $t \rightarrow \infty$. Thus
\[
gr(\boxplus\mathcal{C}^{\circ}_{\geq n(t)}) \rightarrow gr(\mathcal{C}^{\boxplus}_{w})
\]
as $t\rightarrow\infty$, as required. \qedhere \end{proof}

We may now finally deduce the main result of this paper:

\begin{thm} \label{greqbox}
Let $w$ be an infinite pin word. Then the associated pin class $\mathcal{C}^{\circ}_{w}$ (along with its uncentred counterpart $\mathcal{C}_{w}$) has a proper growth rate which is equal to that of its $\boxplus$-interior, $\mathcal{C}^{\boxplus}_{w}$.
\end{thm}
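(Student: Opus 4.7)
The plan is to use Proposition \ref{limprop} together with the Finite Prefix Lemma to sandwich both the upper and lower growth rates of $\mathcal{C}^{\circ}_{w}$ between copies of $gr(\mathcal{C}^{\boxplus}_{w})$, and then transfer the conclusion to the uncentred class using Proposition \ref{eqgrs}.

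First I would record the two inclusions which are immediate from the definitions:
\[
\mathcal{C}^{\boxplus}_{w} \;\subseteq\; \mathcal{C}^{\circ}_{w} \;\subseteq\; \boxplus\mathcal{C}^{\circ}_{w}.
\]
Since $\mathcal{C}^{\boxplus}_{w}$ and $\boxplus\mathcal{C}^{\circ}_{w}$ are both $\boxplus$-closed subclasses of $\mathcal{P}^{\circ}$ satisfying the adjacency condition, Theorem \ref{grexist} tells us both have proper growth rates, and monotonicity of upper/lower growth rates under class containment gives
\[
gr(\mathcal{C}^{\boxplus}_{w}) \;\leq\; \underline{gr}(\mathcal{C}^{\circ}_{w}) \;\leq\; \overline{gr}(\mathcal{C}^{\circ}_{w}) \;\leq\; gr(\boxplus\mathcal{C}^{\circ}_{w}).
\]
The existence of $\overline{gr}(\mathcal{C}^{\circ}_{w})$ comes from Proposition \ref{eqgrs} (via Marcus--Tardos applied to the underlying class).

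Next I would apply this same chain to every left-truncation $w_{\geq n}$. By the Finite Prefix Lemma \ref{fplem}, $\overline{gr}(\mathcal{C}^{\circ}_{w}) = \overline{gr}(\mathcal{C}^{\circ}_{w_{\geq n}})$ for every $n$, so the chain above (applied to $w_{\geq n}$ in place of $w$) yields
\[
\overline{gr}(\mathcal{C}^{\circ}_{w}) \;=\; \overline{gr}(\mathcal{C}^{\circ}_{w_{\geq n}}) \;\leq\; gr(\boxplus\mathcal{C}^{\circ}_{w_{\geq n}})
\]
for every $n \in \mathbb{N}$. Now pass to the limit: by Proposition \ref{limprop},
\[
\overline{gr}(\mathcal{C}^{\circ}_{w}) \;\leq\; \lim_{n\to\infty} gr(\boxplus\mathcal{C}^{\circ}_{w_{\geq n}}) \;=\; gr(\mathcal{C}^{\boxplus}_{w}).
\]
Combining with the lower bound $\underline{gr}(\mathcal{C}^{\circ}_{w}) \geq gr(\mathcal{C}^{\boxplus}_{w})$ already recorded, all four quantities coincide, so $gr(\mathcal{C}^{\circ}_{w})$ exists and equals $gr(\mathcal{C}^{\boxplus}_{w})$.

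Finally, Proposition \ref{eqgrs} says that the uncentred class $\mathcal{C}_{w}$ has the same upper and lower growth rates as its centred counterpart, so $gr(\mathcal{C}_{w})$ also exists and equals $gr(\mathcal{C}^{\boxplus}_{w})$. There is no real obstacle in this argument once Proposition \ref{limprop} is in hand: all the work has already been done in establishing that the $G$-sequences $G_{t}(z)$ of the classes $\boxplus\mathcal{C}^{\circ}_{w_{\geq n(t)}}$ converge coefficient-wise to $G(z)$ on $[0,1/\kappa]$ fast enough to force convergence of their smallest roots. The only thing to be careful about here is simply invoking the Finite Prefix Lemma and monotonicity in the correct direction to squeeze the non-existent-a-priori proper growth rate of $\mathcal{C}^{\circ}_{w}$ between two coinciding limits.
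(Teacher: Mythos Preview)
Your proposal is correct and follows essentially the same approach as the paper: sandwich $\underline{gr}(\mathcal{C}^{\circ}_{w}) \leq \overline{gr}(\mathcal{C}^{\circ}_{w})$ between $gr(\mathcal{C}^{\boxplus}_{w})$ below (via containment) and $gr(\boxplus\mathcal{C}^{\circ}_{w_{\geq n}})$ above (via the Finite Prefix Lemma plus containment), then invoke Proposition~\ref{limprop} to collapse the sandwich. Your explicit final appeal to Proposition~\ref{eqgrs} to handle the uncentred class $\mathcal{C}_{w}$ is a small addition the paper leaves implicit.
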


\begin{proof}
Suppose $w$ is an infinite pin word. Then $\mathcal{C}^{\boxplus}_{w} \subseteq \mathcal{C}^{\circ}_{w}$ and so
\begin{equation}\label{one}
gr(\mathcal{C}^{\boxplus}_{w}) =  \underline{gr}(\mathcal{C}^{\boxplus}_{w}) \leq \underline{gr}(\mathcal{C}^{\circ}_{w})
\end{equation}
where the left-hand equality is due to the fact that $\boxplus$-interiors have (proper) growth rates. Now, letting $n \in \mathbb{N}$ we have
\begin{equation}\label{two}
\overline{gr}(\mathcal{C}^{\circ}_{w}) = \overline{gr}(\mathcal{C}^{\circ}_{w_{\geq n}})
\end{equation}
by the Finite Prefix Lemma \ref{fplem}, and by containment we also have
\begin{equation} \label{three}
\overline{gr}(\mathcal{C}^{\circ}_{w_{\geq n}}) \leq \overline{gr}(\boxplus(\mathcal{C}^{\circ}_{w_{\geq n}})) = gr(\boxplus(\mathcal{C}^{\circ}_{w_{\geq n}}))
\end{equation}
where the right-hand equality follows from existence of proper growth rates of $\boxplus$-closures of pin classes.

Combining equations (\ref{one}), (\ref{two}) and (\ref{three}) yields:
\[
gr(\mathcal{C}^{\boxplus}_{w}) \leq \underline{gr}(\mathcal{C}^{\circ}_{w}) \leq \overline{gr}(\mathcal{C}^{\circ}_{w}) \leq gr(\boxplus\mathcal{C}^{\circ}_{w_{\geq n}})
\]
and taking limits (using Proposition \ref{limprop}) as $n \rightarrow \infty$ forces
\[
\underline{gr}(\mathcal{C}^{\circ}_{w}) = \overline{gr}(\mathcal{C}^{\circ}_{w}) = gr(\mathcal{C}^{\boxplus}_{w}),
\]
as required. Finally, Proposition \ref{eqgrs} demonstrates that the uncentred pin class $\mathcal{C}_{w}$ also has a proper growth rate, equal to this common value.\end{proof}

We have now established that every pin permutation class has a proper growth rate, equal to that of the $\boxplus$-interior of its centred counterpart. Combining this fact with Lemmas \ref{pcupperbound} and \ref{lemma:kappabound} we obtain the following bounds on this characteristic:

\begin{cor}[Pin class growth rate bounds] \label{pcgrbounds}
Suppose $w$ is an infinite pin word. Then $gr(\mathcal{C}_{w})$ exists and
\[
\kappa \leq gr(\mathcal{C}_{w}) \leq \omega_{\infty}.
\]
\end{cor}

\subsection{Example: The Liouville $\mathcal{V}$} \label{sec:4.4}

We conclude by illustrating the theory developed throughout this paper to determine the growth rate of a single example of a non-recurrent pin class. We shall consider one of the simplest examples of a pin class generated by a not-eventually-recurrent infinite pin word. Recall that for a word $f$ over any alphabet we write $(f)^{k}$ to mean $\underbrace{\mbox{fff \dots ff}}_{\mbox{{\small $k$}}}$. We now define:

\begin{defn}[The Liouville $\mathcal{V}$]
Let $w_{\mathcal{L}}$ denote the infinite pin word
\[
\begin{split}
w_{\mathcal{L}} & = 1(ul)^{1}ur(ul)^{2}ur(ul)^{3}ur(ul)^{4}ur(ul)^{5}ur(ul)^{6}ur(ul)^{7} \dots \\
 & = 1ulurululurulululurululululurulul \dots
\end{split}
\]

We write
\[
\mathcal{V}_{\mathcal{L}} = \mathcal{C}_{w_{\mathcal{L}}},
\]
and refer to the permutation class $\mathcal{V}_{\mathcal{L}}$ as the \emph{Liouville $\mathcal{V}$}\footnote{Named by analogy with Liouville's constant, whose decimal expansion consists of $1$'s spaced out by increasing strings of $0$'s.}. Similarly, we define the centred Liouville $\mathcal{V}$, $\mathcal{V}^{\circ}_{\mathcal{L}}$, to be $\mathcal{C}^{\circ}_{w_{\mathcal{L}}}$. See Fig. \ref{fig:Liouville} for an illustration.
\end{defn}

The infinite pin word $w_{\mathcal{L}}$ is most certainly not recurrent: the pin word $1ulur$, for example, appears once as a pin factor at the beginning of the sequence and never again. In fact, $w_{\mathcal{L}}$ is not even eventually-recurrent: as the length of the string of between any two consecutive $r$'s is strictly increasing, any pin factor of $w_{\mathcal{L}}$ which contains more than one $r$ appears once and never again.

\begin{figure}[h]
\begin{center}
\reflectbox{\begin{tikzpicture}[scale=0.35]

\node[circle, draw, fill=none, inner sep=0pt, minimum width=\plotptradius] (0) at (9,0) {};
\node[permpt] (1) at (7,1) {};
\node[permpt] (2) at (8,3) {}; \draw[thin] (2) -- ++ (0,-2.5);
\node[permpt] (3) at (11,2) {}; \draw[thin] (3) -- ++ (-3.5,0);
\node[permpt] (4) at (10,5) {}; \draw[thin] (4) -- ++ (0,-3.5);
\node[permpt] (5) at (5,4) {};  \draw[thin] (5) -- ++ (5.5,0);
\node[permpt] (6) at (6,7) {}; \draw[thin] (6) -- ++ (0,-3.5);
\node[permpt] (7) at (13,6) {}; \draw[thin] (7) -- ++ (-7.5,0);
\node[permpt] (8) at (12,9) {}; \draw[thin] (8) -- ++ (0,-3.5);
\node[permpt] (9) at (15,8) {}; \draw[thin] (9) -- ++ (-3.5,0);
\node[permpt] (10) at (14,11) {}; \draw[thin] (10) -- ++ (0,-3.5);
\node[permpt] (11) at (3,10) {}; \draw[thin] (11) -- ++ (11.5,0);
\node[permpt] (12) at (4,13) {}; \draw[thin] (12) -- ++ (0,-3.5);
\node[permpt] (13) at (17,12) {}; \draw[thin] (13) -- ++ (-13.5,0);
\node[permpt] (14) at (16,15) {}; \draw[thin] (14) -- ++ (0,-3.5);
\node[permpt] (15) at (19,14) {}; \draw[thin] (15) -- ++ (-3.5,0);
\node[permpt] (16) at (18,17) {}; \draw[thin] (16) -- ++ (0,-3.5);
\node[permpt] (17) at (21,16) {}; \draw[thin] (17) -- ++ (-3.5,0);
\node[permpt] (18) at (20,19) {}; \draw[thin] (18) -- ++ (0,-3.5);
\node[permpt] (19) at (1,18) {}; \draw[thin] (19) -- ++ (19.5,0);
\node[permpt] (20) at (2,21) {}; \draw[thin] (20) -- ++ (0,-3.5);


\draw[thick] (9,0) -- ++ (0,22);
\draw[thick] (0,0) -- ++ (22,0);



\draw[thin,dashed] (22,20) -- ++ (-20.5,0);




\end{tikzpicture}}
\end{center}
\caption{The Liouville $\mathcal{V}$; a non-recurrent pin class.}
\label{fig:Liouville}
\end{figure}

As a consequence of this fact, $\mathcal{V}^{\circ}_{\mathcal{L}}$ is not $\boxplus$-closed. For example, $\pi^{\circ}_{1lur}$ and $\pi^{\circ}_{1lululur}$ are both in $\mathcal{V}^{\circ}_{\mathcal{L}}$ but $\pi^{\circ}_{1lululur} \boxplus \pi^{\circ}_{1lur}$ is not: though $1lur$ and $1lululur$ are both pin factors of $w_{\mathcal{L}}$ they each occur only once and in that order. We will therefore not be able to follow Procedure \ref{recproc} to find the generating function of $\mathcal{V}^{\circ}_{\mathcal{L}}$, though we can nevertheless use Lemma \ref{lemma:boxgenfunction} to enumerate its $\boxplus$-interior, and hence determine its growth rate by Theorem \ref{greqbox}. Somewhat surprisingly, we have already encountered the growth rate of $\mathcal{V}_{\mathcal{L}}$: it is the constant $\nu_{\mathcal{L}}$, defined as in Section \ref{sec:3.7} as the limit of the sequence of constants $(\nu_{1,k})$ as $k \rightarrow \infty$.

\begin{prop}[$\mathcal{V}^{\boxplus}_{\mathcal{L}}$ enumeration] \label{prop:VLcentenum}
The $\boxplus$-interior of the (centred) Liouville $\mathcal{V}$, $\mathcal{V}^{\boxplus}_{\mathcal{L}}$, has generating function
\begin{equation} \label{kkio0}
f(z) = \frac{(1-z)^2}{p_{\mathcal{L}}(z)},
\end{equation}
where $p_{\mathcal{L}}(z) = 1 - 4z + 3z^2 - 2z^3 - z^4 + 2z^5$ as in Definition \ref{def:nul}.

Hence $\mathcal{V}_{\mathcal{L}}$ has growth rate $\nu_{\mathcal{L}} \approx 3.28277$.
\end{prop}

\begin{proof}
We aim to find the generating function of the $\boxplus$-interior of $\mathcal{V}^{\circ}_{\mathcal{L}}$, which by definition is the $\boxplus$-closure of the recurrent $\boxplus$-indecomposables of $\mathcal{V}^{\circ}_{\mathcal{L}}$, all of which are of the form $\pi^{\circ}_{\widetilde{w}}$ where $\widetilde{w}$ is a recurrent pin factor of $w_{\mathcal{L}}$. We thus begin by enumerating the recurrent pin factors of $w_{\mathcal{L}}$. It will again be useful to write out $w_{\mathcal{L}}$ with each letter subscripted by quadrant number:
\[
w_{\mathcal{L}} = 1u_{1}l_{2}u_{2}r_{1}u_{1}l_{2}u_{2}l_{2}u_{2}r_{1}u_{1}l_{2}u_{2}l_{2}u_{2}l_{2}u_{2}r_{1}u_{1}l_{2}u_{2}l_{2}u_{2}l_{2}u_{2}l_{2}u_{2}r_{1}u_{1}l_{2}u_{2}l_{2} \dots
\]
As the gap between consecutive $r_{1}u_{1}$-blocks increases each time, no recurrent pin factor of $w_{\mathcal{L}}$ can stretch between two of these blocks. Conversely, an pin factor of $w_{\mathcal{L}}$ which only contains points from (at most) one of the $r_{1}u_{1}$-blocks \emph{will} occur recurrently. Hence the recurrent pin factors of $w_{\mathcal{L}}$ are precisely the regular pin factors of the `bi-infinite pin word'
\begin{equation} \label{sfdjfl76tgh0}
\dots l_{2}u_{2}l_{2}u_{2}l_{2}u_{2}l_{2}u_{2}l_{2}u_{2}r_{1}u_{1}l_{2}u_{2}l_{2}u_{2}l_{2}u_{2}l_{2}u_{2}l_{2}u_{2} \dots
\end{equation}
Clearly, (\ref{sfdjfl76tgh0}) has two pin factors of length $1$ (the quadrant numerals $1$ and $2$). At all lengths $n \geq 2$ the pin factors of (\ref{sfdjfl76tgh0}) come in three distinct types. First, there are those beginning with a $1$: as there are only two letters in (\ref{sfdjfl76tgh0}) subscripted by a $1$ there are only two possible starting points for pin factors of this type, both of which give distinct pin factors at all lengths greater than $1$. Hence (\ref{sfdjfl76tgh0}) has precisely two pin factors of length $n$ in this category. Second, we have pin factors of (\ref{sfdjfl76tgh0}) which begin with a $2$ and do not include the single $r$: as these will alternate between $u$ and $l$ after the initial $2$, these are entirely determined by the choice of second letter, hence there are two pin factors of this form at all lengths $n \geq 2$. Finally, we have those pin factors of (\ref{sfdjfl76tgh0}) which begin with a $2$ and do contain the single $r$: this $r$ can be in any of the remaining $n-1$ positions, and this choice will then completely determine the pin word. Hence there are $n-1$ pin factors of (\ref{sfdjfl76tgh0}) of this type. In total, then, (\ref{sfdjfl76tgh0}) contains two pin factors of length $1$ and $n+3$ pin factors are all lengths $n \geq 2$. We have thus obtained the generating function of the recurrent pin factors of $w_{\mathcal{L}}$:
\[
\begin{split}
h(z) & = 2z + 5z^2 + 6z^3 + 7z^4 + 8z^5 + 9z^6 + \dots \\
 & = \frac{2z + z^2 - 2z^3}{(1-z)^2} \ .
\end{split}
\]
Next, we need to remove $\boxplus$-decomposables and collisions from this count. Comparing the list above with the lists given in Theorem \ref{classification}, we see that amongst the recurrent pin factors of $w_{\mathcal{L}}$ there are precisely two $\boxplus$-decomposables of length $2$ ($1l$ and $2r$), one colliding pair at length $2$ ($\left\{2l,2u\right\}$) and two more colliding pairs at length $3$ ($\{1ul,2ru\}$ and $\{2ur,1lu\}$). Hence we subtract $3$ from the count at length $2$ and $2$ at length $3$ to obtain the generating function of the recurrent $\boxplus$-indecomposables in $\mathcal{V}^{\circ}_{\mathcal{L}}$:
\[
\begin{split}
g(z) & = h(z) - 3z^2 - 2z^3 \\
 & = \frac{2z - 2z^2 + 2z^3 + z^4 - 2z^5}{(1-z)^2} \ .
\end{split}
\]
As the $\boxplus$-interior $\mathcal{V}^{\boxplus}_{\mathcal{L}}$ is, by definition, the $\boxplus$-closure of the recurrent $\boxplus$-indecomposables in $\mathcal{V}^{\circ}_{\mathcal{L}}$ we may apply the Generating Function Specification \ref{genfuncspec} (along with the obvious fact that $g_{3}(z) = g_{4}(z) = 0$) to obtain the generating function of $\mathcal{V}^{\boxplus}_{\mathcal{L}}$:
\[
\begin{split}
f(z) & = \frac{1}{1-g(z)} \\
 & = \frac{(1-z)^2}{1 - 4z + 3z^2 - 2z^3 - z^4 + 2z^5} \\
 & = \frac{(1-z)^2}{p_{\mathcal{L}}(z)} \ ,
\end{split}
\]
as required. The growth rate then follows from an application of the Exponential Growth Theorem \ref{EGT}. \qedhere
\end{proof}

Thus we have demonstrated that the constant $\nu_{\mathcal{L}} = \lim_{k \rightarrow \infty}\nu_{1,k}$ is an accumulation point in the set of pin class growth rates. This constant in fact represents an important phase transition in the set of pin class growth rates, a theme which will be taken up in the sequel to this paper~\cite{brignall2024pinclassesiismall}, in which we demonstrate that $\nu_{\mathcal{L}}$ is the smallest growth rate at which uncountably-many pin permutation classes appear.

\section{Concluding Remarks} \label{sec:5}

We have proved that all pin classes have growth rates and established a procedure for their calculation. We have also obtained bounds on the possible growth rate of a pin class in Corollary \ref{pcgrbounds}. A natural further question is what happens within these bounds: what are the possible growth rates of pin classes? We can also ask about bounds on growth rates of pin classes subject to certain characteristics: the number of quadrants visited (recurrently) by a pin class and the length of the longest oscillation contained in $\mathcal{C}^{\circ}_{w}$ are natural characteristics to consider. We can in fact state some answers in the former case already: the pin classes $\mathcal{V}$, $\mathcal{Y}$ and $\mathcal{W}$, which we enumerated in Sections \ref{sec:3.7} and \ref{sec:3.8}, are in fact the smallest pin classes which visit two, three and four quadrants recurrently, respectively. It is also relatively easy to derive an \emph{upper} bound on the growth rates of pin classes in a specified number of quadrants by considering the \emph{complete pin class} in those quadrants. For example, the \emph{complete class in two quadrants}, $\mathcal{V}_{c} = \mathcal{C}_{w_{c}}$ is the pin class generated by an infinite pin word $w_{c}$ that contains \emph{all} pin words in quadrants $1$ and $2$ as pin factors. Without too much difficulty (though we omit the proof here) we can calculate the growth rate of $\mathcal{V}_{c}$ to be $\nu_{c} \approx 3.51205$, where $\nu_{c}$ is the reciprocal of the smallest positive real root of
\[
1 - 2z - 4z^2 - 2z^3 - 8z^4 - 4z^5 = 0.
\]

In the sequel~\cite{brignall2024pinclassesiismall} we take up the question of what happens within this interval $[\nu,\nu_{c}]$: we move towards a classification of the growth rates of two-quadrant pin classes and observe some interesting structures in this set of growth rates. For example, we shall show that, as we have already alluded to, $\nu_{\mathcal{L}} \approx 3.28277$ is the first point at which there are uncountably many distinct pin classes and that $\nu_{\mathcal{L}}$ is in fact an accumulation point in the set of pin class growth rates from both above and below. This has potential consequences for the study of well-quasi-ordered permutation classes because two-quadrant pin class can be used to generate (by a similar construction as shown for the increasing oscillations in Fig. \ref{fig:osc0}) infinite antichains with relatively small growth rates.

Potential further directions for study include:
\begin{itemize}
\item A systematic study of pin class growth rates in three and four quadrants.
\item The possibility of conjecturing a classification of `small' antichains (perhaps taking $\nu_{\mathcal{L}}$ as a cut-off) using pin classes in two quadrants.
\item The question of whether we can explicitly determine the generating function (not merely the growth rate) or \emph{any} not-eventually-recurrent pin class, such as the Liouville $\mathcal{V}$.
\end{itemize}

\appendix
\section*{Appendix}
\renewcommand{\thesubsection}{(\Alph{subsection})}

We present as an appendix the proof of Theorem \ref{classification}, that the lists of collisions and $\boxplus$-decomposables given in the statement of that theorem are in fact complete. We begin with the collisions.

\subsection{Collisions Proof}

We can verify that the list of collisions given in the table is complete for lengths $n \leq 5$ by an exhaustive search (which can be done fairly quickly on applying symmetries). We thus aim to prove that the table is complete at lengths $n \geq 6$. We repeat the relevant section of the table for reference.

{
\centering
\begin{tabular}{@{}|>{\centering}m{1.5em}|>{\centering}m{5.5em}|>{\centering\arraybackslash}m{12em}|>{\centering}m{4em}|>{\centering\arraybackslash}m{17.5em}|} \hline
\multicolumn{5}{|c|}{}\\[0.4pt]
\multicolumn{5}{|c|}{\textbf{List of collisions of pin factors:}}\\[6pt] \hline
\multicolumn{2}{|c|}{\textbf{Length}}&\textbf{Representative}&\textbf{Total number}&\textbf{Full List}\\ \hline

\multicolumn{2}{|c|}{$n \geq 6$ \ \textbf{even:}}
&
\begin{tikzpicture}[scale=0.2]

\node[circle, draw, fill=none, inner sep=0pt, minimum width=\plotptradius] (0) at (8,8) {};
\node[permpt] (1) at (9,10) {}; 
\node[permpt] (2) at (6,9) {}; \draw[thin] (2) -- ++ (3.5,0);
\node[permpt] (3) at (7,6) {}; \draw[thin] (3) -- ++ (0,3.5);
\node[permpt] (4) at (4,7) {}; \draw[thin] (4) -- ++ (3.5,0);
\node[permpt] (5) at (5,5) {};  \draw[thin] (5) -- ++ (0,2.5);
\node[permpt] (6) at (1,3) {}; \draw[thin] (6) -- ++ (1.5,0);
\node[permpt] (7) at (2,1) {}; \draw[thin] (7) -- ++ (0,2.5);
\node[permpt] (8) at (11,2) {}; \draw[thin] (8) -- ++ (-9.5,0);
\node[permpt] (9) at (10,11) {}; \draw[thin,dashed] (9) -- ++ (0,-9.5);

\node[empty] (-1) at (8,13) {};

\node[circle,fill,inner sep=0.5pt] (10) at (4.5,5.5) {};
\node[circle,fill,inner sep=0.5pt] (11) at (4,5) {};
\node[circle,fill,inner sep=0.5pt] (12) at (3.5,4.5) {};
\node[circle,fill,inner sep=0.5pt] (13) at (3,4) {};
\node[circle,fill,inner sep=0.5pt] (14) at (2.5,3.5) {};

\draw[thick] (8,0) -- ++ (0,12);
\draw[thick] (0,8) -- ++ (12,0);

\draw[dotted] (9.5,10.5) rectangle (10.5,11.5);

\node[] (15) at (6.5,-1) {$1(ld)^{k}r=2(dl)^{k}dru$};
\end{tikzpicture}
&
8 \ \text{pairs}
&
\small
\[
\begin{split}
\{1(ld)^{k}r, 2(dl)^{k}dru\}, \{1(dl)^{k}u, 4(ld)^{k}lur\}, \\
\{2(dr)^{k}u, 3(rd)^{k}rul\}, \{2(rd)^{k}l, 1(dr)^{k}dlu\}, \\
\{3(ru)^{k}l, 4(ur)^{k}uld\}, \{3(ur)^{k}d, 2(ru)^{k}rdl\}, \\
\{4(ul)^{k}d, 1(lu)^{k}ldr\}, \{4(lu)^{k}r, 3(ul)^{k}urd\}
\end{split}
\]

 \\ \hline

\multicolumn{2}{|c|}{$n \geq 7$ \ \textbf{odd:}}
&
\begin{tikzpicture}[scale=0.2]

\node[circle, draw, fill=none, inner sep=0pt, minimum width=\plotptradius] (0) at (8,6) {};
\node[permpt] (1) at (9,8) {};
\node[permpt] (2) at (6,7) {}; \draw[thin] (2) -- ++ (3.5,0);
\node[permpt] (3) at (7,4) {}; \draw[thin] (3) -- ++ (0,3.5);
\node[permpt] (4) at (5,5) {}; \draw[thin] (4) -- ++ (2.5,0);
\node[permpt] (5) at (3,1) {};  \draw[thin] (5) -- ++ (0,1.5);
\node[permpt] (6) at (1,2) {}; \draw[thin] (6) -- ++ (2.5,0);
\node[permpt] (7) at (2,10) {}; \draw[thin] (7) -- ++ (0,-8.5);
\node[permpt] (8) at (10,9) {}; \draw[thin,dashed] (8) -- ++ (-8.5,0);

\node[empty] (-1) at (8,12) {};

\node[circle,fill,inner sep=0.5pt] (8) at (5.5,4.5) {};
\node[circle,fill,inner sep=0.5pt] (9) at (5,4) {};
\node[circle,fill,inner sep=0.5pt] (10) at (4.5,3.5) {};
\node[circle,fill,inner sep=0.5pt] (11) at (4,3) {};
\node[circle,fill,inner sep=0.5pt] (12) at (3.5,2.5) {};

\draw[thick] (8,0) -- ++ (0,11);
\draw[thick] (0,6) -- ++ (11,0);

\draw[dotted] (9.5,8.5) rectangle (10.5,9.5);

\node[] (13) at (6,-1) {$1(ld)^{k}lu=2(dl)^{k}ur$};

\end{tikzpicture}
&
8 \ \text{pairs}
&
\small
\[
\begin{split}
\{1(ld)^{k}lu, 2(dl)^{k}ur\}, \{1(dl)^{k}dr, 4(ld)^{k}ru\}, \\
\{2(dr)^{k}dl, 3(rd)^{k}lu\}, \{2(rd)^{k}ru, 1(dr)^{k}ul\}, \\
\{3(ru)^{k}rd, 4(ur)^{k}dl\}, \{3(ur)^{k}ul, 2(ru)^{k}ld\}, \\
\{4(ul)^{k}ur, 1(lu)^{k}rd\}, \{4(lu)^{k}ld, 3(ul)^{k}dr\}
\end{split}
\]
 \\ \hline
\end{tabular}\par
}

In order to prove this we first make the observation that in each of the pairs listed the two pin words end in different letters. We call a collision a \emph{minimal collision} if no two pin words in the tuple have the same final letter. Our aim is firs to prove that the list above is a complete list of minimal collisions, and then deduce from this that there are no non-minimal collisions.

\begin{thm}[Classification of Minimal Collisions]

Any minimal collision of length $n \geq 6$ is one of the colliding pairs listed in the table above.

\end{thm}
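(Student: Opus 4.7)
The strategy exploits the extremal structure imposed on $\pi^{\circ}$ by the final letter of each pin word in the collision. Suppose $\{w_1, \dots, w_k\}$ is a minimal collision with common centred permutation $\pi^{\circ}$, and let $L_i$ be the final letter of $w_i$. By the construction of the pin permutation, the final point $p_n^{(i)}$ placed in the $w_i$-process is the unique extreme point of $\pi^{\circ}$ in the direction $L_i$, so pairwise distinct $L_i$ single out $k$ distinct extremal points of $\pi^{\circ}$. Deleting $p_n^{(i)}$ yields the shorter pin permutation $\pi^{\circ}_{w_i^-}$, where $w_i^-$ is $w_i$ with its final letter stripped.

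First I would apply the eight-fold symmetry of the square (which acts compatibly on pin words and centred permutations) to reduce a putative minimal $2$-collision to a small list of canonical cases for the unordered pair $\{L_1, L_2\}$, namely adjacent directions such as $\{r, u\}$ or opposite directions such as $\{l, r\}$. In each case I would locate $p_n^{(2)}$ as an internal point of the $w_1$-construction and $p_n^{(1)}$ as an internal point of the $w_2$-construction, and invoke the slicing condition of Observation \ref{slicing} to force the letter appearing at those positions, which in turn pins down the penultimate letter of each $w_i$. I expect the opposite-direction sub-case to be eliminated at this stage for $n \geq 6$ (since being both the top and bottom pin, or both left and right, is incompatible with the axis-alternation rule over so many letters), leaving only adjacent-direction cases whose final two letters match those of the eight listed pairs. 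The main argument is then an induction on $n$ that peels letters from the tails of $w_1$ and $w_2$: at each stage, knowing the identification of the current pair of extremal points forces the next pair of letters by the same slicing analysis, and iterating produces the full $(ld)^k$ staircase pattern; the base cases $n = 6, 7$ are handled by direct enumeration.

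The technical core, and the main obstacle, is setting up the induction correctly, since stripping the last letter of $w_1$ and $w_2$ does \emph{not} yield a minimal collision of length $n-1$: the permutations $\pi^{\circ}_{w_1^-}$ and $\pi^{\circ}_{w_2^-}$ differ by the symmetric difference $\{p_n^{(1)}, p_n^{(2)}\}$. The induction hypothesis must therefore carry the underlying centred permutation together with a distinguished pair of extremal points playing the role of the open end of the partially-built staircase, and the inductive step advances this data by two letters at a time using the local rigidity established in the base analysis. Minimal tuples with $k \geq 3$ are then disposed of separately: any three pairwise-distinct final letters cover three of the four axis-directions, forcing $\pi^{\circ}$ to realise three prescribed extrema simultaneously, which combined with the $2$-collision rigidity leaves no configurations at length $n \geq 6$; the case $k = 4$ collapses in the same way to the length-$4$ quadruples of the table.
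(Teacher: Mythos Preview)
Your overall plan differs from the paper's in a way that creates a real gap. The paper does \emph{not} set up an induction peeling letters from the tails. Instead it normalises so that $w_1$ ends in the two-letter suffix $dr$, names the two resulting extremal points $(A)$ (lowest) and $(B)$ (second-lowest, rightmost), and then runs a three-way case split on where $(A)$ sits in the $w_2$-construction: final point, first point, or genuinely internal. In each case the geometry of Fig.~\ref{i} is used to reconstruct \emph{all} of $w_2$ directly, and then all of $w_1$, with no recursion.

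The structural fact your plan misses is this: in every surviving case the final point of one word turns out to be the \emph{first} point of the other (the initial quadrant numeral), not an internal letter. In the paper's Case~1, $(B)=p_n^{(1)}$ is forced to be the initial $4$ of $w_2$ because it fails to separate the bounding box of the remaining points; in Case~3, the final $u$-point of $w_2$ is forced to be the initial $1$ of $w_1$ because it is simultaneously topmost and rightmost once $(B)$ is removed. Your step ``locate $p_n^{(2)}$ as an internal point of the $w_1$-construction and invoke the slicing condition'' therefore fails at exactly the crucial moment: the slicing condition of Observation~\ref{slicing} says nothing about the first point, and it is the \emph{failure} of separation (not its success) that drives the argument. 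Once this first-point identification is made, the absence of further $r$'s and $d$'s in $w_2$ follows immediately from extremality of $(A)$ and $(B)$, forcing the $ul$-oscillation pattern in one shot rather than letter by letter.

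Your inductive scheme is not unsalvageable, but the invariant you would need to carry is essentially ``the remaining prefix of $w_1$ and suffix of $w_2$ trace the same oscillation from opposite ends'', and proving that invariant is already the whole content of the theorem. The paper's direct case analysis is both shorter and avoids the bookkeeping you correctly flag as the main obstacle.
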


\begin{proof}

Suppose we have a colliding pair $w_{1},w_{2}$ of pin words. Without loss of generality, by symmetry we may assume that one of these pin words ends in $dr$. We therefore first apply symmetries to the list above so that one pin word of each pair ends in $dr$ -- see Fig.s \ref{fig:oddcollisionsdr} and \ref{fig:evencollisionsdr}.

\begin{figure}[h]
\begin{center}
\begin{tikzpicture}[scale=0.35]

\node[circle, draw, fill=none, inner sep=0pt, minimum width=\plotptradius] (0) at (8,10) {};
\node[permpt] (1) at (10,11) {}; 
\node[permpt] (2) at (9,8) {}; \draw[thin] (2) -- ++ (0,3.5);
\node[permpt] (3) at (6,9) {}; \draw[thin] (3) -- ++ (3.5,0);
\node[permpt] (4) at (7,6) {}; \draw[thin] (4) -- ++ (0,3.5);
\node[permpt] (5) at (5,7) {};  \draw[thin] (5) -- ++ (2.5,0);
\node[permpt] (6) at (3,3) {}; \draw[thin] (6) -- ++ (0,1.5);
\node[permpt] (7) at (1,4) {}; \draw[thin] (7) -- ++ (2.5,0);
\node[permpt] (8) at (2,1) {}; \draw[thin] (8) -- ++ (0,3.5);
\node[permpt] (9) at (12,2) {}; \draw[thin] (9) -- ++ (-10.5,0);
\node[permpt] (10) at (11,12) {}; \draw[thin,dashed] (10) -- ++ (0,-10.5);

\node[circle,fill,inner sep=0.5pt] (11) at (5.5,6.5) {};
\node[circle,fill,inner sep=0.5pt] (12) at (5,6) {};
\node[circle,fill,inner sep=0.5pt] (13) at (4.5,5.5) {};
\node[circle,fill,inner sep=0.5pt] (14) at (4,5) {};
\node[circle,fill,inner sep=0.5pt] (15) at (3.5,4.5) {};

\draw[thick] (8,0) -- ++ (0,13);
\draw[thick] (0,10) -- ++ (13,0);

\draw[dotted] (10.5,11.5) rectangle (11.5,12.5);

\node[] (16) at (6.5,-1) {$1dldl \dots dldr$};
\node[] (17) at (6.65,-2.5) {$=4ldl \dots dldru$};

\begin{scope}[shift={(18,0)}]

\node[circle, draw, fill=none, inner sep=0pt, minimum width=\plotptradius] (0) at (10,5) {};
\node[permpt] (1) at (11,3) {};
\node[permpt] (2) at (8,4) {}; \draw[thin] (2) -- ++ (3.5,0);
\node[permpt] (3) at (9,7) {}; \draw[thin] (3) -- ++ (0,-3.5);
\node[permpt] (4) at (6,6) {}; \draw[thin] (4) -- ++ (3.5,0);
\node[permpt] (5) at (7,8) {};  \draw[thin] (5) -- ++ (0,-2.5);
\node[permpt] (6) at (3,10) {}; \draw[thin] (6) -- ++ (1.5,0);
\node[permpt] (7) at (4,12) {}; \draw[thin] (7) -- ++ (0,-2.5);
\node[permpt] (8) at (1,11) {}; \draw[thin] (8) -- ++ (3.5,0);
\node[permpt] (9) at (2,1) {}; \draw[thin] (9) -- ++ (0,10.5);
\node[permpt] (10) at (12,2) {}; \draw[thin,dashed] (10) -- ++ (-10.5,0);

\node[circle,fill,inner sep=0.5pt] (11) at (6.5,7.5) {};
\node[circle,fill,inner sep=0.5pt] (12) at (6,8) {};
\node[circle,fill,inner sep=0.5pt] (13) at (5.5,8.5) {};
\node[circle,fill,inner sep=0.5pt] (14) at (5,9) {};
\node[circle,fill,inner sep=0.5pt] (15) at (4.5,9.5) {};

\draw[thick] (10,0) -- ++ (0,13);
\draw[thick] (0,5) -- ++ (13,0);

\draw[dotted] (11.5,1.5) rectangle (12.5,2.5);

\node[] (16) at (6.5,-1) {$4lulu \dots luld$};
\node[] (17) at (6.5,-2.5) {$=3ulu \dots luldr$};

\end{scope}

\end{tikzpicture}
\end{center}
\caption{Odd collisions ending in $dr$ of length $\geq 6$.}
\label{fig:oddcollisionsdr}
\end{figure}

\begin{figure}[h]
\begin{center}
\begin{tikzpicture}[scale=0.35]

\node[circle, draw, fill=none, inner sep=0pt, minimum width=\plotptradius] (0) at (8,8) {};
\node[permpt] (1) at (9,10) {}; 
\node[permpt] (2) at (6,9) {}; \draw[thin] (2) -- ++ (3.5,0);
\node[permpt] (3) at (7,6) {}; \draw[thin] (3) -- ++ (0,3.5);
\node[permpt] (4) at (4,7) {}; \draw[thin] (4) -- ++ (3.5,0);
\node[permpt] (5) at (5,5) {};  \draw[thin] (5) -- ++ (0,2.5);
\node[permpt] (6) at (1,3) {}; \draw[thin] (6) -- ++ (1.5,0);
\node[permpt] (7) at (2,1) {}; \draw[thin] (7) -- ++ (0,2.5);
\node[permpt] (8) at (11,2) {}; \draw[thin] (8) -- ++ (-9.5,0);
\node[permpt] (9) at (10,11) {}; \draw[thin,dashed] (9) -- ++ (0,-9.5);

\node[circle,fill,inner sep=0.5pt] (10) at (4.5,5.5) {};
\node[circle,fill,inner sep=0.5pt] (11) at (4,5) {};
\node[circle,fill,inner sep=0.5pt] (12) at (3.5,4.5) {};
\node[circle,fill,inner sep=0.5pt] (13) at (3,4) {};
\node[circle,fill,inner sep=0.5pt] (14) at (2.5,3.5) {};

\draw[thick] (8,0) -- ++ (0,12);
\draw[thick] (0,8) -- ++ (12,0);

\draw[dotted] (9.5,10.5) rectangle (10.5,11.5);

\node[] (15) at (6.5,-1) {$1ldld \dots ldr$};
\node[] (16) at (6.5,-2.5) {$=2dld \dots ldru$};

\begin{scope}[shift={(18,0)}]

\node[circle, draw, fill=none, inner sep=0pt, minimum width=\plotptradius] (0) at (8,4) {};
\node[permpt] (1) at (10,3) {};
\node[permpt] (2) at (9,6) {}; \draw[thin] (2) -- ++ (0,-3.5);
\node[permpt] (3) at (6,5) {}; \draw[thin] (3) -- ++ (3.5,0);
\node[permpt] (4) at (7,8) {}; \draw[thin] (4) -- ++ (0,-3.5);
\node[permpt] (5) at (5,7) {};  \draw[thin] (5) -- ++ (2.5,0);
\node[permpt] (6) at (3,11) {}; \draw[thin] (6) -- ++ (0,-1.5);
\node[permpt] (7) at (1,10) {}; \draw[thin] (7) -- ++ (2.5,0);
\node[permpt] (8) at (2,1) {}; \draw[thin] (8) -- ++ (0,9.5);
\node[permpt] (9) at (11,2) {}; \draw[thin,dashed] (9) -- ++ (-9.5,0);

\node[circle,fill,inner sep=0.5pt] (10) at (5.5,7.5) {};
\node[circle,fill,inner sep=0.5pt] (11) at (5,8) {};
\node[circle,fill,inner sep=0.5pt] (12) at (4.5,8.5) {};
\node[circle,fill,inner sep=0.5pt] (13) at (4,9) {};
\node[circle,fill,inner sep=0.5pt] (14) at (3.5,9.5) {};

\draw[thick] (8,0) -- ++ (0,12);
\draw[thick] (0,4) -- ++ (12,0);

\draw[dotted] (10.5,1.5) rectangle (11.5,2.5);

\node[] (15) at (6.5,-1) {$4ulul \dots uld$};
\node[] (16) at (6.5,-2.5) {$=1lul \dots uldr$};

\end{scope}

\end{tikzpicture}
\end{center}
\caption{Even collisions ending in $dr$ of length $\geq 6$.}
\label{fig:evencollisionsdr}
\end{figure}

Now, suppose we have a minimal collision of pin words $w_{1}$ and $w_{2}$ of length $\geq 6$ and that $w_{1}$ ends in $dr$. Then the centred permutation $\pi^{\circ} = \pi^{\circ}_{w_{1}} = \pi^{\circ}_{w_{2}}$ is of the form given in Fig. \ref{i}. We aim to use this to deduce facts about $w_{2}$ in order to show that this collision is in fact one of the pairs listed in Fig.s \ref{fig:oddcollisionsdr} and \ref{fig:evencollisionsdr}.

\begin{figure}[h]
\begin{center}
\begin{tikzpicture}[scale=0.35]

\node[] (0) at (2,9) {$w_{1} = \dots dr$};

\node[permpt] (1) at (2,0) {}; \draw[thin] (1) -- ++ (0,7);
\node[permpt] (2) at (6,1) {}; \draw[thin] (2) -- ++ (-5.5,0);
\node[] (3) at (2,-1) {\tiny{A}};
\node[] (4) at (6,0) {\tiny{B}};

\draw (0.5,2.5) rectangle (3.5,5.5);

\end{tikzpicture}
\end{center}
\caption{The permutation $\pi^{\circ}$ generated by the pin word $w_{1}=\dots dr$ has this form. Note that $A$ is in the lower half-plane and $B$ is in the fourth quadrant. All points other than $A$ and $B$ (of which there are at least $4$ by the assumption that the length of $\pi^{\circ}$ is at least $6$) are in the box, with precisely one point on one side of the pin attached to $A$ and all other points on the other.}
\label{i}
\end{figure}

As in Fig \ref{i}, we call the lowest and second-lowest points of $\pi^{\circ}$ $A$ and $B$, respectively. These are generated by the final two letters $dr$ of $w_{1}$. We split into three cases based on the position of the letter of $w_{2}$ which generates $A$: the final letter, the initial numeral, or an internal letter.

\subsubsection*{Case $1$: $A$ is the final point}

We begin by deducing various facts about the pin word $w_{2}$, using the fact that the centred permutation $\pi^{\circ}$ looks like Fig. \ref{ii} (with all points other than $A$ and $B$ in the box), as well as the assumption that the final letter of $w_{2}$ corresponds to the point $A$.

\begin{figure}[h]
\begin{center}
\begin{tikzpicture}[scale=0.35]

\node[permpt] (1) at (2,0) {}; \draw[thin,dotted] (1) -- ++ (0,7);
\node[permpt] (2) at (6,1) {};
\node[] (3) at (2,-1) {\tiny{A}};
\node[] (4) at (6,0) {\tiny{B}};

\draw (0.5,2.5) rectangle (3.5,5.5);

\end{tikzpicture}
\end{center}
\caption{The fact that the permutation $\pi^{\circ}$ can be generated by the pin word $w_{1}=\dots dr$ means that it has this form, with at least one point on either side of the dotted line within the box.}
\label{ii}
\end{figure}

By assumption, the final letter of $w_{2}$ corresponds to the point $A$. The point corresponding to the final letter of a pin word must be the most extreme point in the direction indicated by that letter. Fig. \ref{ii} shows that $A$ is the downmost point but not the most extreme point in any other direction (clearly, $B$ is further up and to the right, and the fact that there must be at least one point in the box on each side of the dotted line implies that there is a point further to the left). Hence the final letter of $w_{2}$ \emph{must} be a $d$.

Consider the pin word $w_{2}^{-1}$, formed by removing the final letter of $w_{2}$. This must correspond to a permutation of the shape given in Fig. \ref{iii} (basically Fig. \ref{ii} without the point $A$). Note that the point $B$ does not separate the bounding rectangle of all other points in the permutation. By the definition of a pin permutation, this can only happen if $B$ was the first point placed, so $B$ corresponds to the initial numeral of $w_{2}$. But as $B$ also corresponds to the final $r$ in the pin word $w_{1} = \dots dr$ it must be in the fourth quadrant. Hence $w_{2}$ must begin with the numeral $4$.

\begin{figure}[h]
\begin{center}
\begin{tikzpicture}[scale=0.35]

\node[permpt] (2) at (6,1) {};
\node[] (4) at (6,0) {\tiny{B}};

\draw (0.5,2.5) rectangle (3.5,5.5);

\end{tikzpicture}
\end{center}
\caption{The permutation generated by $w_{2}^{-1}$ has this form.}
\label{iii}
\end{figure}

Hence $w_{2} = 4 \dots d$, with the $4$ corresponding to $B$ and the final $d$ corresponding to $A$. Suppose that the ellipsis in the middle contained an $r$. Then this would correspond to a point to the right of all points placed before. But as $B$ was the first point placed, this would imply the existence of a point to the right of $B$. Fig. \ref{ii} shows that no such point exists, and so $w_{2}$ contains no $r$.

Similarly, suppose that $w_{2}$ contained another $d$, in addition to the final one. Then this would correspond to a point below $B$. But the only point below $B$ is $A$, already accounted for by the final $d$. Hence $w_{2}$ contains no $d$ apart from its final letter.

Combining these facts, we see that $w_{2} = 4 \dots d$, with the letters in the ellipsis alternating between $u$ and $l$. Hence $w_{2} = 4(ul)^{\geq2}d$ or $w_{2} = 4l(ul)^{\geq2}d$, depending on whether the length of $\pi^{\circ}$ is even or odd, respectively\footnote{Recall that we write $(f)^{\geq k}$ to mean that the word $f$ is repeated some finite number of times which is at least $k$. Thus $w_{2} = 4(ul)^{\geq2}d$ is really just an abbreviation for `$w_{2}=4(ul)^{m}d$, where $m \geq 2$'.}. We thus now know what the permutation $\pi$ looks like, as shown in Fig. \ref{iv}.

\begin{figure}[h]
\begin{center}
\begin{tikzpicture}[scale=0.35]

\node[circle, draw, fill=none, inner sep=0pt, minimum width=\plotptradius] (0) at (10,2) {};
\node[permpt] (1) at (12,1) {}; \draw[thin,dotted] (1) -- ++ (-10.5,0);
\node[permpt] (2) at (11,4) {}; \draw[thin] (2) -- ++ (0,-3.5);
\node[permpt] (3) at (8,3) {}; \draw[thin] (3) -- ++ (3.5,0);
\node[permpt] (4) at (9,6) {}; \draw[thin] (4) -- ++ (0,-3.5);
\node[permpt] (5) at (6,5) {};  \draw[thin] (5) -- ++ (3.5,0);
\node[permpt] (6) at (7,7) {}; \draw[thin] (6) -- ++ (0,-2.5);
\node[permpt] (7) at (3,8) {}; \draw[thin] (7) -- ++ (1.5,0);
\node[permpt] (8) at (4,10) {}; \draw[thin] (8) -- ++ (0,-2.5);
\node[permpt] (9) at (1,9) {}; \draw[thin] (9) -- ++ (3.5,0);
\node[permpt] (10) at (2,0) {}; \draw[thin] (10) -- ++ (0,9.5);

\node[circle,fill,inner sep=0.5pt] (11) at (5,8) {};
\node[circle,fill,inner sep=0.5pt] (12) at (5.5,7.5) {};
\node[circle,fill,inner sep=0.5pt] (13) at (6,7) {};
\node[circle,fill,inner sep=0.5pt] (14) at (6.5,6.5) {};

\node[] (15) at (2,-1) {\tiny{A}};
\node[] (16) at (12,0) {\tiny{B}};

\draw[thick] (10,-1) -- ++ (0,12);
\draw[thick] (0,2) -- ++ (13,0);


\begin{scope}[shift={(18,0)}]

\node[circle, draw, fill=none, inner sep=0pt, minimum width=\plotptradius] (0) at (10,3) {};
\node[permpt] (1) at (11,1) {}; \draw[thin,dotted] (1) -- ++ (-9.5,0);
\node[permpt] (2) at (8,2) {}; \draw[thin] (2) -- ++ (3.5,0);
\node[permpt] (3) at (9,5) {}; \draw[thin] (3) -- ++ (0,-3.5);
\node[permpt] (4) at (6,4) {}; \draw[thin] (4) -- ++ (3.5,0);
\node[permpt] (5) at (7,7) {};  \draw[thin] (5) -- ++ (0,-3.5);
\node[permpt] (6) at (5,6) {}; \draw[thin] (6) -- ++ (2.5,0);
\node[permpt] (7) at (3,9) {}; \draw[thin] (7) -- ++ (0,-1.5);
\node[permpt] (8) at (1,8) {}; \draw[thin] (8) -- ++ (2.5,0);
\node[permpt] (9) at (2,0) {}; \draw[thin] (9) -- ++ (0,9.5);

\node[circle,fill,inner sep=0.5pt] (10) at (4,8) {};
\node[circle,fill,inner sep=0.5pt] (11) at (4.5,7.5) {};
\node[circle,fill,inner sep=0.5pt] (12) at (5,7) {};
\node[circle,fill,inner sep=0.5pt] (13) at (5.5,6.5) {};

\node[] (14) at (2,-1) {\tiny{A}};
\node[] (15) at (11,0) {\tiny{B}};

\draw[thick] (10,-1) -- ++ (0,11);
\draw[thick] (0,3) -- ++ (12,0);


\end{scope}

\end{tikzpicture}
\end{center}
\caption{Even and odd cases for the permutation $\pi^{\circ}$, respectively.}
\label{iv}
\end{figure}

We now return to $w_{1} = \dots dr$. As we now know what the permutation $\pi$ looks like, we can deduce that the ellipsis here also does not contain a $d$ or $r$. First, if the ellipsis contained a $d$ then the points corresponding to both this and the next letter would be in the lower half-plane. But Fig. \ref{iv} shows that there is at most one point in the lower half-plane in addition to $A$ and $B$ (which are already accounted for by the final two letters). Similarly, if the ellipsis contained an $r$ then the points corresponding to this and the next letter (neither of which can be the point $B$ as this is accounted for by the final $r$) would be in the right half-plane. But Fig. \ref{iv} shows that there is at most one point in the right half-plane other than $B$. Hence $w_{1} = \dots dr$ contains no $d$ or $r$ apart from the final two letters. This is now enough to deduce all of $w_{1}$: $w_{1} = 1(lu)^{\geq1}ldr$ in the even case, and $w_{1} = 3(ul)^{\geq2}dr$ in the odd case.

This means that there is only one potential collision of each length $n \geq 6$ in Case $1$: $w_{1} = 1(lu)^{\geq1}ldr$ paired with $w_{2} = 4(ul)^{\geq2}d$ in the even case, and $w_{1} = 3(ul)^{\geq2}dr$ paired with $w_{2} = 4l(ul)^{\geq2}d$ in the odd case. These are the collisions on the right hand sides of Fig.s \ref{fig:oddcollisionsdr} and \ref{fig:evencollisionsdr}.

\subsubsection*{Case $2$: $A$ is the first point}

We now deal with the case in which $A$ is the first point placed according to the pin word $w_{2}$, corresponding to the initial numeral. We again use the shape of the permutation $\pi$ shown in Fig. \ref{i} to deduce the form of $w_{2}$.

First note that, as $A$ corresponds to the letter $d$ in $w_{1}$, $A$ must be in the $3$rd or $4$th quadrant. Hence $w_{2} = \{3/4\}\dots$

Next, note that, as $B$ is not the first point placed, it must correspond to a letter. If this letter were $d$ or $l$ then $B$ would be below or to the left of $A$ (as $A$ has already been placed), which it is not (see Fig. \ref{i}). If the letter were $u$, then $B$ would be the upmost of all points placed so far, and as Fig. \ref{i} shows that all other points of $\pi$ are above $B$, this would mean that $B$ would have to be the second point placed, so $w_{2}$ begins with either $3u$ or $4u$. In the first case, both $A$ and $B$ would be in quadrant $3$ and in the second case $B$ would be to the left of $A$ -- Fig. \ref{i} shows that neither of these is true, so $B$ cannot correspond to a $u$ in $w_{2}$. Hence, by elimination, $B$ corresponds to an $r$ in $w_{2}$.

Hence $B$ is at the end of a right-pin separating the previously placed point from the bounding rectangle of all other previously placed points and the origin. As the origin is above $B$ (as we know $B$ is in quadrant $4$), the previously placed point must be the only point below $B$, namely $A$. Thus $B$ corresponds to the first letter of $w_{2}$ after the numeral.

Hence $w_{2}=\{3/4\}r\dots $. We can now easily deduce that the ellipsis here contains no $r$ or $d$: if it contained an $r$ then this would correspond to a point to the right of $B$ (as this has already been placed), and it if contained a $d$ then this would correspond to a point below $A$ -- but Fig. \ref{i} clearly shows that no point of either type exists.

Hence $w_{2} = \{3/4\}rulul \dots$, and so the permutation $\pi^{\circ}$ looks like one of the permutations in Fig. \ref{v}.

\begin{figure}[h]
\begin{center}
\begin{tikzpicture}[scale=0.35]

\node[circle, draw, fill=none, inner sep=0pt, minimum width=\plotptradius] (0) at (5,3) {};
\node[permpt] (1) at (4,1) {};
\node[permpt] (2) at (7,2) {}; \draw[thin] (2) -- ++ (-3.5,0);
\node[permpt] (3) at (6,5) {}; \draw[thin] (3) -- ++ (0,-3.5);
\node[permpt] (4) at (2,4) {}; \draw[thin] (4) -- ++ (4.5,0);
\node[permpt] (5) at (3,7) {};  \draw[thin] (5) -- ++ (0,-3.5);
\node[permpt] (6) at (1,6) {}; \draw[thin] (6) -- ++ (2.5,0);

\node[circle,fill,inner sep=0.5pt] (7) at (1.5,7) {};
\node[circle,fill,inner sep=0.5pt] (8) at (1,7.5) {};
\node[circle,fill,inner sep=0.5pt] (9) at (0.5,8) {};
\node[circle,fill,inner sep=0.5pt] (10) at (0,8.5) {};

\node[] (11) at (4,0) {\tiny{A}};
\node[] (12) at (7,1) {\tiny{B}};
\node[] (13) at (7,5) {\tiny{C}};

\draw[thick] (5,0) -- ++ (0,8);
\draw[thick] (0,3) -- ++ (8,0);


\begin{scope}[shift={(13,0)}]

\node[circle, draw, fill=none, inner sep=0pt, minimum width=\plotptradius] (0) at (4,3) {};
\node[permpt] (1) at (5,1) {};
\node[permpt] (2) at (7,2) {}; \draw[thin] (2) -- ++ (-2.5,0);
\node[permpt] (3) at (6,5) {}; \draw[thin] (3) -- ++ (0,-3.5);
\node[permpt] (4) at (2,4) {}; \draw[thin] (4) -- ++ (4.5,0);
\node[permpt] (5) at (3,7) {};  \draw[thin] (5) -- ++ (0,-3.5);
\node[permpt] (6) at (1,6) {}; \draw[thin] (6) -- ++ (2.5,0);

\node[circle,fill,inner sep=0.5pt] (7) at (1.5,7) {};
\node[circle,fill,inner sep=0.5pt] (8) at (1,7.5) {};
\node[circle,fill,inner sep=0.5pt] (9) at (0.5,8) {};
\node[circle,fill,inner sep=0.5pt] (10) at (0,8.5) {};

\node[] (11) at (5,0) {\tiny{A}};
\node[] (12) at (7,1) {\tiny{B}};
\node[] (13) at (7,5) {\tiny{C}};

\draw[thick] (4,0) -- ++ (0,8);
\draw[thick] (0,3) -- ++ (8,0);


\end{scope}

\end{tikzpicture}
\end{center}
\caption{The two possible cases for the permutation $\pi^{\circ}$, as generated by the pin word $w_{2} = \{3/4\}rulul \dots$. As $n \geq 6$ all the points shown must exist; any further points are a continuation of the oscillation in the second quadrant.}
\label{v}
\end{figure}

We now return to $w_{1} = \dots dr$ which also generates $\pi^{\circ}$, with the final two letters corresponding to $A$ and $B$. Note first that all the points in the upper half-plane of Fig. \ref{v} have been placed before $A$ in $w_{1}$, and in either case there are at least three points to the left of $A$ which have been placed already. This means that the previous letter to the $d$ corresponding to $A$ must have been an $r$, and this must correspond to the point marked $C$ (the only point other than $B$ in the right half-plane). Thus $w_{1} = \dots rdr$, with the final three letters corresponding to $C$, $A$, $B$, respectively. Note that the ellipsis here cannot contain a $d$, as $A$ and $B$ (already accounted for by the final two letters) are the only points in the lower half-plane. Thus $w_{1} = \dots urdr$. But this implies that $C$ is the second-upmost point, despite Fig. \ref{v} showing that $C$ has at least two points above it. This contradiction implies that there are in fact no collisions in Case $2$.

\subsubsection*{Case $3$: $A$ is an internal point}

We now deal with the case where $A$ is generated by an internal letter of the pin word $w_{2}$ (ie., neither the initial numeral nor the final letter).

First, we determine which letter represents $A$ in $w_{2}$. By looking at Fig. \ref{i}, we see that $A$ is in the lower half-plane and is indeed the lowest point in the entire permutation. If $A$ were represented by a $u$ in $w_{2}$ it would be in the upper half-plane, so we can exclude this possibility. If $A$ were represented by an $l$ or $r$, on the other hand, it would have to follow either the numeral $3$ or $4$ or the letter $d$ (as otherwise $A$ would be in the upper half-plane). But in this case $A$ would be above the previously-placed point, contradicting the fact that it is the lowest point in the entire permutation. Hence $A$ must be represented by the letter $d$ in $w_{2}$.

Next, note that the $d$ representing $A$ must in fact be the \emph{final} $d$ in $w_{2}$: if there were a $d$ after $A$ it would place a point below $A$, but $A$ is the lowest point in the entire permutation. As $A$ is generated by a letter internal to $w_{2}$ there is a letter immediately after the $d$ corresponding to $A$ in $w_{2}$, either an $l$ or an $r$. The point corresponding to this next letter, which we call $B'$, will be the second-lowest point \emph{so far} when it is placed. But since all points placed after $B'$ (if there are any) will be in the upper half-plane (as there is no further $d$ in the word), $B'$ is in fact the second-lowest point in the \emph{entire} permutation. Hence $B'$ must actually be $B$, which must correspond to an $r$ in $w_{2}$ as it is to the right of $A$.

We now know that the points $A$ and $B$ are generated by a consecutive $dr$ in $w_{2}$, and that there is no letter $d$ in $w_{2}$ after this $dr$ appears. By the same token there is also no further $r$ after this $dr$, as this would generate a point to the right of $B$. Hence, after the $dr$ in $w_{2}$ corresponding to $A,B$, the pin word $w_{2}$ alternates between $u$ and $l$ for any remaining points. We now ask how many points are remaining in $w_{2}$ after this final $dr$.

If there were \emph{no} points after the $dr$ in $w_{2}$ then $w_{1}$ and $w_{2}$ would both end in $dr$, so this would not be a pair of \emph{minimal} collisions. Hence we can assume that there is at least one further letter after the final $dr$, which must be a $u$.

Now suppose that there were at least two letters after the final $dr$. Then $w_{2}$ would have the form $\dots drul \dots$, with any further letters at the end alternating between $u$ and $l$. Thus $\pi^{\circ}$ would look like the permutation shown on the right-hand side of Fig. \ref{vi}.

\begin{figure}[h]
\begin{center}
\begin{tikzpicture}[scale=0.35]

\node[] (0) at (2,9) {$w_{1} = \dots dr$};

\node[permpt] (1) at (2,0) {}; \draw[thin] (1) -- ++ (0,5.5);
\node[permpt] (2) at (5,1) {}; \draw[thin] (2) -- ++ (-3.5,0);
\node[] (3) at (2,-1) {\tiny{A}};
\node[] (4) at (5,0) {\tiny{B}};

\draw (0.5,2) rectangle (3.5,5);

\begin{scope}[shift={(13,2)}]

\node[] (0) at (1,7) {$w_{2} = \dots drul \dots$};

\node[permpt] (1) at (1,-2) {}; \draw[thin] (1) -- ++ (0,4.5);
\node[permpt] (2) at (4,-1) {}; \draw[thin] (2) -- ++ (-3.5,0);
\node[permpt] (3) at (3,4) {}; \draw[thin] (3) -- ++ (0,-5.5);
\node[permpt] (4) at (-2,3) {}; \draw[thin] (4) -- ++ (5.5,0);
\node[] (6) at (1,-3) {\tiny{A}};
\node[] (7) at (4,-2) {\tiny{B}};
\node[] (8) at (3,5) {\tiny{C}};
\node[] (9) at (-2,2) {\tiny{D}};

\node[circle,fill,inner sep=0.5pt] (8) at (-1.5,4) {};
\node[circle,fill,inner sep=0.5pt] (9) at (-2,4.5) {};
\node[circle,fill,inner sep=0.5pt] (10) at (-2.5,5) {};
\node[circle,fill,inner sep=0.5pt] (11) at (-3,5.5) {};

\draw (0,0) rectangle (2,2);

\end{scope}

\end{tikzpicture}
\end{center}
\caption{The permutation $\pi^{\circ}$ is (under the assumption that there are at least two letters after the $r$ that generates $B$ in $w_{2}$) generated by both the words $w_{1} = \dots dr$ and $w_{2} = \dots drul \dots$, so has both of these forms. The points marked $A$ and $B$ must match up.}
\label{vi}
\end{figure}

Hence the permutation $\pi^{\circ}$ is simultaneously of both forms represented in Fig. \ref{vi}. On closer inspection, however, these forms contradict each other, which we can see by counting the points other than $B$ to the left and right of $A$: looking first at the diagram generated by $w_{1}$, the box here is non-empty (as the length of $\pi^{\circ}$ is at least $6$), and the pin attached to $A$ must separate the previously-placed point from all others (including the origin). Hence, excluding $B$, there is precisely one point on one side of $A$ and all other points (including the origin) are on the other side. Looking at the diagram generated by $w_{2}$, however, we see that the box here is also non-empty (as $A$ is internal there is at least one non-origin point preceeding it), with the previous point on one side and all other points (including the origin) on the other. Including the points marked $C$ and $D$, we now see that there are at least two points (including the origin but excluding $B$) on either side of $A$, thus contradicting what the diagram generated by $w_{1}$ told us. We conclude that there cannot in fact be more than one point after the final $dr$ in $w_{2}$.

Thus there is in fact precisely one point after the final $dr$, corresponding to $A,B$ in $w_{2}$, and so $w_{2} = \dots dru$. Hence our permutation $\pi^{\circ}$ is of both the forms shown in Fig. \ref{vii}.

\begin{figure}[h]
\begin{center}
\begin{tikzpicture}[scale=0.35]

\node[] (0) at (2,9) {$w_{1} = \dots dr$};

\node[permpt] (1) at (2,0) {}; \draw[thin] (1) -- ++ (0,5.5);
\node[permpt] (2) at (5,1) {}; \draw[thin] (2) -- ++ (-3.5,0);
\node[] (3) at (2,-1) {\tiny{A}};
\node[] (4) at (5,0) {\tiny{B}};

\draw (0.5,2) rectangle (3.5,5);

\begin{scope}[shift={(13,2)}]

\node[] (0) at (1,7) {$w_{2} = \dots dru$};

\node[permpt] (1) at (1,-2) {}; \draw[thin] (1) -- ++ (0,4.5);
\node[permpt] (2) at (4,-1) {}; \draw[thin] (2) -- ++ (-3.5,0);
\node[permpt] (3) at (3,4) {}; \draw[thin] (3) -- ++ (0,-5.5);
\node[] (6) at (1,-3) {\tiny{A}};
\node[] (7) at (4,-2) {\tiny{B}};
\node[] (8) at (3,5) {\tiny{C}};

\draw (0,0) rectangle (2,2);

\end{scope}

\end{tikzpicture}
\end{center}
\caption{The permutation $\pi^{\circ}$ is generated by both the words $w_{1} = \dots dr$ and $w_{2} = \dots dru$, so has both of these forms. The points marked $A$ and $B$ must match up.}
\label{vii}
\end{figure}

But now consider the pin word $w_{1}^{-1}$, obtained by removing the final letter from $w_{1}$. This must generate a pin permutation corresponding to the permutations shown in Fig. \ref{vii} with the point $B$ removed. In particular, it contains the point $C$ which (given the absence of $B$) will be both the highest  and rightmost point in the corresponding pin permutation $\pi^{\circ}_{w_{1}^{-1}}$. This can only happen if $C$ was the \emph{first} point placed in $\pi^{\circ}_{w_{1}^{-1}}$ (otherwise is separates the previously placed point from all others in one direction, making it only the second-most point in that direction). As $C$ is clearly in the first quadrant, this means that $w_{1}$ starts with a $1$. Hence $w_{1} = 1 \dots dr$. Further, given that there is no point above $C$ in $\pi^{\circ}$ and the only point to the right is $B$, accounted for by the final $r$, this means that there is no $u$ in $w_{1}$ and no $r$ except for the final letter. Hence $w_{1}$ is either $1l(dl)^{\geq 1}dr$ or $1(dl)^{\geq 2}dr$, depending on whether the length of $\pi^{\circ}$ is even or odd. Thus $\pi^{\circ}$ is one of the permutations on the left-hand sides of Fig.s \ref{fig:oddcollisionsdr} and \ref{fig:evencollisionsdr}, with $w_{1}$ being the upper word in the pair. Noting that $w_{2} = \dots dru$ cannot contain an $r$ or $u$ apart from the final two letters (all points on the upper and right half-planes are already accounted for) allows us to conclude that $w_{2}$ must be the lower word in the pair. \qedhere

\end{proof}

We can now quickly deduce that \emph{all} collisions are in fact minimal, and thus the list given above is complete:

\begin{lemma}

Every colliding tuple $\{w_{1},w_{2}, \dots, w_{k}\}$ of pin words is minimal (that is to say, $k \leq 4$ and each pair $w_{i},w_{j}$ differs in the final letter).

\end{lemma}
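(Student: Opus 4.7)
The plan is to prove the contrapositive that no colliding pair of pin words shares a final letter; the conclusion for general tuples then follows at once, since the words of any colliding tuple must have pairwise distinct final symbols, of which there are only four ($u, d, l, r$ for length $\geq 2$, or $1,2,3,4$ at length $1$), forcing $k \leq 4$.

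I would proceed by strong induction on the length $n$. The base case $n=2$ is immediate, since pin words $Q_1 L$ and $Q_2 L$ with $Q_1 \neq Q_2$ place their first points in different quadrants and so generate distinct permutations. For the inductive step, suppose $\{w_1, w_2\}$ is a colliding pair of length $n$ with common final letter $L$. The letter $L$ in either pin word always generates the point of $\pi^{\circ}$ extremal in direction $L$; removing it from both words yields distinct pin words $w_1^{-}, w_2^{-}$ generating the same length-$(n{-}1)$ permutation. By the inductive hypothesis, the words of this shorter colliding pair have distinct final letters, so by the classification theorem this pair appears (either in its own right, or as a pair extracted from a larger minimal tuple) in the tables above.

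The key constraint is that for both $w_1^{-}L$ and $w_2^{-}L$ to be valid pin words, $L$ must have the opposite alignment (horizontal vs.\ vertical) to the final letter of each of $w_1^{-}$ and $w_2^{-}$, so those final letters must share the same alignment. Direct inspection of the tables shows that in every minimal colliding pair of length $2$, $3$, $5$ or $\geq 6$, the two final letters lie in different alignments; this immediately contradicts the alignment condition above, so no non-minimal colliding pair of length $n$ can reduce to a pair from these cases.

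The genuinely delicate case, and the main obstacle, is $n-1 = 4$, where the minimal collisions occur as the quadruples $\{1ldr, 2dru, 3rul, 4uld\}$ and $\{1dlu, 2rdl, 3urd, 4lur\}$: within each quadruple the four final letters are exactly $u,d,l,r$, so pairs extracted from a quadruple can indeed share an alignment (for instance $\{1ldr, 3rul\}$, both ending horizontally, or $\{2dru, 4uld\}$, both ending vertically). For each of these same-alignment pairs I would tabulate the two potential common extensions by a letter of the opposite alignment, draw the corresponding length-$5$ pin permutations, and verify that none of the eight extended pairs coincides with a pair listed in the (exhaustively verified) classification of length-$5$ minimal collisions. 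Any genuine colliding extension would, by the inductive hypothesis already established for $n-1 < 5$, itself be minimal and hence appear in that list; its absence from the list therefore yields the required contradiction and closes the induction.
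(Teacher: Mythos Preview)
Your approach is essentially the same as the paper's: strip a shared final letter and observe that the resulting shorter collision must have same-alignment final letters, then obtain a contradiction from the classification tables. The paper, however, organises the argument differently: it first disposes of all lengths $n\leq 5$ by exhaustive search, and only then runs the stripping argument for $n\geq 6$. Since any non-minimal collision of length $\geq 6$ strips to a collision of length $\geq 5$, and the exhaustive check guarantees there are no non-minimal collisions at length $5$, the reduction can never reach the length-$4$ quadruples at all --- so the paper never has to confront the same-alignment pairs you single out as the delicate case.

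Your explicit treatment of that case is therefore more work than strictly needed, though it is not wrong. One remark: your final sentence, claiming that a colliding length-$5$ extension ``would, by the inductive hypothesis already established for $n-1<5$, itself be minimal'', is not valid --- at that point in the induction you are establishing the statement for $n=5$, so the hypothesis covers only lengths $<5$ and says nothing about a length-$5$ collision. The direct verification you describe just before (drawing the eight extended pairs and checking they do not coincide) is what actually does the work; simply drop the appeal to the inductive hypothesis there.
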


\begin{proof}

We write $w^{-n}$ for the pin word obtained from a pin word $w$ by removing the final $n$ letters.

Note that every collision of length $n \leq 5$ (which we have exhaustively listed) is minimal. Now suppose that $\{w_{1},w_{2}\}$ is a non-minimal colliding pair of some length $n \geq 6$. By applying symmetries, we can assume that both $w_{1}$ and $w_{2}$ ends in a $d$. But then this $d$ generates the lowest point in the shared generated pin permutation $\pi^{\circ}$ -- in particular, it represents the same point in each pin word. Thus $w_{1}^{-1}$ and $w_{2}^{-1}$ must also form a collision, indeed a collision of shorter length. Given that $w_{1}$ and $w_{2}$ are not the same word, we can continue this process until we arrive at a minimal collision $w_{1}^{-n},w_{2}^{-n}$. But we have already classified all minimal collisions, and can note that all minimal collisions have different alignment of the terminal letter: if one of the pair ends in $d$ or $u$ then the other ends in $l$ or $r$. This means that such a pair cannot be extended to a collision of longer length, forming a contradiction.

\end{proof}

Thus all collisions are minimal and the list given in the table is complete.

\subsection{Classification of $\boxplus$-decomposables}

We now aim to prove that the list of $\boxplus$-decomposables given in Theorem \ref{classification} is complete:

\begin{proof}
First, we list the $\boxplus$-decomposables of lengths $2$ and $3$: these are $1l$ and $1ld$ along with their eight symmetries, as can be confirmed by an exhaustive list.

We now construct a $\boxplus$-decomposable pin permutation $\pi^{\circ} = \pi^{\circ}_{w}$ of length $n \geq 4$, generated by the pin word $w$, and show that it must be one of those listed in the statement of the theorem. We proceed by noting that every $\boxplus$-decomposable pin permutation can be drawn on the diagram in Fig. \ref{fig:boxdec1}, with no points in the shaded region, and at least one point in each of the inner and outer regions.

\begin{figure}[h]
\begin{center}
\begin{tikzpicture}[scale=0.25]


\draw[very thick] (0,-11) -- ++ (0,22);
\draw[very thick] (-11,0) -- ++ (22,0);

\draw[thin] (-4,-11) -- ++ (0,22);
\draw[thin] (4,-11) -- ++ (0,22);
\draw[thin] (-11,-4) -- ++ (22,0);
\draw[thin] (-11,4) -- ++ (22,0);


\draw[pattern=crosshatch,pattern color=black!80,draw=none] (-4,4) rectangle (4,11);
\draw[pattern=crosshatch,pattern color=black!80,draw=none] (-11,-4) rectangle (-4,4);
\draw[pattern=crosshatch,pattern color=black!80,draw=none] (-4,-11) rectangle (4,-4);
\draw[pattern=crosshatch,pattern color=black!80,draw=none] (4,-4) rectangle (11,4);


\end{tikzpicture}
\end{center}
\caption{The general shape of a $\boxplus$-decomposable permutation. The shaded region must be empty, whilst both the inner and outer unshaded regions must be non-empty.}
\label{fig:boxdec1}
\end{figure}

By applying symmetries if necessary we can assume that the first point of $\pi^{\circ}$ placed was in the first quadrant. In fact, this point must be placed in the outer region of the first quadrant, as it is impossible to get out of the inner region once a point has been placed in there: if the point $p_{n}$ has been placed in the inner region then the point $p_{n+1}$ will be closer to the origin in the direction specified by the letter that placed $p_{n}$, and hence must also be in the inner region. Hence our permutation $\pi^{\circ}$ starts out like that in Fig. \ref{fig:boxdec2}.

\begin{figure}[h]
\begin{center}
\begin{tikzpicture}[scale=0.25]


\draw[very thick] (0,-11) -- ++ (0,22);
\draw[very thick] (-11,0) -- ++ (22,0);

\draw[thin] (-4,-11) -- ++ (0,22);
\draw[thin] (4,-11) -- ++ (0,22);
\draw[thin] (-11,-4) -- ++ (22,0);
\draw[thin] (-11,4) -- ++ (22,0);


\node[permpt,label={\tiny $p_{1}$}] (1) at (5,6) {};


\draw[pattern=crosshatch,pattern color=black!80,draw=none] (-4,4) rectangle (4,11);
\draw[pattern=crosshatch,pattern color=black!80,draw=none] (-11,-4) rectangle (-4,4);
\draw[pattern=crosshatch,pattern color=black!80,draw=none] (-4,-11) rectangle (4,-4);
\draw[pattern=crosshatch,pattern color=black!80,draw=none] (4,-4) rectangle (11,4);



	
\end{tikzpicture}
\end{center}
\caption{First point of $\pi^{\circ}$ is placed in the outer region of the first quadrant.}
\label{fig:boxdec2}
\end{figure}

Next, note that it is impossible to get into the inner region of the first quadrant now: every point placed in the first quadrant from now on will be either to the right of or above $p_{1}$. Hence in order to construct a $\boxplus$-decomposable, we must at some point move to another quadrant. Again, by symmetry, we can assume that the next quadrant visited by the pin permutation $\pi^{\circ}$ is the second quadrant, and we now split into cases based on whether this first point in the second quadrant is in the inner or outer region:

\subsubsection*{Case $1$: First point in the second quadrant is in the outer region}

In this case, we may or may not begin by oscillating in the first quadrant for any length. If there is an oscillation of length $\geq 2$ this will ensure that the first point placed in the second quadrant will be in the outer region (it will be the second-highest point placed so far, and so will be above $p_{1}$, the first point placed, which is in the outer region). If, on the other hand, we leave the first quadrant on the second step, the second point may or may not be in the outer region. For now, we assume that the first point placed in the second quadrant is in the outer region (regardless of whether there is an oscillation of length greater than $1$ in the first quadrant) and deal with the the case in which it is in the inner region as Case $2$, below. We thus have a permutation that looks like that in Fig. \ref{fig:boxdec3}.

\begin{figure}[h]
\begin{center}
\begin{tikzpicture}[scale=0.25]


\draw[very thick] (0,-11) -- ++ (0,22);
\draw[very thick] (-11,0) -- ++ (22,0);

\draw[thin] (-4,-11) -- ++ (0,22);
\draw[thin] (4,-11) -- ++ (0,22);
\draw[thin] (-11,-4) -- ++ (22,0);
\draw[thin] (-11,4) -- ++ (22,0);


\node[permpt,label={\tiny $p_{1}$}] (1) at (5,6) {};
\node[permpt,label={\tiny $p_{2}$}] (2) at (7,5) {}; \draw[thin] (2) -- ++ (-2.5,0);
\node[permpt,label={\tiny $p_{3}$}] (3) at (6,8) {};  \draw[thin] (3) -- ++ (0,-3.5);
\node[permpt,label={\tiny $p_{4}$}] (4) at (9,7) {}; \draw[thin] (4) -- ++ (-3.5,0);
\node[permpt,label={\tiny $p_{5}$}] (5) at (8,10) {}; \draw[thin] (5) -- ++ (0,-3.5);
\node[permpt,label={\tiny $p_{6}$}] (6) at (-6,9) {}; \draw[thin] (6) -- ++ (14.5,0);


\draw[pattern=crosshatch,pattern color=black!80,draw=none] (-4,4) rectangle (4,11);
\draw[pattern=crosshatch,pattern color=black!80,draw=none] (-11,-4) rectangle (-4,4);
\draw[pattern=crosshatch,pattern color=black!80,draw=none] (-4,-11) rectangle (4,-4);
\draw[pattern=crosshatch,pattern color=black!80,draw=none] (4,-4) rectangle (11,4);

\draw[pattern=crosshatch,pattern color=blue!80,draw=none] (0,-4) rectangle (4,4);
\draw[pattern=crosshatch,pattern color=blue!80,draw=none] (-4,0) rectangle (0,4);


	
\end{tikzpicture}
\end{center}
\caption{The first point of $\pi^{\circ}$ outside the first quadrant can be assumed to be in the second quadrant by symmetry.}
\label{fig:boxdec3}
\end{figure}

We note that the inner regions of the first, second and fourth quadrants (shaded in blue in Fig. \ref{fig:boxdec3}) are now inaccessible: there are now at least two points in the outer region of the upper half-plane, and because any new point in the upper half-plane must be either the highest or second-highest placed so far, it is now impossible to place any points below those two, hence the inner region of the upper half-plane will remain empty. A similar argument shows that the inner region of the right-half plane is now also inaccessible: if we had oscillated in the first quadrant initially then there are now at least two points to the right of the inner region in the right half-plane, and the argument goes through exactly as before. If we went directly to the second quadrant with our second point then there would be only one point to the right of the inner region so far, but in order to place anything in the right half-plane again we would need to take a right step, which would create a second point to the right of the inner region, thus rendering the inner region of the right half-plane again inaccessible. In any case, the only part of the inner region which is now accessible is in the third quadrant, so we must be aiming to end up there.

If, after placing our first point in the second quadrant, we took either an upward step or a downward step into the outer region of the third quadrant, then the whole of the inner region would be rendered inaccessbile, by the same argument as in the previous paragraph. Hence we must now take a downstep into the inner region of the third quadrant, as in Fig. \ref{fig:boxdec4}.

\begin{figure}[h]
\begin{center}
\begin{tikzpicture}[scale=0.25]


\draw[very thick] (0,-11) -- ++ (0,22);
\draw[very thick] (-11,0) -- ++ (22,0);

\draw[thin] (-4,-11) -- ++ (0,22);
\draw[thin] (4,-11) -- ++ (0,22);
\draw[thin] (-11,-4) -- ++ (22,0);
\draw[thin] (-11,4) -- ++ (22,0);


\node[permpt,label={\tiny $p_{1}$}] (1) at (5,6) {};
\node[permpt,label={\tiny $p_{2}$}] (2) at (7,5) {}; \draw[thin] (2) -- ++ (-2.5,0);
\node[permpt,label={\tiny $p_{3}$}] (3) at (6,8) {};  \draw[thin] (3) -- ++ (0,-3.5);
\node[permpt,label={\tiny $p_{4}$}] (4) at (9,7) {}; \draw[thin] (4) -- ++ (-3.5,0);
\node[permpt,label={\tiny $p_{5}$}] (5) at (8,10) {}; \draw[thin] (5) -- ++ (0,-3.5);
\node[permpt,label={\tiny $p_{6}$}] (6) at (-6,9) {}; \draw[thin] (6) -- ++ (14.5,0);
\node[permpt,label={\tiny $p_{7}$}] (7) at (-2,-2) {};


\draw[pattern=crosshatch,pattern color=black!80,draw=none] (-4,4) rectangle (4,11);
\draw[pattern=crosshatch,pattern color=black!80,draw=none] (-11,-4) rectangle (-4,4);
\draw[pattern=crosshatch,pattern color=black!80,draw=none] (-4,-11) rectangle (4,-4);
\draw[pattern=crosshatch,pattern color=black!80,draw=none] (4,-4) rectangle (11,4);

\draw[pattern=crosshatch,pattern color=blue!80,draw=none] (0,-4) rectangle (4,4);
\draw[pattern=crosshatch,pattern color=blue!80,draw=none] (-4,0) rectangle (0,4);


\draw [thin] plot [smooth] coordinates {(-5,9.5) (-5,7) (-4,6) (-2,4) (-2,-2)};
	
\end{tikzpicture}
\end{center}
\caption{This is $\boxplus$-decomposable, but will not be if any further point is added.}
\label{fig:boxdec4}
\end{figure}

Note that we cannot now place any further points: either a left or right step here will place a point into the shaded region. Hence the only $\boxplus$-decomposable pin permutations in Case $1$ are those shown in Fig. \ref{fig:boxdec4}, which is to say those generated by a pin word of the form $1 \dots ld$, where the only letters in the ellipsis are $u$ and $r$.

\subsubsection*{Case $2$: First point in the second quadrant is in the inner region}

In this case, it is clear that we have to move into the second quadrant immediately after placing the first point in quadrant $1$ (otherwise the first point in quadrant $2$ will be above $p_{1}$ and hence not in the inner region). Hence we start off as in Fig. \ref{fig:boxdec5}.

\begin{figure}[h]
\begin{center}
\begin{tikzpicture}[scale=0.25]


\draw[very thick] (0,-11) -- ++ (0,22);
\draw[very thick] (-11,0) -- ++ (22,0);

\draw[thin] (-7,-11) -- ++ (0,22);
\draw[thin] (5,-11) -- ++ (0,22);
\draw[thin] (-11,-7) -- ++ (22,0);
\draw[thin] (-11,4) -- ++ (22,0);


\node[permpt,label={\tiny $p_{1}$}] (1) at (7,6) {};
\node[permpt,label={\tiny $p_{2}$}] (2) at (-2,2) {};

\draw[pattern=crosshatch,pattern color=black!80,draw=none] (-7,4) rectangle (5,11);
\draw[pattern=crosshatch,pattern color=black!80,draw=none] (-11,-7) rectangle (-7,4);
\draw[pattern=crosshatch,pattern color=black!80,draw=none] (-7,-11) rectangle (5,-7);
\draw[pattern=crosshatch,pattern color=black!80,draw=none] (5,-7) rectangle (11,4);


\draw [thin] plot [smooth] coordinates {(7.5,5) (5,5) (3,4) (0,2) (-2,2)};
	
\end{tikzpicture}
\end{center}
\caption{This is $\boxplus$-decomposable, but will not be if any right or up step is taken.}
\label{fig:boxdec5}
\end{figure}

Note that we can now no longer take an up- or right-step: doing so would place a point above or to the right of $p_{1}$ and therefore within the shaded region. Conversely, if we only take down- and left-steps we can stay in the inner region indefinitely, as shown in Fig. \ref{fig:boxdec6}.

\begin{figure}[h]
\begin{center}
\begin{tikzpicture}[scale=0.25]


\draw[very thick] (0,-11) -- ++ (0,22);
\draw[very thick] (-11,0) -- ++ (22,0);

\draw[thin] (-7,-11) -- ++ (0,22);
\draw[thin] (5,-11) -- ++ (0,22);
\draw[thin] (-11,-7) -- ++ (22,0);
\draw[thin] (-11,4) -- ++ (22,0);


\node[permpt,label={\tiny $p_{1}$}] (1) at (7,6) {};
\node[permpt,label={\tiny $p_{2}$}] (2) at (-2,2) {};
\node[permpt,label={\tiny $p_{3}$}] (3) at (-1,-3) {};  \draw[thin] (3) -- ++ (0,5.5);
\node[permpt,label={\tiny $p_{4}$}] (4) at (-4,-2) {}; \draw[thin] (4) -- ++ (3.5,0);
\node[permpt,label={\tiny $p_{5}$}] (5) at (-3,-5) {}; \draw[thin] (5) -- ++ (0,3.5);
\node[permpt,label={\tiny $p_{6}$}] (6) at (-6,-4) {}; \draw[thin] (6) -- ++ (3.5,0);
\node[permpt,label={\tiny $p_{7}$}] (6) at (-5,-6) {}; \draw[thin] (6) -- ++ (0,2.5);

\draw[pattern=crosshatch,pattern color=black!80,draw=none] (-7,4) rectangle (5,11);
\draw[pattern=crosshatch,pattern color=black!80,draw=none] (-11,-7) rectangle (-7,4);
\draw[pattern=crosshatch,pattern color=black!80,draw=none] (-7,-11) rectangle (5,-7);
\draw[pattern=crosshatch,pattern color=black!80,draw=none] (5,-7) rectangle (11,4);


\draw [thin] plot [smooth] coordinates {(7.5,5) (5,5) (3,4) (0,2) (-2,2)};
	
\end{tikzpicture}
\end{center}
\caption{The general form of a $\boxplus$-decomposable pin permutation in Case $2$}
\label{fig:boxdec6}
\end{figure}

Hence we have obtained another family of $\boxplus$-decomposable pin permutations, those generated by the pin words $1ldldl \dots$, and that these are all the possibilities in Case $2$.

Combining these two cases, we have obtained all possible $\boxplus$-decomposable pin permutations that begin in the first quadrant and first visit the second. By applying all eight symmetries we thus obtain all $\boxplus$-decomposable pin permutations and can see that these are precisely those described in the statement of Theorem \ref{classification}. \end{proof}

\def\cprime{$'$}


\end{document}